\let\ifdebugmode\iffalse 
\title{An extension of Batanin's approach to globular algebras\thanks{This work
    was partially supported by the French ANR project PPS (ANR-19-CE48-0014).}}
\author{Simon Forest\\
  \small Aix-Marseille Univ, CNRS, I2M, Marseille, France}
\date{}
\begin{document}

\maketitle
\begin{abstract}
  In earlier work, Batanin has shown that an important class of definitions of
higher categories could be apprehended together simply as monads over globular
sets. This allowed him to generalize the notion of polygraph, initially
introduced by Street and Burroni for strict categories, to all algebraic
globular higher categories. In this work, we refine this perspective and
introduce new constructions and properties for this class of higher categories.
In particular, we define the notion of cellular extension and its associated
free construction, from which we obtain another definition of polygraphs and the
adjunction between globular algebras and polygraphs. We moreover introduce two
criteria allowing one to use most of the constructions of this article without
having to describe explicitly the underlying globular monad.
\end{abstract}

\section*{Introduction}

Over the last years, higher categories have emerged as a convenient tool to
study problems in mathematics, physics and computer
science~\cite{baez2010physics,leinster2004higher}. The notion of ``higher
category'' encompasses informally all the structures that have
higher-dimensional cells which can be composed together with several operations.
It admits a vast number of definitions, which can make it hard to apprehend.

In order to get a more global view and factor out several common constructions
and properties across the different possible definitions higher categories, it
is useful to consider a restriction of this general notion to a more formal
class of theories. This was done by Batanin~\cite{batanin1998computads}, who
introduced a unified formalism for algebraic globular higher categories. The
latter are very common, since they include all the globular higher categories
defined by a set of operations and equations between them. Moreover, the
instances of such higher categories form \emph{locally finitely presentable
  categories} and, as such, have very good properties, like being complete and
cocomplete~\cite{adamek1994locally}. The setting of Batanin then enables one to
derive several common constructions for such higher categories. In particular,
one can generalize to those the notion of \emph{polygraph}, which can be thought
as higher-dimensional signatures, originally defined by
Street~\cite{street1976limits} for strict $2$\categories and Burroni for strict
$\omega$\categories. Batanin moreover showed that there is an adjunction between
$n$\polygraphs and $n$\categories, which is \eq{most of the time} monadic (as
proved by Street for $2$\polygraphs of strict categories), allowing for
presenting higher categories of this framework using polygraphs.

Although this is already valuable, it seems that the work of Batanin can be
refined in several aspects. First, several constructions of interest for
concrete applications are too implicit, if not absent of Batanin's treatment.
For example, in the context of higher dimensional rewriting, one is usually
interested in a notion intermediate to higher category and polygraph, called
\emph{cellular extension}, which is simply a higher category with a set of
generators in the next dimension. One can then consider the free
$(n{+}1)$\category obtained from an $n$\cellular extension by freely generating
the $(n{+}1)$\cells from the set of generators. Thus, cellular extensions can be
considered as a more flexible construction than the one on polygraphs, where
each dimension has to be freely generated.

Second, in order to use the results of Batanin, one has to work directly with a
monad and its Eilenberg-Moore category. However, one usually does not have
direct access to the underlying monad of the considered theory of higher
category, but only the operations of the theory together with the equations
satisfied by the theory, from which one can derive a monad, but the description
of the latter can be very tedious~\cite{penon1999approche}. Instead, one would
prefer results that can work directly with the equational theory, the category
of models of this theory, and the straight-forward functors that one can build
from it. In particular, such a shortcut feels particularly desirable for showing
the \emph{truncability} of the monad, which is required for the constructions
and properties of Batanin to work.

Third, it seems that deeper understanding about globular algebras can be gained
by seeing them through the lenses of the formal theory of monads of
Street~\cite{street1972formal}. Indeed, several natural constructions on
globular algebras, like truncations functors and their right adjoints, are in
fact projections, through the Eilenberg-Moore functor, of constructions
happening in the category $\MND$ of monads and monad functors, whose
compositions involve vertical composition of square-shaped natural
transformation. Then, several computations on these functors can be nicely
described using string diagrams, facilitating their readability and making them
more intuitive.

In this work, we attempt to address these three points.

\paragraph*{Outline}

In \Cref{text:formal-theory-monad}, we recall some elements of Street's formal
theory of monads~\cite{street1972formal}. In particular, we define the category
$\MND$ of monads over categories together with the Eilenberg-Moore construction,
which produces a category of algebras from a monad. In
\Cref{text:higher-categories-globular-algebras}, we recall Batanin's framework
for globular higher categories, where theories of higher categories are simply
studied as monads on globular sets, and extend it with new results and
constructions. Among other, we prove two criterions
(\Cref{thm:charact-globular-algebras} and \Cref{thm:truncable-charact}) which
help bridge the gap between that framework and the usual definitions of higher
categories as structures with operations satisfying equations. In
\Cref{text:free-higher}, we recover the notion of polygraph through a different
path than the one of Batanin using the intermediate and intuitive notion of
cellular extensions. We also describe explicitly the free constructions
associated to both notions. The adjunctions shown by Batanin between higher
categories and polygraphs can then be recovered from an adjunction between
cellular extensions and polygraphs (\Cref{thm:poltoce-algptopol-adj}). Finally,
in \Cref{text:strict-cats-and-precats}, we illustrate the use of our properties
and constructions on the case of strict categories.

\paragraph*{Acknowledgements}

I would like to thank all the people with whom I was able to discuss this work
and who gave me useful feedback about it. In particular, I would like to thank
Samuel Mimram, Tom Hirschowitz and Dominic Verity who took the time to read in
depth an earlier version of this work under the form of the first chapter of my
PhD thesis. The comments of Dominic were particularly valuable to this work,
since he opened my eyes on some very naïve arguments I used in that
earlier version, and indicated me some more natural ones. Most of the changes
from the earlier version to this one can be attributed to him, so that I am
grateful to him.

Also, some additional thanks to Samuel Mimram for having developed
\texttt{satex}\footnote{\url{https://github.com/smimram/satex}.} that I have
used to draw most of the string diagrams of this article.


\paragraph*{Notations}

In this article, given $n \in \N$, $\N_n$ denotes the set~$\set{0,\ldots,n}$
and~$\N^*_n$ denotes the set~$\set{1,\ldots,n}$. We extend this notation to the
infinity and put~$\N_\omega = \N$ and $\N^*_\omega = \N^*$. As a consequence of
the above choices, we might write~$\N_n \cup \set n$ as a convenient
abbreviation to denote either~$\N_n$ when~$n \in \N$, or~$\Ninf$ when~$n =
\omega$.


\section{Highlights of the formal theory of monads}
\label{text:formal-theory-monad}

\label{text:algebras-over-a-monad}

In order to better manipulate globular algebras and constructions upon them, we
use the formal theory of monads of Street~\cite{street1972formal}, of which we
recall briefly the salient points.

We start by recalling the notion of algebra for a monad and the associated
Eilenberg-Moore category in \Cref{text:algebras}. Then, in
\Cref{text:mnd-category}, we recall that the construction of the Eilenberg-Moore
category is part of an adjunction between the $2$-categories $\MND$, describing
monads over categories, and $\CAT$. Finally, in
\Cref{text:morphs-in-mnd-from-adjs}, we give several useful properties to build
morphisms in $\MND$ from adjunctions in $\CAT$.

In the following, we introduce monads either as $(T,\eta,\mu)$, where $T$ is the
endofunctor of the monad and $(\eta,\mu)$ is the unit-counit pair; or we
introduce them as $(\cC,T)$, where $\cC$ is the base category for which $T$ is
the endofunctor. When the latter notation is used, the unit and counit, when
required, are introduced explicitly.

\subsection{Algebras}
\label{text:algebras}

Given a monad~$(T,\eta,\mu)$ on a category~$\mcal C$, a \index{algebra over a
  monad}\emph{$T$\algebra} is the data of an object~$X \in \mcal C$ together
with a morphism~${h \co TX \to X}$ such that
\[
  h \circ \eta_X = \unit X
  \qtand
  h \circ \mu_X = h \circ T(h)\zbox.
\]
A \index{morphism!of algebras}\emph{morphism} between two algebras~$(X,h)$
and~$(X',h')$ is given by a morphism~$f \co X \to X'$ of~$\mcal C$ satisfying
\[
  f \circ h = h' \circ T(f)\zbox.
\]
We \glossary(.aaa){$\mcal C^T$}{the Eilenberg-Moore category \wrt the
  monad~$T$}write~$\mcal C^T$ for the category of $T$\algebras, also called
\index{Eilenberg-Moore category}\emph{Eilenberg-Moore category of~$T$}. There is
a canonical forgetful \glossary(UT){$\bigfun U^T$}{the canonical forgetful
  functor of an Eilenberg-Moore category}functor
\[
  \bigfun U^T \co \mcal C^T \to \mcal C
\]
which maps the $T$\algebra~$(X,h)$ to~$X$. This functor has a canonical left
\glossary(FT){$\bigfun F^T$}{the canonical free functor of an Eilenberg-Moore
  category}adjoint
\[
  \bigfun F^T \co \mcal C \to \mcal C^T
\]
which maps~$X \in \mcal C$ to the $T$\algebra~$(TX,\mu_X)$, such that the unit
of~$\bigfun F^T \dashv \bigfun U^T$ is~$\eta^T = \eta$, and the associated
counit, denoted~$\eps^T$, is such that~$\eps^T_{(X,h)} = h$ for a given
$T$\algebra~$(X,h)$. The monad induced by~$\smash{\bigfun F^T \dashv \bigfun
  U^T}$ is then exactly~$(T,\eta,\mu)$.

\subsection{The category $\MND$}
\label{text:mnd-category}

Given two monads~$(S,\gamma,\nu)$ and~$(T,\eta,\mu)$ on two categories~$\mcal C$
and~$\mcal D$ respectively, a \emph{monad functor} betweem~$(S,\gamma,\nu)$
and~$(T,\eta,\mu)$ is the data of a functor $F \co \mcal C \to \mcal D$ together
with a natural transformation $\alpha \co TF \To FS$ such that
\[
  \alpha \circ (\eta F) = F\gamma
  \qqtand
  \alpha \circ (\mu F) = (F \nu) \circ (\alpha S) \circ (S \alpha)
  \zbox.
\]

A \emph{monad functor transformation} (often abbreviated \emph{truncable
  monads}) between two monad functors $(F,\alpha),(G,\beta) \co (\cC,S) \to
(\cD,T)$ is a natural transformation $m \co F \To G$ such that
\[
  (m S) \circ \alpha = \beta \circ (T m)
  \zbox.
\]

Monads, monad functors and monad transformations form a (very large) strict
$2$\category denoted~$\MND$. There is a functor
\[
  \mndinc \co \CAT \to \MND
\]
which maps a large category $\mcal C$ to the identity monad $(\mcal
C,\catunit[\mcal C])$.
This functor admits a right adjoint
\[
  \EM \co \MND \to \CAT
\]
which maps a monad $(\mcal C,T)$ to the Eilenberg-Moore category $\mcal C^T$,
and a monad functor
\[
  (F,\alpha) \co (\mcal C,S) \to (\mcal D,T)
\]
to a
functor
\[
  \emfunct F \alpha \co \emcat{\mcal C} S \to \emcat{\mcal D} T
\]
mapping
an algebra $(X,h) \in \emcat {\mcal C} S$ to the algebra $(FX,Fh \circ \alpha_X)
\in \emcat {\mcal D} T$, and mapping an algebra morphism~$f$ to $F(f)$. Finally,
a monad transformation
\[
  m \co (F,\alpha) \To (G,\beta) \co (\cC,S) \to (\cD,T)
\]
is mapped by $\EM$ to the natural transformation $F^\alpha \To G^\beta$ whose
component at an $S$\algebra $(X,h)$ is $m_X$. The component of the counit of the
adjunction at a monad $(\mcal C,T)$ is given by $(\emcanf T,\emcanf T \eps^T)$.
Moreover, the monad functor $(T,\mu) \co (\cC,\catunit) \to (\cC,T)$ corresponds
to the functor $\freealgf^T\co \cC \to \cC^T$ by the adjunction $\mndinc \dashv
\EM$.

There is also a forgetful functor
\[
  \mndund \co \MND \to \CAT
\]
which maps a monad $(\cC,T)$ to $\cC$ and whose action on monad functors and
monad transformations is the expected one. We mention that this functor is left
adjoint to $\mndinc$, even though this is not really useful for our purposes.


\subsection{Morphisms of $\MND$ from adjunctions}
\label{text:morphs-in-mnd-from-adjs}

We give several results that allows building morphisms of $\MND$ from
adjunctions. First, we prove that every functor which is a right adjoint can be
lifted canonically to $\MND$:
\begin{prop}
  \label{prop:ex-algra}
  Let $\cC$ and $\cD$ be two categories, $(S,\eta^S,\mu^S)$ be a monad on~$\cC$,
  and
  \[
    L \dashv R \co \cC \to \cD
  \]
  be an adjunction. The functor $R$ lifts
  through $\mndund$ to a cocartesian map
  \[
    (R,RS\eps)\co (\cC,S) \to (\cD,T)
  \]
  where
  $T = (T,\eta^T,\mu^T)$ is the canonical monad with $T = RSL$ and $\eps$ is the
  counit of $L \dashv R$.
\end{prop}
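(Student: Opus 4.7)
The plan is to proceed in three parts: construct the monad $T$ and verify its axioms, check that $(R, RS\eps)$ is a monad functor, and finally establish the cocartesian universal property. The constructions involved are the standard lifting of a monad along an adjunction, but they must be assembled carefully.

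First, I would define $\eta^T := (R\eta^S L) \circ \eta$ and $\mu^T := (R\mu^S L) \circ (RS\eps SL)$. The left and right unitality and the associativity of $T = RSL$ then reduce, by the triangle identities of $L \dashv R$, to the corresponding axioms for $S$. As suggested in the introduction, these reductions are most cleanly handled via string diagrams.

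Second, setting $\alpha := RS\eps \co TR \To RS$, I would verify the two monad functor axioms for $(R,\alpha) \co (\cC,S) \to (\cD,T)$. The unit compatibility $\alpha \circ (\eta^T R) = R\eta^S$ reduces, after unfolding $\eta^T$, to the triangle identity $(R\eps) \circ (\eta R) = \unit_R$. The multiplication compatibility $\alpha \circ (\mu^T R) = (R\mu^S) \circ (\alpha S) \circ (T\alpha)$ reduces similarly, by unfolding $\mu^T$ and combining naturality of $\eps$ with the triangle identities.

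Third, for cocartesianness, let $(F, \beta) \co (\cC, S) \to (\cE, U)$ be a monad functor and $H \co \cD \to \cE$ a functor with $HR = F$. I would produce the unique factoring monad functor structure $\gamma \co UH \To HT$ on $H$ by the formula $\gamma := (\beta L) \circ (UH \eta)$, which has the correct type since $UH\eta \co UH \To UHRL = UFL$ and $\beta L \co UFL \To FSL = HT$. The factorization equation $(H\alpha) \circ (\gamma R) = \beta$ then follows from naturality of $\beta$ applied to the components of $\eps$, together with the triangle identity $(R\eps) \circ (\eta R) = \unit_R$. The monad functor axioms for $(H,\gamma)$ reduce by a further diagram chase to those for $(F,\beta)$. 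For uniqueness, any natural transformation $\gamma'$ satisfying $(H\alpha) \circ (\gamma' R) = \beta$ must, by naturality of $\gamma'$ at $\eta$ combined with the other triangle identity $(\eps L) \circ (L\eta) = \unit_L$, coincide with $\gamma$; notably the monad functor axioms are not even required for the uniqueness half of the argument.

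The main obstacle is bookkeeping rather than conceptual difficulty: each verification is a compound of triangle identities, naturality squares, and monad axioms, and string diagrams in the style of Street's formal theory of monads are essentially needed to keep the calculations legible.
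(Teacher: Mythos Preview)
Your proposal is correct and follows essentially the same approach as the paper: the factorizing $2$-cell $\gamma := (\beta L) \circ (UH\eta)$ is exactly the paper's $\bar\alpha$, and your uniqueness argument via naturality and the triangle identity is the computational content of the paper's observation that precomposition with $(R,RS\eps)$ is bijective with inverse $\beta' \mapsto (\beta' L) \circ (T'\bar U\eta)$. The only minor difference is that you explicitly construct and verify the monad $T = RSL$, whereas the paper takes this as the ``canonical'' transferred monad without spelling it out.
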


\begin{proof}
  The fact that we obtain a monad functor can readily be verified using string
  diagrams. For example, the equation $(RS\eps) \circ (\eta^T R) = R \eta^S$
  asserts that the two diagrams
  \[
    \satex{und-cocart-lift-eq-unit-1}
    \qtand
    \satex{und-cocart-lift-eq-unit-2}
  \]
  represents the same natural transformation, which is true by the zigzag
  equations for $L \dashv R$. The other equation $(RS\eps) \circ (\mu^T R) = (R
  \mu^S) \circ (RS\eps S) \circ (T RS \eps)$ is verified similarly.

  We are now left to prove the cocartesianness of $(R,RS\eps)$. So let
  $(\cD',T')$ be a monad, $U \co \cC \to \cD'$ and $\bar U \co \cD \to \cD'$
  such that $U = \bar U R$, and $(U,\alpha) \co (\cC,S) \to (\cD',T')$ be a
  monad functor. We define $\bar \alpha \co T' \bar U \To \bar U T$ by
  \[
    \bar \alpha
    \qquad
    =
    \qquad
    \satex{und-cocart-lift-bar-alpha-def}
  \]
  and verify that it defines a monad functor $(\bar U,\bar \alpha) \co (\cD,T)
  \to (\cD',T')$. We start with the unit equation, \ie 
  $\bar\alpha \circ (\eta^{T'} \bar U) = \bar U \eta^T$:
  \[
    \satex{und-cocart-lift-bar-alpha-eq-unit-1}
    \qquad
    =
    \qquad
    \satex{und-cocart-lift-bar-alpha-eq-unit-2}
    \qquad
    =
    \qquad
    \satex{und-cocart-lift-bar-alpha-eq-unit-3}
    \qquad
    =
    \qquad
    \satex{und-cocart-lift-bar-alpha-eq-unit-4}
    \zbox.
  \]
  The second equation, \ie $\bar \alpha \circ (\mu^T \bar U) = (\bar U \mu^S)
  \circ (\bar \alpha T) \circ (T' \bar \alpha)$, is proved in
  \Cref{fig:bar-alpha-mu}.
  \begin{figure}
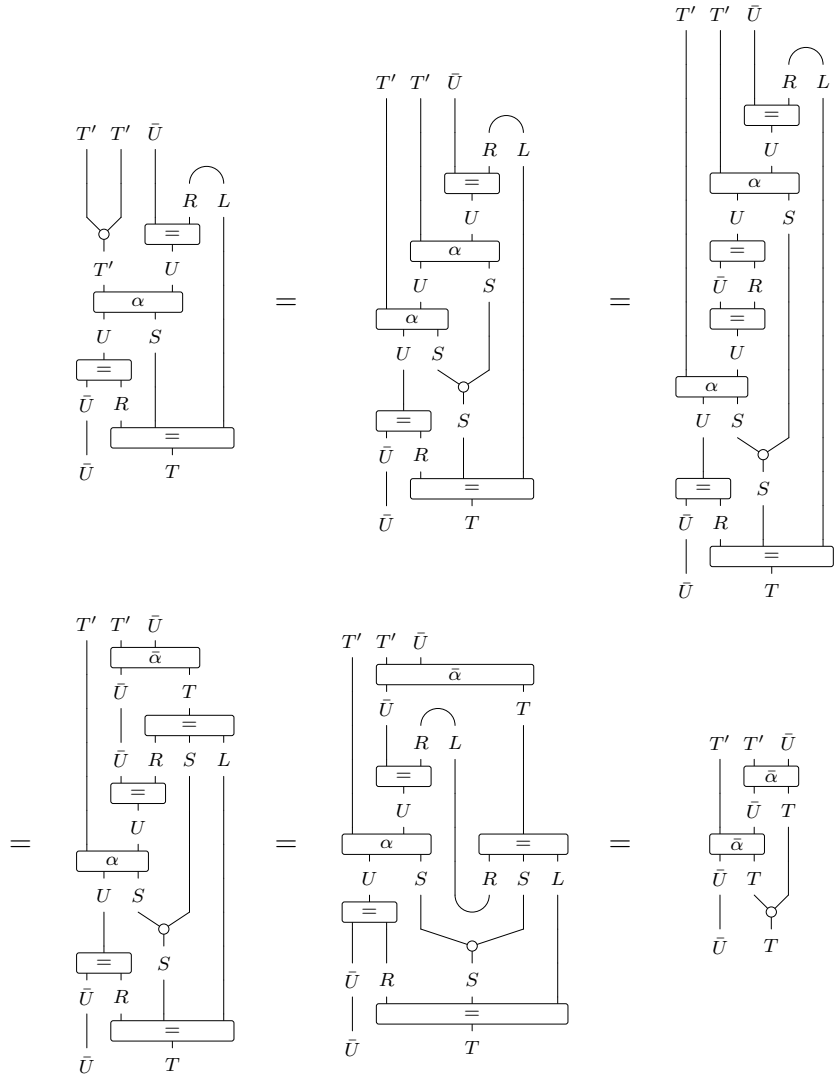

    \centering
    \[
      \begin{array}{cccccc}
        &
          \satex{und-cocart-lift-bar-alpha-eq-mult-1}
        &
          =
        &
          \satex{und-cocart-lift-bar-alpha-eq-mult-2}
        &
          =
        &
          \satex{und-cocart-lift-bar-alpha-eq-mult-3}
        \\
        =
        &
          \satex{und-cocart-lift-bar-alpha-eq-mult-4}
        &
          =
        &
          \satex{und-cocart-lift-bar-alpha-eq-mult-5}
        &
          =
        &
          \satex{und-cocart-lift-bar-alpha-eq-mult-6}
      \end{array}
    \]
    \caption{Proof of $\bar \alpha \circ (\mu^T \bar U) =
      (\bar U
      \mu^S) \circ (\bar \alpha T) \circ (T' \bar \alpha)$.}
    \label{fig:bar-alpha-mu}
  \end{figure}
  Thus, we conclude that $(\bar U,\bar \alpha)$ is a monad functor. We verify
  that it is a factorization of $(U,\alpha)$ through $(R,RS\eps)$:
  \[
    \satex{und-cocart-lift-factor-1}
    \qquad
    =
    \qquad
    \satex{und-cocart-lift-factor-2}
    \qquad
    =
    \qquad
    \satex{und-cocart-lift-factor-3}
    \zbox{\qquad.}
  \]
  Moreover, by the zigzag equations, we observe that the vertical 
  pasting operation
  \[
    \begin{tikzcd}
      \cD
      \ar[d,"T"']
      \ar[r,"\bar U"]
      \cphar[rd,"\Downarrow \beta"]
      &
      \cD'
      \ar[d,"T'"]
      \\
      \cD
      \ar[r,"\bar U"']
      &
      \cD'
    \end{tikzcd}
    \qquad
    \mapsto
    \qquad
    \begin{tikzcd}
      \cC
      \ar[r,"R"]
      \ar[d,"S"']
      \cphar[rd,"\Downarrow RS\eps"]
      &
      \cD
      \ar[d,"T"]
      \ar[r,"\bar U"]
      \cphar[rd,"\Downarrow \beta"]
      &
      \cD'
      \ar[d,"T'"]
      \\
      \cC
      \ar[r,"R"']
      &
      \cD
      \ar[r,"\bar U"']
      &
      \cD'
    \end{tikzcd}
  \]
  is a bijective operation between natural transformations of adequate types,
  whose inverse is $\beta' \mapsto (\beta' L) \circ (T'\bar U \eta)$ where
  $\eta$ is the unit of $L \dashv R$. Thus, we conclude that $(\bar U,\bar
  \alpha)$ is the unique monad functor above $\bar U$ which factors $(U,\alpha)$
  through $(R,RS\eps)$. Thus, the latter is a cocartesian morphism for $\mndund$.
\end{proof}

\begin{rem}
  The morphism of \Cref{prop:ex-algra} is even \emph{$2$-cocartesian}, meaning
  that given a monad transformation $\theta \co (U,\alpha) \To (U',\alpha') \co
  (\cD,T) \to (\cD',T')$ and $\bar \theta \co \bar U \To \bar U'$ such that
  $\bar \theta R = \theta$, then $\bar\theta$ uniquely lifts through $\mndund$
  to a monad modification $\bar \theta \co (\bar U,\bar \alpha) \To (\bar
  U',\bar \alpha')$ which factors $\theta$ seen as a monad modification through
  $(R,RS\eps)$. The proof that $\bar \theta$ induces a monad transformation is
  done by observing that the bijection introduced in the end of the proof of
  \Cref{prop:ex-algra} generalizes to a bijection
  \[
    \begin{tikzcd}
      \cD
      \ar[d,"T"']
      \ar[r,"\bar U"]
      \cphar[rd,"\Downarrow \beta"]
      &
      \cD'
      \ar[d,"T'"]
      \\
      \cD
      \ar[r,"\bar U'"']
      &
      \cD'
    \end{tikzcd}
    \qquad
    \mapsto
    \qquad
    \begin{tikzcd}
      \cC
      \ar[d,"S"']
      \ar[r,"U"]
      \cphar[rd,"\Downarrow \beta'"]
      &
      \cD'
      \ar[d,"T'"]
      \\
      \cC
      \ar[r,"U'"']
      &
      \cD'
    \end{tikzcd}
    \zbox{\quad.}
  \]
  Then, the equation required for $\bar\theta$ to be a monad transformation is
  given by the one of $\theta$, through this bijection.
\end{rem}

Given a situation as in the statement of \Cref{prop:ex-algra}, we might wonder
whether the adjunction $L \dashv R$ lifts to an adjunction in $\MND$. In fact,
we can ask this question in a more general setting: given $(\cC,S)$ and
$(\cD,T)$ two objects of $\MND$ and $(R,\rho) \co (\cC,S) \to (\cD,T) \in
\MND$ such that $R = \mndund(R,\rho)$ is part of an adjunction $L \dashv R$,
when does this adjunction lift to an adjunction in $\MND$, \ie there exists
$\lambda \co SL \To LT$ such that $(L,\lambda) \dashv (R,\rho)$ in $\MND$?

Given the situation just described, consider the natural transformation $\rho^*
\co LT \To SL$, also called the \emph{mate}~\cite{kelly1974review} of $\rho$,
defined by
\[
  \rho^*
  \qquad
  =
  \qquad
  \satex{und-adj-lift-rho-star-def}
  \zbox{\qquad.}
\]
We then have the following property:
\begin{prop}
  \label{prop:und-adj-lift}
  Given an adjunction $L \dashv R \co \cC \to \cD$ and a monad functor
  \[
    (R,\rho) \co (\cC,S) \to (\cD,T),
  \]
  the following are equivalent:
  \begin{enumerateroman}
  \item \label{prop:und-adj-lift:lambda} there exists $\lambda \co SL \To LT$
    such that $(L,\lambda) \dashv (R,\rho)$ is an adjunction in $\MND$, whose
    image by $\mndund$ is the adjunction $L \dashv R$;
  \item \label{prop:und-adj-lift:isom} the natural transformation $\rho^*$ is an isomorphism.
  \end{enumerateroman}
\end{prop}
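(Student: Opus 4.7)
My plan is to exploit the mate correspondence for the adjunction $L \dashv R$: for fixed functors $S$ on $\cC$ and $T$ on $\cD$, the assignment $\rho \mapsto \rho^*$ already defined in the paper is one direction of the standard bijection between natural transformations $TR \To RS$ and $LT \To SL$, and any candidate $\lambda \co SL \To LT$ has a corresponding mate $\lambda^\flat \co RS \To TR$ obtained by pasting with $\eta$ and $\epsilon$. I would first unpack what it means for an adjunction $(L,\lambda) \dashv (R,\rho)$ in $\MND$ to lift $L \dashv R$: since the triangle identities hold at the level of $\CAT$, the only new content is that $\eta$ and $\epsilon$ must be monad transformations, which amounts to the two equations
\[
  (R\lambda) \circ (\rho L) \circ (T\eta) = \eta T
  \qtand
  (\epsilon S) \circ (L\rho) \circ (\lambda R) = S \epsilon\zbox.
\]

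For the direction \ref{prop:und-adj-lift:lambda}${}\Rightarrow{}$\ref{prop:und-adj-lift:isom}, I would start from these two equations and compute, using string diagrams, the composites $\rho^* \circ \lambda$ and $\lambda \circ \rho^*$. Plugging the definition of $\rho^*$ and then applying the counit equation (resp. unit equation) reduces each composite to a pasting that collapses via a zigzag identity to the identity natural transformation. This exhibits $\rho^*$ as an isomorphism with inverse $\lambda$.

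For the converse direction \ref{prop:und-adj-lift:isom}${}\Rightarrow{}$\ref{prop:und-adj-lift:lambda}, I would set $\lambda = (\rho^*)^{-1}$ and verify in turn: (a) that $(L,\lambda)$ is a monad functor $(\cD,T) \to (\cC,S)$, and (b) that $\eta$ and $\epsilon$ are monad transformations of the required shape. For (b), the two monad-transformation equations displayed above can be rewritten, after composing with $\rho^*$, as equalities of the form $\rho^* \circ \lambda = \id$ or $\lambda \circ \rho^* = \id$ (together with a zigzag), hence hold by assumption. The triangle identities in $\MND$ then follow from the ones in $\CAT$ because $\mndund$ is faithful on $2$-cells.

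The main obstacle is step (a): one must check that the pair of monad-functor axioms for $\lambda$ can be extracted from the corresponding axioms for $\rho$. The cleanest approach is to express each axiom for $\lambda$ as an equality of two natural transformations of the form $SL\cdots \To LT\cdots$, precompose and postcompose with $\rho^*$ in the appropriate places, and translate the resulting equality into one between natural transformations $TR\cdots \To RS\cdots$; by the mate bijection, this translated equality is precisely the corresponding monad-functor axiom for $\rho$, known to hold. This string-diagrammatic bookkeeping is in the same style as, but slightly heavier than, the calculation carried out in \Cref{fig:bar-alpha-mu} above, so I would relegate the detailed pastings to a figure rather than inline them.
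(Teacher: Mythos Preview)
Your proposal is correct and follows essentially the same route as the paper: for \ref{prop:und-adj-lift:lambda}$\Rightarrow$\ref{prop:und-adj-lift:isom} one shows that $\lambda$ is a two-sided inverse of $\rho^*$ using the monad-transformation equations for $\eta$ and $\epsilon$, and for \ref{prop:und-adj-lift:isom}$\Rightarrow$\ref{prop:und-adj-lift:lambda} one sets $\lambda=(\rho^*)^{-1}$, verifies the monad-functor axioms for $(L,\lambda)$ by rewriting them in terms of $\rho^*$ and reducing to the axioms for $(R,\rho)$, and then checks that $\eta,\epsilon$ are monad transformations. One small caution: in step~(a) your phrasing ``by the mate bijection, this translated equality is precisely the corresponding monad-functor axiom for $\rho$'' overstates what the mate correspondence does automatically---the paper (and any honest execution of your plan) still has to perform a short string-diagram computation using the definition of $\rho^*$ and a zigzag to reach the $\rho$-axiom, rather than getting it for free from an abstract bijection.
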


\begin{proof}
  We first show that \ref{prop:und-adj-lift:lambda} implies
  \ref{prop:und-adj-lift:isom}. So let $\lambda$ be as in
  \ref{prop:und-adj-lift:lambda}. We prove that $\lambda$ is an inverse for
  $\rho^*$. We start by showing that $\lambda \circ \rho^* = \unit{LT}$.
  By the correspondence given by the adjunction $L \dashv R$, it is equivalent
  to show that $(R(\lambda \circ \rho^*)) \circ (\eta T) = \eta T$. From the
  fact that $(L,\lambda) \dashv (R,\rho)$ is an adjunction, we get in particular
  the following equality
  \begin{equation}
    \label{eq:und-adj-lift-eq-eta}
    \begin{tikzcd}[baseline=(mybase.base)]
      \cD
      \ar[r,"L"{description}]
      \ar[d,"T"']
      \ar[rr,"\catunit{\cD}",bend left=50,myname=top]
      \cphar[rd,"\Downarrow\lambda"]
      &
      |[alias=midC]|\cC
      \ar[r,"R"{description}]
      \ar[d,"S"{description}]
      \cphar[rd,"\Downarrow\rho"]
      &
      \cD
      \ar[d,"T"{name=mybase}]
      \\
      \cD
      \ar[r,"L"']
      &
      \cC
      \ar[r,"R"']
      &
      \cD
      \cphar[from=top,to=midC,"\Downarrow \eta"]
    \end{tikzcd}
    \qquad
    =
    \qquad
    \begin{tikzcd}[baseline=(mybase.base)]
      \cD
      \ar[rr,"\catunit{\cD}"]
      \ar[d,"T"']
      \cphar[rrd,"="]
      &
      &
      \cD
      \ar[d,"T"{name=mybase}]
      \\
      \cD
      \ar[rr,"\catunit{\cD}"{description},myname=midhor]
      \ar[rd,"L"',bend right=30]
      &
      {}
      &
      \cD
      \\[-2ex]
      &
      |[alias=midC]|\cC
      \ar[ru,bend right=30,"R"']
      \cphar[to=midhor,"\Downarrow\eta"]
    \end{tikzcd}
  \end{equation}
  from which we can prove the wanted equality as follows:
  \[
    \satex{und-adj-lift-inv-first-1}
    \qquad
    =
    \qquad
    \satex{und-adj-lift-inv-first-2}
    \qquad
    =
    \qquad
    \satex{und-adj-lift-inv-first-3}
    \zbox{\qquad.}
  \]
  Symmetrically, in order to prove that $\rho^* \circ \lambda = \unit{SL}$, it
  is equivalent to prove that
  \[
    (S \eps) \circ ((\rho^* \circ \lambda) R) = S \eps
    \zbox.
  \]
  In order to prove the latter, we use the equality satisfied by $\eps$ as a
  monad transformation:
  \begin{equation}
    \label{eq:und-adj-lift-eq-eps}
    \begin{tikzcd}[baseline=(mybase.base)]
      \cC
      \ar[r,"R"]
      \ar[d,"S"']
      \cphar[rd,"\Downarrow\rho"]
      &
      |[alias=midC]|\cD
      \ar[r,"L"]
      \ar[d,"T"{description}]
      \cphar[rd,"\Downarrow\lambda"]
      &
      \cC
      \ar[d,"S"{name=mybase}]
      \\
      \cC
      \ar[r,"R"'{description}]
      \ar[rr,bend right=50,"\catunit{\cC}"',myname=bot]
      &
      |[alias=midD]|\cD
      \ar[r,"L"'{description}]
      &
      \cC
      \cphar[from=midD,to=bot,"\Downarrow \eps"]
    \end{tikzcd}
    \qquad
    =
    \qquad
    \begin{tikzcd}[baseline=(mybase.base)]
      &
      |[alias=midC]|\cD
      \ar[rd,bend left=30,"L"]
      \\[-2ex]
      \cC
      \ar[rr,"\catunit{\cC}"{description},myname=midhor]
      \ar[d,"S"']
      \cphar[rrd,"="]
      \ar[ru,"R",bend left=30]
      &
      &
      \cC
      \ar[d,"S"{name=mybase}]
      \\
      \cC
      \ar[rr,"\catunit{\cC}"']
      &
      {}
      &
      \cC
      \cphar[from=midC,to=midhor,"\Downarrow\eps"]
    \end{tikzcd}
    \zbox{\qquad.}
  \end{equation}
  Then, the wanted equality can be proved using string diagrams just as before.
  Thus, \ref{prop:und-adj-lift:isom} is proved.

  Conversely, assume that \ref{prop:und-adj-lift:isom} holds. We put $\lambda =
  \finv{(\rho^*)}$ and show that $(L,\lambda)$ is a monad functor. First, we
  need to show that $\lambda \circ (\eta^S L) = (L \eta^T)$, or equivalently,
  that $(\eta^S L) = \rho^* \circ (L \eta^T)$. We show the latter using string diagrams:
  \[
    \satex{und-adj-lift-monfunct-first-1}
    \qqeq
    \satex{und-adj-lift-monfunct-first-2}
    \qqeq
    \satex{und-adj-lift-monfunct-first-3}
    \zbox{\qquad.}
  \]
  Second, we need to show that $\lambda \circ (\mu^S L) = (L \mu^T) \circ
  (\lambda T) \circ (S \lambda)$, or equivalently, that the equation $(\mu^S L) \circ (S \rho^*)
  \circ (\rho^* T) = \rho^* \circ (L \mu^T)$ holds. We show the latter using
  string diagrams again:
  \[
    \satex{und-adj-lift-monfunct-second-1}
    \qqeq
    \satex{und-adj-lift-monfunct-second-2}
    \qqeq
    \satex{und-adj-lift-monfunct-second-3}
    \zbox{\qquad.}
  \]
  Thus, $(L,\lambda)$ is a monad functor. We must now prove that $\eta$ and
  $\eps$ induce monad transformations. First, we show that
  \eqref{eq:und-adj-lift-eq-eta} holds, or equivalently, that
  \[
    \begin{tikzcd}[baseline=(mybase.base)]
      \cD
      \ar[r,"L"{description}]
      \ar[rr,"\catunit{\cD}",bend left=50,myname=top]
      &
      |[alias=midC]|\cC
      \ar[r,"R"{description}]
      \ar[d,"S"{description}]
      \cphar[rd,"\Downarrow\rho"]
      &
      \cD
      \ar[d,"T"{name=mybase}]
      \\
      &
      \cC
      \ar[r,"R"']
      &
      \cD
      \cphar[from=top,to=midC,"\Downarrow \eta"]
    \end{tikzcd}
    \qquad
    =
    \qquad
    \begin{tikzcd}[baseline=(mybase.base)]
      \cD
      \ar[r,"T"]
      \ar[d,"L"']
      \cphar[rd,"\Downarrow\rho^*"]
      &
      \cD
      \ar[r,"\catunit{\cD}"]
      \ar[d,"L"{description}]
      \cphar[rd,"\Downarrow\eta"]
      &
      \cD
      \ar[d,"\catunit{\cD}"]
      \\
      \cC
      \ar[r,"S"']
      &
      \cC
      \ar[r,"R"']
      &
      \cD
    \end{tikzcd}
    \zbox{\quad.}
  \]
  But the latter equation follows directly from the string diagram definition of
  $\rho^*$ and the zigzag equations of the adjunction $L \dashv R$.

  The other required equation \eqref{eq:und-adj-lift-eq-eps}, or equivalently,
  \[
    \begin{tikzcd}[baseline=(mybase.base)]
      \cC
      \ar[r,"R"]
      \ar[d,"S"'{name=mybase}]
      \cphar[rd,"\Downarrow\rho"]
      &
      |[alias=midC]|\cD
      \ar[d,"T"{description,name=mybase}]
      &
      \\
      \cC
      \ar[r,"R"'{description}]
      \ar[rr,bend right=50,"\catunit{\cC}"',myname=bot]
      &
      |[alias=midD]|\cD
      \ar[r,"L"'{description}]
      &
      \cC
      \cphar[from=midD,to=bot,"\Downarrow \eps"]
    \end{tikzcd}
    \qqeq
    \begin{tikzcd}[baseline=(mybase.base)]
      \cC
      \ar[r,"R"]
      \ar[d,"\catunit{\cC}"']
      \cphar[rd,"\Downarrow\eps"]
      &
      \cD
      \ar[r,"T"]
      \ar[d,"L"{description,name=mybase}]
      \cphar[rd,"\Downarrow\rho^*"]
      &
      \cD
      \ar[d,"L"]
      \\
      \cC
      \ar[r,"\catunit{\cC}"']
      &
      \cC
      \ar[r,"S"']
      &
      \cC
    \end{tikzcd}
  \]
  holds by a similar argument. Finally, the zigzag equations for $\eta$ and
  $\eps$ seen as monad transformations follows from the zigzag equations
  satisfied by them as natural transformation in $\CAT$. Hence,
  \ref{prop:und-adj-lift:lambda} holds.
\end{proof}

\noindent We have the same kind of property for left adjoints (first suggested
by Dominic Verity to the author):
\begin{prop}
  \label{prop:und-adjl-lift}
  Given an adjunction $L \dashv R \co \cC \to \cD$ and a monad functor
  \[
    (L,\lambda) \co (\cD,T) \to (\cC,S),
  \]
  the following are equivalent:
  \begin{enumerateroman}
  \item \label{prop:und-adjl-lift:rho} there exists $\rho \co SR \To RT$
    such that $(L,\lambda) \dashv (R,\rho)$ is an adjunction in $\MND$, whose
    image by $\mndund$ is the adjunction $L \dashv R$;
  \item \label{prop:und-adjl-lift:isom} the natural transformation $\lambda$ is
    an isomorphism.
  \end{enumerateroman}
\end{prop}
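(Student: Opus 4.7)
The plan is to mimic the proof of \Cref{prop:und-adj-lift}, with the monad functor structure now sitting on the left adjoint $L$ rather than on the right adjoint $R$. Throughout, let $\eta$ and $\eps$ denote the unit and counit of $L \dashv R$.

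For \ref{prop:und-adjl-lift:rho} $\Rightarrow$ \ref{prop:und-adjl-lift:isom}, I will exhibit an explicit inverse to $\lambda$ by setting
\[
  \bar\lambda \;:=\; (\eps S L) \circ (L \rho L) \circ (L T \eta) \co L T \To S L.
\]
The check $\bar\lambda \circ \lambda = \unit{SL}$ will proceed in three moves: naturality of $\lambda$ applied to $\eta$ first rewrites $(LT\eta) \circ \lambda$ as $(\lambda RL) \circ (SL\eta)$; the monad transformation equation satisfied by $\eps$, whiskered by $L$ on the right, then collapses the resulting three middle factors into $S \eps L$; and the zigzag identity $(\eps L) \circ (L \eta) = \unit L$ finishes. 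The check $\lambda \circ \bar\lambda = \unit{LT}$ is symmetric: it uses the naturality of $\eps$ applied to $\lambda$, then the monad transformation equation for $\eta$, whiskered by $L$ on the left, to obtain $L \eta T$, and concludes with the same zigzag.

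For \ref{prop:und-adjl-lift:isom} $\Rightarrow$ \ref{prop:und-adjl-lift:rho}, I will take $\rho$ to be the mate of $\lambda^{-1}$ through $L \dashv R$, namely
\[
  \rho \;:=\; (R S \eps) \circ (R \lambda^{-1} R) \circ (\eta T R) \co T R \To R S,
\]
and then verify, by string-diagram computations mirroring those of \Cref{prop:und-adj-lift}, that $(R,\rho)$ is a monad functor (the unit and multiplication axioms follow from the corresponding axioms of $(L,\lambda)$ together with the zigzag identities), that $\eta$ and $\eps$ become monad transformations with respect to $\lambda$ and $\rho$ (both equations hold essentially by construction of $\rho$), and that the zigzag equations in $\MND$ reduce immediately to the underlying zigzags in $\CAT$.

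The main obstacle is not any single computation but rather the bookkeeping needed to keep the orientations of the mate formulas consistent; once one checks that every string-diagram identity invoked in \Cref{prop:und-adj-lift} admits a symmetric analogue after exchanging $L$ with $R$ and $\lambda$ with $\rho$, the remainder of the argument is essentially mechanical.
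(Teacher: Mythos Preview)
Your proposal is correct and follows exactly the approach the paper intends: the paper's own proof simply says ``the proof is essentially the same as the one for \Cref{prop:und-adj-lift}'' and records the formula for $\rho$ (the mate of $\lambda^{-1}$), which is precisely your $(RS\eps)\circ(R\lambda^{-1}R)\circ(\eta TR)$. Your explicit inverse $\bar\lambda$ in the direction \ref{prop:und-adjl-lift:rho}$\Rightarrow$\ref{prop:und-adjl-lift:isom} is likewise the mate of $\rho$, and your two verifications (using the monad-transformation equations for $\eps$ and $\eta$ plus the zigzag identities) are the correct dualisations of the corresponding computations in \Cref{prop:und-adj-lift}; you have in fact supplied more detail than the paper does.
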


\begin{proof}
  The proof is essentially the same as the one for \Cref{prop:und-adj-lift}.
  For \ref{prop:und-adjl-lift:isom} implies \ref{prop:und-adjl-lift:rho}, $\rho$
  is now defined by the string diagram
  \[
    \rho
    \qqeq
    \satex{und-adjl-lift-rho}
  \]
  and we prove just as before that it induces a monad functor $(R,\rho)$ such
  that $(L,\lambda) \dashv (R,\rho)$ is an adjunction in $\MND$.
\end{proof}

\begin{prop}
  \label{prop:mnd-morph-eta}
  Given two adjunctions $L \dashv R$ and $\bar L \dashv \bar R$, in the configuration
  \[
    \begin{tikzcd}
      \cC
      \ar[r,shift left,"L"]
      &
      \cD
      \ar[l,shift left,"R"]
      \ar[r,shift left,"\bar L"]
      &
      \cE
      \ar[l,shift left,"\bar R"]
    \end{tikzcd}
    \zbox{,}
  \]
  if we write $S$ for the monad associated with $L \dashv R$ and $T$ for the
  monad associated with $\bar L L \dashv R \bar R$, and $\bar \eta$ for the unit
  of $\bar L \dashv \bar R$, we have a monad functor
  \[
    (\catunit[\cC],R \bar \eta L) \co (\cC,T) \to (\cC,S)
    \zbox.
  \]
\end{prop}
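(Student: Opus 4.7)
The plan is to verify directly that the pair $(\catunit[\cC], R\bar\eta L)$ satisfies the two monad-functor axioms from \Cref{text:mnd-category}. Unpacking the convention $\alpha \co TF \To FS$ applied to a morphism with source $(\cC, T)$ and target $(\cC, S)$ and with $F = \catunit[\cC]$, the expected type of $\alpha$ is $RL \To R\bar R\bar L L$; the whiskering $R\bar\eta L$ of $\bar\eta \co \catunit[\cD] \To \bar R\bar L$ on the left by $R$ and on the right by $L$ has exactly this type. The unit axiom then demands $R\bar\eta L \circ \eta = \eta^T$, which is nothing but the standard formula for the unit of the composite adjunction $\bar L L \dashv R \bar R$, so no actual computation is required for it.

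The heart of the proof is the multiplication axiom, which I plan to establish in the string-diagram calculus already used in \Cref{prop:ex-algra} and \Cref{prop:und-adj-lift}. After expanding $\mu^S = R\eps L$ and, using the composite counit $\bar\eps \circ \bar L\eps\bar R$, $\mu^T = R\bar R\bar\eps\bar L L \circ R\bar R\bar L\eps\bar R\bar L L$, the right-hand side $\mu^T \circ (R\bar\eta L \star R\bar\eta L)$ unfolds as a vertical composite of four $2$-cells: two $\bar\eta$-bends inserted in parallel into $RLRL$, with an $\eps$-cap and a $\bar\eps$-cap stacked above them. An interchange of $\bar\eta$ with $\eps$ slides one $\bar\eta$-bend past the $\eps$-cap and places a $\bar L\bar\eta$-bend directly adjacent to the $\bar\eps\bar L$-cap; the zigzag identity $\bar\eps\bar L \circ \bar L\bar\eta = \catunit[\bar L]$ of $\bar L \dashv \bar R$ then collapses this pair to an identity $2$-cell. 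A second interchange of $\bar\eta$ with $\eps$, applied to the remaining $\bar\eta$-bend and $\eps$-cap, produces exactly the left-hand side $R\bar\eta L \circ R\eps L$.

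The only real obstacle is notational, namely tracking horizontal-composition positions among up to eight functors in a single composite. Conceptually the argument consists of a single zigzag cancellation together with two naturality slides, and the string-diagram idiom already used throughout the section renders the manipulation essentially topological; once drawn in that form, the equality is visible at a glance.
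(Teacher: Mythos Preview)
Your proposal is correct and follows essentially the same approach as the paper: verify the two monad-functor axioms directly, with the unit axiom being precisely the definition of the unit of the composite adjunction $\bar L L \dashv R\bar R$, and the multiplication axiom reducing (via interchange/naturality) to a single zigzag identity of $\bar L \dashv \bar R$. The paper's proof says exactly this, only more tersely, deferring the manipulation to string diagrams; your written-out sequence of one interchange, one zigzag cancellation, and one further interchange is an accurate unpacking of what those diagrams encode.
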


\begin{proof}
  The fact that it is a monad functor can be checked using string diagrams. On
  the one hand, the compatibility of $R \bar \eta L$ with the units of $S$ and
  $T$ asserts that the two diagrams
  \[
    \satex{lem-mnd-morph-eta-unit-l}
    \qqtand
    \satex{lem-mnd-morph-eta-unit-r}
  \]
  represents the same cell, which is true by definition of the unit of $T$. On
  the other hand, the compatibility of $R \bar \eta L$ with the multiplications
  of $S$ and $T$ asserts that the two diagrams
  \[
    \satex{lem-mnd-morph-eta-mult-l}
    \qqtand
    \satex{lem-mnd-morph-eta-mult-r}
  \]
  represents the same cell, which is true by the zigzag equations of $\bar L
  \dashv \bar R$. Thus, $(\catunit[\cC],R \bar \eta
  L)$ is indeed a monad functor.
\end{proof}


\section{Higher categories as globular algebras}
\label{text:higher-categories-globular-algebras}
The notion of ``higher category'' encompasses informally all the structures that
have higher-dimensional cells which can be composed together with several
operations. Such structures can differ on many points. First, there are several
possible shapes for the cells of higher categories. For example, \emph{globular
  higher categories} have $0$\cells, $1$\cells, $2$\cells, $3$\cells, \etc of
the form
\[
  \begin{tikzcd}[cramped]
    x
  \end{tikzcd}
  ,
  \qquad
  \quad
  \begin{tikzcd}[cramped,column sep={between origins,5em}]
    x
    \ar[r,"f"]
    &
    y
  \end{tikzcd}
  ,
  \qquad
  \quad
  \begin{tikzcd}[cramped,column sep={between origins,5em}]
    x
    \ar[r,bend left=70,"f",""{name=f,auto=false}]
    \ar[r,bend right=70,"g"'{pos=0.52},""{name=g,auto=false}]
    &
    y
    \ar[from=f,to=g,phantom,"\Downarrow\!\phi"]
  \end{tikzcd}
  ,
  \qquad
  \quad
  \begin{tikzcd}[cramped,column sep={between origins,5em}]
    x
    \ar[r,bend left=70,"f",""{name=f,auto=false}]
    \ar[r,bend right=70,"g"'{pos=0.52},""{name=g,auto=false}]
    &
    y
    \ar[from=f,to=g,phantom,"\phi\!\Downarrow\!\smash{\xTO{F}}\!\Downarrow\!\psi"]
  \end{tikzcd}
  ,
  \qquad
  \quad
  \text{\etc}
\]
But one can consider higher categories with other shapes than the globular ones.
Common variants include \emph{cubical}~\cite{al2000multiple} and
\emph{simplicial}~\cite{joyal2002quasi} higher categories, whose $2$\cells for
example are respectively of the form
\[
  \begin{tikzcd}[cramped,column sep={between origins,5em}]
    x
    \ar[r,"f"]
    \ar[d,"g"']
    \ar[rd,phantom,"\Downarrow\!\phi"]
    &
    y
    \ar[d,"h"]
    \\
    x'
    \ar[r,"f'"']
    &
    y'
  \end{tikzcd}
  \qquad
  \qtand
  \qquad
  \begin{tikzcd}[cramped,column sep={between origins,3.5em}]
    &
    |[alias=y]| y
    \ar[rd,"g"]
    & \\
    x
    \ar[ru,"f"]
    \ar[rr,"h"',""{auto=false,name=h}]
    & & z
    \ar[from=y,to=h,phantom,"\Downarrow\!\phi"]
  \end{tikzcd}
  \pbox.
\]
Moreover, higher categories have several operations which satisfy axioms that
can take different forms, according to their position in the strict/weak
spectrum. For example, a \emph{strict $2$\category} is a globular
$2$\dimensional category that have, among others, an operation~$\comp_0$ to
compose $1$\cells in dimension~$0$, as in
\begin{gather*}
  \begin{tikzcd}[ampersand replacement=\&,cramped,column sep={between origins,5em}]
    x
    \ar[r,"f"]
    \&
    y
  \end{tikzcd}
  \;
  \comp_0
  \;
  \begin{tikzcd}[ampersand replacement=\&,cramped,column sep={between origins,5em}]
    y
    \ar[r,"\vphantom x\smash g"]
    \&
    z
  \end{tikzcd}
  \;=\;
  \begin{tikzcd}[ampersand replacement=\&,cramped,column sep={between origins,5em}]
    x
    \ar[r,"f\comp_0 g"]
    \&
    z
  \end{tikzcd}
  \zbox,
\end{gather*} 
and operations~$\comp_0$ and~$\comp_1$ to compose $2$\cells in dimensions~$0$
and~$1$ respectively, as in
\begingroup
\allowdisplaybreaks
\begin{gather*}
  \begin{tikzcd}[ampersand replacement=\&,cramped,column sep={between origins,5em}]
    x
    \ar[r,bend left=70,"f",""{name=f,auto=false}]
    \ar[r,bend right=70,"g"'{pos=0.52},""{name=g,auto=false}]
    \&
    y
    \ar[from=f,to=g,phantom,"\Downarrow\!\phi"]
  \end{tikzcd}
  \;
  \comp_0
  \;
  \begin{tikzcd}[ampersand replacement=\&,cramped,column sep={between origins,5em}]
    y
    \ar[r,bend left=70,"f'",""{name=f,auto=false}]
    \ar[r,bend right=70,"{g'}"'{pos=0.49},""{name=g,auto=false}]
    \&
    z
    \ar[from=f,to=g,phantom,"\Downarrow\!\phi'"]
  \end{tikzcd}
  \;
  =
  \;
  \begin{tikzcd}[ampersand replacement=\&,cramped,column sep={between origins,5em}]
    x
    \ar[r,bend left=70,"f \comp_0 f'",""{name=f,auto=false}]
    \ar[r,bend right=70,"g \comp_0 g'"'{pos=0.52},""{name=g,auto=false}]
    \&
    z
    \ar[from=f,to=g,phantom,"\Downarrow\!\phi\! \comp_0 \!\phi'"]
  \end{tikzcd}
  \shortintertext{and}
  \begin{tikzcd}[ampersand replacement=\&,cramped,column sep={between origins,5em}]
    x
    \ar[r,bend left=70,"f",""{name=f,auto=false}]
    \ar[r,bend right=70,"g"'{pos=0.52},""{name=g,auto=false}]
    \&
    y
    \ar[from=f,to=g,phantom,"\Downarrow\!\phi"]
  \end{tikzcd}
  \;
  \comp_1
  \;
  \begin{tikzcd}[ampersand replacement=\&,cramped,column sep={between origins,5em}]
    x
    \ar[r,bend left=70,"g",""{name=f,auto=false}]
    \ar[r,bend right=70,"{h}"'{pos=0.52},""{name=g,auto=false}]
    \&
    y
    \ar[from=f,to=g,phantom,"\Downarrow\!\psi"]
  \end{tikzcd}
  \;
  =
  \;
  \begin{tikzcd}[ampersand replacement=\&,cramped,column sep={between origins,5em}]
    x
    \ar[r,bend left=70,"f",""{name=f,auto=false}]
    \ar[r,bend right=70,"h"'{pos=0.52},""{name=g,auto=false}]
    \&
    y
    \ar[from=f,to=g,phantom,"\Downarrow\!\phi\! \comp_1 \!\psi"]
  \end{tikzcd}
  \zbox.
\end{gather*}
\endgroup
These operations are required to satisfy several axioms consisting in
equalities, like the associativity axiom: given $0$\composable~$1$-
or~$2$\cells~$u,v,w$, \abovelongtoshortskip
\begin{align*}
  (u \comp_0 v) \comp_0 w &= u \comp_0 (v \comp_0 w)
  \shortintertext{and, given $1$\composable $2$\cells~$\phi,\psi,\chi$,}
  (\phi \comp_1 \psi) \comp_1 \chi &= \phi \comp_1 (\psi \comp_1 \chi)\zbox.
\end{align*}
\belowlongtoshortskip
An example of a weak higher category is given by a
\index{bicategory}\emph{bicategory}, which is a globular $2$\dimensional
category that has operations similar to a strict $2$\category but which satisfy
axioms in the form of ``weak equalities''. For example, the $0$\composition of
$1$\cells is only required to be weakly associative, in the sense that, given
$0$\composable $1$\cells
\[
  \begin{tikzcd}[ampersand replacement=\&,cramped,column sep={between origins,5em}]
    w\ar[r,"f"]\& x\ar[r,"g"]\& y\ar[r,"h"]\& z
  \end{tikzcd}
  \zbox,
\]
the equality $(f \comp_0 g) \comp_0 h = f \comp_0 (g \comp_0 h)$ does not hold
necessarily, but there should exist a \index{coherence cell}\emph{coherence
  cell} between the two sides, \ie an invertible $2$\cell~$\alpha_{f,g,h}$ as in
\[
  \begin{tikzcd}[cramped,column sep={between origins,5em}]
    w
    \ar[r,bend left=70,"(f \comp_0 g)\comp_0 h",""{name=f,auto=false}]
    \ar[r,bend right=70,"f \comp_0 (g \comp_0 h)"'{pos=0.52},""{name=g,auto=false}]
    &
    z
    \ar[from=f,to=g,phantom,"\Downarrow\!\alpha_{f,g,h}"]
  \end{tikzcd}
  \zbox.
\]
Finally, a subtle difference between the different kinds higher categories is
the \index{algebraic higher category}\emph{algebraicity} of their
definition~\cite{leinster2004higher,gurski2013coherence}. This notion
essentially pertains to weak higher categories. Informally, a definition of some
sort of higher categories is algebraic when it can be equivalently described by
means of a monad. Concretely, algebraic definitions of weak higher categories
involve coherence cells that are \emph{distinguished} (like the definition of
bicategories, which requires that ``there exists an invertible
$2$\cell~$\alpha_{f,g,h}$ between~$(f\comp_0 g) \comp_0 h$ and~$f \comp_0 (g
\comp_0 h)$''), whereas non-algebraic definitions of weak higher categories
involve coherence cells that are not (a non-algebraic definition of bicategories
would only require that ``there exists \emph{some} invertible $2$\cell
between~$(f\comp_0 g) \comp_0 h$ and~$f \comp_0 (g \comp_0 h)$'').


In the remainder of this article, we will restrain our attention to globular algebraic higher
categories. A particular theory of $k$\categories can be seen as a monad on the
category of $k$\globular sets, and a $k$\category instance of this theory is an
algebra for this monad. A lot of globular higher categories that one usually
encounters fit in this setting: strict $k$\categories, bicategories, Gray
categories, \etcend Several constructions can then be defined at this level of
abstractions, like the truncation of globular algebras and its left adjoint.

In \Cref{ssec:globular-sets}, we recall the definition of globular sets and
elementary operations on them. We then introduce globular algebras as algebras
of a monad on globular sets in \Cref{text:globular-algebras}, and then define
the truncation and inclusion functors which relate globular algebras from
different dimensions in \Cref{text:alg-trunc-incl-functors}. Then, in the case
of a monad on $\nGlob\omega$, we use the truncation functors to formally express
$\Alg_\omega$ as a (bi)limit over the $\Alg_k$'s in \Cref{text:algo-as-a-limit}.
In \Cref{text:criterion-for-globular-algebras}, we introduce a criterion to
recognize a tower of categories as equivalent to the categories of globular
algebras over a monad, simplifying the concrete use of the theory developped in
this article. In \Cref{text:truncable-monads}, we introduce the notion of
truncable monads, which correspond more to the notions of higher categories that
we are accustomed to than general globular monads, and, in
\Cref{text:charact-truncable}, we prove a criterion to easily recognize such
monads in the wild.

\subsection{Globular sets and operations}
\label{ssec:globular-sets}

Here, we recall the classical notion of \emph{globular set}. It is the
underlying structure of a globular higher category which describes \emph{globes}
of different dimensions together with their sources and targets. We moreover
define the truncation and inclusion functors between globular sets of different
dimensions.

\paragraph{Definition}
\parlabel{text:globular-set-def}

Given~$n\in\N\cup \set\omega$, an \index{globular set}\emph{$n$-globular
  set}~$(X,\csrc,\ctgt)$ (often simply denoted~$X$) is the data of sets~$X_k$
for~$k\in \N_n$ together with \glossary(dround){$\csrc_i,\ctgt_i$}{the source
  and target operations of a globular set}functions~$\csrc_i,\ctgt_i \co X_{i+1}
\to X_i$ for~$i \in \N_{n-1}$ as in
\[
  \begin{tikzcd}
    X_0
    &
    \ar[l,shift right,"\csrc_0"']
    \ar[l,shift left,"\ctgt_0"]
    X_1
    &
    \ar[l,shift right,"\csrc_1"']
    \ar[l,shift left,"\ctgt_1"]
    X_2
    &
    \ar[l,shift right,"\csrc_2"']
    \ar[l,shift left,"\ctgt_2"]
    \cdots
    &
    \ar[l,shift right,"\csrc_{k-1}"']
    \ar[l,shift left,"\ctgt_{k-1}"]
    X_k
    &
    \ar[l,shift right,"\csrc_{k}"']
    \ar[l,shift left,"\ctgt_{k}"]
    X_{k+1}
    &
    \ar[l,shift right,"\csrc_{k+1}"']
    \ar[l,shift left,"\ctgt_{k+1}"]
    \cdots
  \end{tikzcd}
\]
such that
\[
  \csrc_i\circ \csrc_{i+1}=\csrc_i\circ \ctgt_{i+1}
  \qtand
  \ctgt_i\circ \csrc_{i+1}=\ctgt_i\circ \ctgt_{i+1}
  \quad
  \text{for~$i \in \N_{n-1}$.} 
\]
When there is no ambiguity on~$i$, we often write~$\csrc$ and~$\ctgt$
for~$\csrc_i$ and~$\ctgt_i$. An element~$u$ of~$X_i$ is called an
\index{globe}\emph{$i$-globe} of~$X$ and, for~$i>0$, the globes~$\csrc_{i-1}(u)$
and~$\ctgt_{i-1}(u)$ are respectively called the
\index{source!of a globe}\emph{source} and
\index{target!of a globe}\emph{target} and~$u$. Given $n$\globular sets~$X$
and~$Y$, a
\index{morphism!of globular sets}\emph{morphism of $n$\globular set}
between~$X$ and~$Y$ is a family of functions~$F = (F_k\co X_k\to Y_k)_{k \in
  \N_n}$, such that
\[
  \csrc_i\circ F_{i+1}=F_i\circ \csrc_i
  \qquad
  \text{for~$i \in \N_{n-1}$.}
\]
We \glossary(Glob){$\nGlob n$}{the category of $n$\globular sets}write~$\nGlob
n$ for the category of $n$\globular sets.
\begin{remark}
  \label{rem:glob-ess-alg}
  The above definition directly translates to an essentially algebraic theory,
  so that~$\nGlob n$ is essentially algebraic. In particular,~$\nGlob n$ is
  locally finitely presentable, complete and cocomplete by
  \myCref[Theorem~1.1.1.1]{prop:ess-alg-iff-loc-fin-pres} and
  \Cref{prop:loc-pres-nice-properties}.
\end{remark}
%

\noindent For~$\eps\in\set{-,+}$ and~$j\geq 0$, we write
\[
  \csrctgt\eps_{i,j}=\csrctgt\eps_{i}\circ\csrctgt\eps_{i+1}\circ\cdots\circ\csrctgt\eps_{i+j-1}
\]
for the \index{source!iterated}\emph{iterated source} (when~$\eps=-$) and \index{target!iterated}\emph{target} (when~$\eps=+$)
operations. We generally omit the index~$j$ when there is no ambiguity and
simply write~$\csrctgt\eps_i(u)$ for~$\csrctgt\eps_{i,j}(u)$.
Given~$i,k,l\in\N_n$ with~$i < \min(k,l)$, we \glossary(.abXkxXl){$X_k\times_i
  X_l$}{the set of pairs of $i$\composable~$k$- and $l$\globes}write~$X_k\times_i X_l$ for the
pullback
\[
  \begin{tikzcd}
    X_k\times_i X_l \ar[r,dotted] \ar[d,dotted]
    \ar[rd,phantom,"\drcorner",very near start]& X_l\ar[d,"\csrc_i"]\\
    X_k\ar[r,"\ctgt_i"']& X_i
  \end{tikzcd}
  \pbox.
\]
Given~$p \ge 2$ and~$k_1,\ldots,k_p \in \N_n$, a sequence of globes~$u_1 \in
X_{k_1}, \ldots, u_p \in X_{k_p}$ is said
\index{composable}\emph{$i$\composable} for some~$i < \min(k_1,\ldots,k_p)$,
when~$\ctgt_{i}(u_j) = \csrc_i(u_{j+1})$ for~$j \in \N^*_{p-1}$. Given~$k \in
\N_n$ and~$u,v \in X_{k}$,~$u$ and~$v$ are said \index{parallel}\emph{parallel}
when~${k = 0}$ or~$\csrctgt\eps_{k-1}(u) = \csrctgt\eps_{k-1}(v)$ for~${\eps \in
  \set{-,+}}$. To remove the side condition~$k = 0$, we use the convention
\glossary(.acXmo){$X_{-1}$}{by convention, the singleton set of $({-}1)$\globes of a globular
  set}that~$X_{-1}$ is the set~$\set{\ast}$ and that~$\csrc_{-1},\ctgt_{-1}$ are
the unique function~$X_0 \to X_{-1}$.

For~$u\in X_{i+1}$, we sometimes write~$u\co v\to w$ to indicate
that~$\csrc_i(u)=v$ and~$\ctgt_i(u)=w$. In low dimension, we use $n$-arrows such
\glossary(.adarrow){$\to$, $\To$, $\TO$, $\TOO$}{arrows which indicate the source
  and target of globes of a globular set}as~$\To$,~$\TO$,~$\TOO$, \etc to indicate the sources and the targets of
$n$-globes in several dimensions. For example, given a $2$\globular set~$X$
and~$\phi \in X$, we sometimes write~$\phi\co f \To g \co x \to y$ to indicate
that
\[
  \phi \in X_2,
  \quad 
  \csrc_1(\phi) = f,
  \quad 
  \ctgt_1(\phi) = g,
  \quad
  \csrc_0(\phi) = x
  \qtand
  \ctgt_0(\phi) = y.
\]
We also use these arrows in graphical representations to picture the
elements of a globular set~$X$. For example, given an $n$\globular set~$X$ with~$n \ge 2$, the drawing
\begin{equation}
  \label{eq:some-globular-set}
  \begin{tikzcd}[sep=4em]
    x
    \ar[r,"f",bend left=70,""{auto=false,name=fst}] 
    \ar[r,"g"{description},""{auto=false,name=snd}]
    \ar[r,"h"',bend right=70,""{auto=false,name=trd}]
    &
    y
    \ar[from=fst,to=snd,phantom,"\Downarrow\!\phi"]
    \ar[from=snd,to=trd,phantom,"\Downarrow\!\psi"]
    \ar[r,"k"]
    &
    z
  \end{tikzcd}
\end{equation}
figures two $2$\cells~$\phi,\psi \in X_2$, four $1$\cells~$f,g,h,k \in X_1$ and
three $0$\cells~$x,y,z \in X_0$ such that
\begin{gather*}
  \csrc_1(\phi) = f, \qquad \ctgt_1(\phi) = \csrc_1(\psi)= g, \qquad 
  \ctgt_1(\psi) = h, \\
  \csrc_0(f) = \csrc_0(g) = \csrc_0(h) = x, \qquad
  \ctgt_0(f) = \ctgt_0(g) = \ctgt_0(h) = \csrc_0(k) = y, \qquad \ctgt_0(k) = z.
\end{gather*}

\paragraph{Truncation and inclusion functors} 
\parlabel{text:gs-trunc-incl-functors}

Given~$m \in \N_n$ and~$X \in \nGlob n$, we denote by~$\restrictcat X m$ the
\index{truncation!of a globular set}\emph{$m$\nbd-trun\-ca\-tion of~$X$}, \ie
the $m$\globular set obtained from~$X$ by removing the $i$-globes for~$i \in
\N_n$ with~$i > m$. This operation extends to a
\glossary(Globatrunc){$\gtruncf[n] m {(-)}$, $\restrictcat X m$}{the truncation
  functor on globular sets}functor
\[
  \gtruncf[n] m {(-)}\co \nGlob n \to \nGlob m
\]
often denoted~$\gtruncf m {(-)}$ when there is no ambiguity. This functor admits
a left \glossary(Globbincl){$\gincf[m] n {(-)}$, $\inc n X$}{the inclusion
  functor on globular sets}adjoint
\[
  \gincf[m] n {(-)}\co \nGlob
  m \to \nGlob n
\]
often denoted~$\gincf n {(-)}$ when there is no ambiguity, and which maps an
$m$\globular set~$X$ to the $n$\globular set~$\inc n X$, called
\index{inclusion!of a globular set}\emph{$n$\nbd-inclu\-sion of~$X$}, and which
is defined by~${\restrict m {(\inc n X)} = X}$ and~${(\inc n X)_i = \emptyset}$
for~$i \in \N_n$ with~$i > m$. The unit of the adjunction~$\gincf n {(-)} \dashv
\gtruncf m {(-)}$ is the identity and the counit is the natural transformation
\glossary(i){$\gtrunccu[n] {m}$, $\gtrunccu {m}$}{the counit of the adjunction
  $\gincf[m] n {(-)} \dashv \gtruncf[n] m {(-)}$}denoted~$\gtrunccu[n] {m}$, or
simply~$\gtrunccu {m}$ when there is no ambiguity, which is given by the family
of canonical morphisms
\[
  \gtrunccu m_X\co\inc n {(\restrict m X)} \to X
\]
for~$X \in \nGlob n$. The functor~$\gtruncf[n] m {(-)}$ also admits a
\glossary(Globcfill){$\gincfill[n] m -$, $\incfill n X$}{the right adjoint to
  the truncation functor~$\gtruncf[m]n-$}right
adjoint
\[
  \gincfill[n] m - \co \nGlob m \to \nGlob n
\]
denoted~$\gincfill n -$ when there is no ambiguity, and which maps an
$m$\globular set to the $n$\globular set~$\incfill n X$ defined by~$\restrict m
{(\incfill n X)} = X$, and, for~$i \in \N_n$ with~$i > m$,
\[
  (\incfill n X)_i = \set{ (u,v) \in X_m \mid \text{$u$ and~$v$ are parallel}}
\]
such that, for~$(u,v) \in (\incfill n X)_i$,
\begin{gather*}
  \csrc_{m}((u,v)) = u
  \qtand
  \ctgt_m((u,v)) = v
  \shortintertext{and}
  \csrc_j((u,v)) = \ctgt_j((u,v)) = (u,v)
  \quad
  \text{for~$j \in \N_{i-1}$.} 
\end{gather*}
\noindent Note that, since they are left adjoints, the functors~$\gincf[m] {n}
{(-)}$ and~$\gtruncf[n] {m} {(-)}$ preserves colimits.

\subsection{Globular algebras}
\label{text:globular-algebras}

We now introduce categories of \index{globular algebra}\emph{globular algebras},
\ie the Eilenberg-Moore categories induced by monads on globular sets, as were
first introduced by Batanin in~\cite{batanin1998computads}. We moreover give
several additional constructions and properties on these objects.

\parlabel{text:globular-algebra-definition}

Let~$n \in \N \cup \set\omega$ and~$(T,\eta,\mu)$ be a finitary monad on~$\nGlob
n$. We write~$\Alg_n$ for the category of~$T$\algebras~$\nGlob n^{T}$ and
\[
  \fgfalgf_n\co \Alg_n \to \nGlob n \qquad \qquad \freealgf_n\co \nGlob n \to \Alg_n
\]
for the induced left and right adjoints, that were denoted~$\bigfun U^T$
and~$\bigfun F^T$ in \Cref{text:algebras-over-a-monad}: given an algebra~$(X,h)
\in \Alg_n$, the image of~$(X,h)$ by~$\fgfalgf_n$ is~$X$ and, given~$Y \in
\nGlob n$,~$\freealgf_nY$ is the free $T$\algebra
\[
  (TY,\mu_Y\co TTY \to TY).
\]
Given $k < n$, using \Cref{prop:ex-algra} with the adjunction $\gtruncf k - \dashv \gincf n -$, we
get a monad $(T^k,\eta^k,\mu^k)$ on $\nGlob k$ where
\[
  T^k = \gtruncf k - T \gincf n -
\]
and such that~$\eta^{k} \co \unit {\nGlob k} \to T^k$ is the composite
\[
  \begin{tikzcd}[column sep=5em,cramped]
    \unit {\nGlob k} \ar[r,equal] & \gtruncf k {(-)}  \gincf n {(-)}
    \ar[r,"{\gtruncf k {(-)} \eta  \gincf n {(-)}}"] & T^k
  \end{tikzcd}
\]
\ie~$\eta^{k}_X = \restrict k {(\eta_{\inc n X})}$ for~$X \in \nGlob k$, and
such that~$\mu_k \co T^k  T^k \to T^k$ is the composite
\[
  \begin{tikzcd}[column sep=8em,cramped]
    T^k  T^k
    \ar[r,"{\gtruncf k {(-)}  T  \gtrunccu k {}  T  \gincf n {(-)}}"]
    & 
    \gtruncf k {(-)}  T  T  \gincf n {(-)}
    \ar[r,"\gtruncf k {(-)}  \mu  \gincf n {(-)}"]
    &
    T^k
  \end{tikzcd}
  \zbox.
\]
So, for~$k\in\N_n$, we get a category $\Alg_k = \nGlob k^{T^k}$, and
canonical functors
\[
  \fgfalgf_k\co \Alg_k \to \nGlob k \qquad \qquad \freealgf_k\co \nGlob k \to
  \Alg_k
\]
defined like~$\fgfalgf_n$ and~$\freealgf_n$ above, forming an adjunction
$\freealgf_k \dashv \fgfalgf_k$ whose counit is denoted $\eps^k$. The objects
of~$\Alg_k$ are called \index{category@$k$-category}\emph{$k$\categories}.
Moreover, given a $k$\category~$C=(X,h)$, the elements of~$X_i$ are called the
\index{cell!of a $k$-category}\emph{$i$\cells of~$C$} for~$i \in \N_k$.
\begin{remark}
  In the above definition, we require that the monad~$(T,\eta,\mu)$ is finitary
  in order to prove later the existence of several free constructions on the
  $k$\categories. This is not too restrictive, since it includes all the monads
  of algebraic globular higher categories that have operations with finite
  arities, \ie most theories of algebraic globular higher categories.
\end{remark}
\noindent We can already derive several properties of the categories~$\Alg_k$:
\begin{prop}
  \label{prop:alg-cat-loc-fin-pres}
  For~$k \in \N \cup \set n$, the category~$\Alg_k$ is locally finitely
  presentable. In particular, it is complete and cocomplete. Moreover, the
  functor~$\fgfalgf_k$ preserves and creates directed colimits, and creates
  limits.
\end{prop}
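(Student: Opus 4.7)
The plan is to reduce everything to standard facts about Eilenberg-Moore categories of finitary monads on locally finitely presentable categories. The key preliminary is to verify that the induced monad $T^k$ on $\nGlob k$ is itself finitary.

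First, I would observe that $T^k = \gtruncf k {(-)} \circ T \circ \gincf n {(-)}$ is a composite of three functors, each of which preserves directed colimits: $\gincf n {(-)}$ and $\gtruncf k {(-)}$ are left adjoints (as noted in \Cref{ssec:globular-sets}, where $\gincf n {(-)} \dashv \gtruncf k {(-)}$ and $\gtruncf k {(-)} \dashv \gincfill n -$), so they preserve all colimits; and $T$ is finitary by hypothesis. Hence $T^k$ is finitary.

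Next, since $\nGlob k$ is locally finitely presentable (\Cref{rem:glob-ess-alg}) and $T^k$ is a finitary monad on it, I would invoke the standard theorem (as in Adámek-Rosický) which states that the Eilenberg-Moore category of a finitary monad on a locally finitely presentable category is itself locally finitely presentable. This gives the first assertion, from which completeness and cocompleteness follow by \Cref{prop:loc-pres-nice-properties}.

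For the properties of $\fgfalgf_k$, I would appeal to the classical results on monadic forgetful functors. Creation of limits by $\fgfalgf_k = \bigfun U^{T^k}$ is a standard general fact holding for any Eilenberg-Moore forgetful functor, independently of finitarity. For directed colimits, I would use the well-known result that when the endofunctor of a monad preserves filtered (in particular directed) colimits, the forgetful functor from its Eilenberg-Moore category creates them; this also implies preservation. Since $T^k$ is finitary, both preservation and creation of directed colimits by $\fgfalgf_k$ follow.

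None of the steps involves a serious obstacle: the only non-routine point is to make sure the functors composing $T^k$ preserve directed colimits, which is immediate here since they are all left adjoints (or finitary). Everything else is a citation of standard results about monadic adjunctions and locally finitely presentable categories.
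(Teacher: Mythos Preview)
Your proposal is correct and follows essentially the same approach as the paper: reduce to the standard result that the Eilenberg--Moore category of a finitary monad on a locally finitely presentable category is locally finitely presentable, and cite the classical facts about monadic forgetful functors for creation of limits and directed colimits. You are slightly more explicit than the paper in verifying that $T^k$ is finitary (the paper invokes \Cref{prop:loc-fin-pres-alg-cat} directly, leaving this check implicit), and you argue creation of directed colimits via the standard ``finitary monad'' result rather than the paper's route through preservation, reflection of isomorphisms and cocompleteness of $\Alg_k$, but these are cosmetic differences.
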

\begin{proof}
  The category~$\Alg_k$ is locally finitely presentable as a consequence of
  \Propr{loc-fin-pres-alg-cat} since~$\nGlob k$ is locally finitely presentable
  by \myCref[Remark~1.2.3.4]{rem:glob-ess-alg}. The functor~$\fgfalgf_k$
  preserves directed colimits by \Propr{loc-fin-pres-alg-cat}. Moreover,
  since~$\fgfalgf_k$ reflects isomorphisms and~$\Alg_k$ is
  cocomplete,~$\fgfalgf_k$ creates directed colimits. Finally, it is well-known
  that the forgetful functor associated to an Eilenberg-Moore category creates
  limits (see~\cite[Proposition 4.3.1]{borceux1994handbook2} for example).
\end{proof}
\noindent We can usually derive monads from equational definitions of higher
categories as illustrated by the following examples.
\begin{example}
  \label{ex:cat-globular-algebra}
  The canonical forgetful functor~$\Cat \to \nGlob 1$ is a finitary right
  adjoint~(see \Cref{ex:grph-to-cat-theo-morphism} for a detailed argument)
  which thus induces a finitary monad~$(T,\eta,\mu)$ on~$\nGlob 1$. This monad
  maps a $1$\globular set~$G$ to the underlying $1$\globular set of the category
  of paths on~$G$ seen as a graph. Using Beck's monadicity theorem
  (\Cref{thm:monadicity}), one can verify that the functor~$\Cat \to \nGlob 1$
  is monadic, so that~$\Alg_1 \simeq \Cat$. Moreover, the
  monad~$(T^0,\eta^0,\mu^0)$ is essentially the identity monad on~$\nGlob 0$,
  and thus~$\Alg_0 \simeq \Set$. More generally, we will see
  in~\Cref{text:strict-cats-and-precats} that the monads of strict
  $k$\categories for~$k \in \N$ are derived from the monad of strict \ocats.
\end{example}
\begin{example}
  \label{ex:weird-cat-globular-algebra}
  We define a notion of \index{weird $2$-category}\emph{weird $2$\category} as follows: a weird
  $2$\category is a $2$\globular set~$C$ equipped with an operation
  \[
    {\comp} \co C_2 \times C_2 \to C_0\zbox.
  \]
  Note that we do not require the composability of the arguments of~$\comp$, and
  we do not enforce any axiom on~$\comp$. A morphism between two weird
  $2$\categories is then a morphism between the underlying $2$\globular sets
  that is compatible with~$\comp$. The \glossary(Weird){$\mathbf{Weird}$}{the
    category of weird $2$-categories}category~$\mathbf{Weird}$ of weird
  $2$\categories and their morphisms is essentially algebraic, and the functor
  which maps a weird $2$\category to its underlying $2$\globular set is induced
  by an essentially algebraic theory morphism, so that it is a right adjoint and
  finitary by \myCref[Theorem~1.2.3.4]{prop:functor-from-theo-morphism-ra}. From
  the adjunction, we derive a finitary monad~$(T,\eta,\mu)$ on~$\nGlob 2$, and,
  given~$X \in \nGlob 2$, we have that
  \[
    (TX)_0 \cong X_0 \sqcup (X_2 \times X_2) \qquad (TX)_1 \cong X_1 \qquad (TX)_2 \cong X_2
  \]
  so that, for~$\Alg_2$ derived from the monad~$T$,~$\Alg_2 \cong
  \mathbf{Weird}$. Moreover, the monads~$(T^0,\eta^0,\mu^0)$
  and~$(T^1,\eta^1,\mu^1)$ are essentially the identity monads on~$\nGlob 0$
  and~$\nGlob 1$ respectively, so that the associated notions of weird~$0$- and
  $1$\categories are simply~$0$- and $1$\globular sets.
\end{example}
\noindent The previous example moreover illustrates the unusual operations that
notions of higher categories defined in the setting of Batanin can have. It is
also an example of a monad on globular sets which is not truncable (\cf
\Cref{ex:weird-monad-not-truncable}).

\begin{example}
  Monoids can be considered in any category with a monoidal structure. Given a
  theory of globular algebraic higher category, one can define an associated
  notion of strict monoidal higher category by considering monoids in the
  category of algebras, choosing the monoidal structure to be the cartesian one.
  Monadically, we have an operation mapping a monad~$T$ on $n$\globular sets to
  a monad $T'$ on $n$\globular sets representing the theory of strictly monoidal
  higher categories which are instances of~$T$. This operation can be seen to
  preserve a finitary hypothesis on~$T$.
\end{example}

\subsection{Truncation and inclusion functors}
\label{text:alg-trunc-incl-functors}

We now introduce truncation and inclusion functors between the
categories~$\Alg_k$ together with some of their properties.

Let~$n \in \N \cup \set\omega$ and~$(T,\eta,\mu)$ be a finitary monad on~$\nGlob
n$. Given $k < n$, using \Cref{prop:ex-algra}, we get that $\gtruncf[n]k-$ lifts
to a cocartesian morphism
\[
  (\nGlob n, T) \xto{(\gtruncf k -,\gtruncf k - T \gtrunccu k)}
  (\nGlob k,T^k)
\]
with respect to the functor $\mndund$. By applying $\EM$, we get a functor
\[
  \algtruncf[n] k - \co \Alg_n \to \Alg_k
\]
also denoted $\algtruncf k-$ when there is no ambiguity.

Now, given $k,l \in \N$ with $k < l < n$, since $\gtruncf[n]k-$ can be factored
as $\gtruncf[l]k-\gtruncf[n]l-$, the cocartesianness of $(\gtruncf l -,\gtruncf
l - T \gtrunccu l)$ ensures that there is a unique morphism $(F,\alpha)$ which
factorizes $(\gtruncf k -,\gtruncf k - T \gtrunccu k)$ through $(\gtruncf l
-,\gtruncf l - T \gtrunccu l)$ in $\MND$.

\begin{lem}
  \label{lem:gtruncf-cocart-lift}
  Given $k,l \in \N$ with $k < l < n$, the morphism
  \[
    (\gtruncf[l]k-,\gtruncf k-T^l\gtrunccu[l]k) \co (\nGlob l,T^l) \to (\nGlob
    k,T^k)
  \]
  is the factorization of $(\gtruncf k -,\gtruncf k - T \gtrunccu k)$ through
  the cocartesian morphism
  \[
    (\gtruncf l -,\gtruncf l - T \gtrunccu l)
    \zbox.
    \qedhere
  \]
\end{lem}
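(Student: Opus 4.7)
The plan is to verify directly that the composite in $\MND$ of $(\gtruncf[n]l-,\gtruncf l-T\gtrunccu[n]l)$ with $(\gtruncf[l]k-,\gtruncf[l]k-T^l\gtrunccu[l]k)$ equals $(\gtruncf[n]k-,\gtruncf k-T\gtrunccu[n]k)$. The conclusion will then follow from the uniqueness of the factorization of a morphism through a cocartesian one (ensured by \Cref{prop:ex-algra}). Before composing, one checks that the candidate factor is in fact a monad functor: this is an instance of \Cref{prop:ex-algra} applied to the adjunction $\gincf[k]l-\dashv\gtruncf[l]k-$ together with the monad $T^l$ on $\nGlob l$, using the identification $\gtruncf[l]k- T^l \gincf[k]l- = T^k$ that follows from $\gtruncf[l]k-\gtruncf[n]l- = \gtruncf[n]k-$ and $\gincf[l]n-\gincf[k]l- = \gincf[k]n-$.

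Using the standard formula for the composition of monad functors, the composite has underlying functor $\gtruncf[l]k-\gtruncf[n]l- = \gtruncf[n]k-$, and underlying natural transformation
\[
  (\gtruncf[l]k-\gtruncf[n]l-T\gtrunccu[n]l)\circ(\gtruncf[l]k-T^l\gtrunccu[l]k\gtruncf[n]l-)\zbox.
\]
Expanding $T^l = \gtruncf[n]l-T\gincf[l]n-$, this rewrites as $\gtruncf[n]k- T \theta$, where
\[
  \theta \;=\; \gtrunccu[n]l\circ(\gincf[l]n-\gtrunccu[l]k\gtruncf[n]l-)
  \;\co\; \gincf[l]n-\gincf[k]l-\gtruncf[l]k-\gtruncf[n]l- \To \unit{\nGlob n}\zbox.
\]

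It remains to identify $\theta$ with $\gtrunccu[n]k$. This is the statement that the counit of the composite adjunction $\gincf[l]n-\gincf[k]l-\dashv \gtruncf[l]k-\gtruncf[n]l-$ agrees with the counit of $\gincf[k]n-\dashv\gtruncf[n]k-$: on the one hand, the composite formula for counits of composed adjunctions reads exactly as the definition of $\theta$; on the other hand, both adjunctions have identity units (since $\gtruncf\cdot\gincf = \unit$), and this pins down the counit uniquely given the adjoint pair. Hence $\theta = \gtrunccu[n]k$ and the composite monad functor is $(\gtruncf[n]k-,\gtruncf[n]k-T\gtrunccu[n]k)$, so by the universal property of the cocartesian morphism $(\gtruncf[n]l-,\gtruncf[n]l-T\gtrunccu[n]l)$, its factorization through it is as claimed. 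The only non-trivial step is the identification of the two counits, which is really just the compatibility of truncation and inclusion with composition of adjunctions; the rest is bookkeeping with indices.
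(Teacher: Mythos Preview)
Your proposal is correct and follows essentially the same approach as the paper: both apply \Cref{prop:ex-algra} to recognize the candidate as a monad functor, identify its target monad with $T^k$, and then verify that the composite with $(\gtruncf[n]l-,\gtruncf[n]l-T\gtrunccu[n]l)$ equals $(\gtruncf[n]k-,\gtruncf[n]k-T\gtrunccu[n]k)$ via the counit identity $\gtrunccu[n]l\circ(\gincf[l]n-\gtrunccu[l]k\gtruncf[n]l-)=\gtrunccu[n]k$. The only cosmetic difference is that the paper checks $T'=T^k$ by direct computation of the endofunctor, unit, and multiplication, whereas you appeal to the compatibility of \Cref{prop:ex-algra} with composite adjunctions; conversely, you spell out the counit identity as the counit formula for a composite adjunction, while the paper simply asserts it.
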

\begin{proof}
  By \Cref{prop:ex-algra}, there is a morphism
  \[
    (\gtruncf[l]k-,\gtruncf k-T^l\gtrunccu[l]k) \co (\gtruncf l -,T^l) \to
    (\gtruncf k -,T')
  \]
  where $(T',\eta',\mu')$ is the monad on $\gtruncf k -$ defined by
  \begin{align*}
    T' &= \gtruncf k -T^l\gincf l -
    \\
    \eta' &= \gtruncf k -\eta^l\gincf l -
    \\
    \mu' &= (\gtruncf k - \mu^l \gincf k -) \circ (\gtruncf k - T^l \gtrunccu[l]k
    T^l \gincf l -)
           \zbox.
  \end{align*}
  By a straight-forward computation, we get $T' = T^k$, $\eta' = \eta^k$ and
  $\mu' = \mu^k$. To verify that the above morphism is the wanted factorization,
  we are left to check that
  \[
    (\gtruncf[l]k-,\gtruncf k-T^l\gtrunccu[l]k)
    \circ
    (\gtruncf l -,\gtruncf l - T \gtrunccu l)
    =
    (\gtruncf k -,\gtruncf k - T \gtrunccu k)
  \]
  in $\MND$. But it is straight-forward too, since
  \[
    \gtrunccu[n]l \circ (\gincf[n]l-\gtrunccu[l]k\gtruncf[n]l-)
    =
    \gtrunccu[n]k\zbox.
    \qedhere
  \]
\end{proof}
\noindent By applying $\EM$ to the factorization morphism given by
\Cref{lem:gtruncf-cocart-lift}, we get a functor
\[
  \algtruncf[l] k - \co \Alg_l \to \Alg_k
\]
also denoted $\algtruncf k-$ when there is no ambiguity. Concretely, given a
$T^l$\algebra
\[
  (X,h\co T^lX \to X)\zbox,
\]
its image by $\algtruncf k -$ is the $T^k$\algebra~$(\restrict k X,h')$,
where~$h'$ is defined as the composite
\[
  \begin{tikzcd}[column sep=6.5em,cramped]
    T^k(\restrict k X)
    \ar[r,"{({\gtruncf[l] k {(-)}T^{l}\gtrunccu[l] {k}})_X}"]
    &
    \restrict k {(T^{l}X)}
    \ar[r,"\restrict k h"]
    &
    \restrict k X
  \end{tikzcd}
  \zbox.
\]
The image of~$(X,h)$ in~$\Alg_l$ by~$\algtruncf k {(-)}$ is called the
\index{truncation!of a globular algebra}\emph{$k$\truncation} of~$(X,h)$ and we
denote it~$\restrict k {(X,h)}$. Note that the image of a morphism~${f \co (X,h)
  \to (X',h')}$ by~$\smash{\algtruncf k {(-)}}$ is~$\restrict k f$ (the globular
$k$\truncation of~$f$). The same concrete description holds for the functors
$\algtruncf[n] k -$.

\smallpar The definition of the truncation functors allows for the following
compatibility property:
\begin{prop}
  Given~$j,k,l \in \N_n\cup \set n$ with~$j < k < l$, we have
  \[
    \algtruncf[k] j {(-)} \circ \algtruncf[l] {k} {(-)} = \algtruncf[l] j {(-)}
    \zbox.
  \]
\end{prop}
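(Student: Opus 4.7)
The plan is to verify the equality directly on objects and morphisms, using the explicit description of the truncation functors recalled just above the statement. For morphisms, both composites send an algebra morphism $f$ to the $j$-truncation $\restrict j f$, which is immediate from functoriality of globular-set truncation together with the identity $\gtruncf[k]j-\,\gtruncf[l]k- = \gtruncf[l]j-$ on $\nGlob{-}$. So the content of the proof lies in the action on objects.

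Fix a $T^l$-algebra $(X,h)$. Unfolding the concrete description of $\algtruncf[l]k-$ and then of $\algtruncf[k]j-$, the composite $\algtruncf[k]j-(\algtruncf[l]k-(X,h))$ has underlying globular set $\restrict j X$ and structure map
\[
h'' = \restrict j h \,\circ\, (\gtruncf[l]j- T^l \gtrunccu[l]k)_X \,\circ\, (\gtruncf[k]j- T^k \gtrunccu[k]j)_{\restrict k X},
\]
where we have used $\gtruncf[k]j-\gtruncf[l]k- = \gtruncf[l]j-$ to simplify the outer truncation $\restrict j(\,\cdot\,)$. On the other hand, $\algtruncf[l]j-(X,h)$ has structure map $\restrict j h \,\circ\, (\gtruncf[l]j- T^l \gtrunccu[l]j)_X$. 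Hence it suffices to establish the equality of natural transformations
\[
(\gtruncf[l]j- T^l \gtrunccu[l]k) \,\circ\, (\gtruncf[k]j- T^k \gtrunccu[k]j\,\gtruncf[l]k-) \;=\; \gtruncf[l]j- T^l \gtrunccu[l]j.
\]

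Using the factorization $T^k = \gtruncf[l]k- T^l \gincf[k]l-$ (from the definition of $T^k$), both sides have the shape $\gtruncf[l]j- T^l (\,\cdot\,)$, and the equation reduces to a three-level analogue of the counit identity already used in the proof of \Cref{lem:gtruncf-cocart-lift}, namely
\[
\gtrunccu[l]j \;=\; \gtrunccu[l]k \,\circ\, (\gincf[k]l-\,\gtrunccu[k]j\,\gtruncf[l]k-).
\]
This latter identity follows from the decompositions $\gincf[j]l- = \gincf[k]l-\gincf[j]k-$ and $\gtruncf[l]j- = \gtruncf[k]j-\gtruncf[l]k-$ together with the universal property of the adjunctions $\gincf[i]l- \dashv \gtruncf[l]i-$. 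The only genuine step is this final counit identity; it is essentially bookkeeping, and it is where the compatibility between the successive globular-set truncations is actually used.
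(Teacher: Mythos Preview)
Your argument is correct. You unfold the concrete description of the truncation functors, reduce the question to an equality of the structure-map natural transformations, and then factor out $\gtruncf[l]j- T^l$ to land on the counit identity $\gtrunccu[l]j = \gtrunccu[l]k \circ (\gincf[k]l-\,\gtrunccu[k]j\,\gtruncf[l]k-)$, which is indeed the standard compatibility of counits for the composite adjunction $\gincf[j]l- = \gincf[k]l-\gincf[j]k- \dashv \gtruncf[k]j-\gtruncf[l]k- = \gtruncf[l]j-$.

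The paper's proof takes a different, more structural route. Rather than computing on objects, it works in $\MND$: both $(\gtruncf[l]j-,\gtruncf[l]j- T^l \gtrunccu[l]j)$ and the composite $(\gtruncf[k]j-,\gtruncf j- T^k \gtrunccu[k]j)\circ(\gtruncf[l]k-,\gtruncf[l]k- T^l \gtrunccu[l]k)$ are factorizations of $(\gtruncf[n]j-,\gtruncf[n]j- T \gtrunccu[n]j)$ through the cocartesian morphism $(\gtruncf[n]l-,\gtruncf[n]l- T \gtrunccu[n]l)$, hence equal by uniqueness; then one applies the functor $\EM$. That argument hides the counit bookkeeping inside repeated uses of \Cref{lem:gtruncf-cocart-lift} and the cocartesian property from \Cref{prop:ex-algra}. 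Your approach is more elementary and self-contained (no appeal to cocartesianness in $\MND$), while the paper's approach better reflects the organizing principle of the article, namely that these identities are shadows of uniqueness statements in the formal theory of monads.
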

\begin{proof}
  The two morphisms
  \[
    (\gtruncf[k]j-,\gtruncf j-T^k\gtrunccu[k]j)
    \circ
    (\gtruncf[l] k -,\gtruncf[l] k - T^l \gtrunccu[l] k)
    \qtand
    (\gtruncf[l] j -,\gtruncf[l] j - T^l \gtrunccu[l] j)
  \]
  provide a factorization of $(\gtruncf[n]j -,\gtruncf[n] j - T \gtrunccu[n] j)$
  through the cocartesian morphism
  \[
    (\gtruncf[n]l -,\gtruncf[n] l - T \gtrunccu[n] l)\zbox.
  \]
  Thus, they are equal. The conclusion follows from the functoriality of~$\EM$.
\end{proof}

The finitary assumption on~$T$ enables the existence of a left adjoint to
truncation functors:
\begin{prop}
  \label{prop:algtruncf-ra}
  Given~$k,l \in \N_n\cup\set n$ with~$k < l$, the functor~$\algtruncf[l] k -$
  is finitary and admits a left adjoint.
\end{prop}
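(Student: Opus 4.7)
My plan is to prove the two statements in sequence, using that both $\Alg_k$ and $\Alg_l$ are locally finitely presentable (by \Cref{prop:alg-cat-loc-fin-pres}) and reducing everything to properties of the underlying truncation functor $\gtruncf[l]k-$ on globular sets.

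First I would prove finitarity. The key observation is the square
\[
  \fgfalgf_k \circ \algtruncf[l] k - = \gtruncf[l]k- \circ \fgfalgf_l\zbox,
\]
which holds by the concrete description of $\algtruncf[l]k-$ given just after \Cref{lem:gtruncf-cocart-lift}. The functor $\gtruncf[l]k-$ preserves directed colimits since it is a left adjoint (its right adjoint is $\gincfill[l]k-$), and $\fgfalgf_l$ preserves directed colimits by \Cref{prop:alg-cat-loc-fin-pres}, hence the composite on the right preserves them. Since $\fgfalgf_k$ creates directed colimits (again by \Cref{prop:alg-cat-loc-fin-pres}), it follows that $\algtruncf[l]k-$ preserves directed colimits too.

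Next I would show the existence of the left adjoint by verifying the hypotheses of the adjoint functor theorem for locally finitely presentable categories: a functor between locally presentable categories admits a left adjoint as soon as it preserves all small limits and is accessible. Accessibility follows from the preservation of directed colimits just shown. For the preservation of limits, I would use the same square: $\fgfalgf_l$ creates limits and $\gtruncf[l]k-$ preserves them (being a left adjoint, but even just as a right adjoint to $\gincf[k]l-$), so $\gtruncf[l]k- \circ \fgfalgf_l$ preserves limits; combining this with the fact that $\fgfalgf_k$ creates limits, a standard creation argument yields that $\algtruncf[l]k-$ preserves them.

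I expect no serious obstacle here — the proof is essentially a diagram-chase reducing properties of $\algtruncf[l]k-$ to properties of the well-behaved functor $\gtruncf[l]k-$, taking advantage of the two adjoints $\gincf[k]l-\dashv \gtruncf[l]k- \dashv \gincfill[l]k-$ established in \Cref{text:gs-trunc-incl-functors}. The only minor subtlety is invoking the right form of the adjoint functor theorem for locally presentable categories; alternatively, one could invoke the classical adjoint lifting theorem for monads, using that $\gtruncf[l]k-$ has a left adjoint and that $\Alg_l$ has all coequalizers (it is even cocomplete), but the locally-presentable route seems cleaner given the framework already set up.
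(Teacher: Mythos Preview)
Your proposal is correct and follows essentially the same route as the paper: use the commuting square $\fgfalgf_k \circ \algtruncf[l] k - = \gtruncf[l]k- \circ \fgfalgf_l$, deduce preservation of directed colimits and of limits from the creation properties of $\fgfalgf_k$, and then invoke \Cref{prop:criterion-loc-pres-cat-adjoints}. One tiny slip to clean up: you wrote that $\gtruncf[l]k-$ preserves limits ``being a left adjoint'' --- left adjoints preserve colimits, not limits --- but your parenthetical correction (right adjoint to $\gincf[k]l-$) is exactly what the paper uses.
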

\begin{proof}
  By the adjunction $\mndinc \dashv \EM$, we have a commutative diagram
  \[
    \begin{tikzcd}
      {\Alg_l}
      \ar[r,"\algtruncf k -"]
      \ar[d,"\fgfalgf_l"']
      &
      {\Alg_k}
      \ar[d,"\fgfalgf_k"]
      \\
      \nGlob l
      \ar[r,"\gtruncf k -"']
      &
      \nGlob k
    \end{tikzcd}
    \zbox.
  \]
  The functor~$\algtruncf[l] k {(-)}$ is finitary since, by
  \Propr{alg-cat-loc-fin-pres},~$\fgfalgf_k$ creates directed colimits and the
  functor
  \[
    {\fgfalgf_k \algtruncf[l] k {(-)} = \gtruncf k {(-)}
      \fgfalgf_{l}}
  \]
  preserves directed colimits. Moreover,~$\smash{\algtruncf[l] k {(-)}}$
  preserves limits since~$\fgfalgf_k$ creates limits and the
  functor~$\smash{\fgfalgf_k \algtruncf[l] k {(-)} = \gtruncf k {(-)}
    \fgfalgf_{l}}$ preserves limits (both~$\smash{\gtruncf[l] k {(-)}}$
  and~$\smash{\fgfalgf_{l}}$ are right adjoints). Then, by
  \Propr{criterion-loc-pres-cat-adjoints}, the functor~$\smash{\algtruncf[l] k {(-)}}$
  admits a left adjoint.
\end{proof}
\noindent Given~$k,l \in \N_n\cup\set n$ with~$k < l$, we
\glossary(Algbincf){$\algincf[k] {l} {(-)}$, $\incf l A$}{the inclusion functor
  for globular algebras}write
\[
  \algincf[k] {l} {(-)} \co \Alg_k \to \Alg_{l}
\]
for the left adjoint to~$\algtruncf[l] k -$, or even~$\algincf l {(-)}$ when
there is no ambiguity on~$k$. The image of~$(X,h)$ in~$\Alg_k$ by~$\algincf
{l}{(-)}$ is called the
\index{inclusion!of a globular algebra}\emph{$l$\nbd-inclusion} of~$(X,h)$ and
we denote it~$\incf l {(X,h)}$.

\subsection[\texorpdfstring{$\Alg_\omega$}{Alg-omega} as a limit]{$\bm{\Alg_\omega}$
  as a limit}
\label{text:algo-as-a-limit}

Let~$(T,\eta,\mu)$ be a finitary monad on~$\nGlob\omega$. The purpose of this
paragraph is to characterize~$\Alg_\omega$ as a limit on the categories~$\Alg_k$
for~$k \in \N$ using the truncation functors~$\algtruncf k {(-)}$.

\begin{prop}
  \label{prop:Tomega-limit-cone}
  The cone
  \[
    ((\nGlob\omega,T), (\gtruncf k-,\gtruncf k-T\gtrunccu k)_{k\in\N})
  \]
  is a limit cone in $\MND$ on the diagram
  \[
    \hss
    \begin{tikzcd}
      (\nGlob0,T^0)
      &
      (\nGlob1,T^1)
      \ar[l,"{\gtruncf 0-}"']
      &
      (\nGlob2,T^2)
      \ar[l,"{\gtruncf 1-}"']
      &
      (\nGlob3,T^3)
      \ar[l,"{\gtruncf 2-}"']
      &
      \cdots
      \ar[l,"{\gtruncf 3-}"']
    \end{tikzcd}
    \hss
  \]
  where we abbreviated by~$\gtruncf k -$ the monad functor
  $(\gtruncf k -,\gtruncf k -T^{k+1}\gtrunccu k)$ for $k \in \N$ for readability.
\end{prop}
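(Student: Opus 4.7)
The plan is to verify the $1$-categorical universal property of the limit directly, using the finitariness of $T$ to express $TFX$ as a sequential colimit that can be matched against the cone data. Let $((\mcal E, R), ((G_k, \beta_k))_{k \in \N})$ be a cone over the diagram in $\MND$. The cone compatibility gives in particular $\gtruncf k {G_l} = G_k$ for $k \le l$, and since $\nGlob\omega$ is manifestly the limit in $\CAT$ of $(\nGlob k)_{k \in \N}$ along the truncation functors (a globular set is exactly the data of all its truncations in a compatible way), we obtain a unique functor $F \co \mcal E \to \nGlob\omega$ satisfying $\gtruncf k F = G_k$ for all $k \in \N$.

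To construct the monad functor structure $\alpha \co T F \To F R$, I exploit that for every $X \in \mcal E$, the counit components $\gtrunccu l_{FX}$ exhibit $FX$ as the filtered colimit $\varinjlim_l \gincf \omega {G_l X}$ in $\nGlob\omega$. Since $T$ is finitary (hence preserves filtered colimits) and $\gtruncf k {-}$ is a left adjoint, applying both yields
\[
  \gtruncf k {T F X} \;\cong\; \varinjlim_{l \ge k} \gtruncf[l] k {T^l G_l X}
\]
in $\nGlob k$, with $l$-th leg $\gtruncf k {T \gtrunccu l_{FX}}$. The cone compatibility at the truncation edge from level $l'$ to level $l$ (for $k \le l \le l'$) translates, after applying $\gtruncf[l]k {-}$, precisely into the cocone condition for the family $\gtruncf[l]k {\beta_{l, X}} \co \gtruncf[l]k {T^l G_l X} \to G_k R X$ over this diagram. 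The induced map on the colimit defines $\gtruncf k {\alpha_X}$; naturality in $X$ and compatibility across $k$ are then automatic, so these components assemble into the desired natural transformation $\alpha \co T F \To F R$.

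Uniqueness of $(F, \alpha)$ is then straightforward: $F$ is pinned down by the limit in $\CAT$, and any candidate $\alpha'$ satisfying the cone equation must agree with $\alpha$ on every leg of the colimit presentation of $\gtruncf k {T F X}$, so $\gtruncf k {\alpha'} = \gtruncf k {\alpha}$ for every $k$, forcing $\alpha' = \alpha$.

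The main obstacle is verifying that the constructed $(F, \alpha)$ is genuinely a monad functor, \ie that $\alpha$ satisfies the unit and multiplication axioms with respect to $R$ and $T$. Since natural transformations into $\nGlob\omega$ are determined by their truncations, it suffices to check each axiom after applying $\gtruncf k {-}$, where the colimit description together with the cocone factorisation reduces them to the corresponding axioms satisfied by $(G_k, \beta_k)$; the delicate point is controlling how the multiplication $\mu$ of $T$ interacts with the colimit, which requires a careful use of the naturality of the counits $\gtrunccu l$ and of the finitariness of $T$ to interchange $T$ with the filtered colimit presenting $FX$.
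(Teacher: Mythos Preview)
Your proposal is correct and follows essentially the same strategy as the paper: obtain the functor $F$ from the limit in $\CAT$, use finitariness of $T$ to present $TFX$ (after truncation) as a filtered colimit of the $T^l G_l X$, and build $\alpha$ from the $\beta_l$ via the colimit universal property, then reduce the monad functor axioms to those of the $(G_l,\beta_l)$. The only organisational difference is that the paper packages your ``compatibility across $k$'' step as a single statement, by expressing $\Hom(T\Gamma,\Gamma S)$ as the limit of a bi-indexed diagram and invoking finality of the diagonal $\catop{(\N,\le)} \to \catop{(\N\times\N,\le\times\le)}$, whereas you construct $\gtruncf k{\alpha}$ level by level and then assemble; both routes are valid and use the same ingredients.
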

\begin{proof}
  We already know that it is a cone by \Cref{lem:gtruncf-cocart-lift}. Let us
  prove that it is a limit cone in~$\MND$ (seen here as a $1$\category).

  Let $(\cC,S)$ be a monad and $((\Gamma^k,\gamma^k) \co (\cC,S) \to (\nGlob
  k,T^k))_{k\in\N}$ be a cone on the diagram of the statement. By forgetting the
  $2$\cells, we get a cone $(\Gamma^k \co \cC \to \nGlob
  k)_k$ and thus a functor $\Gamma \co \cC \to \nGlob\omega$. In order to get a
  monad functor, we still need to build a $2$\cell $\gamma\co T\Gamma \To \Gamma
  S$. Such a $2$\cell is the data of morphisms $\gamma_X \co T\Gamma X \to
  \Gamma S X$ for $X \in \cC$. Write $R^k$ for
  \[
    R^k = \gincf \omega {(-)} \gtruncf k {(-)} \co \nGlob
    \omega \to \nGlob \omega
  \]
  \newcommand\onecu[2][]{\operatorname{\mathrm j}^{\commatwo{#2}{#1}}}%
  and $\onecu k$ for the natural transformation
  \[
    \onecu k = \gincf \omega {(-)} \gtrunccu[k+1] k \gtruncf {k+1} {(-)} \co R^k
    \To R^{k+1}
    \zbox.
  \]
  Note that, for all~$Y \in \nGlob \omega$,~$(\gtrunccu[\omega]{k}_Y \co R^kY \to Y)_{k \in
    \N}$ is a colimit cocone in~$\nGlob \omega$ on the diagram
  \[
    \begin{tikzcd}
      R^0 Y
      \ar[r,"{\onecu {0}_{Y}}"]
      &
      R^1 Y
      \ar[r,"{\onecu {1}_{Y}}"]
      &
      \cdots
      \ar[r,"{\onecu {k-1}_{Y}}"]
      &
      R^k Y
      \ar[r,"{\onecu {k}_{Y}}"]
      &
      R^{k+1} Y
      \ar[r,"{\onecu {k+1}_{Y}}"]
      &
      \cdots
    \end{tikzcd}
  \]
  Since~$T$ is finitary,~$((T\gtrunccu[\omega]{k})_Y \co T R^k Y \to TY)_{k \in
    \N}$ is a colimit cocone on the diagram
  \[
    \hss
    \begin{tikzcd}[column sep=4em]
      TR^0Y
      \ar[r,"{(T\onecu{0})_{Y}}"]
      &
      TR^1Y
      \ar[r,"{(T\onecu {1})_{Y}}"]
      &
      \cdots
      \ar[r,"{(T\onecu{k-1})_{Y}}"]
      &
      TR^kY
      \ar[r,"{(T\onecu{k})_{Y}}"]
      &
      TR^{k+1} Y
      \ar[r,"{(T\onecu{k+1})_{Y}}"]
      &
      \cdots
    \end{tikzcd}
    \hss
  \] 
  In particular, the sought morphisms $\gamma_X$ are uniquely characterized by the
  cocone
  \[
    ((\gamma \circ (T\gtrunccu[\omega]{k}\Gamma))_X \co T R^k \Gamma X \to \Gamma S X)_{k \in
      \N}
  \]
  on the above diagram, where we put $Y = \Gamma X$. Using some rephrasing, we
  have a limit cone
  \[
    ( \precomp {(T\gtrunccu[\omega]{k}\Gamma)} \co \Hom(T \Gamma, \Gamma S) \to \Hom(T
    R^k \Gamma,\Gamma S))_{k \in \N}
  \]
  on the diagram
  \[
    \hss
    \begin{tikzcd}[column sep=2.5em]
      \Hom(TR^0\Gamma,\Gamma S)
      &
      \Hom(TR^1\Gamma,\Gamma S)
      \ar[l,"\precomp{(T\onecu {0}\Gamma)}"']
      &
      \Hom(TR^2\Gamma,\Gamma S)
      \ar[l,"\precomp{(T\onecu {1}\Gamma)}"']
      &
      \cdots
      \ar[l,"\precomp{(T\onecu {2}\Gamma)}"']
    \end{tikzcd}
    \hss
  \] 
  On the other hand, given a pair of functors $G,G' \co \cC \to \nGlob\omega$,
  every natural transformation $\alpha \co G \To G'$ is uniquely characterized
  by the projections $\gtruncf k - \alpha$ for $k \in \N$. In other words, we
  have a limit cone
  \[
    ( \gtruncf l - \co \Hom(G, G') \to \Hom(\gtruncf l - G,\gtruncf l - G')_{l \in \N}
  \]
  on the diagram
  \[
    \hss
    \begin{tikzcd}[column sep=2em]
      \Hom(\gtruncf 0 - G,\gtruncf 0 - G')
      &
      \Hom(\gtruncf 1 - G,\gtruncf 1 - G')
      \ar[l,"\gtruncf 0 -"']
      &
      \Hom(\gtruncf 2 - G,\gtruncf 2 - G')
      \ar[l,"\gtruncf 1 -"']
      &
      \cdots
      \ar[l,"\gtruncf 3 -"']
    \end{tikzcd}
    \hss
  \] 
  By combination of the two limit diagrams, we have that $\Hom(T \Gamma, \Gamma
  S)$ is limit cone on the diagram
  \[
    \hss
    \begin{tikzcd}[row sep=huge]
      &
      \vdots
      \ar[d,"{\gtruncf {l+1} -}"{description}]
      &
      \vdots
      \ar[d,"{\gtruncf {l+1} -}"{description}]
      & \\
      \cdots
      &
      \Hom(\gtruncf {l+1}- TR^k\Gamma, \gtruncf {l+1} - \Gamma S)
      \ar[l,"\precomp {(\onecu {k-1})}"']
      \ar[d,"{\gtruncf {l} -}"{description}]
      &
      \Hom(\gtruncf {l+1} - TR^{k+1}\Gamma, \gtruncf {l+1} - \Gamma S)
      \ar[l,"\precomp {(\onecu {k})}"']
      \ar[d,"{\gtruncf {l} -}"{description}]
      &
      \cdots
      \ar[l,"\precomp {(\onecu {k+1})}"']
      \\
      \cdots
      &
      \Hom(\gtruncf l - TR^k\Gamma, \gtruncf l - \Gamma S)
      \ar[d,"{\gtruncf {l-1} -}"{description}]
      \ar[l,"\precomp {(\onecu {k-1})}"']
      &
      \Hom(\gtruncf l - TR^{k+1}\Gamma, \gtruncf l - \Gamma S)
      \ar[l,"\precomp {(\onecu {k})}"']
      \ar[d,"{\gtruncf {l-1} -}"{description}]
      &
      \cdots
      \ar[l,"\precomp {(\onecu {k+1})}"']
      \\
      &
      \vdots
      &
      \vdots
    \end{tikzcd}
    \hss
  \]
  expressed on the category $\catop{(\N \times \N, \le \times \le)}$. By
  finality of the diagonal functor
  \[
    \catop{(\N,\le)} \to \catop{(\N\times\N,\le \times \le)}
    \zbox,
  \]
  $\Hom(T \Gamma, \Gamma S)$ is in fact a limit cone on the diagram
  \[
    \hss
    \begin{tikzcd}[column sep=small]
      \Hom(\gtruncf 0 - TR^0\Gamma,\gtruncf 0 - \Gamma S)
      &
      \Hom(\gtruncf 1 - TR^1\Gamma,\gtruncf 1 - \Gamma S)
      \ar[l]
      &
      \Hom(\gtruncf 2 - TR^2\Gamma,\gtruncf 2 - \Gamma S)
      \ar[l]
      &
      \cdots
      \ar[l]
    \end{tikzcd}
    \hss
  \]
  One can check that a compatible sequence for this diagram is precisely given
  by the $\gamma^k$ for $k \in \N$. Indeed, it is a consequence of the fact that
  $((\Gamma^k,\gamma^k))_{k\in \N}$ is a cone. Thus, we get $\gamma \co T\Gamma
  \To \Gamma S$ such that $(\gtruncf k - \gamma) \circ (\gtruncf k - T
  \gtrunccu[\omega]k\Gamma) = \gamma_k$ and $\gamma$ is uniquely defined by this
  property. Thus, we have the uniqueness of a factorizing $(\Gamma,\gamma)$ in
  $\MND$ for the cone we started from. We are left to show that
  $(\Gamma,\gamma)$ is indeed a monad functor for existence.

  Let's prove the first required equality, \ie $\gamma \circ (\eta^T \Gamma) =
  \Gamma \eta^S$. By an argument similar to the one we used above, it is enough
  to prove that
  \[
    (\gtruncf k - \gamma) \circ (\gtruncf k -\eta^T \Gamma) \circ (\gtruncf k -
    \gtrunccu[\omega]k \Gamma) = (\gtruncf k - \Gamma \eta^S) \circ (\gtruncf k -
    \gtrunccu[\omega]k \Gamma)
  \]
  for every $k \in \N$. We compute that
  \begin{align*}
    & 
      (\gtruncf k - \gamma) \circ (\gtruncf k -\eta^T \Gamma) \circ (\gtruncf k -
      \gtrunccu[\omega]k \Gamma)
    \\
    =&
       (\gtruncf k - \gamma) \circ (\gtruncf k -
       T\gtrunccu[\omega]k \Gamma) \circ (\gtruncf k -\eta^T R^k\Gamma)
       & \text{(by naturality)}
       \\
    =&
       \gamma^k \circ (\gtruncf k -\eta^T R^k\Gamma)
       \\
    =&
       \gamma^k \circ (\eta^{k}\Gamma^k)
       & \text{\makebox[5cm][r]{(since the unit of $\gincf \omega - \dashv \gtruncf k -$ is the identity)}}
       \\
    =&
       \Gamma^k \eta^S
       \\
    =&
       \gtruncf k -\Gamma \eta^S
       \\
    =&
       (\gtruncf k -\Gamma \eta^S) \circ (\gtruncf k -
      \gtrunccu[\omega]k \Gamma)
       & \text{(since $\gtruncf k -
          \gtrunccu[\omega]k$ is an identity)\zbox.}
  \end{align*}
  In order to show the other equality, that is,
  \begin{equation}
    \label{eq:compat-with-mult}
    \gamma \circ (\mu^T \Gamma) = (\Gamma \mu^S) \circ (\gamma S) \circ (T \gamma)
  \end{equation}
  we need to use twice the argument we used earlier to get that
  \[
    (\pi_k \co \Hom(TT\Gamma,\Gamma S) \to \Hom(\gtruncf k - T R^k T R^k\Gamma,\gtruncf k - \Gamma
    S))_{k \in \N}
  \]
  is a limit cone on the adequate diagram, where $\pi_k$ is the composite
  \[
    \hss
    \begin{tikzcd}
      \Hom(TT\Gamma,\Gamma S)
      \ar[r,"{\gtruncf k -}"]
      &
      \Hom(\gtruncf k -TT\Gamma,\gtruncf k -\Gamma S)
      \ar[r,"{\precomp{(\gtruncf k - T \gtrunccu k T \gtrunccu k \Gamma)}}"]
      &[2cm]
      \Hom(\gtruncf k - T R^k T R^k\Gamma,\gtruncf k - \Gamma S)\zbox.
    \end{tikzcd}
    \hss
  \]
  Then, using the existing equations and naturality, the reader can prove
  similarly as above that
  \[
    \hss
    (\gtruncf k -(\gamma \circ (\mu^T \Gamma))) \circ {(\gtruncf k - T
        \gtrunccu k T \gtrunccu k \Gamma)}
    = (\gtruncf k -((\Gamma \mu^S) \circ (\gamma S) \circ (T \gamma)))
    \circ
    {(\gtruncf k - T \gtrunccu k T \gtrunccu k \Gamma)}
    \hss
  \]
  so that~\eqref{eq:compat-with-mult} holds and~$(\Gamma,\gamma)$ is indeed a
  monad functor, which moreover factorizes uniquely the cone
  $(\Gamma^k,\gamma^k)_{k\in\N}$. Thus, $(\nGlob\omega,T)$ is indeed a limit
  cone in $\MND$ as stated.
\end{proof}

\begin{prop}
  \label{prop:alg-infty-limit}
  $(\algtruncf k {(-)} \co \Alg_\omega \to \Alg_k)_{k \in \N}$ is a (bi)limit cone in~$\CAT$
  on the diagram
  \[
    \begin{tikzcd}
      \Alg_0
      &
      \Alg_1
      \ar[l,"\algtruncf 0 {(-)}"']
      &
      \Alg_2
      \ar[l,"\algtruncf 1 {(-)}"']
      &
      \Alg_3
      \ar[l,"\algtruncf 2 {(-)}"']
      &
      \cdots
      \ar[l,"\algtruncf 3 {(-)}"']
    \end{tikzcd}
  \]
\end{prop}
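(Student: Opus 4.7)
The plan is to derive this as an immediate consequence of the preceding \Cref{prop:Tomega-limit-cone} by transporting its limit cone along the functor $\EM \co \MND \to \CAT$. Recall from \Cref{text:mnd-category} that $\EM$ is right adjoint to $\mndinc$, so it preserves all limits that exist in $\MND$. Therefore, applying $\EM$ to the cone
\[
  ((\nGlob\omega,T), (\gtruncf k-, \gtruncf k- T \gtrunccu k)_{k\in\N})
\]
yields a limit cone in $\CAT$ on the diagram obtained by applying $\EM$ vertex-wise and edge-wise.

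Next, I would check that this image cone is precisely the cone of the statement. On objects: $\EM(\nGlob k, T^k) = \nGlob k^{T^k} = \Alg_k$ for every $k \in \N \cup \set\omega$. On the legs: by the very definition of $\algtruncf[n] k -$ in \Cref{text:alg-trunc-incl-functors}, we have $\EM(\gtruncf k -, \gtruncf k - T \gtrunccu k) = \algtruncf[n] k -$. For the base diagram, the connecting maps $(\gtruncf k -, \gtruncf k - T^{k+1} \gtrunccu[k+1]k)$ get sent to the functors denoted $\algtruncf[k+1] k -$ between the $\Alg_k$'s, again directly from their definitions (and consistent with \Cref{lem:gtruncf-cocart-lift} together with the functoriality of $\EM$). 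This gives exactly the cone of the statement.

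Regarding the ``(bi)'' qualifier: since the adjunction $\mndinc \dashv \EM$ is a $2$-adjunction between $2$-categories, $\EM$ preserves $2$-dimensional limits as well, so the resulting cone is also a bilimit cone in $\CAT$. Concretely, the uniqueness of the factorizing monad functor $(\Gamma,\gamma)$ obtained in the proof of \Cref{prop:Tomega-limit-cone} transports into the corresponding universal property for functors out of an Eilenberg--Moore category, and the analogous argument for $2$-cells gives the bilimit strengthening.

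The main ``obstacle'' is really only bookkeeping: there is no new argument to make, since the substantive work was carried out in \Cref{prop:Tomega-limit-cone}. One just has to carefully verify that $\EM$ applied to the structural monad functors recovers the truncation functors on algebras, which is essentially a matter of unfolding the definition from \Cref{text:alg-trunc-incl-functors}. A more hands-on alternative would be to replay the proof of \Cref{prop:Tomega-limit-cone} directly at the level of algebras (using that the forgetful functors $\fgfalgf_k$ create limits and the finitary hypothesis on $T$), but this would merely duplicate the earlier argument.
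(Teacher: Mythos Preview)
Your approach is correct and essentially identical to the paper's: apply $\EM$ to the limit cone of \Cref{prop:Tomega-limit-cone} and use that $\EM$, being a right adjoint, preserves limits; the identification of the image cone with the truncation functors is exactly as you describe.

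One small point on the ``(bi)'' qualifier: \Cref{prop:Tomega-limit-cone} is proved only as a $1$-categorical limit in $\MND$ (the paper says so explicitly), so invoking a $2$-adjunction does not directly upgrade the conclusion---you would need the input to be a $2$-limit first. The paper instead handles this in the subsequent \Cref{rem:alg-infty-2-limit} by citing the generic fact (e.g.\ \cite[Lemma~7.5.1]{riehl2014categorical}) that any $1$-limit in $\CAT$ is automatically a strict $2$-limit. Your parenthetical ``analogous argument for $2$-cells'' would work too, but would require actually replaying the proof of \Cref{prop:Tomega-limit-cone} at the level of monad transformations, which is more effort than appealing to the generic lemma.
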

\begin{proof}
  This is a consequence of \Cref{prop:Tomega-limit-cone} and that $\EM$ is a
  right adjoint.
\end{proof}

\begin{remark}
  \label{rem:alg-infty-2-limit}
  The above proof only show that $\Alg_\omega$ is a limit in the $1$-categorical
  sense, \ie only factorizes cones of functors by a functor (and not morphisms
  of cones as natural transformations, \emph{a priori}). But, by a generic
  argument~(stated for example as \cite[Lemma~7.5.1]{riehl2014categorical}), any
  limit in the $1$-categorical sense in $\CAT$ is also a strict $2$-limit, \ie
  factorizes morphisms between cones as well.
\end{remark}

\begin{remark}
  If we restrain our interest to \emph{weakly truncable monads} (defined in
  \Cref{text:truncable-monads}), another proof is possible which dispenses with
  the specific and technical argument of \Cref{prop:Tomega-limit-cone}. Indeed,
  writing $\MNDpsd$ for the sub-$2$-category of $\MND$ consisting of monads,
  \emph{pseudo} monad functors and monad transformations, where a monad functor
  $(F,\phi)$ is said \emph{pseudo} when~$\phi$ is an isomorphism, we have that
  the \emph{underlying functor} $\mndund \co \MNDpsd \to \CAT$, mapping a monad
  $(\cC,T)$ to its underlying category~$\cC$, creates bilimits, and the latter
  are moreover preserved by the inclusion $\MNDpsd \hookrightarrow \MND$ (this
  is left as an exercise for now). Thus, a bilimit version of
  \Cref{prop:Tomega-limit-cone} can be deduced from the fact that $\nGlob\omega$
  is a bilimit of the $\nGlob k$'s for $k \in \N$. This argument does not apply
  for $\MND$ as a whole, and it is really the finitary hypothesis on~$T$ which
  makes \Cref{prop:Tomega-limit-cone} work.
\end{remark}

\subsection{A criterion for globular algebras}
\label{text:criterion-for-globular-algebras}


Usually, a specific notion of higher category and the associated truncation and
inclusion functors are not directly derived from a monad. Instead, we often
manipulate higher categories that are defined, in each dimension~$k \in \N$, as
structures with operations satisfying some equations, and the truncation and
inclusion functors are defined by hand. Such equational definitions surely
induce monads on $k$\globular sets, but it is not immediate that the monad in
dimension~$l_1$ is obtained by truncating the monad in dimension~$l_2$ for $l_1
< l_2$, as was done earlier, nor is the equivalence between the boilerplate
definitions of truncation and inclusion functors and the ones defined earlier in
this section. Verifying the equivalences of these definitions is required in
order to use general constructions for globular algebras, like the ones of the
next section. But, without a generic argument, the verification can be tedious
since it involves, among others, an explicit description of the different
monads. In this section, we give a criterion, in the form of
\Cref{thm:charact-globular-algebras}, to recognize that categories and functors
between them are equivalent to categories of globular algebras and truncation
functors derived from some monad on globular sets.

\smallpar We first prove a technical lemma:
\begin{lem}
  \label{lem:mnd-ra-square}
  Given a commutative diagram of functors
  \[
    \begin{tikzcd}
      C
      \ar[r,"U"]
      \ar[d,"\mcal T"']
      &
      D
      \ar[d,"\mcal T'"]
      \\
      \bar C
      \ar[r,"\bar U"']
      &
      \bar D
    \end{tikzcd}
  \]
  which are part of adjunctions $F \dashv U$, $\bar F \dashv \bar U$, $\mcal I
  \dashv \mcal T$, $\mcal I' \dashv \mcal T'$, we have a diagram in $\MND$
  \[
    \begin{tikzcd}[column sep=large]
      (C,\catunit{})
      \ar[d,"{\mndinc(\mcal T)}"']
      \ar[rrr,"{(U,U\eps^{F,U})}"]
      &&&
      (D,R)
      \ar[d,"{(\mcal T',\cT'R\eps^{\cI',\cT'})}"]
      \\
      (\bar C,\catunit{})
      \ar[r,"{(\bar U,\bar U\eps^{\bar F,\bar U})}"']
      &
      (\bar D,\bar T)
      &(\bar D,\bar S)
      \ar[l,"{(\catunit{},\bar U\eta^{\cI,\cT}\bar F)}"]
      &(\bar D,S)
      \ar[l,"\sim"]
    \end{tikzcd}
  \]
  where $R$, $S$, $\bar S$ and $\bar T$ are the monads respectively derived from
  the adjunctions $F \dashv U$, $F\mcal I' \dashv \mcal T' U$, $\mcal I \bar F
  \dashv \bar U \mcal T$ and $\bar F \dashv \bar U$, and $(\bar D,S) \to (\bar
  D,\bar S)$ is the isomorphism of monads induced by $\cT'U = \bar U\cT$, and
  where we wrote $\eta^{L,R}$ and $\eps^{L,R}$ for the unit and counit of each
  adjunction $L \dashv R$. Moreover, if the unit of the adjunction $\mcal I
  \dashv \mcal T$ is an isomorphism (or, equivalently, $\mcal I$ is fully
  faithful), then the morphism $(\bar D,\bar S) \to (\bar D,\bar T)$ is an
  isomorphism.
\end{lem}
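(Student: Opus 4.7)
The plan is to verify each arrow of the diagram as a well-defined monad morphism, then to check the commutativity of the outer diagram, and finally to establish the last assertion. Each non-identity arrow is produced by a direct application of an earlier proposition: $(U,U\eps^{F,U})$ and $(\bar U,\bar U\eps^{\bar F,\bar U})$ come from \Cref{prop:ex-algra} applied to the trivial monads on $C$ and $\bar C$ together with $F\dashv U$ and $\bar F\dashv\bar U$; the morphism $(\cT',\cT'R\eps^{\cI',\cT'})$ also arises from \Cref{prop:ex-algra}, this time applied to the monad $R$ on $D$ along $\cI'\dashv\cT'$, whose induced monad on $\bar D$ is $\cT'R\cI'=\cT'UF\cI'=S$; and $(\catunit{},\bar U\eta^{\cI,\cT}\bar F)$ is the monad functor produced by \Cref{prop:mnd-morph-eta} from the composable pair $\bar F\dashv\bar U$ and $\cI\dashv\cT$, whose associated monads on $\bar D$ are indeed $\bar T$ and $\bar S$.

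Since $\bar U\cT=\cT'U$ by hypothesis, the two functors $F\cI'$ and $\cI\bar F$ are both left adjoint to this common functor and are therefore canonically isomorphic via a natural iso $\psi\co F\cI'\xrightarrow{\sim}\cI\bar F$, obtained as the mate of the identity on $\bar U\cT$. Whiskering $\psi^{-1}$ on the left by $\bar U\cT$ yields a $2$-cell $\bar S=\bar U\cT\cI\bar F\To\bar U\cT F\cI'=S$ which, by a short check using the units and counits of the four adjunctions, is compatible with the two monad structures; this gives the required isomorphism $(\bar D,S)\xrightarrow{\sim}(\bar D,\bar S)$ in $\MND$.

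For the commutativity of the outer diagram, both composites have underlying functor $\bar U\cT=\cT'U$, so only the agreement of their coherence $2$-cells remains. Unpacking the composition law for monad functors, the top-right path gives a $2$-cell $\bar T\bar U\cT\To\bar U\cT$ obtained by pasting $\bar U\eta^{\cI,\cT}\bar F$, the mate iso, $\cT'R\eps^{\cI',\cT'}$ and $U\eps^{F,U}$, while the bottom-left path directly yields $\bar U\eps^{\bar F,\bar U}\cT$. I expect to verify this equality using string diagrams, through repeated zigzag cancellations and the mate characterization of the iso between $S$ and $\bar S$; this is the step I anticipate to be the main obstacle of the proof.

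For the final claim, if the unit of $\cI\dashv\cT$ is an isomorphism, then so is its whiskering $\bar U\eta^{\cI,\cT}\bar F$. Since the underlying functor of $(\catunit{},\bar U\eta^{\cI,\cT}\bar F)$ is the identity, the inverse of this $2$-cell still defines a monad functor (the monad functor axioms transpose along inversion when the underlying functor is the identity), so the morphism is invertible in $\MND$.
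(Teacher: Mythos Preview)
Your proposal is correct and follows essentially the same route as the paper: the paper also invokes \Cref{prop:ex-algra} and \Cref{prop:mnd-morph-eta} to justify that the displayed arrows are monad functors, introduces the mate isomorphism $\theta\co F\cI'\To\cI\bar F$ (your $\psi$) to identify $S$ with $\bar S$, and then verifies the commutativity of the resulting pasting via a short string-diagram computation using the zigzag identities. Your treatment of the final clause is in fact slightly more explicit than the paper's, which leaves that observation to the reader.
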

\begin{proof}
  By \Cref{prop:ex-algra,prop:mnd-morph-eta} already introduced, the morphisms
  shown on the diagram are indeed monad functors. We are left to check that the
  diagram commutes. More precisely, we need to show that pasting of $2$\cells
  \[
    \begin{tikzcd}[sep=huge]
      C
      \ar[r,"U"]
      \ar[d,"{\catunit{}}"']
      \cphar[rd,"="]
      &
      D
      \ar[r,"\cT'"]
      \ar[d,"R"'{description}]
      \cphar[rd,"\Downarrow\cT'R\eps^{\cI',\cT'}"]
      &
      \bar D
      \ar[r,"{\catunit{}}"]
      \ar[d,"S"'{description}]
      \cphar[rd,"\Downarrow\bar U\cT\finv\theta"]
      &
      \bar D
      \ar[r,"{\catunit{}}"]
      \ar[d,"\bar S"'{description}]
      \cphar[rd,"\Downarrow\bar U\eta^{\cI,\cT}\bar F"]
      &
      \bar D
      \ar[d,"\bar T"]
      \\
      C
      \ar[r,"U"']
      &
      D
      \ar[r,"\cT'"']
      &
      \bar D
      \ar[r,"{\catunit{}}"']
      &
      \bar D
      \ar[r,"{\catunit{}}"']
      &
      \bar D
    \end{tikzcd}
  \]
  is equal to
  \[
    \begin{tikzcd}[sep=huge]
      C
      \ar[r,"\cT"]
      \ar[d,"{\catunit{}}"']
      \cphar[rd,"="]
      &
      \bar C
      \ar[r,"\bar U"]
      \ar[d,"{\catunit{}}"'{description}]
      \cphar[rd,"\Downarrow\bar U \eps^{\bar F,\bar U}"]
      &
      \bar D
      \ar[d,"{\bar T}"]
      \\
      C
      \ar[r,"\cT"']
      &
      \bar C
      \ar[r,"\bar U"']
      &
      \bar D
    \end{tikzcd}
  \]
  where $\theta \co F\cI'\To \cI \bar F$ is the canonical isomorphism between
  the two functors which are left adjoint to $\bar U \cT = \cT' U$, defined by
  \[
    \theta
    \qqeq
    \satex{mnd-ra-square-theta}
    \zbox{\qquad.}
  \]
  We check that the wanted equality holds using string diagrams:
  \[
    \satexnoscale{lem-mnd-morph-1}
    =
    \satexnoscale{lem-mnd-morph-2}
    =
    \satexnoscale{lem-mnd-morph-3}
    \zbox{\qquad.}
    \qedhere
  \]
\end{proof}
The criterion for recognizing categories and functors as categories of globular
algebras and truncation functors between them is the following:
\begin{theo}
  \label{thm:charact-globular-algebras}
  Let $(T,\eta,\mu)$ be a finitary monad on $\nGlob\omega$, and
  \[
    (C_k)_{k \in \N}
    \qtand
    C_\omega
  \]
  be categories, and
  \[
    (U_k \co C_k \to \nGlob k)_{k
      \in \N}
    \qtand
    U_\omega \co C_\omega \to \nGlob \omega 
  \]
  be monadic functors, and
  \[
    (\mcal T^{k+1}_k \co C_{k+1} \to C_k)_{k \in \N}
    \qtand
    (\mcal T^\omega_k \co C_\omega \to C_k)_{k \in \N}
  \]
  be right adjoint functors with fully faithful left adjoints such that
  $\cT^{k+1}_k\cT^\omega_{k+1} = \cT^\omega_k$ and
  \[
    \begin{tikzcd}
      C_\omega
      \ar[r,"U_\omega"]
      \ar[d,"\mcal T^{\omega}_k"']
      &
      \nGlob\omega
      \ar[d,"{\gtruncf[\omega]k-}"]
      \\
      C_k
      \ar[r,"U_k"']
      &
      \nGlob k
    \end{tikzcd}
  \]
  commute for every $k \in \N$. Then, there exist equivalences of categories
  \[
    H_\omega \co C_\omega \to \Alg_\omega
    \qtand
    (H_k \co  C_k \to \Alg_k)_{k \in \N}
  \]
  such that
  \begin{equation}
    \label{eq:thm:charact-globular-algebras:obtained-squares}
    \begin{tikzcd}
      C_\omega
      \ar[r,"H_\omega"]
      \ar[d,"\mcal T^{\omega}_k"']
      &
      \Alg_\omega
      \ar[d,"{\algtruncf[\omega]k-}"]
      \\
      C_k
      \ar[r,"H_k"']
      &
      \Alg_k
    \end{tikzcd}
    \qtand
    \begin{tikzcd}
      C_{k+1}
      \ar[r,"H_{k+1}"]
      \ar[d,"\mcal T^{k+1}_k"']
      &
      \Alg_{k+1}
      \ar[d,"{\algtruncf[k+1]k-}"]
      \\
      C_k
      \ar[r,"H_k"']
      &
      \Alg_k
    \end{tikzcd}
  \end{equation}
  commute for every $k \in \N$.
\end{theo}
\begin{rem}
  The statement of the above theorem can easily be adapted when working on a
  monad $T$ on $\nGlob n$ for some $n \in \N$ by requiring instead that
  \[
    \begin{tikzcd}
      C_{k+1}
      \ar[r,"U_{k+1}"]
      \ar[d,"\mcal T^{k+1}_k"']
      &
      \nGlob{k+1}
      \ar[d,"{\gtruncf[k+1]k-}"]
      \\
      C_k
      \ar[r,"U_k"']
      &
      \nGlob k
    \end{tikzcd}
  \]
  commutes for every $k < n$, and then one only gets the right commuting square
  of~\eqref{eq:thm:charact-globular-algebras:obtained-squares}.
\end{rem}
\begin{proof}
  Let $k \in \N$. Taking $C_\omega$, $C_k$, $\nGlob\omega$ and $\nGlob k$
  respectively for $C$, $\bar C$, $D$ and $\bar D$, and $U_\omega$, $U_k$,
  $\cT^\omega_k$ and $\gtruncf[\omega]k-$ respectively for $U$, $\bar U$, $\cT$
  and $\cT'$ in \Cref{lem:mnd-ra-square}, and using the adjunction $\mndinc
  \dashv \EM$, we get a commutative square
  \[
    \begin{tikzcd}[column sep=large]
      C_\omega
      \ar[d,"{\mcal T^\omega_k}"']
      \ar[rrr,"H_\omega"]
      &&&
      \Alg_\omega
      \ar[d,"{\algtruncf[\omega]k-}"]
      \\
      C_k
      \ar[r,"\bar H_k"']
      &
      \nGlob k^{\bar T}
      &
      \nGlob k^{\bar S}
      \ar[l,"\sim"]
      &
      \Alg_k
      \ar[l,"\sim"]
    \end{tikzcd}
  \]
  where $\bar S$ and $\bar T$ are the monads defined in the lemma. Remember that
  both $\Alg_k$ and $\algtruncf[\omega] k -$ were defined using
  \Cref{prop:ex-algra} so that the arrow on the right of the above diagram is
  indeed $\algtruncf[\omega] k -$. Also, $H_\omega$ and $\bar H_k$ are
  equivalences since $U_\omega$ and $U_k$ were supposed monadic. Finally, by the
  final part of \Cref{lem:mnd-ra-square} and the hypothesis, the middle arrow in
  the bottom line is an isomorphism. Thus, by defining $H_k$ to be the
  composition of the three morphisms on the bottom line (after inverting the
  second and third ones), we get the commutative diagram on the left
  of~\eqref{eq:thm:charact-globular-algebras:obtained-squares}, and we are left
  to get the one on the right. Writing $\cI^{k+1}_\omega$ for a left adjoint to
  $\cT^\omega_{k+1}$, since the unit $\catunit{} \To
  \cT^\omega_{k+1}\cI^{k+1}_\omega$ is an isomorphism, it is enough to check
  that
  \[
    \algtruncf k -H_{k+1}\cT^\omega_{k+1}\cI^{k+1}_\omega
    =
    H_k\cT^{k+1}_k\cT^\omega_{k+1}\cI^{k+1}_\omega\zbox.
  \]
  But
  \begin{align*}
    \algtruncf k -H_{k+1}\cT^\omega_{k+1}\cI^{k+1}_\omega
    &=
      \algtruncf k -\algtruncf[\omega] {k+1}-H_\omega\cI^{k+1}_\omega
      \\
    &=
      \algtruncf[\omega] k -H_\omega\cI^{k+1}_\omega
    \\
    &=
      H_k\cT^\omega_k\cI^{k+1}_\omega
    \\
    &=
      H_k\cT^{k+1}_k\cT^\omega_{k+1}\cI^{k+1}_\omega
    &      
  \end{align*}
  which concludes the proof.
\end{proof}

\subsection{Truncable globular monads}
\label{text:truncable-monads}
\begingroup \newcommand\Tkpo{T^{k+1}}%

The general setting of higher category theories as monads over globular sets
allows defining theories with unusual operations, like compositions of
$l$\cells that produce unrelated $l'$\cells for some~$l' < l$ (\cf
\Cref{ex:weird-cat-globular-algebra}). Anticipating the next section, such
theories are badly behaved when it comes to freely adding new
$(k{+}1)$\generators to $k$\categories, since the underlying $k$\categories will
not be preserved in the process. In order not to allow such monads, we recall
from~\cite{batanin1998computads} the notion of \emph{truncable monad} which
forbids those problematic operations and still includes most usual theories for
higher categories: those are the monads which ``commute with truncation'' in a
suitable sense. As we will see in the next\link section, the $k$\categories of
these theories are preserved when freely adding $(k{+}1)$\generators.

Let~$n \in \N \cup \set\omega$ and~$(T,\eta,\mu)$ be a finitary monad on~$\nGlob
n$. For~$k \in \N_n$, remember that we used \Cref{prop:ex-algra} to define both
the monad $(T^k,\eta^k,\mu^k)$ and the cocartesian morphism
\[
  (\gtruncf[n]k-,\gtruncf k {(-)} T \gtrunccu[n] k) 
  \co
  (\nGlob n,T)
  \to
  (\nGlob k,T^k)
  \zbox.
\]
In the following, we write $\montruncn k$ for the natural transformation
$\gtruncf k {(-)} T \gtrunccu[n] k$. The monad~$T$ is said
\emph{weakly truncable} when~$\montruncn k$ is an isomorphism for each~$k < n$; it is \emph{truncable} when, for each~$k < n$, $\montruncn k$ is the
natural identity transformation (so that ${T_k\gtruncf k {(-)} = \gtruncf k
  {(-)}T}$).
\begin{example}
  The monad~$(T,\eta,\mu)$ of categories on~$\nGlob 1$ defined in
  \Exr{cat-globular-algebra} is weakly truncable. By choosing adequately the
  left adjoint~$\nGlob 1 \to \Cat$ that defines~$T$, we can even suppose
  that~$T$ is truncable. More generally, we will see in \Cref{text:sc-as-globular-algebras}
  that the monad of strict \ocats is weakly truncable, and even truncable up to
  an isomorphism of monads.
\end{example}
\begin{example}
  \label{ex:weird-monad-not-truncable}
  The monad~$(T,\eta,\mu)$ of weird $2$\categories on~$\nGlob 2$ defined in
  \Exr{weird-cat-globular-algebra} is not truncable since, for~$X \in \nGlob
  2$, we have
  \[
    (TX)_0 \cong X_0 \sqcup (X_2 \times X_2)
    \qtand
    (T^0(\restrict 0 X))_0 \cong X_0\zbox.
  \]
\end{example}
\noindent The following property tells that we can always adapt a weakly
truncable monad to a truncable monad:
\begin{prop}
  If~$(T,\eta,\mu)$ is weakly truncable, then it is isomorphic to a monad
  which is truncable.
\end{prop}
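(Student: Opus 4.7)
The plan is to construct a truncable monad $T'$ on~$\nGlob n$ together with a monad isomorphism $T \cong T'$ by transporting $T$ along a carefully chosen dimension-wise representative of itself. For each $X \in \nGlob n$ and each $k \in \N_n$, weak truncability provides a bijection $(\montruncn k_X)_k \co (T^k\gtruncf k X)_k \to (TX)_k$, namely the top-dimensional component of the isomorphism $\montruncn k_X \co T^k\gtruncf k X \to \gtruncf k TX$. I would then define $T'X$ to be the unique $n$-globular set with $(T'X)_k := (T^k\gtruncf k X)_k$ in each dimension and whose source and target maps are chosen so that the family $\theta_X = ((\montruncn k_X)_k)_{k \in \N_n}$ assembles into an isomorphism $\theta_X \co T'X \to TX$ of $n$-globular sets. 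The globular identities on $T'X$ follow directly from those on $TX$ by transport along $\theta_X$.

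Next I would extend $T'$ to a functor via $T'f := \theta_Y^{-1} \circ Tf \circ \theta_X$ for $f \co X \to Y$, which automatically makes $\theta \co T' \To T$ a natural isomorphism. Pulling back the monad structure of $T$ along $\theta$ endows $T'$ with a unit $\eta^{T'}_X = \theta_X^{-1} \circ \eta^T_X$ and a multiplication uniquely determined by $\theta_X \circ \mu^{T'}_X = \mu^T_X \circ T\theta_X \circ \theta_{T'X}$, so that $\theta$ is a monad isomorphism by design.

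The crux is to verify that $T'$ is truncable, i.e.\ that $\gtruncf k - T' \gtrunccu[n] k$ is the identity for every $k < n$. First, for $j \le k$ one computes
\[
  (T'\gincf n \gtruncf k X)_j = (T^j\gtruncf j \gincf n \gtruncf k X)_j = (T^j\gtruncf j X)_j = (T'X)_j
\]
using that $\gtruncf j \gincf n \gtruncf k = \gtruncf j$ when $j \le k$, so the source and target of the required natural transformation coincide literally. Second, unwinding the transport formula, $(T'\gtrunccu[n]k_X)_j$ equals the composite $(\montruncn j_X)_j^{-1} \circ (T\gtrunccu[n]k_X)_j \circ (\montruncn j_{\gincf n \gtruncf k X})_j$. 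Applying the naturality of $\montruncn j$ to the morphism $\gtrunccu[n]k_X$ together with the fact that $\gtruncf j \gtrunccu[n]k_X = \catunit{\gtruncf j X}$ for $j \le k$ (counits of $\gincf n{-} \dashv \gtruncf k{-}$ are identities below the truncation dimension), the middle and right factors simplify to $(\montruncn j_X)_j$, so the whole expression collapses to the identity.

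The main obstacle is this final truncability verification: one needs to unwind the transport definition of $T'$ correctly and feed in the naturality of the isomorphisms $\montruncn j$ together with the elementary fact about truncation counits. Everything else — the globular set structure on $T'X$, the functoriality of $T'$, and the transport of the monad structure making $\theta$ a monad isomorphism — is routine once the dimension-wise bijections $(\montruncn k_X)_k$ are in hand.
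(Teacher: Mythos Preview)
Your proposal is correct and follows essentially the same strategy as the paper: set $(T'X)_k = (T^k\gtruncf k X)_k$ dimension-wise, assemble the top components of the $\montruncn k$ into a globular isomorphism $\theta\co T' \To T$, and transport the monad structure along it. The paper presents this inductively on~$k$ (building the source/target maps of $\bar T X$ step by step) and leaves the final truncability check as an easy verification, whereas you give the construction directly and spell out that check via naturality of $\montruncn j$ together with $\gtruncf j \gtrunccu[n]k = \unit{}$ for $j\le k$; these are the same argument in slightly different packaging.
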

\begin{proof}
  We define a truncable monad~$(\bar T,\bar\eta,\bar\mu)$ on~$\nGlob n$ and an
  isomorphism~${\phi\co T\to \bar T}$ from their trunctations
  \[
    \gtruncf k {(-)}\bar T \qtand \restrict k \phi\co \gtruncf k {(-)}T\to \gtruncf k {(-)}\bar T
  \]
  and we define those using an induction on~$k$ for~$k \in \N_n$. In dimension~$0$, we put
  \[
    \gtruncf 0 {(-)} \bar T = T^0\gtruncf 0
    {(-)}
    \qtand
    \phi_0 = \finv {(\montruncn 0)}
  \]
  Then, given~$k \in \N_n$ and a $(k{+}1)$\globular set~$X$, we define~$\restrict {k+1} {(\bar TX)}$ as the $(k{+}1)$\globular set~$Y$ where
  \[
    \restrict k Y = \restrict {k} {(\bar TX)}
    \qtand
    Y_{k+1} = (T^{k+1}(\restrict {k+1} X))_{k+1}
  \]
  and the operation~$\csrctgt\eps_k \co Y_{k+1} \to Y_{k}$ is defined as the
  composite
  \[
    Y_{k+1} = (T^{k+1}(\restrict {k+1} X))_{k+1} \xto{\csrctgt\eps_{k}}
    (T^{k+1}(\restrict {k+1} X))_k \xto{(\montruncn {k+1})_k} (TX)_k \xto{(\phi_k)_k} (\bar TX)_k
  \]
  for~$\eps \in \set{-,+}$. Our definition extends canonically to a functor
  \[
    \gtruncf {k+1}
    {(-)} \bar T\co \nGlob n \to \nGlob {k+1}\zbox.
  \]
  We also extend~$\restrict k \phi$
  on dimension~$k+1$ by putting, for~$X \in \nGlob n$,
  \[
    (\phi_{X})_{k+1} = (\finv {(\montruncn[X] {k+1})})_{k+1} \co (TX)_{k+1} \to
    (\bar TX)_{k+1}
  \]
  So we defined~$\bar T\co \nGlob n \to \nGlob n$ together with an isomorphism~$\phi \co T\to \bar T$. Finally, we put
  \[
    \bar\eta = \phi \circ \eta \qtand \bar\mu = \phi \circ \mu \circ (\finv\phi\finv\phi)
  \]
  so that~$(\bar T,\bar\eta,\bar\mu)$ is a monad and $(\catunit,\finv\phi) \co
  (\nGlob n,T) \to (\nGlob n,\bar T)$ a monad isomorphism. By the definition
  of~$\bar T$, we easily verify that~$(\bar T,\bar\eta,\bar\mu)$ is truncable.
\end{proof}
\noindent The truncability with respect to dimension~$n$ implies the
truncability with respect to dimension~$l$ for any~$l < n$:
\begin{lem}
  \label{lem:truncable-other-dims}
  If $T$ is weakly truncable (\resp truncable), then, for every $k,l \in \N$
  with $k < l < n$, $\gtruncf k - T^l \gtrunccu[l]k$ is an isomorphism (\resp an
  identity).
\end{lem}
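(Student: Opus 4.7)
The plan would be to leverage the factorization established in \Cref{lem:gtruncf-cocart-lift} to decompose $\montruncn k$ in terms of $\montruncn l$ and the natural transformation $\gtruncf[l]k- T^l \gtrunccu[l]k$ (suitably whiskered), and then to undo the whiskering using a section. Concretely, from the cited lemma one has
\[
  (\gtruncf[l]k-, \gtruncf[l]k- T^l \gtrunccu[l]k) \circ (\gtruncf[n]l-, \montruncn l) = (\gtruncf[n]k-, \montruncn k)
\]
in $\MND$, and I would unwind the $2$-cell component of a composition of monad functors to rewrite this as
\[
  \montruncn k = (\gtruncf[l]k- \montruncn l) \circ (\gtruncf[l]k- T^l \gtrunccu[l]k \gtruncf[n]l-)\zbox.
\]

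Next, assuming $T$ is weakly truncable, both $\montruncn k$ and $\montruncn l$ are isomorphisms; since whiskering preserves isomorphisms, $\gtruncf[l]k- \montruncn l$ is an isomorphism too, and the displayed equation then forces $\gtruncf[l]k- T^l \gtrunccu[l]k \gtruncf[n]l-$ to be an isomorphism. In the strictly truncable case, $\montruncn k$, $\montruncn l$, and hence $\gtruncf[l]k- \montruncn l$, are identity natural transformations, and the same equation forces $\gtruncf[l]k- T^l \gtrunccu[l]k \gtruncf[n]l-$ to be the identity.

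Finally, I would transfer the property from the whiskered natural transformation to $\gtruncf[l]k- T^l \gtrunccu[l]k$ itself by whiskering on the right by $\gincf[l]n-$: since $\gtruncf[n]l- \gincf[l]n- = \catunit[\nGlob l]$, one obtains
\[
  (\gtruncf[l]k- T^l \gtrunccu[l]k \gtruncf[n]l-) \gincf[l]n- = \gtruncf[l]k- T^l \gtrunccu[l]k\zbox,
\]
and whiskering preserves both being an isomorphism and being an identity. The main (mild) bookkeeping obstacle will be verifying, particularly in the truncable case, that the sources and codomains of the relevant natural transformations actually coincide, so that it makes sense to speak of them as identities; this follows from the identifications $\gtruncf[l]k- \gtruncf[n]l- = \gtruncf[n]k-$ and $T^k = \gtruncf[l]k- T^l \gincf[k]l-$ already used earlier, which yield in particular $T^k \gtruncf[n]k- = \gtruncf[n]k- T = \gtruncf[l]k- T^l \gtruncf[n]l-$ under the truncability assumptions.
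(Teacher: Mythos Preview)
Your proposal is correct and follows essentially the same argument as the paper: the paper derives the identity
\[
  \gtruncf[n]k- T \gtrunccu[n]k = (\gtruncf[l]k-\gtruncf[n]l-T\gtrunccu[n]l) \circ (\gtruncf[l]k-T^l\gtrunccu[l]k\gtruncf[n]l-)
\]
directly from the decomposition of the counit $\gtrunccu[n]k$, while you obtain the same equation by unpacking the composition of monad functors in \Cref{lem:gtruncf-cocart-lift}; both then apply the 2-out-of-3 property and precompose with $\gincf[l]n-$ to strip off the trailing $\gtruncf[n]l-$.
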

\begin{proof}
  We have $\gtrunccu[n]k = \gtrunccu[l]k \circ (\gincf[k]l- \gtrunccu[l]k
  \gtruncf[l]k-)$, so that
  \[
    \gtruncf[n]k- T \gtrunccu[n]k = (\gtruncf[l]k-\gtruncf[n]l-T\gtrunccu[n]l)
    \circ (\gtruncf[l]k-T^l\gtrunccu[l]k\gtruncf[n]l-)
    \zbox.
  \]
  By the 2-out-of-3 property for isomorphisms, we get that
  $\gtruncf[l]k-T^l\gtrunccu[l]k\gtruncf[n]l-$ is an isomorphism. By
  precomposing with $\gincf[l]n-$, we get that $\gtruncf[l]k-T^l\gtrunccu[l]k$
  is an isomorphism. When $T$ is truncable, this isomorphism is an equality.
\end{proof}
\noindent When~$T$ is truncable, the~$T^k$,~$\eta^k$ and~$\mu^k$ can be related through
the equations given by the following lemma:
\begin{lem}
  \label{lem:truncable-T-eta-mu}
  If~$T$ is truncable, then, for $k,l \in \N_n \cup \set n$ with~$k < l$, we have
  \[
    T^k \gtruncf[l] k - = \gtruncf[l] k - T^l
    \qtand
    \gtruncf[l] k - \eta^l = \eta^k \gtruncf[l] k -
    \qtand
    \gtruncf[l] k - \mu^l = \mu^k \gtruncf[l] k -
  \]
\end{lem}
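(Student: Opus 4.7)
The plan is to derive the three equations simultaneously from a single observation: under the truncability of $T$, the canonical monad functor from $(\nGlob l,T^l)$ to $(\nGlob k,T^k)$ carries an identity $2$-cell, and the three equalities are precisely what this collapse produces.

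First, I would assemble the monad functor
\[
  (\gtruncf[l]k-,\gtruncf[l]k- T^l \gtrunccu[l]k) \co (\nGlob l, T^l) \to (\nGlob k, T^k),
\]
which, for $l<n$, is provided by \Cref{lem:gtruncf-cocart-lift}, and for $l=n$ reduces (via $T^l=T$) to the monad functor $(\gtruncf[n]k-,\montruncn k)$ that was used in \Cref{text:alg-trunc-incl-functors} to define $T^k$ from \Cref{prop:ex-algra}.

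Next, I would argue that its $2$-cell is the identity natural transformation. For $l=n$ this is exactly the definition of truncability of $T$, since $\montruncn k$ is then required to be an identity. For $l<n$ this is the content of \Cref{lem:truncable-other-dims}. Since this $2$-cell has domain $T^k\gtruncf[l]k-$ and codomain $\gtruncf[l]k-T^l$, the fact that it is an identity immediately yields the first equation $T^k\gtruncf[l]k-=\gtruncf[l]k-T^l$.

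Finally, I would specialize the two monad functor axioms recalled in \Cref{text:mnd-category} to the case where the $2$-cell is the identity. The unit axiom then reduces to $\eta^k\gtruncf[l]k-=\gtruncf[l]k-\eta^l$ and the multiplication axiom to $\mu^k\gtruncf[l]k-=\gtruncf[l]k-\mu^l$, which close the proof. The only minor subtlety is the case split between $l<n$ and $l=n$ needed to identify the $2$-cell with the identity, since \Cref{lem:truncable-other-dims} is stated only for $l<n$; beyond this, the argument is purely formal.
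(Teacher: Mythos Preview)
Your proposal is correct and follows essentially the same route as the paper: invoke that the $2$-cell $\gtruncf[l]k- T^l \gtrunccu[l]k$ of the monad functor is an identity (via truncability and \Cref{lem:truncable-other-dims}), read off the first equation, and then specialize the two monad functor axioms to obtain the remaining equalities. Your explicit case split between $l<n$ and $l=n$ is a welcome bit of extra care that the paper's proof leaves implicit.
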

\begin{proof}
  \begingroup
  \allowdisplaybreaks
  By \Cref{lem:truncable-other-dims}, we have that $\gtruncf k - T^l
  \gtrunccu[l]k$ is an identity. In particular,
  \[
    T^k \gtruncf[l] k -
    =
    \gtruncf[l] k - T^l
    \zbox.
  \]
  Moreover, from the equations satisfied by the monad functor
  $(\gtruncf[l]k-,\gtruncf k - T^l \gtrunccu[l]k)$, we deduce that
  \[
    \gtruncf[l] k - \eta^l = \eta^k \gtruncf[l] k -
    \qtand
    \gtruncf[l] k - \mu^l = \mu^k \gtruncf[l] k -
    \zbox.
    \qedhere
  \]
  \endgroup
\end{proof}
\noindent We now prove several properties of truncable monads regarding
truncation of algebras. First, the truncation of algebras has now a simpler
definition:
\begin{prop}
  \label{prop:algtruncf-truncable}
  If\ \ $T$ is truncable, then given~$k,l\in \N_n \cup \set n$ such that~$k
  < l$, and an $l$\algebra~${(X,h) \in \Alg_{l}}$, we have~$\restrict k {(X,h)}
  = (\restrict k X,\restrict k h)$.
\end{prop}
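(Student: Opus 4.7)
The plan is to directly unwind the concrete description of the truncation functor $\algtruncf[l]k-$ that was given in \Cref{text:alg-trunc-incl-functors}, and then apply \Cref{lem:truncable-other-dims} to collapse the structural natural transformation to an identity.

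Recall that, by construction, $\algtruncf[l]k-$ is obtained by applying $\EM$ to the monad functor $(\gtruncf[l]k-,\gtruncf k - T^l \gtrunccu[l]k)$ (which factors $(\gtruncf k -,\gtruncf k - T \gtrunccu k)$ through $(\gtruncf l -,\gtruncf l - T \gtrunccu l)$ by \Cref{lem:gtruncf-cocart-lift}). Concretely, as was spelled out right after \Cref{lem:gtruncf-cocart-lift}, the image of $(X,h)$ under $\algtruncf[l]k-$ is the pair $(\restrict k X, h')$ where $h'$ is the composite
\[
  \begin{tikzcd}[column sep=6.5em,cramped]
    T^k(\restrict k X)
    \ar[r,"{({\gtruncf[l] k {(-)}T^{l}\gtrunccu[l] {k}})_X}"]
    &
    \restrict k {(T^{l}X)}
    \ar[r,"\restrict k h"]
    &
    \restrict k X
    \zbox.
  \end{tikzcd}
\]
So the entire content of the statement reduces to showing that, when $T$ is truncable, the first arrow in this composite is the identity.

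This is exactly what \Cref{lem:truncable-other-dims} provides: applied with the current indices $k < l < n$ (or for the case $l = n$ the truncability hypothesis applies directly), the natural transformation $\gtruncf k - T^l \gtrunccu[l]k$ is an identity. Evaluating at $X$ gives $(\gtruncf[l] k - T^{l}\gtrunccu[l] {k})_X = \unit{\restrict k(T^l X)}$, and hence $h' = \restrict k h$. This gives the claimed equality $\restrict k (X,h) = (\restrict k X, \restrict k h)$.

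There is essentially no obstacle here: the statement is purely a combinatorial simplification that packages \Cref{lem:truncable-other-dims} together with the already-established concrete formula for $\algtruncf[l]k-$. The only small care to take is to distinguish the two cases $l < n$ and $l = n$ in invoking \Cref{lem:truncable-other-dims} (or to observe that the case $l = n$ is just the definition of truncability of $T$ itself, which makes $\gtruncf k - T \gtrunccu[n]k$ an identity directly).
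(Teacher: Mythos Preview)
Your proof is correct and follows essentially the same route as the paper's: both reduce to showing that $(\gtruncf[l] k - T^l \gtrunccu[l] k)_X$ is an identity, and both draw this from the truncability hypothesis. The paper does the computation in two short steps (swapping $\gtruncf[l] k - T^l$ for $T^k \gtruncf[l] k -$ via \Cref{lem:truncable-T-eta-mu}, then using that $\gtruncf[l] k - \gtrunccu[l] k$ is an identity), whereas you invoke \Cref{lem:truncable-other-dims} directly, which is equally valid and arguably cleaner.
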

\begin{proof}
  Indeed, since~$T$ is truncable, we have
  \begin{align*}
    ({\gtruncf[l] k {(-)}T^{l}\gtrunccu[l] {k}})_X
    = 
    ({T^{k}\gtruncf[l] k {(-)}\gtrunccu[l] {k}})_X
    = \unit {T^k X}
  \end{align*}
  so that~$\restrict k {(X,h)} = (\restrict k X,\restrict k h)$.
\end{proof}
\noindent Moreover, the operation of truncation of algebras is now a left
adjoint:
\begin{prop}
  \label{prop:truncable-algtruncf-la}
  If\ \ $T$ is truncable, then, given~$k,l \in \N_n \cup \set n$ with~$k < l$,
  the functor
  \[
    \algtruncf[l] k -\co \Alg_{l} \to \Alg_k
  \]
  is a left adjoint. In particular, it preserves colimits.
\end{prop}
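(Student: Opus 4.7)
The plan is to produce a right adjoint to $\algtruncf[l]k-$ by lifting the globular adjunction $\gtruncf[l]k-\dashv \gincfill[l]k-$ from $\CAT$ up to $\MND$ and then applying~$\EM$. Recall that by definition, $\algtruncf[l]k-$ is the image under~$\EM$ of the monad functor
\[
  (\gtruncf[l]k-,\, \gtruncf k- T^l \gtrunccu[l]k) \co (\nGlob l, T^l) \to (\nGlob k, T^k),
\]
in which $\gtruncf[l]k-$ is already a left adjoint in $\CAT$, with right adjoint $\gincfill[l]k-$ introduced in \Cref{text:gs-trunc-incl-functors}. This puts us squarely in the setup of \Cref{prop:und-adjl-lift}.

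The first step is to observe that the hypothesis of truncability makes the comparison $2$-cell trivial: by \Cref{lem:truncable-other-dims}, $\gtruncf k - T^l \gtrunccu[l]k$ is an identity, hence in particular an isomorphism. Thus condition~\ref{prop:und-adjl-lift:isom} of \Cref{prop:und-adjl-lift} holds for our monad functor, and the proposition yields a natural transformation $\rho \co T^k \gincfill[l]k- \To \gincfill[l]k- T^l$ together with an adjunction
\[
  (\gtruncf[l]k-,\, \gtruncf k- T^l \gtrunccu[l]k) \dashv (\gincfill[l]k-, \rho)
\]
internal to~$\MND$, whose image by $\mndund$ recovers $\gtruncf[l]k- \dashv \gincfill[l]k-$.

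The second step is simply to apply the $2$-functor $\EM \co \MND \to \CAT$ to this adjunction. Since $\EM$ is a $2$-functor (being the right adjoint of the $2$-adjunction $\mndinc \dashv \EM$ recalled in \Cref{text:mnd-category}), it preserves adjunctions, so we obtain an adjunction in $\CAT$ between $\Alg_l$ and $\Alg_k$ whose left adjoint is $\EM(\gtruncf[l]k-, \gtruncf k- T^l \gtrunccu[l]k) = \algtruncf[l]k-$. The ``in particular'' clause is then immediate from the fact that left adjoints preserve colimits.

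There is no real obstacle here: once truncability has been used to trivialize the mate, the whole argument reduces to citing \Cref{prop:und-adjl-lift} and applying $\EM$. The only thing to double-check carefully is the matching of conventions, namely that the monad functor carried by $\algtruncf[l]k-$ has its $2$-cell in the direction $T^k L \To L T^l$ required by \Cref{prop:und-adjl-lift}, which is indeed the case here.
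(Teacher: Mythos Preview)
Your proof is correct and follows essentially the same route as the paper: invoke \Cref{lem:truncable-other-dims} to see that the comparison $2$-cell is an identity, apply \Cref{prop:und-adjl-lift} to lift the globular adjunction $\gtruncf[l]k-\dashv \gincfill[k]l-$ to~$\MND$, and then push through~$\EM$. The only cosmetic point is that the paper writes the right adjoint as $\gincfill[k]l-$ rather than $\gincfill[l]k-$, but the intended functor is the same.
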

\begin{proof}
  By \Cref{lem:truncable-other-dims}, $\gtruncf k - T^l \gtrunccu[l]k$ is an
  identity and in particular an isomorphism. Thus, by \Cref{prop:und-adjl-lift},
  since we have an adjunction $\gtruncf[l]k- \dashv \gincfill[k]l-$, this
  adjunction lifts canonically through $\mndund$ to an adjunction
  \[
    (\gtruncf[l]k-,\gtruncf k - T^l \gtrunccu[l]k) \dashv (\gincfill[k]l-,\rho)
  \]
  for some $\rho$ given by \Cref{prop:und-adjl-lift}. By applying $\EM$,
  we get an adjunction $\algtruncf[l]k- \dashv \algincfill[k]l-$ where
  $\algincfill[k]l- = \EM(\gincfill[k]l-,\rho)$.
\end{proof}

\subsection{Characterization of truncable monads}
\label{text:charact-truncable}

Earlier, we introduced \Cref{thm:charact-globular-algebras} that allows
recognizing that some categories and functors between them are equivalent to the
categories of globular algebras and the associated truncation functors derived
from a monad~$T$ on globular sets, without having to explicitly describe this
monad. But, by the current definition of truncability, in order to show that the
monad~$T$ is truncable, a direct proof would require to show that the natural
transformations~$\gtruncf l - T \gtrunccu[n] l$ are isomorphisms, so that a
description of~$T$ is still needed. Below, we introduce a characterization of
the truncability of~$T$ that does not rely on such tedious description.


\smallpar We start by proving the following lemma, relating the
functors~$\freealgf_k$:
\begin{lem}
  \label{lem:freealgfk-eq}
  Let~$n \in \N \cup \set \omega$ and~$(T,\eta,\mu)$ be a finitary monad
  on~$\nGlob n$. Given~$k \in \N$ such that~$k < n$, we have
  \[
    \algtruncf k - \freealgf_n \gincf[k] n - = \freealgf_k\zbox.
  \]
\end{lem}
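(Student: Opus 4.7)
The plan is a direct unfolding of the definitions: both $\algtruncf k - \freealgf_n \gincf[k] n -$ and $\freealgf_k$ are concrete functors $\nGlob k \to \Alg_k$ whose actions on objects and morphisms can be computed explicitly and matched on the nose. I expect the proof to be a one-screen calculation with no conceptual obstacle; the only thing to take care of is the bookkeeping of whiskerings.

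Fix $X \in \nGlob k$. By the definition of $\freealgf_n$ recalled in \Cref{text:algebras}, we have $\freealgf_n \gincf n X = (T \gincf n X, \mu_{\gincf n X})$. Applying the explicit description of $\algtruncf k -$ given just after \Cref{lem:gtruncf-cocart-lift} (with $l = n$) yields
\[
  \algtruncf k - \freealgf_n \gincf n X
  \;=\;
  \bigl(\gtruncf k T \gincf n X,\; h'\bigr),
  \qquad
  h' \;=\; (\gtruncf k \mu_{\gincf n X}) \circ \bigl(\gtruncf k - T \gtrunccu[n]k\bigr)_{T \gincf n X}.
\]
The underlying $k$-globular set is $\gtruncf k T \gincf n X = T^k X$ by definition of $T^k$, so it matches the underlying globular set of $\freealgf_k X = (T^k X, \mu^k_X)$.

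It remains to check $h' = \mu^k_X$. This is immediate from the explicit formula
\[
  \mu^k \;=\; (\gtruncf k - \mu\, \gincf n -) \circ (\gtruncf k - T\, \gtrunccu k\, T\, \gincf n -)
\]
recalled in \Cref{text:globular-algebra-definition}: evaluating at $X$, the second factor $(\gtruncf k - T\, \gtrunccu k\, T\, \gincf n -)_X$ equals $\gtruncf k T\, (\gtrunccu[n] k)_{T \gincf n X}$, which is precisely $(\gtruncf k - T \gtrunccu[n]k)_{T \gincf n X}$. Hence $h' = \mu^k_X$, and so $\algtruncf k - \freealgf_n \gincf n X = \freealgf_k X$ as $T^k$-algebras.

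Finally, on morphisms $f \co X \to X'$ in $\nGlob k$, both functors send $f$ to $\gtruncf k T \gincf n f = T^k f$ at the level of underlying morphisms, hence they agree as morphisms of $T^k$-algebras. This establishes $\algtruncf k - \freealgf_n \gincf n - = \freealgf_k$ as functors, as claimed.
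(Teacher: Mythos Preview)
Your proof is correct and is essentially the same argument as the paper's, just unfolded one level further: the paper checks the equality by transposing through the adjunction $\mndinc \dashv \EM$ and comparing the two resulting monad functors as pastings of $2$-cells (which reduces to the definition of $\mu^k$), whereas you carry out the same comparison componentwise on objects and morphisms. Both computations bottom out in the identity $h' = (\gtruncf k - \mu\, \gincf n -)_X \circ (\gtruncf k - T\, \gtrunccu k\, T\, \gincf n -)_X = \mu^k_X$.
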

\begin{proof}
  By the adjunction $\mndinc \dashv \EM$, it amounts to prove that the two
  pastings of $2$\cells
  \[
    \begin{tikzcd}[sep=large]
      \nGlob k 
      \ar[r,"{\gincf n -}"]
      \ar[d,"\catunit{}"']
      \cphar[rd,"="]
      &
      \nGlob n
      \ar[r,"T"]
      \ar[d,"\catunit{}"{description}]
      \cphar[rd,"\Downarrow\mu"]
      &
      \nGlob n
      \ar[r,"{\gtruncf k -}"]
      \ar[d,"T"{description}]
      \cphar[rd,"\Downarrow \gtruncf k - T \gtrunccu k"]
      &
      \nGlob k
      \ar[d,"T^k"]
      \\
      \nGlob k 
      \ar[r,"{\gincf n -}"']
      &
      \nGlob n
      \ar[r,"T"']
      &
      \nGlob n
      \ar[r,"{\gtruncf k -}"']
      &
      \nGlob k
    \end{tikzcd}
  \]
  and
  \[
    \begin{tikzcd}[sep=large]
      \nGlob k 
      \ar[r,"T^k"]
      \ar[d,"\catunit{}"']
      \cphar[rd,"\Downarrow\mu^k"]
      &
      \nGlob k
      \ar[d,"T^k"]
      \\
      \nGlob k 
      \ar[r,"T^k"']
      &
      \nGlob k
    \end{tikzcd}
  \]
  are equal. But this directly follows from the definition of $\mu^k$. So the
  wanted equality holds.
\end{proof}
\medskip\noindent Now, we prove that truncable monads can be characterized
through the associated globular algebras:
\begin{prop}
  \label{lem:truncability-through-algeba-cats}
  Let~$n \in \N \cup \set \omega$ and~$(T,\eta,\mu)$ be a finitary monad
  on~$\nGlob n$. Then, the monad~$(T,\eta,\mu)$ is weakly truncable (\resp
  truncable) if and only if, for~$k \in \N_{n-1}$, the natural transformation
  \[
    \algtruncf k - \freealgf_n \gtrunccu[n] k \co \freealgf_k \gtruncf k- \To
    \algtruncf k- \freealgf_n
  \]
  is an isomorphism (\resp an identity).
\end{prop}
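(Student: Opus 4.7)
The plan is to reduce both equivalences to the corresponding statements about $\montruncn k$ by applying the forgetful functor $\fgfalgf_k \co \Alg_k \to \nGlob k$, which is monadic and therefore both faithful and conservative.

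First, I would identify the source and target of the natural transformation $\algtruncf k - \freealgf_n \gtrunccu[n]k$. By \Cref{lem:freealgfk-eq}, $\algtruncf k - \freealgf_n \gincf n - = \freealgf_k$, so whiskering the counit $\gtrunccu[n]k \co \gincf n - \gtruncf k - \To \catunit{\nGlob n}$ on the left with $\algtruncf k - \freealgf_n$ does yield a natural transformation of the asserted type $\freealgf_k \gtruncf k - \To \algtruncf k - \freealgf_n$. Next, using the commutation $\fgfalgf_k \algtruncf k - = \gtruncf k - \fgfalgf_n$ that arises because $\algtruncf k -$ is the image under $\EM$ of the cocartesian morphism built in \Cref{prop:ex-algra}, one computes
\[
  \fgfalgf_k(\algtruncf k - \freealgf_n \gtrunccu[n]k) = \gtruncf k - \fgfalgf_n \freealgf_n \gtrunccu[n]k = \gtruncf k - T \gtrunccu[n]k = \montruncn k.
\]

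For the weakly truncable case, conservativity of $\fgfalgf_k$ immediately gives that $\algtruncf k - \freealgf_n \gtrunccu[n]k$ is a natural isomorphism if and only if its image $\montruncn k$ is, which is precisely the definition of weak truncability.

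For the identity case, one implication is immediate: if $\algtruncf k - \freealgf_n \gtrunccu[n]k$ is an identity, then applying $\fgfalgf_k$ shows that $\montruncn k$ is, whence $T$ is truncable. Conversely, assume $T$ is truncable, so that $\montruncn k$ is the identity; in particular, $T^k \gtruncf k - = \gtruncf k - T$. Using the explicit formula for $\algtruncf k -$ on algebras stated right after \Cref{lem:gtruncf-cocart-lift}, together with \Cref{lem:truncable-T-eta-mu}, one verifies that $\algtruncf k - \freealgf_n X = (T^k \restrict k X, \mu^k_{\restrict k X}) = \freealgf_k \gtruncf k - X$ for every $X \in \nGlob n$, so the source and target of $\algtruncf k - \freealgf_n \gtrunccu[n]k$ agree as functors. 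Since the image of this natural transformation under the faithful functor $\fgfalgf_k$ is the identity, the natural transformation itself must be the identity. The only mildly delicate step is this last functor-level identification, which is a routine unfolding of the definition of $\algtruncf k -$ using the equations from \Cref{lem:truncable-T-eta-mu}.
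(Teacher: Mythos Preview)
Your proof is correct and follows essentially the same route as the paper's: both compute $\fgfalgf_k(\algtruncf k - \freealgf_n \gtrunccu[n]k) = \gtruncf k - T \gtrunccu[n]k$ via the equality $\fgfalgf_k \algtruncf k - = \gtruncf k - \fgfalgf_n$, and then conclude using the fact that $\fgfalgf_k$ reflects isomorphisms (resp.\ identities). The only difference is cosmetic: where the paper simply asserts that $\fgfalgf_k$ reflects identities, you unfold this by first checking that the source and target functors coincide and then invoking faithfulness---but this extra work is not needed, since any algebra morphism whose underlying globular map is an identity is automatically an identity of algebras (the algebra-morphism compatibility forces the two structure maps to agree).
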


\begin{proof}
  Note that the domain of~$\algtruncf k - \freealgf_n \gtrunccu[n] k$ is the
  claimed one by~\Cref{lem:freealgfk-eq}. Now, for~$k \in \N_{n-1}$, we have that
  \begin{align*}
    \fgfalgf_k\algtruncf k - \freealgf_n \gtrunccu k
    &= \gtruncf k - \fgfalgf_n \freealgf_n \gtrunccu k \\
    &= \gtruncf k - T \gtrunccu k\zbox.
  \end{align*}
  The proposition follows from the fact that~$\fgfalgf_k$ reflects isomorphisms
  (\resp identities).
\end{proof}
\smallpar We will need the following folklore property about adjunctions:
\begin{prop}
  \label{prop:isom-btw-adj}
  Let
  \[
    L \dashv R \co C \to D
    \qtand
    L' \dashv R' \co C \to D
  \]
  be two adjunctions
  with respective unit-counit pairs $(\gamma,\eps)$ and $(\gamma',\eps')$,
  and
  \[
    \theta \co L \To L'
    \qtand
    \bar\theta \co R' \To R
  \]
  be two natural transformations such that
  $\theta = (\eps L') \circ (L
  \bar\theta L') \circ (L \gamma')$,
  \ie graphically:
  \[
    \satex{isom-btw-adj-stmt-l}
    \qqeq
    \satex{isom-btw-adj-stmt-r}
    \pbox{\qquad.}
  \]
  Then, $\theta$ is an isomorphism if and only if $\bar\theta$ is an isomorphism.
\end{prop}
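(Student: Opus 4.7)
The relation $\theta = (\eps L') \circ (L \bar\theta L') \circ (L \gamma')$ exhibits $\theta$ and $\bar\theta$ as \emph{mates} with respect to the adjunctions $L \dashv R$ and $L' \dashv R'$, in the sense of Kelly-Street. The plan is to use the well-known fact that the mate correspondence is a bijection between natural transformations $L \To L'$ and natural transformations $R' \To R$, whose inverse operation sends $\theta$ to
\[
  \bar\theta = (R \eps') \circ (R \theta R') \circ (\gamma R'),
\]
and to exploit its compatibility with vertical composition and with identities.

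I would first treat the direction where $\theta$ is an isomorphism. Given an inverse $\theta^{-1} \co L' \To L$, the candidate inverse $\bar\theta' \co R \To R'$ for $\bar\theta$ is defined as the mate of $\theta^{-1}$, namely
\[
  \bar\theta' = (R' \eps) \circ (R' \theta^{-1} R) \circ (\gamma' R).
\]
A string-diagram computation in the style of the proof of \Cref{prop:und-adj-lift} then establishes both $\bar\theta \circ \bar\theta' = \unit R$ and $\bar\theta' \circ \bar\theta = \unit{R'}$; each reduces, via naturality of $\theta$ together with the zigzag equations for the two adjunctions, to one of the identities $\theta \circ \theta^{-1} = \unit{L'}$ or $\theta^{-1} \circ \theta = \unit L$. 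The converse implication (if $\bar\theta$ is invertible then so is $\theta$) is entirely symmetric: an inverse for $\theta$ is constructed as the mate of $(\bar\theta)^{-1}$, and the same kind of diagram chase concludes.

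There is no genuine obstacle here; the argument reduces to a short and routine manipulation using only the zigzag equations and naturality, exactly in the spirit of the string-diagram computations carried out earlier in this section. A more conceptual alternative would be to invoke the general fact that the mate correspondence is functorial with respect to vertical composition, hence automatically sends isomorphisms to isomorphisms; but the direct verification fits more naturally with the rest of the exposition.
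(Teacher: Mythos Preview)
Your proposal is correct and matches the paper's own argument: the paper's proof is a single sentence stating that an inverse for $\theta$ is constructed from an inverse for $\bar\theta$ via string diagrams, and symmetrically, which is exactly the mate-based construction you describe. Your write-up is in fact more detailed than the paper's, but the underlying idea is the same.
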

\begin{proof}
  An inverse for $\theta$ can be straight-forwardly constructed from an inverse
  for $\bar\theta$ using string diagrams, and symmetrically.
\end{proof}
\noindent Given~$k \in\N$, we \glossary(jk){$\gincfillu[n] k$}{the unit of the
  adjunction $\gtruncf[n] k - \dashv \gincfill[k]n -$}write
\[
  \gincfillu[n] k \co \unit {\nGlob n} \To \gincfill[k]n -\gtruncf[n] k -
\]
or simply~$\gincfillu k$, for the unit of the adjunction~$\gtruncf[n] k - \dashv
\gincfill[k]n - \co \nGlob k \to \nGlob n$. We can now introduce the criterion
for showing the truncability of monads through their globular algebras:
\begin{theo}
  \label{thm:truncable-charact}
  Let~$n \in \N \cup \set \omega$ and~$(T,\eta,\mu)$ be a finitary monad
  on~$\nGlob n$. The monad~$(T,\eta,\mu)$ is weakly truncable if and only if,
  for~$k \in \N_{n-1}$, the functor~$\algtruncf[n] k-$ has a right adjoint, that
  we \glossary(Algcfill){$\algincfill[k]n-$, $\incfill n A$}{the right adjoint to the truncation
  functor~$\algtruncf[n]k-$}write~$\smash{\algincfill[k]n-}$, which satisfies that~$\smash{\gincfillu k
    \fgfalgf_n \algincfill[k]n-}$ is an isomorphism.
\end{theo}
\begin{proof}
  By \Propr{truncable-algtruncf-la}, if~$T$ is weakly
  truncable,~$\smash{\algtruncf[n] k-}$ has a right adjoint, so we can assume
  that this adjoint exists and denote it by~$\smash{\algincfill[k]n-}$. Then,
  the morphism~$\smash{\algtruncf k - \freealgf_n \gtrunccu k}$, pictured by
  \[
    \begin{tikzpicture}[xscale=1.3,mycircle/.style={circle,minimum size=0.7cm}]
      \path
      (0,0) node (f5) {$\algtruncf k -$}
      ++(1,0) node (f6) {$\freealgf_n$}
      ++(1,0) node (f7) {$\gincf n -$}
      ++(1,0) node (f8) {$\gtruncf k -$};
      \draw (f5) -- +(0,-1.5);
      \draw (f6) -- +(0,-1.5);
      \drawcounit{$\gtrunccu k$}{1}{f7}{f8}
    \end{tikzpicture}
  \]
  is a natural transformation between two composites of left adjoints. Then, by
  deriving the units and counits of these composite adjunctions (as in
  \cite[IV.\S8 Theorem~1]{mac2013categories} for example), and using (the dual
  of) \Cref{prop:isom-btw-adj}, the above natural transformation is an
  isomorphism if and only if the natural transformation depicted by the string diagram
  \[
    \begin{tikzpicture}[xscale=1.3,mycircle/.style={circle,minimum size=0.9cm}]
      \path (0,0) node (f1) {$\gincfill n-$}
      ++(1,0) node (f2) {$\gtruncf k -$}
      ++(1,0) node (f3) {$\fgfalgf_n$}
      ++(1,0) node (f4) {$\algincfill n -$}
      ++(1,0) node (f5) {$\algtruncf k -$}
      ++(1,0) node (f6) {$\freealgf_n$}
      ++(1,0) node (f7) {$\gincf n -$}
      ++(1,0) node (f8) {$\gtruncf k -$}
      ++(1,0) node (f9) {$\fgfalgf_n$}
      ++(1,0) node (f10) {$\algincfill n -$};
      \coordinate (middleleft) at ($(f4)!0.5!(f5)$);
      \node[draw,mycircle,radius=3] (AL) at ($(middleleft) + (0,1)$) {$\alpha$};
      \path[draw] (AL.west) to[out=180,in=90,looseness=0.8] (f4.north);
      \path[draw] (AL.east) to[out=0,in=90,looseness=0.8] (f5.north);
      \draw (f4) -- +(0,-3.5);
      \foreach \vnode/\vlabel/\vup/\vleft/\vright in {%
        BL/$\eta^n$/2/f3/f6,
        CL/$\gamma$/3/f2/f7,
        DL/$\gincfillu k$/4/f1/f8%
      }{
        \drawunit{\vlabel}\vup\vleft\vright
        \draw (\vleft) -- +(0,-3.5);
      };
      \coordinate (middleright) at ($(f7)!0.5!(f8)$);
      \node[draw,mycircle] (AR) at ($(middleright) - (0,1)$) {$\gtrunccu k$};
      \path[draw] (AR.west) to[out=180,in=-90,looseness=0.8] (f7.south);
      \path[draw] (AR.east) to[out=0,in=-90,looseness=0.8] (f8.south);
      \foreach \vnode/\vlabel/\vdown/\vleft/\vright in {%
        BR/$\eps^n$/2/f6/f9,
        CR/$\alpha'$/3/f5/f10%
      }{
        \drawcounit{\vlabel}\vdown\vleft\vright
        \draw (\vright) -- +(0,4.5);
      };
    \end{tikzpicture}
  \]
  is an isomorphism,
  where~$(\alpha,\alpha')$,~$(\eta^n,\eps^n)$,~$(\gamma,\gtrunccu k)$ are the
  pairs of units and counits associated with the adjunctions~$\algtruncf k -
  \dashv \algincfill n -$,~$\freealgf_n \dashv \fgfalgf_n$ and~$\gincf n -
  \dashv \gtruncf k-$ respectively. Using the zigzag equations satisfied by
  adjunctions to reduce the above diagram, we obtain
  \[
    \begin{tikzpicture}[xscale=1.3,mycircle/.style={circle,minimum size=0.9cm}]
      \path (0,0) node (f1) {$\gincfill n-$}
      ++(1,0) node (f2) {$\gtruncf k -$}
      ++(1,0) node (f3) {$\fgfalgf_n$}
      ++(1,0) node (f4) {$\algincfill n -$};
      \coordinate (middleleft) at ($(f1)!0.5!(f2)$);
      \drawunit{$\gincfillu k$}{1}{f1}{f2}
      \draw (f3) -- +(0,1.5);
      \draw (f4) -- +(0,1.5);
    \end{tikzpicture}
  \]
  which is the diagram associated to the morphism~$\gincfillu k \fgfalgf_k
  \algincfill k-$. Thus,~$\algtruncf k - \freealgf_n \gtrunccu k$ is an
  isomorphism if and only if~$\smash{\gincfillu k \fgfalgf_k \algincfill k-}$ is
  an isomorphism. We conclude with \Lemr{truncability-through-algeba-cats}.
\end{proof}
\noindent We will use the above criterion to show that the monad associated to
the theories of strict categories is weakly truncable (\cf
\Cref{thm:sc-monad-wtruncable}).


\endgroup



\section{Free higher categories on generators}
\label{text:free-higher}

Given some theory of higher categories, an important construction is the one
that builds a $k$\category which is freely generated on a set of generators.
Indeed, like for other algebraic theories, a $k$\category can be described by
means of a \emph{presentation}, \ie by quotienting a free $k$\category by a set
of relations. For example, a formal adjunction can be described as the strict
$2$\category generated by two $0$\cells~$x$ and~$y$, two $1$\cells~$l \co y \to
x$ and~$r \co x \to y$, and two $2$\cells~$\gamma \co \unit y \To l \comp_0 r$
and~$\eps \co r \comp_0 l \To \unit x$ satisfying the zigzag
identities. Given a theory of higher
categories expressed in Batanin's setting, \ie as a monad~$(T,\eta,\mu)$
on~$\nGlob n$ for some~$n \in \Ninf$, there are several free constructions that
one can consider. First, the functors~$\freealgf_k \co \nGlob k \to \Alg_k$
already enable to construct the free $k$\category on a $k$\globular set.
Moreover, there is a construction which produces a $(k{+}1)$\category from a
\emph{$k$\cellular extension}, \ie a pair consisting of a $k$\category and a set
of $(k{+}1)$\generators. Such construction was introduced for strict categories
in~\cite{burroni1993higher}. Finally, one can consider the free $k$\category on
a \emph{$k$\polygraph}: the latter is a system of $i$\generators for~$i \in
\N_n$ which is organized inductively as cellular extensions. It differs from a
mere $k$\globular set in the sense that a $k$\polygraph allows generators to
have complex sources and targets that are composites of other generators,
whereas the sources and targets of generators organized in a $k$\globular set
can only be globes. Polygraphs were first introduced by
Street~\cite{street1976limits} and Burroni~\cite{burroni1993higher} for strict
categories, and then generalized to any finitary monad on globular sets by
Batanin~\cite{batanin1998computads}.

In this section, we define free constructions for algebraic globular higher
categires following the path of Burroni, giving in the process another light on
the results of Batanin, who was not relying on cellular extensions in his
proofs. More precisely, we define the notion of \emph{cellular extension} for
any algebraic theory of globular higher categories, and then derive the notion
\emph{polygraphs} from it, together with the free construction associated to
each notion. Since most of the definitions rely on pullbacks in~$\CAT$, we first
recall some properties of these pullbacks (\Cref{text:pullback-in-CAT}). Then,
we introduce \emph{cellular extensions} together with the associated free
construction for any finitary monad on globular sets, and, in the case of a
truncable monad, we show that this construction is stable, \ie that freely
adding $(k{+}1)$\generators does not change the underlying $k$\category
(\Cref{ssec:cell-exts}). Then, we introduce \emph{polygraphs} together with
the associated free construction for any finitary monad on globular sets
(\Cref{text:polygraphs}). Finally, we recover in our setting the adjunction of
Batanin between higher categories and polygraphs in \Cref{text:pol-adj}.

\subsection[Pullbacks in \texorpdfstring{$\CAT$}{CAT}]{Pullbacks in $\bm{\CAT}$}
\label{text:pullback-in-CAT}

In the following sections, we define the categories of cellular extensions and
polygraphs using pullbacks in~$\CAT$. We will be interested in showing that
these categories are cocomplete and that several of the projection functors are
left or right adjoints. Such properties are consequence of general properties of
pullbacks that we recall below. In particular, a pullback of an
\emph{isofibration}, \ie a functor which lifts isomorphisms, has good properties
with regard to cocompleteness and preservation of colimits. This is convenient
since, as we will see below, all the truncation functors introduced until now
are isofibrations.

\smallpar In the following, given~$C \in \CAT$, we write~$\unitp 2 C$ for the
identity natural transformation on the identity functor~$\unit C \co C \to C$.
We begin with a property of compatibility of pullbacks in~$\CAT$ with left and
right adjoints:
\begin{prop}
  \label{prop:pullback-la-ra}
  Given a pullback in~$\CAT$
  \[
    \begin{tikzcd}
      C'
      \ar[r,"F'"]
      \ar[d,"G'"']
      &
      C
      \ar[d,"G"]
      \\
      D'
      \ar[r,"F"']
      &
      D
    \end{tikzcd}
  \]
  and a functor~$H\co D \to C$ such that~${GH = \unit D}$, then there exists a
  canonical~$H'\co D' \to C'$ such that~${G'H' = \unit {D'}}$. Moreover, if there
  is an adjunction~$H \dashv G$ (\resp~$G \dashv H$) whose unit (\resp counit)
  is~$\unitp 2 {D}$, then there is an adjunction~$H' \dashv G'$ (\resp~$H \dashv
  G$) whose unit (\resp counit) is~$\unitp 2 {D'}$.
\end{prop}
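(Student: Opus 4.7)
The plan is to exploit the universal property of the pullback twice: first at the level of objects/morphisms to construct $H'$, and then at the level of natural transformations to construct the counit (resp.\ unit) of the lifted adjunction.

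First I would construct $H'$. Since $GH = \unit D$, we have $G(HF) = F = F\unit{D'}$, so the pair $(HF, \unit{D'})$ is compatible with the cospan defining the pullback, yielding a unique functor $H' \co D' \to C'$ satisfying $F'H' = HF$ and $G'H' = \unit{D'}$, which gives the first part.

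For the adjunction lifting, assume $H \dashv G$ has unit $\unitp{2}D$ and some counit $\eps \co HG \To \unit C$. The zigzag equation $G\eps \circ \eta G = \unit G$ collapses (since $\eta$ is an identity) to $G\eps = \unit G$, and similarly $\eps H = \unit H$. To build $\eps' \co H'G' \To \unit{C'}$ I would use the universal property of the pullback on natural transformations: since $C'$ is the strict pullback in $\CAT$, a natural transformation between two functors into $C'$ is the same data as a pair of natural transformations into $C$ and $D'$ respectively whose images in $D$ coincide. I propose the pair $(\eps F', \unit{G'})$: the compatibility $G(\eps F') = \unit{GF'} = \unit{FG'} = F(\unit{G'})$ holds by $G\eps = \unit G$. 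This yields a unique $\eps' \co H'G' \To \unit{C'}$ with $F'\eps' = \eps F'$ and $G'\eps' = \unit{G'}$.

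It remains to check the triangle identities for $(\unitp{2}{D'}, \eps')$. The identity $G'\eps' \circ \unit{D'}G' = \unit{G'}$ is immediate from $G'\eps' = \unit{G'}$. For $\eps'H' \circ H'\unit{D'} = \unit{H'}$, \ie $\eps' H' = \unit{H'}$, I would again invoke the uniqueness in the pullback: both $\eps' H'$ and $\unit{H'}$ satisfy $F'(-) = \unit{HF}$ (using $\eps H = \unit H$ and $F'H' = HF$) and $G'(-) = \unit{\unit{D'}}$, so they coincide. The resp.\ statement (presumably $G' \dashv H'$ with counit $\unitp{2}{D'}$, correcting an apparent typo) is proved symmetrically by constructing the unit $\eta' \co \unit{C'} \To H'G'$ from the pair $(\eta F', \unit{G'})$.

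The only real subtlety is justifying that natural transformations into the strict pullback are characterized by pairs of compatible natural transformations, but this follows directly from the pointwise description of a strict pullback of categories (objects and morphisms are pairs satisfying a compatibility, so natural transformations, being families of morphisms, inherit the same pairing). Everything else is then a straightforward bookkeeping with the zigzag identities collapsed by the identity unit/counit hypothesis.
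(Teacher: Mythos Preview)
Your proof is correct. The construction of $H'$ is identical to the paper's. For the adjunction part, you take a slightly different route: you explicitly construct the counit $\eps'$ as the pair $(\eps F',\unit{G'})$ via the 2-dimensional universal property of the strict pullback, and then verify the triangle identities by projecting along $F'$ and $G'$. The paper instead argues directly at the level of hom-sets: a morphism $H'X\to Y$ in $C'$ is a compatible pair $(f_r\co HFX\to F'Y,\ f_l\co X\to G'Y)$, and since the unit of $H\dashv G$ is an identity the map $G\co C(HFX,F'Y)\to D(FX,GF'Y)$ is a bijection, so $f_r$ is determined by $F(f_l)$ and hence $G'$ gives the required natural bijection $C'(H'X,Y)\cong D'(X,G'Y)$. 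Both arguments rest on the same concrete description of morphisms in the strict pullback; yours makes the counit explicit and checks the zigzag equations componentwise, while the paper's hom-set argument is a line shorter but leaves the counit implicit. Your observation that the ``resp.'' clause should read $G'\dashv H'$ rather than $H\dashv G$ is also correct.
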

\begin{proof}
  We define~$H'$ using the universal property of pullbacks by
  \[
    \begin{tikzcd}[cramped,row sep=small]
      D'
      \ar[rrd,"H\circ F",bend left]
      \ar[rd,"H'"{description},dotted]
      \ar[rdd,"\unit {D'}"',bend right]
      \\
      &
      C'
      \ar[r,"F'"]
      \ar[d,"G'"']
      &
      C
      \ar[d,"G"]
      \\
      &
      D'
      \ar[r,"F"']
      &
      D
    \end{tikzcd}
  \]
  which satisfies~$G'H' = \unit {D'}$ by definition. Moreover, suppose that
  there is an adjunction~$H \dashv G$ whose unit is~$\unitp 2 {D}$. Then,
  since~$C'$ is defined by a pullback, a morphism~${f\co H'X \to Y \in C'}$ is
  the data of morphisms~$f_l \co X \to G'Y$ and~$f_r \co HFX \to F'Y$
  with~$F(f_l) = G(f_r)$. But, since the unit of~${H \dashv G}$ is~$\unitp 2
  {D}$,~$G$ induces a bijective correspondence between~$C(HFX,F'Y)$
  and~$D(FX,GF'Y)$, so that~$f_r$ is uniquely defined by~$F(f_l)$. Thus,~$G'$
  induces a bijective natural correspondence between~$C'(H'X,Y)$ and~$D'(X,G'Y)$
  for all~$X \in D'$ and~$Y \in C'$, so that there is an adjunction~${H' \dashv
    G'}$ with unit~${\unitp 2 {D'}}$. The case where~$G$ is left adjoint is
  similar.
\end{proof}
\noindent
Moreover, we prove that isofibrations are well-behaved regarding pullbacks
in~$\CAT$. We recall that a functor~$G\co C \to D \in \CAT$ is an
\index{isofibration}\emph{isofibration} when it lifts isomorphisms, \ie for all~$X \in C$
and~$\tilde Y \in D$, given an isomorphism~$\tilde f\co GX \to \tilde Y$ in~$D$,
there exists~$Y \in C$ and an isomorphism~$f\co X \to Y$ such that~${GY = \tilde
  Y}$ and~${G(f) = \tilde f}$. We then have:\ndr{pas trouvé de référence pour
  cette prop}%
\begin{prop}
  \label{prop:pullback-CAT-iso-liftting-colimits}
  Given a pullback in~$\CAT$
  \[
    \begin{tikzcd}
      C'
      \ar[r,"F'"]
      \ar[d,"G'"']
      &
      C
      \ar[d,"G"]
      \\
      D'
      \ar[r,"F"']
      &
      D
    \end{tikzcd}
  \]
  such that~$G$ is an isofibration, the following hold:
  \begin{enumerate}[label=(\roman*),ref=(\roman*)]
  \item \label{prop:pullback-CAT-iso-liftting-colimits:iso-lifting} $G'$ is an isofibration,
  \item \label{prop:pullback-CAT-iso-liftting-colimits:colimits} given a small
    category~$I$, if~$C$ and~$D'$ have all $I$-colimits and~$F$ and~$G$ preserve
    them, then~$C'$ has all $I$-colimits and~$F'$ and~$G'$ preserve them.
  \end{enumerate}
\end{prop}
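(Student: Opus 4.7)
The plan is to work with the concrete description of $C'$ as the strict pullback in $\CAT$: its objects are pairs $(X,Y) \in C \times D'$ with $GX = FY$, and its morphisms $(g,f)$ satisfy $G(g) = F(f)$, with $F'$ and $G'$ the two projections.

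For \ref{prop:pullback-CAT-iso-liftting-colimits:iso-lifting}, the argument is immediate. Given $(X,Y) \in C'$ and an isomorphism $f \colon Y \to \tilde Y$ in $D'$, I would apply the isofibration property of $G$ to the isomorphism $F(f) \colon GX \to F\tilde Y$ in $D$ to obtain an isomorphism $g \colon X \to \tilde X$ in $C$ with $G\tilde X = F\tilde Y$ and $G(g) = F(f)$. Then $(\tilde X, \tilde Y)$ is in $C'$ and $(g,f)$ is the required lift of $f$.

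For \ref{prop:pullback-CAT-iso-liftting-colimits:colimits}, the strategy is to form $I$\nobreakdash-colimits separately on each factor and then align them using the isofibration property. Given a diagram $K \colon I \to C'$ with components $K(i) = (X_i, Y_i)$, let $X = \colim_i X_i$ in $C$ with injections $\iota_i$, and $Y = \colim_i Y_i$ in $D'$ with injections $\kappa_i$. Since $G$ and $F$ preserve $I$\nobreakdash-colimits, both $(GX, G\iota_i)$ and $(FY, F\kappa_i)$ are colimit cocones in $D$ over the common diagram $i \mapsto GX_i = FY_i$, so there is a canonical isomorphism $\phi \colon GX \to FY$ with $\phi \circ G\iota_i = F\kappa_i$. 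Lifting $\phi$ along $G$ yields an isomorphism $\psi \colon X \to \tilde X$ in $C$ with $G\tilde X = FY$ and $G\psi = \phi$, so that $(\tilde X, Y)$ is an object of $C'$, equipped with the cocone $(\psi \circ \iota_i, \kappa_i) \colon K(i) \to (\tilde X, Y)$.

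To finish, I would check universality: given any cocone $(g_i, f_i) \colon K(i) \to (Z, W)$ in $C'$, the universal property of $\tilde X$ (still a colimit in $C$ since $\psi$ is invertible) and of $Y$ produce unique factorizations $g \colon \tilde X \to Z$ and $f \colon Y \to W$. The pair $(g, f)$ is then a morphism in $C'$ precisely because $G(g)$ and $F(f)$ agree after precomposition with each $F\kappa_i = G(\psi \circ \iota_i)$ (both equal $G(g_i) = F(f_i)$), whence $G(g) = F(f)$ by uniqueness of factorizations out of the colimit $FY$ in $D$. Preservation by $F'$ and $G'$ is then automatic since they project to the chosen colimits. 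The one delicate point—the main obstacle, if any—is this last alignment step: without the isofibration hypothesis on $G$, one could only produce a colimit in $C'$ up to isomorphism, not on the nose.
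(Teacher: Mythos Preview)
Your proof is correct and follows essentially the same approach as the paper: both parts use the concrete description of the pullback as pairs, lift the comparison isomorphism between the two pushed-forward colimit cocones along the isofibration $G$, and then assemble the resulting pair into a colimit in $C'$. Your version is in fact more explicit than the paper's, which leaves the universality check as ``we easily verify that it is a colimit cocone''; your argument for why $G(g)=F(f)$ via uniqueness of factorizations out of $FY$ is exactly the verification the paper omits.
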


\begin{proof}
  \proofof{prop:pullback-CAT-iso-liftting-colimits:iso-lifting} Let~$X \in C'$,~$Y_L \in D'$ and~$\theta_L\co G'X \to Y_L$ be an isomorphism. Then, since~$G$
  is an isofibration, there is~$Y_R \in C$ and an isomorphism~$\theta_R\co F'X
  \to Y_R$ such that~$F(\theta_L) = G(\theta_R)$. Moreover,~$F(\finv\theta_L) =
  G(\finv\theta_R)$ so that~$(\theta_L,\theta_R) \co X \to (Y_L,Y_R)$ is an
  isomorphism of~$C'$.
  \proofof{prop:pullback-CAT-iso-liftting-colimits:colimits} Let~$d\co I \to C'$
  be a functor, which is the data of~$d_L\co I \to D'$ and~$d_R\co I \to C$.
  Then, there are colimit cocones~$(p_{L,i}\co d_L(i) \to X_L)_{i \in I}$ and~$(p_{R,i}\co d_R(i) \to X_R)_{i \in I}$. Since both~$F$ and~$G$ preserve
  colimits, both
  \[
    (F(p_{L,i})\co F(d_L(i)) \to F(X_L))_{i \in I}
    \qtand
    (G(p_{R,i})\co F(d_L(i)) \to G(X_R))_{i \in I}
  \]
  are colimit cocones for~$F\circ d_L$. So there exists an
  isomorphism~$\theta\co F(X_L) \to G(X_R)$ between the two cocones. Since~$G$
  is an isofibration, we can suppose that~$F(X_L) = G(X_R)$ and~$\theta = \unit
  {F(X_L)}$. Thus, we have a cocone~$((p_{L,i},p_{R,i})\co d(i) \to
  (X_L,X_R))_i$ on~$d$, and we easily verify that it is a colimit cocone.
\end{proof}
\begin{remark}
  Pullbacks in~$\CAT$ should normally raise suspicion since strict limits are
  not well-behaved in~$\CAT$ in general. Indeed, a limit cone in~$\CAT$ on a
  diagram is not stable when replacing some functors of the diagram by
  isomorphic functors. Moreover, the limit cone is defined up to isomorphism,
  and not up to equivalence of categories. To solve this problem, one usually
  considers a weaker notion of limits, where the triangles of cones commute only
  up to isomorphisms, as with weighted bilimits~\cite{makkai1989accessible}. But
  the strict limit on a diagram is generally not equivalent to the associated
  weighted bilimit. However, introducing weighted bilimits here would be an
  unnecessary pain for what we want to do, since the pullbacks along
  isofibrations are equivalent to the weighted bipullbacks
  (see~\cite[Proposition~5.1.1]{makkai1989accessible}).
\end{remark}
\smallpar We say that a monad functor is an \emph{isofibration} when the underlying
functor is an isofibration. We then have:
\begin{lem}
  \label{lem:em-pres-isofib}
  The functor $\EM$ preserves isofibrations.
\end{lem}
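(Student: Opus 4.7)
The plan is to unpack the claim concretely: given a monad functor $(F,\alpha) \co (\cC,S) \to (\cD,T)$ whose underlying functor $F$ is an isofibration in $\CAT$, I want to show that the lifted functor $\emfunct F \alpha \co \cC^S \to \cD^T$ is also an isofibration. So I start from an $S$-algebra $(X,h) \in \cC^S$ and an isomorphism
\[
  \tilde g \co \emfunct F \alpha(X,h) = (FX,\; Fh \circ \alpha_X) \to (Y',k')
\]
in $\cD^T$, and I want to produce an $S$-algebra $(Y,h'')$ together with an isomorphism $g \co (X,h) \to (Y,h'')$ in $\cC^S$ lifting $\tilde g$.

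The first step is to use the hypothesis that $F$ is an isofibration in $\CAT$: applied to $X \in \cC$ and the underlying isomorphism $\tilde g \co FX \to Y'$ in $\cD$, it yields an object $Y \in \cC$ with $FY = Y'$ and an isomorphism $g \co X \to Y$ in $\cC$ with $Fg = \tilde g$. The second step is to define the only possible $S$-algebra structure on $Y$ that makes $g$ into an algebra morphism, namely
\[
  h'' \;=\; g \circ h \circ S(g^{-1}) \co SY \to Y\zbox.
\]
This is immediately an $S$-algebra (transport of structure along an isomorphism preserves both the unit and the multiplication axioms), and by construction $g \circ h = h'' \circ Sg$, so $g \co (X,h) \to (Y,h'')$ is an isomorphism in $\cC^S$.

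The only nontrivial verification is that $\emfunct F \alpha(Y,h'') = (Y',k')$, i.e., that $Fh'' \circ \alpha_Y = k'$. Using naturality of $\alpha \co TF \To FS$ at $g^{-1}$, we get $FS(g^{-1}) \circ \alpha_Y = \alpha_X \circ T(\tilde g^{-1})$, hence
\[
  Fh'' \circ \alpha_Y
  \;=\; Fg \circ Fh \circ FS(g^{-1}) \circ \alpha_Y
  \;=\; \tilde g \circ (Fh \circ \alpha_X) \circ T(\tilde g^{-1})\zbox.
\]
Since $\tilde g$ is a morphism of $T$-algebras, $\tilde g \circ (Fh \circ \alpha_X) = k' \circ T\tilde g$, and the right-hand side collapses to $k' \circ T\tilde g \circ T(\tilde g^{-1}) = k'$, as desired. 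This completes the lift.

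I do not expect any real obstacle here: the proof is essentially the classical ``transport of algebra structure along an isomorphism''. The only subtle point worth highlighting is where the naturality of $\alpha$ enters, which is exactly the place where being a monad functor (not merely a functor between the underlying categories) is used.
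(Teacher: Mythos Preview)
Your proof is correct and follows essentially the same approach as the paper: lift the underlying isomorphism via the isofibration hypothesis, transport the $S$-algebra structure along it, and check that the result is sent to the given $T$-algebra. You actually spell out more than the paper does, in particular the naturality-of-$\alpha$ computation showing $Fh'' \circ \alpha_Y = k'$, which the paper simply asserts.
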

\begin{proof}
  Let $(F,\alpha) \co (\cC,S) \to (\cD,T)$ be a monad functor isofibration. Let
  $(X,h \co SX \to X)$ be an $S$\algebra, and $f \co F^\alpha(X,h) \to (Y,k)$ be
  a $T$\algebra isomorphism. Since $F$ is an isofibration, there is an
  isomorphism $\tilde f \co X \to \tilde Y$ such that $F(\tilde f) = f$. One can
  equip $\tilde Y$ with a structure of $S$\algebra $(\tilde Y,\tilde k)$ by
  putting $\tilde k = \tilde f \circ h \circ S(\finv{\tilde f})$. Then, we have
  an $S$\algebra morphism $\tilde f \co (X,h) \to (\tilde Y,\tilde k)$.
  Moreover, $(\tilde Y,\tilde k)$ is mapped to $(Y,k)$ by $F^\alpha$, so that
  $\tilde f$ is an isomorphism which lifts $f$. Hence, $\EM$ preserves
  isofibration.
\end{proof}
\noindent We now verify that several functors of interest to us are isofibrations:
\begin{prop}
  \label{prop:gtruncf-lifts-isom}
  Given~$k,l \in \Ninf$ with $k<l$, the functor~$\gtruncf[l] k {(-)}$ is an isofibration.
\end{prop}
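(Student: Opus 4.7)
The plan is to construct, for a given isomorphism $\tilde f \co \gtruncf[l]k X \to \tilde Y$ in $\nGlob k$ with $X \in \nGlob l$, an $l$\globular set $Y$ and an isomorphism $f \co X \to Y$ in $\nGlob l$ such that $\gtruncf[l]k Y = \tilde Y$ and $\gtruncf[l]k f = \tilde f$. Since an isofibration is defined precisely by this lifting property, this will give the result.

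To build $Y$, I would keep the data of $\tilde Y$ in dimensions~${\le k}$ and transport the data of $X$ in dimensions~${>k}$. Concretely, put $Y_i = \tilde Y_i$ for~$i \le k$ with source and target maps inherited from~$\tilde Y$, and $Y_i = X_i$ for~$i > k$ with source and target maps inherited from~$X$ except for~$\csrc_k,\ctgt_k \co Y_{k+1} \to Y_k$, which must land in~$\tilde Y_k$ and are therefore defined as the composites $\tilde f_k \circ \csrc_k^X$ and $\tilde f_k \circ \ctgt_k^X$. The globularity equations hold automatically within the blocks $i < k$ and $i \ge k$; the only condition crossing the boundary is the one at~$i = k-1$ (when $k \ge 1$), which follows from the globularity of~$X$ and the naturality of~$\tilde f$ (since~$\tilde f$ is a morphism of $k$\globular sets, it commutes with $\csrc_{k-1}$ and $\ctgt_{k-1}$).

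Next, define $f \co X \to Y$ componentwise by $f_i = \tilde f_i$ for~$i \le k$ and $f_i = \unit{X_i}$ for~$i > k$. Compatibility of $f$ with sources and targets in dimensions~$<k$ and in dimensions~$>k$ is immediate; compatibility across the boundary (at $\csrc_k,\ctgt_k$) is exactly the content of the redefinition of these maps on~$Y$. Since each $f_i$ is either a bijection (for $i\le k$) or an identity (for $i > k$), $f$ is an isomorphism in $\nGlob l$. By construction, $\gtruncf[l]k Y = \tilde Y$ and $\gtruncf[l]k f = \tilde f$, which is what was required.

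There is no real obstacle here: the construction is explicit and the verifications are mechanical. The only point deserving slight care is the definition of the source and target maps at the level~$k$ of~$Y$, because this is the unique place where the new data~$\tilde Y$ and the old data~$X$ have to be glued together; everything else reduces to naturality of $\tilde f$ and globularity of $X$.
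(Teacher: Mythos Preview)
Your proof is correct and is precisely the explicit construction the paper has in mind: the paper's own proof consists of the single word ``Straight-forward.'' You have simply spelled out the expected lift-by-transport-of-structure argument, and the verifications are exactly as you describe.
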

\begin{proof}
  Straight-forward.
\end{proof}
\begin{prop}
  \label{prop:algtruncf-lifts-isom}
  Let~$n \in \Ninf$ and~$(T,\eta,\mu)$ be a finitary monad on~$\nGlob n$. Given $k,l \in
  \Ninf$ with $k < l$, the functor~$\algtruncf[l] k {(-)}$ is an isofibration.
\end{prop}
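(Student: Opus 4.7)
The plan is to reduce this to the two facts already established just above: namely, that $\gtruncf[l]k{-}$ is an isofibration (\Cref{prop:gtruncf-lifts-isom}), and that $\EM$ preserves isofibrations (\Cref{lem:em-pres-isofib}). Recall that, by construction, $\algtruncf[l]k{-}$ is defined as the image under $\EM$ of the monad functor
\[
  (\gtruncf[l]k{-},\gtruncf k{-}\,T^l\,\gtrunccu[l]k) \co (\nGlob l, T^l) \to (\nGlob k, T^k),
\]
obtained from the cocartesian lift supplied by \Cref{prop:ex-algra} (see \Cref{lem:gtruncf-cocart-lift} for the intermediate case).

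The key observation is then that this monad functor is an isofibration in the sense defined just before \Cref{lem:em-pres-isofib}, since its underlying functor in $\CAT$ is precisely $\gtruncf[l]k{-}$, and the latter is an isofibration by \Cref{prop:gtruncf-lifts-isom}. Applying \Cref{lem:em-pres-isofib} to this monad functor yields that $\algtruncf[l]k{-} = \EM(\gtruncf[l]k{-},\gtruncf k{-}\,T^l\,\gtrunccu[l]k)$ is an isofibration, which is the desired conclusion.

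There is really no obstacle here beyond unfolding the definitions: the whole content of the proof is contained in the two prior results, and the statement is essentially a corollary. The only point that deserves a brief comment is that the definition of $\algtruncf[l]k{-}$ for $k < l < n$ (as opposed to the case $l = n$) goes through the factorization of \Cref{lem:gtruncf-cocart-lift}, but since that factorization has the same underlying functor $\gtruncf[l]k{-}$ in $\CAT$, the argument is unchanged.
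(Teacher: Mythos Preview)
Your proof is correct and follows exactly the same approach as the paper's: both observe that $\algtruncf[l]k{-}$ is the image under $\EM$ of a monad functor whose underlying functor $\gtruncf[l]k{-}$ is an isofibration, and then apply \Cref{lem:em-pres-isofib}. Your additional remark about the factorization in \Cref{lem:gtruncf-cocart-lift} is a helpful clarification but does not alter the argument.
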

\begin{proof}
  The functor $\algtruncf[l] k {(-)}$ is the image by $\EM$ of
  $(\gtruncf[l]k-,\gtruncf[l]k-T^{l}\gtrunccu[l]k)$, which is an
  isofibration. Thus, by \Cref{lem:em-pres-isofib}, it is an isofibration.
\end{proof}

\subsection{Cellular extensions}
\label{ssec:cell-exts}
In this section, we introduce the notion of \emph{$k$\cellular extension}, which
describes a~$k$\category (for some theory of higher categories) equipped with a
set of \index{generator!of a cellular extension}\emph{$(k{+}1)$\generators}. We moreover give the construction of the free
$(k{+}1)$\category on a $k$\cellular extension together with more specific
results when the theory we are considering is associated with a truncable monad.


Let~$n \in \Ninf$ and~$(T,\eta,\mu)$ be a finitary monad on~$\nGlob n$. Given~$k
\in \N_{n-1}$, we define the \glossary(Algp){$\Algp_k$}{the category of
  $k$\cellular extensions}category~$\Algp_k$ of \index{cellular
  extension@$k$-cellular extension}\emph{$k$\cellular extensions} as the
\glossary(Ak){$\cetoalg_{k}$}{the functor which maps a $k$\cellular extension to
its underlying $k$\algebra}\glossary(Gk){$\cetoglob_{k}$}{the functor which maps
a $(k{-}1)$\cellular extension to its underlying $k$\globular set}pullback
\[
  \begin{tikzcd}[cramped,column sep=4em]
    \Algp_k \ar[r,dotted,"\cetoglob_{k+1}"] \ar[d,dotted,"\cetoalg_{k}"'] & \nGlob {k+1} \ar[d,"\gtruncf k {(-)}"] \\
    \Alg_k \ar[r,"\fgfalgf_k"']& \nGlob k
  \end{tikzcd}
\]
\noindent We verify that:
\begin{prop}
  \label{prop:algp-oone-pres}
  The category $\Algp_k$ is locally finitely presentable. In particular, it is
  complete and cocomplete. Moreover, both $\cetoglob_{k+1}$ and $\cetoalg_k$ are
  finitary right adjoints.
\end{prop}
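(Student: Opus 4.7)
My plan is to leverage the isofibration property of $\gtruncf k-$ (\Cref{prop:gtruncf-lifts-isom}) together with the chain of adjunctions $\gincf{k+1}- \dashv \gtruncf k- \dashv \gincfill{k+1}-$ and the pullback machinery of \Cref{text:pullback-in-CAT}, in order to inherit good properties from $\Alg_k$ and $\nGlob{k+1}$.

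First I would observe that $\gtruncf k-$ is simultaneously a left and a right adjoint, hence preserves all limits and all colimits and is finitary, while $\fgfalgf_k$ preserves limits and filtered colimits by \Cref{prop:alg-cat-loc-fin-pres}. Applying \Cref{prop:pullback-CAT-iso-liftting-colimits} to the isofibration pullback shows that $\Algp_k$ has filtered colimits, preserved by both projections $\cetoalg_k$ and $\cetoglob_{k+1}$. Since in addition both $\fgfalgf_k$ and $\gtruncf k-$ preserve limits, $\Algp_k$ is complete with limits computed componentwise and preserved by both projections.

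Next I would apply \Cref{prop:pullback-la-ra} twice. With $H = \gincf{k+1}-$, the adjunction $\gincf{k+1}- \dashv \gtruncf k-$ has identity unit, yielding a left adjoint $(X,h) \mapsto ((X,h), \gincf{k+1}\fgfalgf_k(X,h))$ to $\cetoalg_k$. With $H = \gincfill{k+1}-$, the adjunction $\gtruncf k- \dashv \gincfill{k+1}-$ has identity counit, yielding a right adjoint $(X,h) \mapsto ((X,h), \gincfill{k+1}\fgfalgf_k(X,h))$ to $\cetoalg_k$. Combined with its preservation of filtered colimits, this shows that $\cetoalg_k$ is a finitary right adjoint.

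The main obstacle is local finite presentability of $\Algp_k$. I would argue it from the general principle that the strict pullback of two locally finitely presentable categories along finitary right adjoints, one leg of which is an isofibration, is itself LFP: under the isofibration assumption the strict pullback agrees with the bi-pullback, which is a PIE-limit that stays within accessible-and-cocomplete categories. Alternatively one may exhibit an explicit essentially algebraic presentation of $\Algp_k$, obtained by gluing the theory of $T^k$-algebras with equations asserting that the $k$-truncation of the $(k{+}1)$-part coincides with the underlying $k$-globular set of the algebra. Once $\Algp_k$ is known to be LFP, the continuous and finitary functor $\cetoglob_{k+1}$ admits a left adjoint by the adjoint functor theorem for LFP categories, concluding the proof that both projections are finitary right adjoints.
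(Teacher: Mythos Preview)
Your proposal is correct and follows essentially the same approach as the paper: both recognize that the defining strict pullback is a bipullback (since $\gtruncf k-$ is an isofibration) and invoke the closure of locally finitely presentable categories and finitary right adjoints under bilimits (the paper cites Bird's Theorem~2.17 directly). The paper's argument is simply more compressed: once Bird's theorem is invoked, local finite presentability of $\Algp_k$ and the fact that both projections are finitary right adjoints come out simultaneously, so your separate treatment of filtered colimits, limits, the two adjoints of $\cetoalg_k$ via \Cref{prop:pullback-la-ra}, and the adjoint-functor-theorem step for $\cetoglob_{k+1}$ are all redundant (though not incorrect). Note that your argument for $\cetoalg_k$ having both a left and a right adjoint is exactly what the paper proves separately as \Cref{prop:cetoalg-la}.
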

\begin{proof}
  The categoy $\Algp_k$ is defined as the pullback of the finitary right adjoint
  isofibration~$\gtruncf k -$ along the right adjoint $\fgfalgf_k$. Thus, it is
  a bipullback of right adjoints between locally finitely presentable
  categories. Since the $2$\category of locally finitely presentable and
  finitary right adjoint functors is closed under bilimits (see
  \cite[Theorem~2.17]{bird}), $\Algp_k$ is locally finitely presentable and the
  two projections $\cetoglob_{k+1}$ and~$\cetoalg_k$ are finitary right adjoint
  functors.
\end{proof}
\begin{rem}
  \label{rem:algp-fin-pres}
  In fact, one can check by hand that the finitely presentable objects of
  $\Algp_k$ are exactly the compatible pairs $(C,X)$, where $C$ is a finitely
  presentable object of $\Alg_k$ and $X$ is a compatible $(k{+}1)$\globular set
  such that $X_{k+1}$ is finite.
\end{rem}

\begin{prop}
  \label{prop:cetoalg-la}
  The functor~$\cetoalg_{k}$ is an isofibration and has a right adjoint.
\end{prop}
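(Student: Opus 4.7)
The plan is to apply directly the two general pullback facts from \Cref{text:pullback-in-CAT} to the pullback square that defines~$\Algp_k$. Both claims will drop out without any real computation.

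For the isofibration claim, I would invoke \Cref{prop:gtruncf-lifts-isom} to know that the right vertical arrow $\gtruncf k -$ of the defining pullback is an isofibration, and then apply part~\ref{prop:pullback-CAT-iso-liftting-colimits:iso-lifting} of \Cref{prop:pullback-CAT-iso-liftting-colimits}: the pullback of an isofibration along any functor is an isofibration. Hence $\cetoalg_k$ is an isofibration.

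For the right adjoint claim, I would apply \Cref{prop:pullback-la-ra} in its ``$G \dashv H$ with counit equal to $\unitp 2 D$'' variant. Taking $G = \gtruncf k -$ and $H = \gincfill{k+1} -$, we have an adjunction $\gtruncf k - \dashv \gincfill{k+1} -$, and the explicit description of $\gincfill{k+1} -$ from \Cref{text:gs-trunc-incl-functors} gives $\gtruncf k - \gincfill{k+1} - = \unit{\nGlob k}$ on the nose, so the counit of this adjunction is literally the identity $\unitp 2 {\nGlob k}$. The proposition then produces a functor $H' \co \Alg_k \to \Algp_k$ with $\cetoalg_k H' = \unit{\Alg_k}$ and an adjunction $\cetoalg_k \dashv H'$ with counit $\unitp 2 {\Alg_k}$.

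I do not expect any real obstacle: both statements are near-immediate citations. The only point to double-check when writing the proof is that the counit of $\gtruncf k - \dashv \gincfill{k+1} -$ is the strict identity rather than merely an isomorphism, but this is forced by the strict equality $\gtruncf k -\gincfill{k+1}- = \unit{\nGlob k}$ together with the standard fact that, for an adjunction whose left and right composite is the identity, the counit can (and here is chosen to) be taken as the identity natural transformation in order to satisfy the zigzag equations.
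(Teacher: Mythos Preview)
Your proposal is correct and follows essentially the same route as the paper, which simply cites \Cref{prop:gtruncf-lifts-isom} and \Cref{prop:pullback-la-ra}. Your version is in fact slightly more precise: you also invoke \Cref{prop:pullback-CAT-iso-liftting-colimits}\ref{prop:pullback-CAT-iso-liftting-colimits:iso-lifting} for the isofibration part, which is the relevant statement for stability of isofibrations under pullback, and you spell out why the counit of $\gtruncf k - \dashv \gincfill{k+1}-$ is the identity so that \Cref{prop:pullback-la-ra} applies in its $G \dashv H$ variant.
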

\begin{proof}
  This is a consequence of \Cref{prop:gtruncf-lifts-isom,prop:pullback-la-ra}.
\end{proof}

\begin{prop}
  \label{prop:cetoalg-pres-presentables}
  The functor $\cetoalg_k$ preserves finitely presentable objects.
\end{prop}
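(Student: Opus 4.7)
The plan is to leverage the adjunction from \Cref{prop:cetoalg-la}: since both $\Alg_k$ and $\Algp_k$ are locally finitely presentable (by \Cref{prop:alg-cat-loc-fin-pres,prop:algp-oone-pres}), the left adjoint $\cetoalg_k$ preserves finitely presentable objects if and only if its right adjoint $R \co \Alg_k \to \Algp_k$ is finitary, so the goal reduces to exhibiting $R$ explicitly and showing it preserves filtered colimits. To make $R$ explicit, I would unpack the proof of \Cref{prop:cetoalg-la}: applying \Cref{prop:pullback-la-ra} to the adjunction $\gtruncf k - \dashv \gincfill{k+1} -$ (whose counit is the identity, by the very definition of $\gincfill{k+1} -$) yields $R$ given on objects by $R C = (C,\, \gincfill{k+1} - \fgfalgf_k C)$, which lies in the pullback $\Algp_k$ because $\gtruncf k - \gincfill{k+1} - \fgfalgf_k C = \fgfalgf_k C$.

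Next, I would observe that filtered colimits in $\Algp_k$ are computed componentwise, because $\fgfalgf_k$ and $\gtruncf k -$ both preserve them (the first by \Cref{prop:alg-cat-loc-fin-pres}, the second as a left adjoint), so the componentwise colimit of a filtered diagram still satisfies the pullback compatibility and is readily seen to be the colimit in $\Algp_k$. Consequently, $R$ is finitary if and only if each of $\cetoalg_k R = \unit{\Alg_k}$ and $\cetoglob_{k+1} R = \gincfill{k+1} - \fgfalgf_k$ is finitary. The identity is trivially so and $\fgfalgf_k$ is finitary by \Cref{prop:alg-cat-loc-fin-pres}, so the entire question reduces to the (routine but central) step of showing that $\gincfill{k+1} -$ preserves filtered colimits.

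For that last step, since filtered colimits in $\nGlob{k+1}$ are pointwise, I would simply check dimensionwise: $\gincfill{k+1} -$ is the identity in dimensions up to $k$, and in dimension $k+1$ sends $X$ to the set of parallel pairs of $k$-globes in $X$, which is the finite limit $X_k \times_{X_{k-1} \times X_{k-1}} X_k$ in $\Set$ (using the conventions of \Cref{ssec:globular-sets} to handle the $k = 0$ case) and therefore commutes with filtered colimits. Assembling the pieces then yields the claim, the main obstacle being only the explicit identification of $R$, after which everything follows from standard facts about finitariness in locally finitely presentable categories.
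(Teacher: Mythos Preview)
Your proposal is correct and follows essentially the same approach as the paper: both reduce to showing the right adjoint of $\cetoalg_k$ is finitary, and both use the explicit construction of this right adjoint from \Cref{prop:pullback-la-ra}. The paper's proof is terser, simply noting that the right adjoint arises as the factorization of a cone of finitary functors (namely $\unit{\Alg_k}$ and $\gincfill{k+1}-\fgfalgf_k$) through a (bi)pullback of finitary functors, hence is itself finitary; you unpack this by identifying $R$ explicitly, observing that filtered colimits in $\Algp_k$ are componentwise, and checking the finitariness of $\gincfill{k+1}-$ by hand via finite limits in $\Set$.
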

\begin{proof}
  By adjunction, it is equivalent to prove that its right adjoint is finitary.
  By the constructive content of \Cref{prop:pullback-la-ra}, this right adjoint
  is built from a cone of finitary functors on the (bi)pullback defining
  $\Algp_k$, itself made of finitary functors. Thus, the factorizing functor is
  itself finitary.
\end{proof}

\noindent There is a \glossary(Vk){$\algtoce_{k}$}{the forgetful functor from a
  $(k{+}1)$\algebra to a $k$\cellular extension}functor~$\algtoce_{k}\co \Alg_{k+1} \to \Alg_k^+$ defined
as the factorization arrow
\[
  \begin{tikzcd}[column sep=4em,cramped]
    \Alg_{k+1}
    \ar[rrd,bend left,"\fgfalgf_{k+1}"] \ar[ddr,bend right,"\algtruncf {k}{(-)}"']
    \ar[rd,dotted,"\algtoce_{k}"] \\
    & \Alg_k^+
    \ar[d,"\cetoalg_{k}"']
    \ar[r,"\cetoglob_{k+1}"]&\nGlob{k+1}\ar[d,"\gtruncf k {(-)}"]\\
    & \Alg_k\ar[r,"\fgfalgf_k"']&\nGlob k
  \end{tikzcd}
  \pbox.
\]
\noindent Then, there is an operation which produces a $(k{+}1)$\category from a
$k$\cellular extension. It is the left adjoint to~$\algtoce_{k}$, that exists by
the following property:
\begin{theo}
  \label{prop:algtoce-ra}
  $\algtoce_{k}$ has a left adjoint.
\end{theo}
\begin{proof}
  Let~$\alpha^k$ be the unit of the adjunction~$\smash{\algincf[k]{k+1}- \dashv
    \algtruncf[k+1]k -}$, and let
\[
  \arraycolsep0pt
  \begin{array}{cl@{\quad}c@{\quad}c@{\quad}c}
    \Phi^{\mathrm L} &\co& \smash{\Alg_{k+1}(\algincf {k+1} {(-)},-)}\mathstrut &\To& \smash{\Alg_k(-,\algtruncf k
                       {(-)})}\mathstrut \\[0.25cm]
    \Psi^{\mathrm L} &\co& \smash{\Alg_k(\algincf {k+1}{(-)}\freealgf_k(-),-)}\mathstrut&\To& \smash{\nGlob
                       k(-,\fgfalgf_k\algtruncf[k+1] {k} {(-)})}\mathstrut \\[0.25cm]
    \Phi^{\mathrm R}&\co&
                      \Alg_{k+1}(\freealgf_{k+1}(-),-) &\To&
                                                              \nGlob
                                                             {k+1}(-,\fgfalgf_{k+1}(-))
    \\[0.25cm]
    \Psi^{\mathrm R}&\co& \smash{\nGlob {k+1}(\freealgf_{k+1}\gincf[k] {k+1} {(-)},-)}\mathstrut &\To &\smash{\nGlob k (-,\gtruncf
                      k {(-)}\fgfalgf_{k+1}(-))}\mathstrut
  \end{array}
\]
be the natural bijections derived from the associated adjunctions. Note that
these bijections can be defined using the units of the adjunctions. For example,
given~$C \in \Alg_k$ and~$D \in \Alg_{k+1}$,~$\Phi^{\mathrm L}$ maps a
morphism~$f \co \incf{k+1} C \to D \in \Alg_{k+1}$ to the morphism
\[
  \restrict k f \circ \alpha^k_C \co C \to \restrict k D \in \Alg_k
\]
where $\alpha$ is the unit of $\algtruncf k - \dashv \algincf {k+1} -$.
Since
\[
  \mathmakebox[3cm][r]{\freealgf_{k+1}\gincf {k+1}{(-)}}
  \qtand
  \mathmakebox[3cm][l]{\algincf {k+1}
    {(-)}\freealgf_k}
\]
are both left adjoint to~$\fgfalgf_{k}\algtruncf[k+1] k {(-)} = \gtruncf k
{(-)}\fgfalgf_{k+1}$, the natural morphism
\[
  \theta\co \freealgf_{k+1}\gincf[k] {k+1}{(-)} \To \algincf {k+1} {(-)}\freealgf_k
\]
defined as the composite
\[
  \hss
  \theta = (\eps^{k+1} \algincf[k]{k+1}-\freealgf_k) \circ (\freealgf_{k+1} \gtrunccu k  \fgfalgf_{k+1} \algincf[k]{k+1}-\freealgf_k) \circ (\freealgf_{k+1} \gincf[k]{k+1}- \fgfalgf_k \alpha^k \freealgf_k) \circ (\freealgf_{k+1} \gincf[k]{k+1}- \eta^k)
  \hss
\]
which can be represented by 
\[
  \satex{algtoce-ra-theta-l}
  \qqeq
  \satex{algtoce-ra-theta-r}
\]
is an isomorphism as a consequence of \Cref{prop:isom-btw-adj}. In the
following, given a morphism $f \co X \to Y$ of a category~$\mcal C$, we
write~$f^*\co \mcal C(Y,Z) \to \mcal C(X,Z)$ for the function~$g \mapsto g \circ
f$ for all~$Z \in \mcal C$. One can verify using the zigzag equations that the
natural transformation~$\theta$ makes the diagram \begingroup \makeatletter
\renewcommand{\maketag@@@}[1]{\hbox to
  0.000008pt{\hss\m@th\normalsize\normalfont#1}}%
\makeatother
\begin{equation}
  \label{eq:algtoce-ra:psil-psir}
  \begin{tikzcd}[cramped,column sep={8cm,between origins}]
    &[-5cm]
    \Alg_{k+1}(\incf {k+1} {(\freealgf_k Z)},A)
    \ar[d,"(\theta_Z)^*"']
    \ar[r]
    \ar[r,phantom,start anchor=center,end anchor=center,"\scriptstyle\Psi^{\mathrm L}_{Z,A}"{above}]
    &
    \nGlob k(Z,\fgfalgf_{k}(\restrict k {A}))
    \ar[d,equals]
    &[-5cm]
    \\
    &
    \Alg_{k+1}(\freealgf_{k+1}(\incf {k+1}{Z}),A)
    \ar[r]
    \ar[r,phantom,start anchor=center,end anchor=center,"\scriptstyle\Psi^{\mathrm R}_{Z,A}"'{below}]
    &
    \nGlob k(Z,\restrict k {(\fgfalgf_{k+1}A)})
  \end{tikzcd}
\end{equation}
\endgroup
commutes for all~$Z \in \nGlob k$ and~$A \in \Alg_{k+1}$.
Let~$(C,X) \in
\Algp_{k}$,~$D \in \Alg_{k+1}$ and~$(\restrict k D,Y)$ be~$\algtoce_{k} D$. Since
\[
  \mathmakebox[4cm][r]{\fgfalgf_k C =
  \restrict k X}
  \qtand
  \mathmakebox[4cm][l]{\fgfalgf_k D = \restrict k Y}
\]
and by the properties of adjunctions, we have a diagram
\begingroup
\makeatletter
\renewcommand{\maketag@@@}[1]{\hbox to 0.000008pt{\hss\m@th\normalsize\normalfont#1}}%
\makeatother
\begin{equation}
  \label{eq:algtoce-ra:iso-squares}
  \begin{tikzcd}[cramped,column sep={8cm,between origins}]
    &[-5cm]
    |[alias=R1C1]|\Alg_{k+1}(\incf {k+1} {C},D)
    \ar[r]
    \ar[d,"(e^{\mathrm L}_{(C,X)})^*"']
    &
    |[alias=R1C2]|\Alg_k(C,\restrict k D)
    \ar[d,"\fgfalgf_k"]
    &[-5cm]
    \\
    &
    |[alias=R2C1]|\Alg_{k+1}(\incf {k+1} {(\freealgf_k\fgfalgf_{k}C)},D)
    \ar[d,"(\theta_{\restrict k X})^*"']
    \ar[r]
    &
    |[alias=R2C2]|\nGlob k({\fgfalgf_{k} C},\fgfalgf_{k}(\restrict k {D}))
    \ar[d,equals]
    \\
    &
    |[alias=R3C1]|\Alg_{k+1}(\freealgf_{k+1}(\incf {k+1}{(\restrict k {X})}),D)
    \ar[r]
    &
    |[alias=R3C2]|\nGlob k({\restrict k {X}},\restrict k {Y})
    \\
    &
    |[alias=R4C1]|\Alg_{k+1}(\freealgf_{k+1} X,D)
    \ar[u,"(e^{\mathrm R}_{(C,X)})^*"]
    \ar[r]
    &
    |[alias=R4C2]|\nGlob {k+1}(X,Y)
    \ar[u,"\gtruncf k {(-)}"']
    \ar[r,from=R1C1.center,to=R1C2.center,phantom,"\scriptstyle\Phi^{\mathrm L}_{C,D}"{above}]
    \ar[r,from=R2C1.center,to=R2C2.center,phantom,"\scriptstyle\Psi^{\mathrm L}_{\fgfalgf_k C,D}"{above}]
    \ar[r,from=R3C1.center,to=R3C2.center,phantom,"\scriptstyle\Psi^{\mathrm R}_{\restrict k X,D}"{above}]
    \ar[r,from=R4C1.center,to=R4C2.center,phantom,"\scriptstyle\Phi^{\mathrm R}_{X,D}"{above}]
  \end{tikzcd}
\end{equation}
\endgroup such that each square commutes and where~$e^{\mathrm L}$ and~$e^{\mathrm R}$
are the natural transformations
\[
  \mathmakebox[4cm][r]{e^{\mathrm L} = \algincf {k+1} {(-)} \eps^k \cetoalg_{k}}
  \qtand
  \mathmakebox[4cm][l]{e^{\mathrm R} = \freealgf_{k+1} \gtrunccu k \cetoglob_{k+1}}
\]
respectively. Indeed, the middle square commutes
by~\eqref{eq:algtoce-ra:psil-psir} and the top and bottom squares commute by the
zigzag equations. By definition of~$\Algp_k$, the
set~$\Algp_k((C,X),\algtoce_{k}D)$ is the pullback
\[
  \begin{tikzcd}[cramped,column sep={8cm,between origins}]
    &[-5cm]
    \Algp_k((C,X),\algtoce_{k}D)
    \centeredar{r}{\cetoglob_{k+1}}{above}
    \ar[d,"\cetoalg_{k}"']
    \centeredphantom{rd}{\drcorner}{pos=0.3,xshift=-1.8cm}
    &
    \nGlob {k+1}(X,\cetoglob_{k+1}\algtoce_{k}D)
    \ar[d,"\gtruncf k {(-)}"]
    &[-5cm]
    \\
    &
    \Alg_k(C,\cetoalg_{k}\algtoce_{k}D)
    \centeredar{r}{\fgfalgf_k}{below}
    &
    \nGlob k(\restrict k {X},\restrict k {(\cetoglob_{k+1}\algtoce_{k}D)})
  \end{tikzcd}
  \pbox.
\]
Since
\[
  \mathmakebox[4cm][r]{\algtruncf k {(-)} =
  \cetoalg_{k}\algtoce_{k}}
  \qtand
  \mathmakebox[4cm][l]{\fgfalgf_{k+1} = \cetoglob_{k+1} \algtoce_{k}}
\]
and by the commutative diagram~\eqref{eq:algtoce-ra:iso-squares}, the following
diagram is also a pullback:
\[
  \begin{tikzcd}[cramped,column sep={8cm,between origins}]
    &[-5cm]
    \Algp_k((C,X),\algtoce_{k}D)
    \centeredar{r}{\finv{(\Phi^{\mathrm R}_{X,D})}\circ\cetoglob_{k+1}}{above}
    \ar[d,"\finv {(\Phi^{\mathrm L}_{C,D})}\circ \cetoalg_{k}"']
    \centeredphantom{rd}{\drcorner}{pos=0.3,xshift=-1.8cm}
    &
    \Alg_{k+1}(\freealgf_{k+1}X,D)
    \ar[d,"(e^{\mathrm R}_{(C,X)})^*"]
    &[-5cm]
    \\
    &
    \Alg_{k+1}(\incf {k+1} {C},D)
    \centeredar{r}{(e^{\mathrm L}_{(C,X)} \circ \theta_{\restrict k X})^*}{below}
    &
    \Alg_{k+1}(\freealgf_{k+1}(\incf {k+1}{(\restrict k {X})}),D)
  \end{tikzcd}
  \pbox.
\]
Since~$\Alg_{k+1}$ is cocomplete by \Cref{prop:alg-cat-loc-fin-pres}, the
diagram
\[
  \begin{tikzcd}[cramped,column sep={8cm,between origins}]
    &[-5cm]
    \Alg_{k+1}(\polextp C X,D)
    \centeredar{r}{(p^{\mathrm R}_{(C,X)})^*}{above}
    \ar[d,"(p^{\mathrm L}_{(C,X)})^*"']
    \centeredphantom{rd}{\drcorner}{pos=0.3,xshift=-1.8cm}
    &
    \Alg_{k+1}(\freealgf_{k+1}X,D)
    \ar[d,"(e^{\mathrm R}_{(C,X)})^*"]
    &[-5cm]
    \\
    &
    \Alg_{k+1}(\incf {k+1} {C},D)
    \centeredar{r}{(e^{\mathrm L}_{(C,X)} \circ \theta_{\restrict k X})^*}{below}
    &
    \Alg_{k+1}(\freealgf_{k+1}(\incf {k+1}{(\restrict k {X})}),D)
  \end{tikzcd}
\]
is also a pullback, where~$\polextp C X$,~$p^{\mathrm L}_{(C,X)}$
and~$p^{\mathrm R}_{(C,X)}$ are defined as the pushout
\[
  \begin{tikzcd}[column sep={8cm,between origins},cramped]
    &[-5cm]
    \polextp C X
    \centeredphantom{rd}{\drcorner}{pos=0.3,xshift=-1.8cm}
    &
    \freealgf_{k+1}X
    \centeredar{l,dotted}{p^{\mathrm R}_{(C,X)}}{above}
    &[-5cm]
    \\
    &
    \incf {k+1} {C}
    \ar[u,"p^{\mathrm L}_{(C,X)}",dotted]
    &
    \freealgf_{k+1}(\incf {k+1}{(\restrict k {X})})
    \ar[u,"e^{\mathrm R}_{(C,X)}"']
    \centeredar{l}{e^{\mathrm L}_{(C,X)} \circ \theta_{\restrict k X}}{below}
  \end{tikzcd}
  \pbox.
\]
Thus, there is an isomorphism
\[
  \Alg_{k+1}(\polextp C X,D) \cong \Algp_k((C,X),\algtoce_{k}D)
\]
which is natural in~$D$. Hence,~$\algtoce_{k}$ admits a left adjoint.
\end{proof}
\noindent The operation~$(C,X) \mapsto \polextp C X$ defined in the proof of
\Cref{prop:algtoce-ra} extends to a \glossary(.b){$\polextp[k] - -$, $\polextp C
  X$}{the free functor which maps a $k$\cellular extension to a $(k{+}1)$\category}functor
\[
  \polextp[k] - - \co \Algp_k \to \Alg_{k+1}
\]
that we often write~$\polextp--$ when there is no ambiguity on~$k$, and which is
left adjoint to~$\algtoce_{k}$. The image~$\polextp C X$ of some~$(C,X) \in
\Algp k$ is called the \index{free category!on a cellular extension}\index{free extension}\emph{free extension on~$(C,X)$}.

\begin{example}
  Consider the monad~$(T,\eta,\mu)$ on~$\nGlob 1$ defined in
  \Cref{ex:cat-globular-algebra}. A $0$\cellular extension~$(C,X)$ is then
  essentially the data of a set~$C_0$ of $0$\cells, a set~$X_1$
  of~$1$\generators, and functions~$\gsrc_0,\gtgt_0 \co X_1 \to C_0$, \ie a
  graph. Moreover, the $1$\category~$\polextp C X$ is the image of~$(C,X)$ seen
  as a graph by the left adjoint to the functor~$\Cat \to \Gph$ defined in
  \Cref{ex:grph-to-cat-theo-morphism}.
\end{example}

\begin{remark}
  \Cref{prop:algtoce-ra} is a particular case of the fact that the category of
  locally presentable categories and right adjoints are closed under weighted
  bilimits (see~\cite{bird} and the end of~\cite[\S5.1]{makkai1989accessible}).
  But the concrete pushout description given by the proof will be useful later
  to show properties of the functor~$\polextp--$.
\end{remark}

\begin{prop}
  \label{prop:polextp-pres-presentable}
  For every $k \in \N$, the functor $\polextp[k]--$ preserves finitely
  presentable objects.
\end{prop}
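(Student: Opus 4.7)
The plan is to prove the dual statement: by adjunction, $\polextp[k]--$ preserves finitely presentable objects if and only if its right adjoint $\algtoce_k$ preserves directed colimits. So I will show that $\algtoce_k \co \Alg_{k+1} \to \Algp_k$ is finitary.

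First, recall that by \Cref{prop:algp-oone-pres} the category $\Algp_k$ is locally finitely presentable, and both projections $\cetoglob_{k+1}$ and $\cetoalg_k$ are finitary. Moreover, by \Cref{prop:gtruncf-lifts-isom} the functor $\gtruncf k -$ is an isofibration, so \Cref{prop:pullback-CAT-iso-liftting-colimits} applied to the defining pullback
\[
  \begin{tikzcd}[cramped,column sep=4em]
    \Algp_k \ar[r,"\cetoglob_{k+1}"] \ar[d,"\cetoalg_{k}"'] & \nGlob {k+1} \ar[d,"\gtruncf k {(-)}"] \\
    \Alg_k \ar[r,"\fgfalgf_k"']& \nGlob k
  \end{tikzcd}
\]
tells us that directed colimits in $\Algp_k$ are created jointly by $\cetoglob_{k+1}$ and $\cetoalg_k$. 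Therefore a functor into $\Algp_k$ is finitary as soon as its postcompositions with these two projections are finitary.

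Now I apply this to $\algtoce_k$. By construction (the factorization diagram defining $\algtoce_k$) we have
\[
  \cetoalg_k \circ \algtoce_k = \algtruncf[k+1]{k}{(-)}
  \qtand
  \cetoglob_{k+1} \circ \algtoce_k = \fgfalgf_{k+1}.
\]
The functor $\algtruncf[k+1]k-$ is finitary by \Cref{prop:algtruncf-ra}, and $\fgfalgf_{k+1}$ preserves directed colimits by \Cref{prop:alg-cat-loc-fin-pres}. Hence $\algtoce_k$ preserves directed colimits.

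Finally, since $\polextp[k]-- \dashv \algtoce_k$ and both $\Algp_k$ and $\Alg_{k+1}$ are locally finitely presentable, the fact that $\algtoce_k$ is finitary is equivalent to $\polextp[k]--$ preserving finitely presentable objects, which concludes. The only mildly delicate point is the invocation of \Cref{prop:pullback-CAT-iso-liftting-colimits} to ensure that directed colimits in the pullback $\Algp_k$ are detected by the two legs; but this is exactly what the isofibration hypothesis on $\gtruncf k -$ gives us.
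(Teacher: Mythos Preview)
Your proof is correct and follows essentially the same route as the paper: both reduce, via the adjunction $\polextp[k]-- \dashv \algtoce_k$, to showing that $\algtoce_k$ is finitary, and both do so by observing that its two components $\algtruncf[k+1]k-$ and $\fgfalgf_{k+1}$ through the pullback defining $\Algp_k$ are finitary. The paper simply refers back to ``the same argument as in the proof of \Cref{prop:cetoalg-pres-presentables}'' rather than spelling out the isofibration/pullback reasoning as you do.
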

\begin{proof}
  By adjunction, it is equivalent to show that its right adjoint $\algtoce_k$
  is finitary. But, since $T$ is finitary, $\fgfalgf_k$ is finitary, and
  $\algtruncf k-$ is finitary by \Cref{prop:algtruncf-ra}, so that the same
  argument as in the proof of \Cref{prop:cetoalg-pres-presentables} applies to
  conclude that $\algtoce_k$ is finitary.
\end{proof}

\paragraph{The truncable case}

Let~$n \in \Ninf$ and~$(T,\eta,\mu)$ be a finitary monad on~$\nGlob n$. In this
paragraph, we consider the case where~$T$ is a truncable monad, and show that
the underlying $k$\category of a $k$\cellular extension is preserved
by~$\polextp[k]--$. In the following, we write $\algunit[k]$ for the unit of the
adjunction $\algincf[k]{k+1}- \dashv \algtruncf[k+1]k-$. We have:
\begin{prop}
  \label{prop:truncable-algincf-unit-iso}
  If~$T$ is truncable, then~$\algunit[k]$ is an isomorphism for $k \in \N_{n-1}$.
\end{prop}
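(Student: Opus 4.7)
The plan is to reduce the claim to the case of free algebras via colimit-preservation, and then to compute the unit on such algebras explicitly using uniqueness of left adjoints.

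First, under truncability, both $\algtruncf[k+1]{k}{-}$ (a left adjoint by \Cref{prop:truncable-algtruncf-la}) and $\algincf[k]{k+1}{-}$ (a left adjoint by definition) preserve colimits, so their composite does as well. Hence the component of the natural transformation $\algunit[k]$ at a colimit is the canonical map induced between the two colimit diagrams, and is an isomorphism provided its pointwise components are. Since every $C \in \Alg_k$ is canonically a reflexive coequalizer
\[
\freealgf_k T^k \fgfalgf_k C \rightrightarrows \freealgf_k \fgfalgf_k C \to C
\]
of free algebras, the claim reduces to showing that $\algunit[k]_{\freealgf_k X}$ is an isomorphism for every $X \in \nGlob k$.

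Next, I would apply uniqueness of left adjoints: both $\algincf[k]{k+1}{-} \freealgf_k$ and $\freealgf_{k+1} \gincf[k]{k+1}{-}$ are left adjoint to the common functor $\fgfalgf_k \algtruncf[k+1]{k}{-} = \gtruncf[k+1]{k}{-} \fgfalgf_{k+1}$, the equality being automatic from the construction of $\algtruncf$ as the $\EM$-lift of $\gtruncf$. This yields a canonical natural isomorphism which, at a fixed $X$, takes the form $\phi_X \co \algincf[k]{k+1}{(\freealgf_k X)} \to \freealgf_{k+1}{(\gincf[k]{k+1}{X})}$. Applying $\algtruncf[k+1]{k}{-}$ and invoking \Cref{lem:freealgfk-eq}, which gives $\algtruncf[k+1]{k}{-} \freealgf_{k+1} \gincf[k]{k+1}{-} = \freealgf_k$, then produces an isomorphism with codomain $\freealgf_k X$, the natural candidate for an inverse to $\algunit[k]_{\freealgf_k X}$.

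The remaining step is to check that this candidate is indeed an inverse, which is where the main technical work lies. By the universal property of $\freealgf_k$, the check reduces to a single equation in $\nGlob k$: unfolding each side through the units of the two composite adjunctions $\algincf[k]{k+1}{-} \freealgf_k \dashv \fgfalgf_k \algtruncf[k+1]{k}{-}$ and $\freealgf_{k+1} \gincf[k]{k+1}{-} \dashv \gtruncf[k+1]{k}{-} \fgfalgf_{k+1}$ identified by $\phi$, the compatibility boils down to the identity $\gtruncf[k+1]{k}{(\eta^{T^{k+1}}_{\gincf[k]{k+1}{X}})} = \eta^k_X$. The latter holds by the very definition of $\eta^k$ together with the fact that the unit of $\gincf[k]{k+1}{-} \dashv \gtruncf[k+1]{k}{-}$ is the identity. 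Once this compatibility is established, the reduction of the first paragraph yields the claim.
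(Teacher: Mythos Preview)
Your argument is correct, but it takes a substantially different route from the paper's. You reduce to free algebras via the coequalizer presentation (using that truncability makes $\algtruncf[k+1]k-$ colimit-preserving), then identify $\algunit[k]_{\freealgf_k X}$ with the inverse of $\algtruncf k-(\phi_X)$ by exploiting uniqueness of left adjoints and the equation $\gtruncf k-(\eta^{k+1}_{\gincf{k+1}X}) = \eta^k_X$. This works; one small imprecision is that \Cref{lem:freealgfk-eq} is only stated between levels $k$ and $n$, not $k$ and $k{+}1$, but the same proof goes through verbatim at that level (and, in any case, the needed identification is essentially the isomorphism $\theta$ from the proof of \Cref{prop:algtoce-ra}).

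The paper's proof is shorter and more conceptual: it observes that truncability yields an adjoint triple $\algincf{k+1}- \dashv \algtruncf k- \dashv \algincfill{k+1}-$, where the rightmost adjoint is obtained by lifting $\gincfill{k+1}-$ through $\MND$ via \Cref{prop:und-adjl-lift}. Since the counit of $\gtruncf k- \dashv \gincfill{k+1}-$ is an isomorphism, so is that of its lift, hence $\algincfill{k+1}-$ is fully faithful; the standard fact about adjoint triples (``unity and identity of opposites'') then forces $\algincf{k+1}-$ to be fully faithful as well, i.e.\ $\algunit[k]$ is invertible. Your approach has the virtue of being self-contained and not relying on the existence of the further right adjoint $\algincfill{k+1}-$, while the paper's approach avoids any explicit computation and makes the role of the adjoint triple structure transparent.
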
\ndr{il existe des monades~$T$ non troncables pour lesquelles ce n'est
pas vrai}%
\begin{proof}
  The adjunction $\algtruncf k-\dashv \algincfill{k+1}-$ was built from the
  adjunction $\gtruncf k -\dashv \gincfill{k+1}-$ by lifting. The counit of the
  latter being an isomorphism, the counit of the former is as well, so that
  $\algincfill{k+1}-$ is fully faithful. Since we have $\algincf{k+1}- \dashv
  \algtruncf k-\dashv \algincfill{k+1}-$, by the unity and identity of
  opposites, $\algincf{k+1}-$ is fully faithful as well, so that $\algunit[k]$
  is an isomorphism.
\end{proof}
\noindent We can conclude a conservation result for the underlying $k$\category
of $(k{+}1)$\categories produced by~$\polextp[k] - -$:
\begin{prop}
  \label{prop:algtruncf-polextp-isom}
  If~$T$ is truncable, then, given~$k \in \N_n$ and~$(C,X) \in \Algp_k$, there
  is an isomorphism~$C \cong \restrict k {\polextp C X}$ which is natural
  in~$(C,X)$.
\end{prop}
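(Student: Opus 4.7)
The plan is to exploit the explicit pushout construction of $\polextp C X$ given in the proof of \Cref{prop:algtoce-ra}, and apply the truncation functor $\algtruncf k -$ to it. The key inputs are all already assembled: under the truncability hypothesis, $\algtruncf k -$ is a left adjoint by \Cref{prop:truncable-algtruncf-la}, hence preserves the pushout; the components of the pushout diagram become identifiable via the preceding lemmas.

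Concretely, I will apply $\algtruncf k -$ to the square
\[
  \begin{tikzcd}[column sep=large]
    \freealgf_{k+1}(\incf{k+1}(\restrict k X))
    \ar[r,"e^{\mathrm L}_{(C,X)} \circ \theta_{\restrict k X}"]
    \ar[d,"e^{\mathrm R}_{(C,X)}"']
    &
    \incf{k+1} C
    \ar[d,"p^{\mathrm L}_{(C,X)}"]
    \\
    \freealgf_{k+1} X
    \ar[r,"p^{\mathrm R}_{(C,X)}"']
    &
    \polextp C X
  \end{tikzcd}
\]
and identify the resulting terms as follows. The top-right object $\restrict k \incf{k+1} C$ is canonically isomorphic to $C$ via the inverse of $\algunit^k_C$, which is an iso by \Cref{prop:truncable-algincf-unit-iso}. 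For the top-left object, \Cref{lem:freealgfk-eq} gives $\algtruncf k - \freealgf_{k+1}\gincf{k+1} - = \freealgf_k$, so it is $\freealgf_k \restrict k X = \freealgf_k \fgfalgf_k C$. For the bottom-left object, the truncable hypothesis combined with \Cref{lem:truncability-through-algeba-cats} ensures that $\algtruncf k - \freealgf_{k+1} \gtrunccu k$ is the identity natural transformation, so $\restrict k \freealgf_{k+1} X = \freealgf_k \restrict k X = \freealgf_k \fgfalgf_k C$ and moreover $\restrict k e^{\mathrm R}_{(C,X)}$ is an identity.

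Consequently, the truncated square takes the form
\[
  \begin{tikzcd}
    \freealgf_k \fgfalgf_k C
    \ar[r,"\eps^k_C"]
    \ar[d,equal]
    &
    C
    \ar[d,"\restrict k p^{\mathrm L}_{(C,X)}"]
    \\
    \freealgf_k \fgfalgf_k C
    \ar[r,"\restrict k p^{\mathrm R}_{(C,X)}"']
    &
    \restrict k \polextp C X
  \end{tikzcd}
\]
(after precomposing the top map with the iso $\algunit^k_C$ to identify $\restrict k \incf{k+1}C$ with $C$, and using that $\restrict k e^{\mathrm L}_{(C,X)} \circ \restrict k \theta_{\restrict k X}$ reduces to the counit $\eps^k_C$). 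Since $\algtruncf k -$ preserves pushouts, the above is a pushout square; since its left leg is an identity, the universal property forces $\restrict k p^{\mathrm L}_{(C,X)}$ to be an isomorphism, providing the required $C \cong \restrict k \polextp C X$.

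Naturality in $(C,X)$ is automatic: every ingredient above — the defining pushout, the identifications from \Cref{lem:freealgfk-eq} and \Cref{lem:truncability-through-algeba-cats}, and the iso $\algunit^k$ from \Cref{prop:truncable-algincf-unit-iso} — is natural. The main delicacy, and the step I expect to require the most care to write down, is the bookkeeping that identifies $\restrict k (e^{\mathrm L} \circ \theta)$ with the counit $\eps^k_C$: this uses the explicit string-diagram definition of $\theta$ in the proof of \Cref{prop:algtoce-ra} together with the fact that $\restrict k \gtrunccu k$ is an identity (by the triangle identity for $\gincf n - \dashv \gtruncf k -$ whose unit is the identity). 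Everything else is formal manipulation of the previously established naturality and identity statements.
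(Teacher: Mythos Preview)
Your proposal is correct and follows essentially the same route as the paper: apply $\algtruncf k-$ to the defining pushout (it preserves pushouts by \Cref{prop:truncable-algtruncf-la}), show that $\restrict k e^{\mathrm R}$ is an identity using truncability, deduce that $\restrict k p^{\mathrm L}$ is an isomorphism, and compose with the iso $\algunit^k_C$ from \Cref{prop:truncable-algincf-unit-iso}.

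One remark: the step you flag as ``delicate'' --- identifying $\restrict k(e^{\mathrm L}\circ\theta)$ with $\eps^k_C$ --- is in fact unnecessary. Once the left leg of the truncated square is an identity, the pushout forces the opposite leg to be an isomorphism regardless of what the top map is; the paper's proof accordingly never computes that top map. So you can drop that bookkeeping entirely. Also, your citation of \Cref{lem:truncability-through-algeba-cats} is slightly indirect (that statement is phrased for $\freealgf_n$, not $\freealgf_{k+1}$); the paper instead invokes truncability directly to get $\algtruncf k-\freealgf_{k+1}=\freealgf_k\gtruncf k-$, which is the cleaner route here (it is immediate from \Cref{lem:truncable-T-eta-mu}).
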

\begin{proof}
  Recall that~$\polextp C X$ was defined in the proof of \Cref{prop:algtoce-ra} as
  the pushout
  \[
    \begin{tikzcd}[cramped,column sep={8cm,between origins}]
      &[-5cm]
      \polextp C X
      \centeredphantom{rd}{\drcorner}{pos=0.3,xshift=-1.8cm}
      &
      \freealgf_{k+1}X
      \centeredar{l}{p^{\mathrm R}_{(C,X)}}{above}
      &[-5cm]
      \\
      &
      \incf {k+1} {C}
      \ar[u,"p^{\mathrm L}_{(C,X)}"]
      &
      \freealgf_{k+1}(\incf {k+1}{(\restrict k {X})})
      \ar[u,"e^{\mathrm R}_{(C,X)}"']
      \centeredar{l}{e^{\mathrm L}_{(C,X)} \circ \theta_{\restrict k X}}{below}
    \end{tikzcd}
    \pbox{\hspace{-3em}.}
  \]
  By \Propr{truncable-algtruncf-la}, the following diagram is also a pushout
  \[
    \begin{tikzcd}[cramped,column sep={8cm,between origins}]
      &[-5cm]
      \restrict k {\polextp C X}
      \centeredphantom{rd}{\drcorner}{pos=0.3,xshift=-1.8cm}
      &
      \restrict k {(\freealgf_{k+1}X)}
      \centeredar{l}{\restrict k {(p^{\mathrm R}_{(C,X)})}}{above}
      &[-5cm]
      \\
      &
      \restrict k {(\incf {k+1} {C})}
      \ar[u,"\restrict k {(p^{\mathrm L}_{(C,X)})}"]
      &
      \restrict k {(\freealgf_{k+1}(\incf {k+1}{(\restrict k {X})}))}
      \ar[u,"\restrict k {(e^{\mathrm R}_{(C,X)})}"']
      \centeredar{l}{\restrict k {(e^{\mathrm L}_{(C,X)} \circ \theta_{\restrict k X})}}{below}
    \end{tikzcd}
    \pbox{\hspace{-3em}.}
  \]
  Since~$T$ is truncable, we have~$\algtruncf k {(-)} \freealgf_{k+1} =
  \freealgf_k \gtruncf k {(-)}$. Thus,
  \begin{align*}
    \restrict k {(e^{\mathrm R})} 
    &= \algtruncf k {(-)}\freealgf_{k+1} \gtrunccu k \cetoglob_{k+1} \\
    &= \freealgf_k \gtruncf k {(-)} \gtrunccu k \cetoglob_{k+1} \\
    &= \unit {\freealgf_k\gtruncf k {(-)}\cetoglob_{k+1}} & \text{(since~$\restrict k {(\gtrunccu k)}
  = \unit {\nGlob k}$)}
  \end{align*}
  so that~$\restrict k {(e^{\mathrm R}_{(C,X)})} = \unit {\freealgf_k(\restrict
    k X)}$. Hence,~$\restrict k {(p^{\mathrm L}_{(C,X)})}$ is an isomorphism,
  since the pushout of an isomorphism is an isomorphism. By
  \Propr{truncable-algincf-unit-iso}, we conclude that the composite
  \[
    \begin{tikzcd}[sep=huge,cramped]
      C \ar[r,"{\algunit[k]_C}"] &
      \restrict k {(\incf {k+1} C)}
      \ar[r,"\restrict k {(p^{\mathrm L}_{(C,X)})}"]
      &
      \restrict k {\polextp C X}
    \end{tikzcd}
  \]
  is an isomorphism.
\end{proof}

\begin{remark}
  \label{rem:algtruncf-strict}
  If~$T$ is truncable, given~$k \in \N_n$, by \Propr{algtruncf-lifts-isom}, we
  can suppose that we chose~$\polextp - -_k$ so that the isomorphism of
  \Propr{algtruncf-polextp-isom} is the identity. When such a choice is made, we
  have~$\restrict k {\polextp C X} = C$ for all $k$\cellular extension~$(C,X)$.
\end{remark}

\subsection{Polygraphs}
\label{text:polygraphs}

In this section, we recover the generalized notion of \emph{polygraph} of
Batanin using cellular extensions. Intuitively, a polygraph is a system of
generators which organizes inductively as cellular extensions, so that a
definition of polygraphs based on the latter structures seems relevant. In the
process, we study some properties of the different functors involved, and
consider the truncable case.

\paragraph{Another definition of cellular extensions}

Let~$n \in \N \cup \set\omega$ and~$(T,\eta,\mu)$ be a finitary monad on~$\nGlob
n$. Before defining polygraphs, we first provide an alternative definition
of~$\Alg_k$ which is simpler than the one based on pullbacks given in
\Cref{ssec:cell-exts}.

\begin{prop}
  \label{prop:algp-simpler-def}
  Given~$k \in \N_{n-1}$, the category~$\Algp_k$ is isomorphic to the category
  \begin{itemize}
  \item whose objects are the pairs~$(C,S)$ where~$C \in \Alg_k$ and~$S$ is a
    set, equipped with two \glossary(dstraight){$\gsrc_k,\gtgt_k$}{the source
      and target operations for generators of cellular extensions or
      polygraphs}functions
    \[
      \gsrc_k,\gtgt_k \co S \to C_k
    \]
    such
    that~$\csrctgt\eps_{k-1} \circ \gsrc_k = \csrctgt\eps_{k-1} \circ \gtgt_k$
    for~$\eps \in \set{-,+}$,
  \item and whose morphisms between two such pairs~$(C,S)$ and~$(C',S')$ are the
    pairs~$(F,f)$ where
    \[
      F \co C \to C' \in \Alg_k
      \qtand
      f \co S \to S' \in \Set
    \]
    and such that~$\gsrctgt\eps_k \circ f = F_k \circ \gsrctgt\eps_k$ for~$\eps
    \in \set{-,+}$.
  \end{itemize}
\end{prop}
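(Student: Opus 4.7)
The plan is to directly unfold the pullback definition of $\Algp_k$ and exhibit a concrete isomorphism with the category described in the statement, which I will denote $\Algp'_k$. By the pullback definition, an object of $\Algp_k$ is a pair $(C, X)$ with $C \in \Alg_k$, $X \in \nGlob{k+1}$, and $\fgfalgf_k(C) = \gtruncf k X$ on the nose. The key observation is that giving a $(k{+}1)$\globular set $X$ whose $k$\truncation is a prescribed $k$\globular set $G$ amounts to giving a set $S = X_{k+1}$ together with two functions $\csrc_k^X, \ctgt_k^X \co S \to G_k$ subject to the globular identities $\csrctgt\eps_{k-1} \circ \csrc_k^X = \csrctgt\eps_{k-1} \circ \ctgt_k^X$ for $\eps \in \set{-,+}$; all lower-dimensional structure of $X$ is already determined by~$G$.

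Based on this, I would first define a functor $\Phi \co \Algp_k \to \Algp'_k$ sending $(C, X)$ to the pair $(C, X_{k+1})$ equipped with $\gsrc_k = \csrc_k^X$ and $\gtgt_k = \ctgt_k^X$. The required compatibility is then exactly a globular identity in $X$, using the identification $X_k = C_k$ provided by the pullback. A morphism $(F, f) \co (C, X) \to (C', X')$ is sent to $(F, f_{k+1})$, whose compatibility with the source and target maps of the two pairs follows from $f$ being a morphism of $(k{+}1)$\globular sets.

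Conversely, I would define $\Psi \co \Algp'_k \to \Algp_k$ by sending $(C, S)$ to $(C, X)$, where $X$ is the $(k{+}1)$\globular set with $X_i = C_i$ for $i \le k$, $X_{k+1} = S$, all source and target maps in dimensions below~$k$ inherited from the underlying $k$\globular set of~$C$, and $\csrc_k^X = \gsrc_k$, $\ctgt_k^X = \gtgt_k$. The globular identities for $X$ that are not already those of $C$ are precisely the hypotheses on $\gsrc_k, \gtgt_k$, so $X$ is a well-defined $(k{+}1)$\globular set. By construction $\gtruncf k X = \fgfalgf_k C$, so $(C, X) \in \Algp_k$. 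On morphisms, $(F, f)$ is sent to $(F, \bar f)$ where $\bar f$ agrees with $F_i$ in dimensions $i \le k$ and with $f$ in dimension $k{+}1$.

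A direct check, which I would leave as routine, shows that $\Phi$ and $\Psi$ are mutually inverse functors. I do not anticipate any real obstacle: the proposition is essentially a repackaging of the pullback. The only subtlety to watch is that the equality $\fgfalgf_k C = \gtruncf k X$ holds strictly (because pullbacks in $\CAT$ are strict), which is exactly what lets one present the data of $\Algp_k$ without any coherence isomorphism between $C_k$ and $X_k$ and so recover the clean concrete form of the statement.
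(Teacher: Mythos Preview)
Your proposal is correct and follows exactly the same approach as the paper: both define the forward functor by sending $(C,X)$ to $(C,X_{k+1})$ with $\gsrctgt\eps_k = \csrctgt\eps_k$, relying on the strict equality $\fgfalgf_k C = \restrict k X$ from the pullback. You are actually more explicit than the paper, which only describes one direction and asserts the result is an isomorphism, whereas you spell out the inverse $\Psi$ as well.
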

\begin{proof}
  Write~$\Algbp_k$ for the category described in the statement. An isomorphism
  between~$\Algp_k$ and~$\Algbp_k$ can be described as follows. Given~$(C,X) \in
  \Algp_k$, we map~$(C,X)$ to the pair~$(C,X_{k+1})$ and, for~$\eps \in
  \set{-,+}$ and~$x \in X_{k+1}$, we put~$\gsrctgt\eps_k(x) = \csrctgt\eps_k(x)$
  (where~$\csrctgt\eps_k$ is the operation of the globular structure on~$X$),
  and we extend this mapping to morphisms of~$\Algp_k$ as expected. Since~$\fgfalgf_k C = \restrict k X$ for~$(C,X) \in \Algp_k$, the resulting functor
  is an isomorphism of categories.
\end{proof}
\noindent In the following, we will prefer this new definition of cellular
extensions to the original one. This other description helps better understand
how a colimit of a diagram $(C^i,S^i)$ in $\Algp_k$ is computed: first, one
computes a colimit $C$ of the $C^i$'s in $\Alg_k$, then one compute a colimit
$S$ of the $S^i$'s and equips it with the evident source and target operations
$\gsrc_k,\gtgt_k \co S \to C_k$.

\paragraph{Categories of polygraphs}

Let~$n \in \N \cup \set\omega$ and~$(T,\eta,\mu)$ be a finitary monad on the
category~$\nGlob n$. For~$k \in \N_n$, we define the \glossary(Polk){$\nPol
  k$}{the category of $k$\polygraphs}category~$\nPol k$ of
\index{polygraph}\emph{$k$\polygraphs} by induction on~$k$, together with a
\glossary(.ba){$\freecat[k]-$, $\freecat{\P}$}{the functor which maps a
  $k$\polygraph to the associated free $k$\category}functor
\[
  \freecat[k] {(-)} \co\nPol k \to \Alg_k
\]
simply denoted~$\freecat {(-)}$ when there is no ambiguity on~$k$, which maps a
$k$\polygraph~$\P$ to the \index{free category!on a polygraph}\emph{free
  $k$\category}~$\freecat\P$ on~$\P$. First, we put
\[
  \nPol0 = \nGlob 0
  \qtand
  \freecat[0] {(-)} = \freealgf_0\zbox.
\]
Now suppose that~$\nPol k$ and~$\freecat[k]{(-)}$ are defined for some~$k \in
\N_{n-1}$. We define~$\nPol{k+1}$ as the \glossary(Ek){$\poltoce_{k+1}$}{the
  canonical functor which maps a $(k{+}1)$\polygraph to a $k$\cellular
  extension}pullback
\[
  \begin{tikzcd}[cramped,column sep=huge]
    \nPol{k+1}\ar[d,"{\poltruncf[k+1] k {(-)}}"',dotted]\ar[r,"\poltoce_{k+1}",dotted]
    &\Alg_k^+\ar[d,"\cetoalg_{k}"]\\
    \nPol{k}\ar[r,"{\freecat[k]{(-)}}"']&\Alg_k
  \end{tikzcd}
\]
and~$\freecat[k+1]{(-)}$ as the composite
\[
  \begin{tikzcd}[cramped,column sep=huge]
    \nPol{k+1} \ar[r,"\poltoce_{k+1}"]
    &
    \Algp_k \ar[r,"{\polextp[k] - -}"]
    &
    \Alg_{k+1}
  \end{tikzcd}
  \zbox.
\]
As usual, we write~$\restrict k \P$ for the image of~$\P \in \Pol_{k+1}$
by~$\poltruncf[k+1] k {(-)}$, and we often simply write~$\poltruncf k {(-)}$ for
the latter \glossary(poltruncf){$\poltruncf k {(-)}$, $\restrict k \P$}{the
  truncation functor for polygraphs}functor.

Using the simpler definition of~$\Algp_k$ from \Cref{prop:algp-simpler-def}, we
can give a more concrete description of~$\nPol k$ for~${k \in \N_n}$. A
$0$\polygraph~$\P$ is the data of a set~$\P_0$
of~\index{generator!of a polygraph}\emph{$0$\generators}, and a morphism~$\P \to
\P'$ in~$\nPol 0$ is the data of a function~$F_0 \co \P_0 \to \P'_0$. Given~$k
\in \N_{n-1}$, a $(k{+}1)$\polygraph is the data of a pair
\[
  \P = (\restrict k \P,\P_{k+1})
\]
where~$\restrict k \P$ is a $k$\polygraph and~$\P_{k+1}$ is a set of
\emph{$(k{+}1)$\generators}, together with functions
\[
  \gsrc_k,\gtgt_k\co
  \P_{k+1} \to (\freecat{(\restrict k \P)})_{k}
\]
such that
\[
  \csrctgt\eps_{k-1} \circ \gsrc_k = \csrctgt\eps_{k-1}
  \circ \gtgt_k
\]
for~$\eps \in \set{-,+}$, where~$\csrc_{k-1},\ctgt_{k-1} \co
(\freecat{(\restrict k \P)})_{k} \to (\freecat{(\restrict k \P)})_{k-1}$ are the
source and target operations of the $k$\category~$\freecat {(\restrict k \P)}$.
Moreover, a morphism~$\P \to \P'$ in~$\nPol{k+1}$ is the data of a
pair~$(\restrict k F,F_{n+1})$ where~$\restrict k F \co \restrict k \P \to
\restrict k {\P'}$ is a morphism of~$\nPol k$ and~$F_{n+1} \co \P_{n+1} \to
\P'_{n+1}$ is a function such that
\[
  \gsrctgt\eps_k \circ F_{n+1} = \freecat {(\restrict k F)} \circ \gsrctgt\eps_k
\]
for~$\eps \in \set{-,+}$, \ie a $(k{+}1)$\generator~$g$ is mapped by~$F_{n+1}$
to a generator~$g'$ whose $k$\source and $k$\target are exactly the images of
the $k$\source and $k$\target of~$g$ by~$\freecat{(\restrict k F)}$.
\begin{remark}
Note that the diagram
\begin{equation}
  \label{eq:pol-pullback}
  \begin{tikzcd}[column sep=4em]
    \nPol {k+1} \ar[r,"\cetoglob_{k+1}\poltoce_{k+1}"]
    \ar[d,"\poltruncf {k} {(-)}"'] & \nGlob {k+1} \ar[d,"\gtruncf k {(-)}"] \\
    \nPol k \ar[r,"{\fgfalgf_k\freecat[k]-}"']& \nGlob k
  \end{tikzcd}
\end{equation}
is a pullback, since~$\Algp_k$ is defined as a pullback and the concatenation of
two pullbacks is still a pullback.
\end{remark}
\noindent In order to better handle
side conditions, we use the convention that
\[
  \Algp_{-1} = \nGlob 0,
  \quad
  \poltoce_{0} = \unit {\nGlob 0},
  \qtand
  \polextp[0] - - = \freealgf_0
\]
so that~$\freecat[0] {(-)} = \polextp[0] - - \circ \poltoce_0$. We then have:
\begin{prop}
  \label{prop:polk-omega1-pres}
  For~$k \in \N_n$, the category $\nPol k$ is locally $\omega_1$\presentable (in
  particular, complete and cocomplete), and the functor $\poltoce_k$ (\resp
  $\poltruncf {k-1} -$ when $k > 0$) is a left adjoint which preserves
  $\omega_1$\presentable objects.
\end{prop}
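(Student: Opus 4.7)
The proof will proceed by induction on $k$. For the base case $k=0$, we have $\nPol 0 = \nGlob 0 = \Set$, which is locally finitely presentable (hence locally $\omega_1$-presentable), $\poltoce_0$ is the identity (so trivially a left adjoint preserving every presentable object), and there is no lower truncation functor to verify.

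For the inductive step, the key first observation is that $\cetoalg_k$ admits both a left and a right adjoint. Indeed, the adjunctions $\gincf {k+1} - \dashv \gtruncf k -$ and $\gtruncf k - \dashv \gincfill {k+1} -$ have identity unit and identity counit respectively, since both $\gtruncf k - \gincf {k+1} -$ and $\gtruncf k - \gincfill {k+1} -$ equal $\unit{\nGlob k}$; so applying \Cref{prop:pullback-la-ra} twice to the pullback square defining $\Algp_k$ yields both adjoints of $\cetoalg_k$ (each with identity unit or counit respectively). In particular, $\cetoalg_k$ preserves all small colimits. By the inductive hypothesis $\poltoce_k$ is a left adjoint, and by \Cref{prop:algtoce-ra} so is $\polextp[k-1] - -$, hence $\freecat[k] - = \polextp[k-1] - - \circ \poltoce_k$ is cocontinuous (and for $k=0$, $\freecat[0] - = \freealgf_0$ directly).

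Now I apply \Cref{prop:pullback-CAT-iso-liftting-colimits} to the pullback defining $\nPol{k+1}$: since $\cetoalg_k$ is an isofibration by \Cref{prop:cetoalg-la} and both $\freecat[k]-$ and $\cetoalg_k$ preserve all small colimits, $\nPol{k+1}$ has all small colimits computed componentwise, and both projections $\poltoce_{k+1}$ and $\poltruncf[k+1] k -$ preserve them. Completeness follows from the standard componentwise computation of limits in a pullback. For local $\omega_1$-presentability, I appeal to the closure of locally $\omega_1$-presentable categories under bipullbacks along $\omega_1$-accessible functors: the strict pullback along the isofibration $\cetoalg_k$ coincides with the bipullback, $\cetoalg_k$ is finitary by \Cref{prop:algp-oone-pres}, and $\freecat[k]-$ is cocontinuous, so the hypotheses apply. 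Concretely, the $\omega_1$-presentable objects of $\nPol{k+1}$ are exactly those pairs with $\omega_1$-presentable projections.

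It remains to establish the adjoint properties. The functor $\poltruncf[k+1] k -$ is a left adjoint by a further application of \Cref{prop:pullback-la-ra}, this time to the pullback defining $\nPol{k+1}$, using the right adjoint of $\cetoalg_k$ with identity counit obtained above. And $\poltoce_{k+1}$, being a cocontinuous functor between locally $\omega_1$-presentable categories, is a left adjoint by the adjoint functor theorem for locally presentable categories. Preservation of $\omega_1$-presentable objects by both projections is then immediate from the componentwise characterization. The main delicate step is the verification of local $\omega_1$-presentability itself: the cleanest route is through the bipullback closure theorem for locally presentable categories; doing it concretely would require building a small set of $\omega_1$-presentable generators by hand and checking $\omega_1$-accessibility of the relevant hom-functors, which is more tedious but elementary.
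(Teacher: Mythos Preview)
Your overall inductive structure matches the paper's, and several pieces (cocompleteness via \Cref{prop:pullback-CAT-iso-liftting-colimits}, the left adjoint to $\poltruncf[k+1]k-$ via \Cref{prop:pullback-la-ra}) are fine. The gap is in the step establishing local $\omega_1$\presentability.

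You invoke ``closure of locally $\omega_1$\presentable categories under bipullbacks along $\omega_1$\accessible functors'', but this is not the right hypothesis. The relevant result (Bird's Proposition~3.14, which the paper uses) requires the diagram functors $\cetoalg_k$ and $\freecat[k]-$ to be left adjoints \emph{preserving $\omega_1$\presentable objects}; equivalently, their \emph{right} adjoints must be $\omega_1$\accessible. Saying that $\cetoalg_k$ is finitary or that $\freecat[k]-$ is cocontinuous is a condition on the wrong side of the adjunction and does not suffice: a cocontinuous left adjoint need not preserve presentable objects. The paper handles this via \Cref{prop:cetoalg-pres-presentables} for $\cetoalg_k$, and for $\freecat[k]- = \polextp[k-1]-- \circ \poltoce_k$ via \Cref{prop:polextp-pres-presentable} together with the inductive hypothesis that $\poltoce_k$ preserves $\omega_1$\presentable objects. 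You never use this part of the inductive hypothesis, so the induction does not close as written.

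Relatedly, your ``componentwise characterization'' of the $\omega_1$\presentable objects of $\nPol{k+1}$ is an \emph{output} of Bird's result once its hypotheses are verified, not an independent fact you can feed in to deduce preservation by the projections. Once you verify that $\cetoalg_k$ and $\freecat[k]-$ preserve $\omega_1$\presentable objects, Bird~3.14 gives you simultaneously the local $\omega_1$\presentability of $\nPol{k+1}$ and that both projections are left adjoints preserving $\omega_1$\presentable objects, closing the induction in one stroke; your separate appeals to \Cref{prop:pullback-la-ra} and the adjoint functor theorem then become redundant.
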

\begin{proof}
  We prove this property by induction on $k$. The category $\nPol 0$ is locally
  finitely presentable since equivalent to $\Set$. Moreover, since $\fgfalgf_0$
  is finitary, it is also $\omega_1$\accessible so that $\freealgf_0$ preserves
  finitely presentable objects. Thus, the property holds for $k = 0$.

  Now assume that it holds in dimension $k$. By
  \Cref{prop:polextp-pres-presentable}, the functor $\polextp[k]--$ is a left
  adjoint which preserves finitely presentable objects. Thus, its right adjoint
  is finitary and in particular $\omega_1$\accessible. Thus, $\polextp[k]--$
  also preserves $\omega_1$\presentable objects. By induction hypothesis,
  $\freecat[k]-$ is a left adjoint which preserves $\omega_1$\presentable
  objects. By \Cref{prop:cetoalg-pres-presentables} and the same argument,
  $\cetoalg_k$ is also a left adjoint which preserves $\omega_1$\presentable
  objects. By \cite[Proposition~3.14]{bird}, the (bi)pullback $\nPol{k+1}$ of
  these two functors is a locally $\omega_1$\presentable category, and the two
  projecting functors $\poltruncf k -$ and $\poltoce_{k+1}$ are left adjoints
  which preserve $\omega_1$\presentable objects.
\end{proof}

\begin{rem}
  \label{rem:finite-pres-polk}
  Sadly, \cite[Proposition~3.14]{bird} requires us to deal with uncountable
  cardinals, so that we can not use it to prove that $\nPol k$ is locally
  finitely presentable. But one can use an \emph{ad hoc} argument as in
  \cite[Paragraph~1.3.3.16]{forest:tel-03155192} to prove that it actually is.
  Moreover, the finitely presentable objects of $\nPol k$ are the expected ones,
  that is, the $k$\polygraphs with a finite number of generators.
\end{rem}
\simon{utiliser Bird ici est compliqué car 1) il faut que $\Algp_k$ est
  présentable (démo ad hoc pour l'instant) 2) il faut que les foncteurs
  préservent les objets présentables. je vais rester sur cocomplet}
\begin{prop}
  \label{prop:poltruncf-la-isofib}
  For~$k \in \N_n$, the functor~$\poltruncf[k+1] {k} {(-)}$ is an isofibration
  which has both a left adjoint and a right adjoint.
\end{prop}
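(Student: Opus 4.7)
The strategy is to derive all three properties from the corresponding properties of $\cetoalg_k$ using the defining pullback square
\[
  \begin{tikzcd}[cramped,column sep=large]
    \nPol{k+1}\ar[d,"{\poltruncf[k+1] k {(-)}}"']\ar[r,"\poltoce_{k+1}"]
    &\Algp_k\ar[d,"\cetoalg_{k}"]\\
    \nPol{k}\ar[r,"{\freecat[k]{(-)}}"']&\Alg_k
  \end{tikzcd}
\]
together with the general properties of pullbacks along isofibrations recalled in \Cref{text:pullback-in-CAT}.

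For the isofibration part, \Cref{prop:cetoalg-la} already tells us that $\cetoalg_k$ is an isofibration, so that \Cref{prop:pullback-CAT-iso-liftting-colimits}\ref{prop:pullback-CAT-iso-liftting-colimits:iso-lifting} applied to the above pullback directly gives that $\poltruncf[k+1]k-$ is an isofibration.

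For the two adjoints, I plan to exhibit two explicit sections of $\cetoalg_k$ and lift them through the pullback via \Cref{prop:pullback-la-ra}. Using the simpler presentation of $\Algp_k$ given by \Cref{prop:algp-simpler-def}, define
\[
  H \co \Alg_k \to \Algp_k, \quad C \mapsto (C,\emptyset)
  \qtand
  H' \co \Alg_k \to \Algp_k, \quad C \mapsto (C, P_C)
\]
where $P_C = \{(a,b) \in C_k \times C_k \mid a,b \text{ parallel}\}$ is equipped with the two projections as $\gsrc_k$ and $\gtgt_k$. Both clearly satisfy $\cetoalg_k \circ H = \unit{\Alg_k}$ and $\cetoalg_k \circ H' = \unit{\Alg_k}$. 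A short computation then shows $H \dashv \cetoalg_k$ with identity unit: a morphism $(D,\emptyset) \to (C,S)$ in $\Algp_k$ is determined by its underlying $\Alg_k$-morphism, since the component on $(k{+}1)$-generators is forced by emptiness. Symmetrically, $\cetoalg_k \dashv H'$ with identity counit: a morphism $(C,S) \to (D,P_D)$ is determined by its underlying morphism $F \co C \to D$ in $\Alg_k$, because the map $S \to P_D$ on generators is forced to send $x$ to $(F_k(\gsrc_k(x)), F_k(\gtgt_k(x)))$ by the source/target compatibility condition.

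With these two sections in hand, \Cref{prop:pullback-la-ra} lifts the left adjunction $H \dashv \cetoalg_k$ to an adjunction $H^\flat \dashv \poltruncf[k+1]k-$ and the right adjunction $\cetoalg_k \dashv H'$ to an adjunction $\poltruncf[k+1]k- \dashv H'^\flat$ along the pullback, which concludes the proof. The only mildly subtle point will be correctly identifying the parallel-pairs cellular extension as the right adjoint section, but this is essentially forced by the definition of $\Algp_k$.
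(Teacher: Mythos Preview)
Your proposal is correct and follows essentially the same route as the paper: both derive the result from \Cref{prop:pullback-CAT-iso-liftting-colimits} and \Cref{prop:pullback-la-ra} applied to the defining pullback of $\nPol{k+1}$. The paper's proof is terser, invoking \Cref{prop:cetoalg-la} directly (which only records the isofibration property and the \emph{right} adjoint of $\cetoalg_k$) and leaving the existence of the left-adjoint section implicit; your explicit construction of $H$ and $H'$ just spells out what the paper takes for granted, namely that both sections arise by lifting $\gincf[k]{k+1}-$ and $\gincfill[k]{k+1}-$ through the pullback defining $\Algp_k$.
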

\begin{proof}
  We know already that it has a right adjoint by \Cref{prop:polk-omega1-pres}.
  But it is also the consequence, as is the remainder of the statement, of
  \Cref{prop:pullback-CAT-iso-liftting-colimits,prop:pullback-la-ra}.
\end{proof}
\noindent Given~$i,k \in \N$ such that~$i \le k < n+1$, we
\glossary(.ca){$\poltogens[k] i -$, $\P_i$}{the functor which maps a
  $k$\polygraph to its set of $i$\generators}write
\[
  \poltogens[k] i - \co \nPol k \to \Set
\]
or simply~$\poltogens i -$ when there is no ambiguity on~$k$, for the functor
which maps a $k$\polygraph~$\P$ to its set of~$i$\generators~$\P_i$. We verify
that colimits of polygraphs are computed dimensionwise:
\begin{prop}
  \label{prop:poltogens-colimits}
  Given~$i,k \in \N_n$ such that~$i \le k$, the functor~$\poltogens[k] i -$
  preserves colimits.
\end{prop}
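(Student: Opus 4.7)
The plan is to first reduce to the case $i = k$, and then exploit the alternative description of cellular extensions from \Cref{prop:algp-simpler-def}.

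For the reduction, observe that for $i < k$ the $i$-generators of a $k$-polygraph $\P$ coincide with those of its $i$-truncation $\restrict i \P$, so we have a factorization
\[
  \poltogens[k] i - = \poltogens[i] i - \circ \poltruncf[k] i -.
\]
The iterated truncation $\poltruncf[k] i -$ is a composite of $k - i$ one-step truncations, each of which is a left adjoint by \Cref{prop:poltruncf-la-isofib}, and hence preserves colimits. It therefore suffices to prove the statement when $i = k$.

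For $k = 0$, the result is immediate since $\nPol 0 = \nGlob 0$ is essentially $\Set$ and $\poltogens[0] 0 -$ is essentially the identity. For $k \ge 1$, the idea is to factor $\poltogens[k] k -$ as
\[
  \nPol k \xto{\poltoce_k} \Algp_{k-1} \xto{\mathcal{G}} \Set,
\]
where $\mathcal{G}$ is the projection $(C, S, \gsrc_{k-1}, \gtgt_{k-1}) \mapsto S$ afforded by the alternative description of $\Algp_{k-1}$ in \Cref{prop:algp-simpler-def}; this factorization holds since $\poltoce_k(\P)$ has underlying set of generators equal to $\P_k$. The first factor $\poltoce_k$ preserves colimits by \Cref{prop:polk-omega1-pres} (it is a left adjoint), so the remaining task is to show that $\mathcal{G}$ preserves colimits.

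For this last point, one uses the fact, already noted informally after \Cref{prop:algp-simpler-def}, that colimits in $\Algp_{k-1}$ are computed component-wise: given a diagram $((C^j, S^j, \gsrc^j, \gtgt^j))_{j \in J}$, a colimit is obtained by taking $C = \operatorname{colim}_j C^j$ in $\Alg_{k-1}$ with injections $\iota^j \co C^j \to C$, taking $S = \operatorname{colim}_j S^j$ in $\Set$, and equipping $S$ with the source and target maps $S \to C_{k-1}$ induced by the universal property from the cocones $((\iota^j)_{k-1} \circ \gsrc^j)_j$ and $((\iota^j)_{k-1} \circ \gtgt^j)_j$. The verification is routine thanks to the simple form of morphisms in the alternative description, and I do not anticipate any real obstacle; the only care needed is in checking the globularity condition on the induced source/target maps, which follows from naturality of the cocone data.
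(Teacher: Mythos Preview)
Your proposal is correct and follows essentially the same route as the paper: both factor $\poltogens[k] i -$ as the composite of iterated truncations $\poltruncf j -$, then $\poltoce_i$, then the generator-projection $\Algp_{i-1} \to \Set$, and conclude by invoking \Cref{prop:polk-omega1-pres} (or \Cref{prop:poltruncf-la-isofib}) for the first two pieces and the componentwise description of colimits in $\Algp_{i-1}$ for the last. The only cosmetic difference is that you separate off the reduction to $i=k$ explicitly, whereas the paper writes the whole composite at once.
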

\begin{proof}
  This functor can be described as the composite
  \[
    (-)_g \circ \poltoce_i \circ \poltruncf i - \circ \cdots \circ \poltruncf
    {k-1} -
  \]
  where $(-)_g \co \Algp_{i-1} \to \Set$ is the functor mapping a cellular
  extension $(X,S)$ to its set of $i$\generator $S$. By our comment on the
  computation of colimit using the concrete description of $\Algp_{i-1}$, the
  functor $(-)_g$ preserves colimits. Moreover, all the other functors of the
  above composition preserve colimits by \Cref{prop:polk-omega1-pres}, so that
  the conclusion holds.
\end{proof}
\noindent In the case where~$T$ is truncable, given~$k,l \in \N$ with~$k < l$,
the underlying $k$\category of the free $l$\category on an $l$\polygraph is only
determined by the underlying $k$\polygraph, as stated by the following proposition:
\begin{prop}
  \label{prop:algtruncf-pol-isom}
  If~$T$ is truncable, then, given~$k \in \N$ such that~$k < n$ and a
  $(k{+}1)$\polygraph, there exists an isomorphism~$\restrict k {(\freecat \P)}
  \cong \freecat {(\restrict k \P)}$.
\end{prop}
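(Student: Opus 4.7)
The plan is to simply unfold the inductive definition of $\freecat[k+1]{(-)}$ and invoke \Cref{prop:algtruncf-polextp-isom}. Recall that by construction, $\freecat[k+1]{(-)}$ factorizes as $\polextp[k]{-}{-} \circ \poltoce_{k+1}$, and that $\nPol{k+1}$ is defined as the pullback of $\cetoalg_k$ along $\freecat[k]{(-)}$. Therefore, given a $(k{+}1)$-polygraph $\P$, the $k$-cellular extension $\poltoce_{k+1}(\P)$ is of the form $(C, X)$ with $C = \freecat{(\restrict k \P)}$ and $X$ some $(k{+}1)$-globular set such that $\restrict k X = \fgfalgf_k C$.

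Now, since $T$ is truncable, \Cref{prop:algtruncf-polextp-isom} applies to $(C, X)$ and yields a natural isomorphism
\[
  C \cong \restrict k {\polextp C X}\zbox.
\]
Chaining the identifications, we get
\[
  \restrict k {(\freecat \P)}
  \;=\;
  \restrict k {\polextp C X}
  \;\cong\;
  C
  \;=\;
  \freecat{(\restrict k \P)}\zbox,
\]
which is the desired isomorphism. Naturality follows from the naturality in \Cref{prop:algtruncf-polextp-isom} together with the functoriality of $\poltoce_{k+1}$.

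There is not really a hard step here: the work has already been done in \Cref{prop:algtruncf-polextp-isom}, and the content of the present statement is essentially that the $k$-truncation isomorphism for cellular extensions, applied to the distinguished cellular extension $\poltoce_{k+1}(\P)$, \emph{is} precisely the statement for polygraphs. The only thing to keep track of is the shape of $\poltoce_{k+1}(\P)$, which is forced by the pullback defining $\nPol{k+1}$. Note that under the convention from \Cref{rem:algtruncf-strict}, this isomorphism can even be taken to be an equality $\restrict k {(\freecat \P)} = \freecat{(\restrict k \P)}$.
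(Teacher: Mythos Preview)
Your proof is correct and takes essentially the same approach as the paper: unfold the definition $\freecat[k+1]{(-)} = \polextp[k]{-}{-} \circ \poltoce_{k+1}$, identify the underlying $k$\category of $\poltoce_{k+1}(\P)$ as $\freecat{(\restrict k \P)}$, and invoke \Cref{prop:algtruncf-polextp-isom}. The paper's version is terser (one displayed equation and a sentence), but the content is identical.
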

\begin{proof}
  By definition of~$\freecat[k]{-}$, we have
  \[
    \freecat\P = \polextp{\freecat{(\restrict k \P)}}{\P_{k+1}}
  \]
  so that the wanted isomorphism comes from \Cref{prop:algtruncf-polextp-isom}.
\end{proof}
\begin{remark}
  \label{rem:poltruncf-freecat-strict}
  When~$T$ is truncable, under the assumption of \Remr{algtruncf-strict}, the
  isomorphism given by \Propr{algtruncf-pol-isom} is the identity. This enables
  to simplify some notations: given~$k,l \in \N_n$ with~$k \le l$ and an
  $l$\polygraph~$\P$, we write directly~$\restrict k{\freecat\P}$ for
  both~$\restrict k {(\freecat \P)}$ and~$\freecat {(\restrict k \P)}$,
  and~$\freecat \P_k$ for both~$(\freecat \P)_k$ and~$(\freecat {(\restrict k
    \P)})_k$.
\end{remark}

\begin{remark}
  When~$T$ is truncable, given~$k \in \N_n$, a $k$\polygraph~$\P$ can be
  alternatively described as a diagram in~$\Set$ of the form
  \[
    \begin{tikzcd}[column sep=10ex,labels={inner sep=0.5pt}]
      \P_0\ar[d,"\polinj0"{inner sep=2pt}]
      &\P_1
      \ar[dl,shift right,"\gsrc_0"',pos=0.3]
      \ar[dl,shift left,"\gsrc_0",pos=0.3]\ar[d,"\polinj1"{inner sep=2pt}]
      &\P_2\ar[dl,shift right,"\gsrc_1"',pos=0.3]\ar[dl,shift left,"\gsrc_1",pos=0.3]\ar[d,"\polinj2"{inner sep=2pt}]
      &\ldots
      &\P_{k-1}
      \ar[dl,shift right,"\gsrc_{k-2}"',pos=0.3]
      \ar[dl,shift
      left,"\gsrc_{k-2}",pos=0.3]\ar[d,"\polinj{k}"{inner sep=2pt}]
      &\P_{k}\ar[dl,shift right,"\gsrc_{k-1}"',pos=0.3]\ar[dl,shift left,"\gsrc_{k-1}",pos=0.3]\\
      \freecat{\P_0}
      &
      \freecat{\P_1}
      \ar[l,shift right,"{\csrc_0}"']
      \ar[l,shift
      left,"{\ctgt_0}"]
      &\ldots
      \ar[l,shift
      right,"{\csrc_1}"']\ar[l,shift
      left,"{\ctgt_1}"]
      &\freecat{\P_{k-2}}
      &\ar[l,shift
      right,"{\csrc_{k-2}}"']\ar[l,shift
      left,"{\ctgt_{k-2}}"]\freecat{\P_{k-1}}
    \end{tikzcd}
  \]
  where, for~$i \in \N_{k-1}$,~$\polinj i$ is the embedding of the
  $i$\generators in the $i$\cells induced by the unit of the
  adjunction~$\polextp[i]--\dashv\algtoce_{i-1}$ at~$(\freecat{(\restrict {i-1}
    \P)},\P_i)$, such that
  \[
    {\csrc_i}\circ\gsrc_{i+1}={\csrc_i}\circ\gtgt_{i+1}
    \qqtand
    \ctgt_i\circ\gsrc_{i+1}=\ctgt_i\circ\gtgt_{i+1}
  \]
  for~$i \in \N_{k-1}$. The above description of polygraphs can already be found
  in the original paper of Burroni~\cite{burroni1993higher} for polygraphs of
  strict categories.
\end{remark}

\paragraph[\texorpdfstring{$\omega$}{omega}-polygraphs]{$\bm\omega$-polygraphs}

Let~$(T,\eta,\mu)$ be a finitary monad on~$\nGlob \omega$. We define the
category of \glossary(Polomega){$\nPol\omega$}{the category of
  $\omega$\polygraphs}$\omega$\polygraphs~$\nPol\omega$ as the limit in~$\CAT$
\[
  (\poltruncf[\omega] k {(-)} \co \nPol\omega \to \nPol k)_{k \in \N}
\]
on the diagram
\[
  \begin{tikzcd}[column sep=3.5em,cramped]
    \nPol 0
    &
    \nPol 1
    \ar[l,"\poltruncf 0 {(-)}"']
    &
    \cdots
    \ar[l,"\poltruncf 1 {(-)}"']
    &
    \nPol {k}
    \ar[l,"\poltruncf {k-1} {(-)}"']
    &
    \nPol {k+1}
    \ar[l,"\poltruncf k {(-)}"']
    &
    \cdots
    \ar[l,"\poltruncf {k+1} {(-)}"']
  \end{tikzcd}
\]
Concretely, an $\omega$\polygraph~$\P$ is the data of a sequence~$(\P^k)_{k \in
  \N}$, where~$\P^k$ is a $k$\polygraph, such that~$\restrict k {(\P^{k+1})} =
\P^k$ for~$k \in \N$. We start with a presentability result for $\oPol$:
\begin{prop}
  \label{prop:polomega-omega1-pres}
  The category $\oPol$ is locally $\omega_1$\presentable (in particular complete
  and cocomplete), and the functors $\poltruncf[\omega]k-$ are both left and
  right adjoints and preserve $\omega_1$\presentable objects.
\end{prop}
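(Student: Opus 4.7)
The strategy is to transfer local $\omega_1$-presentability from each $\nPol k$ to their sequential limit $\oPol$ via Bird's closure theorem, and to construct both adjoints to each projection explicitly by iterating the dimensionwise adjoints from Proposition~\ref{prop:poltruncf-la-isofib}.

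First, I would assemble the inputs. By Proposition~\ref{prop:polk-omega1-pres}, each $\nPol k$ is locally $\omega_1$-presentable and each transition functor $\poltruncf{k-1}{(-)}$ is a left adjoint preserving $\omega_1$-presentable objects; Proposition~\ref{prop:poltruncf-la-isofib} states moreover that each such transition is an isofibration with a right adjoint. In particular, every transition is simultaneously a left and a right adjoint between locally $\omega_1$-presentable categories, and since it is a left adjoint it is automatically $\omega_1$-accessible. Because the transitions are isofibrations, the strict sequential limit computed in $\CAT$ coincides with the bilimit; applying Bird's closure theorem~\cite[Theorem~2.17]{bird} then yields at once that $\oPol$ is locally $\omega_1$-presentable and that each projection $\poltruncf[\omega]k{(-)}$ is an $\omega_1$-accessible right adjoint. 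From the standard description of $\omega_1$-presentable objects in such a bilimit as compatible sequences of $\omega_1$-presentable polygraphs, it follows directly that the projections preserve $\omega_1$-presentable objects.

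Second, to see that each projection is \emph{also} a left adjoint, I would construct both adjoints explicitly. For the left adjoint, given $\P \in \nPol k$, I would iteratively apply the left adjoints of $\poltruncf l{(-)}$ for $l \ge k$ (provided by Proposition~\ref{prop:poltruncf-la-isofib}) to produce an $\omega$-polygraph whose $k$-truncation is $\P$ and which adds no new generators above dimension $k$; compatibility of the truncations holds because the unit of each dimensionwise left adjoint is an identity, this being the content of the $GH = \catunit{}$ half of Proposition~\ref{prop:pullback-la-ra} used to build those adjoints. For the right adjoint, one performs the dual construction with the dimensionwise right adjoints, obtaining the ``maximal filling'' of $\P$ above dimension $k$. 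In both cases, the adjunction property with $\poltruncf[\omega]k{(-)}$ follows immediately from the universal property of $\oPol$ as a limit combined with the pointwise adjunctions.

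\textbf{Main obstacle.} The most delicate point is the application of Bird's theorem: one must verify that the bilimit in the 2-category of locally $\omega_1$-presentable categories and $\omega_1$-accessible right adjoints is strict (as our limit in $\CAT$ is, thanks to the isofibration hypothesis) and extract from it the concrete description of $\omega_1$-presentables in the limit, which is what yields their preservation by the projections. The iterated construction of the left and right adjoints is routine once one trusts that the units (resp.\ counits) of the dimensionwise adjoints are identities, making the inductive definitions coherent.
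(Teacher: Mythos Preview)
Your proposal is correct and, for the first half (local $\omega_1$-presentability of $\oPol$ and the projections being $\omega_1$-accessible right adjoints), follows the paper's proof exactly: isofibrations make the strict limit a bilimit, and Bird's Theorem~2.17 applies.

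The difference lies in how you handle the remaining claims. The paper does not construct the left adjoints by hand; instead it applies Bird a \emph{second} time, now to the $2$-category of locally $\omega_1$-presentable categories and \emph{left} adjoints preserving $\omega_1$-presentable objects (citing~\cite[Proposition~3.14]{bird}). This single invocation yields both that the projections $\poltruncf[\omega]k-$ are left adjoints \emph{and} that they preserve $\omega_1$-presentable objects, with no extra work. Your route---iterating the dimensionwise left and right adjoints from Proposition~\ref{prop:poltruncf-la-isofib} and checking coherence via identity units/counits---is perfectly valid and more concrete, but your argument for preservation of $\omega_1$-presentables (``standard description of $\omega_1$-presentable objects in such a bilimit as compatible sequences'') is the weakest link: that description is itself a consequence of Bird's result rather than an independent elementary fact, so you are implicitly relying on the same theorem the paper invokes explicitly. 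The paper's double application of Bird is therefore cleaner, while your explicit construction has the merit of actually exhibiting the adjoints.
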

\begin{proof}
  By \Cref{prop:polk-omega1-pres}, the functors $\poltruncf[k+1]k-$ are
  isofibrations, so that the limit defining $\oPol$ is in fact a bilimit. Since
  $\poltruncf[k+1]k-$ preserves colimits as left adjoints, they are
  $\omega_1$\accessible right adjoints. By \cite[Theorem~2.17]{bird}, $\oPol$ is
  locally $\omega_1$\presentable and the functors $\poltruncf[\omega]k-$ are
  $\omega_1$\accessible right adjoint functors.

  The functors $\poltruncf[k+1]k-$ are also left adjoints which preserve
  $\omega_1$\presentable objects by \Cref{prop:polk-omega1-pres}. Thus,
  \cite[Proposition~3.14]{bird} also applies, so that we moreover get that
  $\poltruncf[\omega]k-$ are left adjoints which preserve $\omega_1$\presentable
  objects.
\end{proof}
\begin{rem}
  Assuming the finite presentability of the $\nPol k$ given by
  \Cref{rem:finite-pres-polk}, we can apply \cite[Theorem~2.17]{bird} to deduce
  that $\oPol$ is in fact locally finitely presentable. A small additional
  argument would then prove that the finitely presentable objects of $\nPol k$
  are the \opol with a finite number of generators. A proof of these facts
  without using Bird can be found
  in~\cite[Paragraph~1.3.3.16]{forest:tel-03155192}.
\end{rem}

\noindent Now, in the truncable case, we can easily define the free
$\omega$\category on an $\omega$\polygraph, just like for finite-dimensional
polygraphs:
\begin{prop}
  \label{prop:freecat-infty}
  If~$T$ is truncable, there is a functor~$\freecat[\omega] - \co \nPol\omega
  \to \Alg_{\omega}$ which is uniquely defined by
  \[
    \algtruncf[\omega] k {(-)}
    \circ \freecat[\omega]- = \freecat[k]- \circ \poltruncf[\omega] k -
  \]
  for~$k \in \N$.
\end{prop}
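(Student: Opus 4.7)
The plan is to realize $\freecat[\omega]-$ as the unique functor factorizing through the limit $\Alg_\omega$. By \Cref{prop:alg-infty-limit}, it suffices to provide a compatible family of functors $(F_k \co \nPol\omega \to \Alg_k)_{k \in \N}$ satisfying $\algtruncf[k+1]k- \circ F_{k+1} = F_k$, and take $\freecat[\omega]-$ to be the induced factorization. The obvious candidate is
\[
  F_k = \freecat[k]-\circ \poltruncf[\omega]k- \co \nPol\omega \to \Alg_k,
\]
since this is exactly what the statement prescribes.

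The cone condition then reads
\[
  \algtruncf[k+1]k- \circ \freecat[k+1]- \circ \poltruncf[\omega]{k+1}-
  \;=\; \freecat[k]- \circ \poltruncf[\omega]k-.
\]
Using the definitional equality $\poltruncf[\omega]k- = \poltruncf[k+1]k- \circ \poltruncf[\omega]{k+1}-$ (which holds by construction of $\nPol\omega$ as a limit), this reduces to
\[
  \algtruncf[k+1]k- \circ \freecat[k+1]- \;=\; \freecat[k]- \circ \poltruncf[k+1]k-,
\]
which is exactly the content of \Cref{prop:algtruncf-pol-isom}, providing a natural isomorphism between the two sides. Invoking \Cref{rem:poltruncf-freecat-strict} (together with \Cref{rem:algtruncf-strict}), we may assume this isomorphism has been chosen to be the identity, so the cone condition holds strictly. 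The universal property of the limit then yields a unique functor $\freecat[\omega]-$ satisfying the stated equations.

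The main subtlety is strictness: \Cref{prop:algtruncf-pol-isom} gives only an isomorphism, not an equality, so \emph{a priori} we only obtain a pseudo-cone. Two routes avoid the issue. The clean one uses the rectification already licensed by Remarks \ref{rem:algtruncf-strict} and \ref{rem:poltruncf-freecat-strict}, making all relevant equalities hold on the nose. Alternatively, one can appeal to \Cref{rem:alg-infty-2-limit} to view $\Alg_\omega$ as a $2$-limit, and use that the truncation functors $\algtruncf[k+1]k-$ are isofibrations (\Cref{prop:algtruncf-lifts-isom}) to replace the pseudo-cone by an isomorphic strict cone. Either way, the induced factorization $\freecat[\omega]-$ is unique by the $1$-categorical universal property of the limit.
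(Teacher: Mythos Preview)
Your proof is correct and follows essentially the same approach as the paper: invoke \Cref{rem:algtruncf-strict} and \Cref{rem:poltruncf-freecat-strict} to make the squares commute strictly, then use the limit description of $\Alg_\omega$ from \Cref{prop:alg-infty-limit} (together with the definition of $\nPol\omega$) to obtain the factorizing functor. Your additional discussion of the pseudo-cone alternative is a nice elaboration but not needed once the strictification remarks are in play.
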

\begin{proof}
  By \Remr{algtruncf-strict} and \Remr{poltruncf-freecat-strict}, we have a commutative diagram
  \[
    \begin{tikzcd}[row sep=3em,column sep=4em]
      \nPol 0
      \ar[d,"{\freecat[0]-}"{description}]
      &
      \nPol 1
      \ar[l,"{\poltruncf 0 -}"']
      \ar[d,"{\freecat[1]-}"{description}]
      &
      \cdots
      \ar[l,"{\poltruncf 1 -}"']
      &
      \nPol k
      \ar[l,"{\poltruncf {k-1} -}"']
      \ar[d,"{\freecat[k]-}"{description}]
      &
      \nPol {k+1}
      \ar[l,"{\poltruncf k -}"']
      \ar[d,"{\freecat[k+1]-}"{description}]
      &
      \cdots
      \ar[l,"{\poltruncf {k+1} -}"']
      \\
      \Alg_0
      &
      \Alg_1
      \ar[l,"{\algtruncf 0 -}"]
      &
      \cdots
      \ar[l,"{\algtruncf 1 -}"]
      &
      \Alg_k
      \ar[l,"{\algtruncf {k-1} -}"]
      &
      \Alg_{k+1}
      \ar[l,"{\algtruncf k -}"]
      &
      \cdots
      \ar[l,"{\algtruncf {k+1} -}"]
    \end{tikzcd}
  \]
  which, by the definition of~$\nPol\omega$ and \Propr{alg-infty-limit}, induces
  a functor~$\freecat[\omega]-$ which satisfies the wanted properties.
\end{proof}
\begin{rem}
  In the case where $T$ is only weakly truncable, the squares in the proof of
  \Cref{prop:freecat-infty} only commute up to isomorphism, so that we only get
  a \emph{pseudocone} on the $\Alg_k$'s of vertex $\oPol$. In this case, we can
  use the fact that $\Alg_\omega$ is a bilimit on the $\Alg_k$'s (since the
  functors $\algtruncf[k+1] k -$ are isofibrations by
  \Cref{prop:algtruncf-lifts-isom}) to get a functor $\freecat[\omega]- \co
  \oPol \to \Alg_\omega$ which factorizes up to isomorphism that pseudocone.
\end{rem}
\begin{remark}
  We can still define a functor~$\freecat[\omega]- \co \nPol\omega \to
  \Alg_\omega$ in the case where~$T$ is not weakly truncable. However, this
  functor is not expected to be compatible with the functors~$\algtruncf k -$ as
  in \Propr{freecat-infty}. Indeed, in this case, the functor~$\polextp[k]--$
  does not preserve the underlying $k$\category~$C$ of a $k$\cellular
  extension~$(C,S) \in \Algp_k$.
\end{remark}


\subsection{The polygraphic adjunctions}
\label{text:pol-adj}

We now translate in our setting the definition given by Batanin of the
adjunction between globular algebras and polygraphs. In our case, this
adjunction will be derived from one between cellular extensions and polygraphs
that we are going to introduce.

The constructions of this section are done using an induction on $k \in \N$ to
build a functor
\[
  \algptopol_{k} \co \Algp_k \to \nPol k
\]
which is part of an adjunction $\poltoce_k \dashv \algptopol_{k}$, with unit
$\algppolunit k$ and counit $\freealgpcu k$, from which we derive a functor
$\algtopol_{k} \co \Alg_k \to \nPol k$ which is part of an adjunction
$\freecat[k]- \dashv \algtopol_{k}$, with unit $\algpolunit k$ and counit
$\freecatcu k$.

When $k = 0$, this is easy: we take $\algptopol_{k} = \catunit[\nGlob 0]$ and
$\algtopol_0 = \fgfalgf_0$. So now, we assume that we have defined the right
adjoints $\algptopol_{l}$ and $\algtopol_{l}$ up to dimension $k \in \N$, and we
define the right adjoints $\algptopol_{k+1}$ and $\algtopol_{k+1}$ starting with
the former.

By the pullback definition of $\nPol {k+1}$, this will require defining a
functor $\Algp_{k} \to \nPol {k}$ and a functor $\Algp_{k} \to \Algp_{k}$, the
latter being reasonably suspected to be the functor of the comonad induced by
the adjunction.

The functor $\Algp_{k} \to \nPol {k}$ is defined as expected as the composite
\[
  \Algp_{k} \xto{\cetoalg_k} \Alg_k \xto{\algtopol_k} \nPol k
  \zbox.
\]

We define the functor $\Algp_k \to \Algp_k$, denoted $\algpcom_k$, as follows:
given $(C,S) \in \Algp_k$, $\algpcom_k(C,S)$ is the pullback
\[
  \begin{tikzcd}[sep=large]
    \algpcom_k(C,S)
    \ar[r,dashed,"q^R_{(C,S)}"]
    \ar[d,dashed,"q^L_{(C,S)}"']
    \phar[rd,very near start,"\drcorner"]
    &
    (C,S)
    \ar[d,"\algpifunit k_{(C,S)}"]
    \\
    \incfill{k+1}{(\freecat{(\algtopol_k C)})}
    \ar[r,"(\algpincfill{k+1}- \freecatcu k)_C"']
    &
    \incfill{k+1}C
  \end{tikzcd}
\]
where $\algpifunit k$ denotes the unit of the adjunction $\cetoalg_k \dashv
\algpincfill[k]{k+1}-$, with $\algpincfill[k]{k+1}-$ being the right adjoint
given by \Cref{prop:cetoalg-la}, and where $\freecatcu k$ is the counit of the
adjunction $\freecat[k]- \dashv \algtopol_k$ defined by induction hypothesis.
The idea behind this pullback is the following: we forget the $k$\category $C$
as a $k$\polygraph and freely generate it back immediately to a $k$\category
(the left leg), then we attach back the generators of $S$ to this new
$k$\category, taking into account that there now several choices of cells which
evaluate to the old sources and targets of the elements of $S$ (the right leg).

Since $\algptruncf k -$ preserves limits (as a right adjoint), the diagram
\[
  \begin{tikzcd}[sep=large]
    \restrict k {(\algpcom_k(C,S))}
    \ar[r,dashed,"\restrict k {(q^R)}"]
    \ar[d,dashed,"\restrict k {(q^L)}"']
    \phar[rd,very near start,"\drcorner"]
    &
    C
    \ar[d,"{\unit C}"]
    \\
    \freecat{(\algtopol_k C)}
    \ar[r,"\freecatcu k_C"']
    &
    C
  \end{tikzcd}
\]
is a pullback (we picked $\algpincfill[k]{k+1}-$ so that the counit of
$\cetoalg_k \dashv \algpincfill[k]{k+1}-$ is the identity, which is possible by
\Cref{prop:pullback-la-ra}). Moreover, since $\algptruncf k -$ is an
isofibration, we can suppose that we defined $\algpcom_k$ so that $\restrict k
{(\algpcom_k(C,S))} = \freecat{(\algtopol_k C)}$ and $\restrict k {(q^L)} =
\unit{}$. 

Thus, we get as a factorization through $\nPol {k+1}$ of the cone made of the
above two functors a functor
\[
  \algptopol_{k+1} \co \Algp_k \to \nPol {k+1}
\]
which is characterized by $\poltruncf k -\algptopol_{k+1} = \algtopol_k\cetoalg_k$
and $\poltoce_{k+1}\algptopol_{k+1} = \algpcom_k$.

We now equip $(\poltoce_{k+1},\algptopol_{k+1})$ with a structure of an
adjunction. First, we build a unit
\[
  \algppolunit {k+1} = \prodfact{\algppolunit {k+1}_l}{\algppolunit {k+1}_r}\co
  \catunit[\nPol{k+1}] \To \algptopol_{k+1}\poltoce_{k+1}
\]
through the pullback of $\nPol{k+1}$ as
follows. The projection of $\algppolunit {k+1}$ through $\poltruncf k -$ is
$\algppolunit {k+1}_l = \algpolunit k\poltruncf k -$; this is
well-defined, since
\[
  \poltruncf k -\algptopol_{k+1}\poltoce_{k+1} =
  \algtopol_k\cetoalg_k\poltoce_{k+1} = \algtopol_k\freecat[k] -\poltruncf k -
  \zbox.
\]
Moreover, the projection of $\algppolunit {k+1}$ through $\poltoce_{k+1}$ is a
natural transformation
\[
  \algppolunit {k+1}_r\co
  \poltoce_{k+1} \To \poltoce_{k+1}\algptopol_{k+1}\poltoce_{k+1} = \algpcom_k
  \poltoce_{k+1} \co \nPol {k+1} \to \Algp_k
\]
which is defined using the pullback definition of $\algpcom_k$: we have the cone
\[
  \begin{tikzcd}[sep=large]
    \poltoce_{k+1}
    \ar[rrd,"\unit{}",bend left=20]
    \ar[rdd,"\lambda"',bend right=20]
    \ar[rd,dashed,"\algppolunit {k+1}_r"]
    &[-3em]
    &[4em]
    \\
    &
    \algpcom_k\poltoce_{k+1}
    \ar[r,"q^R"]
    \ar[d,"q^L"']
    \phar[rd,very near start,"\drcorner",xshift=-3ex]
    &
    \poltoce_{k+1}
    \ar[d,"{\algpifunit k\poltoce_{k+1}}"]
    \\
    &
    \algpincfill{k+1}-\freecat[k]-\algtopol_k \algptruncf k -\poltoce_{k+1}
    \ar[r,"\algpincfill{k+1}- \freecatcu k\algptruncf k -\poltoce_{k+1}"']
    &
    \algpincfill{k+1}-\algptruncf k -\poltoce_{k+1}
  \end{tikzcd}
\]
where $\lambda$ is defined using string diagram as
\[
  \lambda
  \qquad
  =
  \qquad
  \satex{algppol-lambda}
  \zbox.
\]
This is indeed a cone, as one can easily verify using the zigzag equations of
adjunctions that the following two string diagrams 
\[
  \satex{algppol-cone-l}
  \qquad
  \text{and}
  \qquad
  \satex{algppol-cone-r}
\]
represent the same natural transformation. Thus, we get a factorization natural
transformation $\algppolunit {k+1}_r \co \poltoce_{k+1} \To
\poltoce_{k+1}\algptopol_{k+1}\poltoce_{k+1}$.

We still need to verify that the equation $\freecat[k]-\algppolunit {k+1}_l = \algptruncf k -
\algppolunit {k+1}_r$, whose string diagrams
representation is
\[
  \satex{algppol-bxi-cone-1}
  \qquad
  =
  \qquad
  \satex{algppol-bxi-cone-2}
\]
so that the equation holds by the zigzag equations of adjunctions. Thus,
\[
  \algppolunit {k+1} = \prodfact{\algppolunit {k+1}_l}{\algppolunit {k+1}_r}\co
  \catunit[\nPol{k+1}] \To \algptopol_{k+1}\poltoce_{k+1}
\]
is well-defined.

We are now required to give a counit 
\[
  \freealgpcu {k+1} \co \poltoce_{k+1}\algptopol_{k+1} \To \catunit[\Algp_{k}]\zbox.
\]
A candidate already exists: since $\poltoce_{k+1}\algptopol_{k+1} = \algpcom_k$,
we can take $\freealgpcu {k+1} = q^R$. We now verify that we have an
adjunction:
\begin{theo}
  \label{thm:poltoce-algptopol-adj}
  There is an adjunction $\poltoce_{k+1} \dashv \algptopol_{k+1}$ with
  $\algppolunit {k+1}$ and $\freealgpcu {k+1}$ as unit and counit.
\end{theo}
\begin{proof}
  We verify the first zigzag equation, namely $(\freealgpcu {k+1}\poltoce_{k+1}) \circ
  (\poltoce_{k+1}\algppolunit {k+1}) = \unit{\poltoce_{k+1}}$:
  \begin{align*}
    (\freealgpcu {k+1}\poltoce_{k+1}) \circ (\poltoce_{k+1}\algppolunit {k+1}) 
    &
      = 
      (\freealgpcu {k+1}\poltoce_{k+1}) \circ (\algppolunit {k+1}_r) 
    \\
    &=\unit{\poltoce_{k+1}}
  \end{align*}
  by the definition $\algppolunit {k+1}_r$.

  We now verify the second zigzag equation, namely $(\algptopol_{k+1}\freealgpcu {k+1}) \circ
  (\algppolunit {k+1}\algptopol_{k+1}) = \unit{\algptopol_{k+1}}$. We first
  check that the projection along $\poltruncf k -$ is an identity using string diagrams:
  \[
    \begin{array}{cccccccc}
      &
      \satex{algppol-zigzag-l-1}
      &
        =
      &
        \satex{algppol-zigzag-l-2}
      &
        =
      &
        \satex{algppol-zigzag-l-3}
      \\
      =
      &
        \satex{algppol-zigzag-l-4}
      &
        =
      &
        \satex{algppol-zigzag-l-5}
      &
        =
      &
        \satex{algppol-zigzag-l-6}
        \\
      =
      &
        \satex{algppol-zigzag-l-7}
        \zbox.
    \end{array}
  \]
  We now check that the projection along $\poltoce_{k+1}$ is also an identity,
  \ie
  \begin{equation}
    \label{eq:zigzag-poltoce-proj}
    (\algpcom_k q^R) \circ
    (\algppolunit {k+1}_r\algptopol_{k+1}) = \unit{\algpcom_k}
    \zbox.
  \end{equation}
  As an equation between two natural transformation with codomain $\algpcom_k$,
  we check it by verifying that the projections along $q^L$ and $q^R$ are the
  same. We start with $q^L$ and use for this purpose the following property:
  \begin{lem}
    \label{lem:criterion-eq-algpincfill}
    Given two natural transformation $\alpha,\beta \co F \To \algpincfill {k+1} -
    G$ for some functors $F \co \cC \to \Algp_k$ and $G \co \cC \to \Alg_k$, we
    have $\alpha = \beta$ if and only if $\cetoalg_k \alpha = \cetoalg_k \beta$.
  \end{lem}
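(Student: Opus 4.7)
The plan is to use the adjunction $\cetoalg_k \dashv \algpincfill[k]{k+1}-$ given by \Cref{prop:cetoalg-la} together with the choice, made just before this lemma, that its counit
\[
\freealgpcu{} \co \cetoalg_k \algpincfill[k]{k+1} - \To \catunit[\Alg_k]
\]
is the identity. Since the "if" direction is trivial, I would only argue the "only if" direction, which amounts to showing that post-composition with $\cetoalg_k$ is injective on natural transformations valued in $\algpincfill[k]{k+1}- G$.

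The core idea is that, under the above adjunction, any natural transformation $\alpha \co F \To \algpincfill[k]{k+1}- G$ is the adjunct of $\cetoalg_k \alpha \co \cetoalg_k F \To G$ (using $\freealgpcu{} = \unit{}$), so $\alpha$ can be reconstructed from $\cetoalg_k \alpha$ by the adjoint formula
\[
\alpha = (\algpincfill[k]{k+1}- \cetoalg_k \alpha) \circ (\algpifunit k F),
\]
where $\algpifunit k$ is the unit of the adjunction. The verification amounts to checking the standard zigzag: the right-hand side equals $(\algpincfill[k]{k+1}-\freealgpcu{} G) \circ (\algpincfill[k]{k+1}-\cetoalg_k \alpha) \circ (\algpifunit k F) = \alpha \circ \unit F = \alpha$, using naturality of $\algpifunit k$ and one zigzag equation. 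Hence if $\cetoalg_k \alpha = \cetoalg_k \beta$, the formula forces $\alpha = \beta$.

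There is no real obstacle here: this is a direct consequence of the triangle identities applied to an adjunction whose counit was arranged to be an identity. The only point requiring mild care is that the choice making $\freealgpcu{} = \unit{}$ is legitimate, which was justified earlier via \Cref{prop:pullback-la-ra} (and is what allows the reconstruction formula to simplify in the way stated above).
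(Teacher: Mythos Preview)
Your approach is essentially the same as the paper's: both use the adjunction $\cetoalg_k \dashv \algpincfill[k]{k+1}-$ with identity counit to conclude that the transpose bijection is realized by applying $\cetoalg_k$, hence $\alpha$ is recoverable from $\cetoalg_k\alpha$. One small slip: you swapped the labels of the two implications --- the trivial direction is ``only if'' ($\alpha=\beta \Rightarrow \cetoalg_k\alpha=\cetoalg_k\beta$), while the injectivity you actually prove is the ``if'' direction; the mathematical content is correct regardless.
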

  \begin{proof}
    The adjunction $\cetoalg_k \dashv \algpincfill{k+1}-$ induces a correspondence
    between natural transformations of type $F \To \algpincfill {k+1} - G$ and the
    ones of type $\cetoalg_k F \To G$. Since the counit of this adjunction is an
    identity, the forward map of this correspondence is given by the application
    of $\cetoalg_k$.
  \end{proof}
  \noindent Thus, by \Cref{lem:criterion-eq-algpincfill}, $q^L$ is a cofork of
  the two sides of \eqref{eq:zigzag-poltoce-proj} if the equation
  \[
    (\algptruncf k -\algpcom_k q^R) \circ
    (\algptruncf k -\algppolunit {k+1}_r\algptopol_{k+1}) = \unit{\algptruncf k -\algpcom_k}
  \]
  holds. We compute that
  \begin{align*}
    & (\algptruncf k -\algpcom_k q^R) \circ
    (\algptruncf k -\algppolunit {k+1}_r\algptopol_{k+1})
    \\
    =\; & (\freecat[k]-\algtopol_k\algptruncf k - q^R) \circ
          (\freecat[k]-\algpolunit k\poltruncf k -\algptopol_{k+1})
    \\
    =\; & (\freecat[k]-\algtopol_k\freecatcu k\algptruncf k -) \circ
          (\freecat[k]-\algpolunit k\algtopol_k\algptruncf k -)
    \\
    =\; &
          \unit{\freecat[k]-\algtopol_k\algptruncf k -} = \unit{\algptruncf k -\algpcom_k}
          &\text{(by the zigzag equations)\zbox.}
  \end{align*}
  We proceed with verifying that $q^R$ is a cofork of the two sides of the equation:
  \begin{align*}
    & q^R \circ (\algpcom_k q^R) \circ (\algppolunit {k+1}_r\algptopol_{k+1})
    \\
    =\;
    & q^R \circ (q^R \algpcom_k) \circ (\algppolunit {k+1}_r\algptopol_{k+1})
      &\text{(by the exchange law)}
        \\
    =\;& q^R =  q^R \circ \unit{\algpcom_k}
    &\text{(by definition of $\algppolunit {k+1}_r$)\zbox.}
  \end{align*}
  Thus, by the universal property of the pullback, the
  equation~\eqref{eq:zigzag-poltoce-proj} holds. Hence, since its projections
  along $q^L$ and $q^R$ holds, the second zigzag equation holds, concluding the
  proof.
\end{proof}
\noindent Writing $\algtopol_{k+1}$ for the composite $\algptopol_{k+1}\algtoce_k$, we get:
\begin{coro}
  \label{prop:alg-pol-adj}
  There is an adjunction $\freecat[k+1]- \dashv \algtopol_{k+1} \co \Alg_{k+1}
  \to \nPol{k+1}$.
\end{coro}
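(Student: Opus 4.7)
The plan is to obtain the adjunction by composing two previously established adjunctions, so almost no work is required. Recall that $\freecat[k+1]{-}$ was defined in Section~\ref{text:polygraphs} as the composite $\polextp[k]{-}{-} \circ \poltoce_{k+1}$, and $\algtopol_{k+1}$ is defined in the statement as the composite $\algptopol_{k+1} \circ \algtoce_k$.

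I would invoke two adjunctions already at hand. First, \Cref{prop:algtoce-ra} gives the adjunction $\polextp[k]{-}{-} \dashv \algtoce_k \co \Alg_{k+1} \to \Algp_k$. Second, \Cref{thm:poltoce-algptopol-adj}, just proven, gives the adjunction $\poltoce_{k+1} \dashv \algptopol_{k+1} \co \Algp_k \to \nPol{k+1}$. Composing these two adjunctions yields
\[
  \polextp[k]{-}{-} \circ \poltoce_{k+1} \;\dashv\; \algptopol_{k+1} \circ \algtoce_k
\]
which is exactly the desired adjunction $\freecat[k+1]{-} \dashv \algtopol_{k+1}$. The unit of the composite is obtained by pasting $\algppolunit{k+1}$ with $\algptopol_{k+1}$ applied to the unit of $\polextp[k]{-}{-} \dashv \algtoce_k$, and the counit is obtained symmetrically from $\freealgpcu{k+1}$ and the counit of $\polextp[k]{-}{-} \dashv \algtoce_k$; the zigzag equations then hold automatically from those of the two component adjunctions.

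There is no real obstacle here: the content of the corollary is entirely absorbed by \Cref{thm:poltoce-algptopol-adj} together with \Cref{prop:algtoce-ra}, and the present statement merely records the resulting composite adjunction between $\Alg_{k+1}$ and $\nPol{k+1}$.
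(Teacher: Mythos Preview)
Your proposal is correct and follows exactly the same approach as the paper: the paper's proof is the single sentence ``By composing the two adjunctions given by \Cref{prop:algtoce-ra} and \Cref{thm:poltoce-algptopol-adj},'' which is precisely what you do. Your additional remarks about how the composite unit and counit are formed are accurate but go slightly beyond what the paper records.
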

\begin{proof}
  By composing the two adjunctions given by \Cref{prop:algtoce-ra} and
  \Cref{thm:poltoce-algptopol-adj}.
\end{proof}
The unit $\algpolunit {k+1}$ and counit $\freecatcu {k+1}$ are defined in the
process of composing the two adjunctions. This concludes the inductive argument
of this section.



\section{The full example of strict categories}
\label{text:strict-cats-and-precats}

In this section, we illustrate the previous constructions on the classical
example of strict categories, which is a well-known theory of higher categories.
Strict categories, as their name suggests, are a classical example of a theory
for higher categories that lies on the strict side of the strict/weak spectrum
of higher categories. As such, they do not represent faithfully the homotopical
information of topological spaces (see~\cite{simpson1998homotopy}
or~\cite{berger1999double}). Nevertheless, they admit a relatively simpler
axiomatization than weak higher categories, and can be encountered in several
situations of interest. In the following, we recall the equational definition of
strict categories and show that it is associated with a truncable monad on
globular sets using the criterions proved in the previous sections
(\Cref{thm:charact-globular-algebras} and \Cref{thm:truncable-charact}). This
allows deriving notions of cellular extensions and polygraphs with the
associated free constructions.

We start by recalling the equational definition of strict categories in
\Cref{par:ocat-def}. Then, we give the full calculation that the forgetful
functor from strict categories to globular sets is monadic in
\Cref{text:sc-monadicity}. Next, we give the boilerplate definitions of the
truncation and inclusion functors for strict categories in
\Cref{text:sc-trunc-inc-functors}. Then, we show that the categories of strict
categories that we obtain in each dimension are coherently equivalent to the
categories of globular algebras derived from a monad on $\nGlob\omega$ (the
monad of strict \ocats) in \Cref{text:sc-as-globular-algebras}. Finally, we
instantiate the free constructions introduced in \Cref{text:free-higher} in the
case of strict categories in \Cref{text:sc-free-constructions}.

\subsection{Equational definition}
\label{par:ocat-def}
Given~$n \in \N \cup \set{\omega}$, a \index{strict category}\emph{strict
  $n$\category}~$(C,\csrc,\ctgt,\unit{},\comp)$ (often simply denoted~$C$) is an
$n$\globular set~$(C,\csrc,\ctgt)$ together with, for~$k \in \N$ with~$k < n$,
identity \glossary(idaacat){$\unitp k {}$}{the identity operation for strict
  categories}operations
\[
  \unitp{k+1} {}\colon C_{k}\to C_{k+1}
\]
often writen~$\unit {}$ when there is no ambiguity on~$k$, and, for~$i,k \in
\N_n$ with~$i < k$, composition \glossary(.caa){$\comp_i$}{the composition
  operation for strict categories}operations
\[
  \comp_{i,k}\colon C_k\times_i C_k\to C_k 
\]%
often denoted~$\comp_i$ when there is no ambiguity on~$k$, which satisfy the
axioms~\ref{cat:first} to~\ref{cat:last} below. Given~$k,l \in \N_n$ such
that~${k \le l}$ and~${u \in C_k}$, we extend the notations for identity
operations and write~$\unitp{l}{}(u)$ for
\[
  \unitp{l}{}(u) = \unitp{l}{} \circ \cdots \circ \unitp{k+1}{}(u)
\]
and, for the sake of conciseness, we often write~$\unitp l u$ for~$\unitp l {}
(u)$, or even~$\unit u$ when~$l = k+1$. The axioms are the following:
\begin{enumerate}[label=(S-\roman*), ref=(S-\roman*)]
\item\label{cat:first} \label{cat:id-srctgt} for~$k \in \N_{n-1}$ and~$u \in
  C_k$,
  \[
  \csrc_k (\unitp{k+1}u) = \ctgt_k(\unitp{k+1}u)
  = u,
  \]
\item \label{cat:src-tgt}for~$i,k \in \N_n$ with~$i < k$,~$(u,v) \in C_k
  \times_i C_k$ and~$\eps \in \set{-,+}$,
  \[
    \csrctgt\eps_{k-1}(u\comp_i v) =
    \begin{cases}
      \csrctgt\eps_{k-1}(u) \comp_{i} \csrctgt\eps_{k-1} (v)&\text{if~$i < k-1$,} \\
      \csrc_{k-1}(u)&\text{if~$i=k-1$ and~$\epsilon=-$,}\\
      \ctgt_{k-1}(v)&\text{if~$i=k-1$ and~$\epsilon=+$,}
    \end{cases}
  \]
\item\label{cat:unital} for~$i,k \in \N_n$ such that~$i < k$, and~$u \in C_k$,
  \[
    \unitp{k}{} (\csrc_i (u)) \comp_{i} u = u = u \comp_{i} \unitp{k}{}
    (\ctgt_i (u)),
  \]
\item\label{cat:assoc} for~$i,k \in \N_n$ such that~$i < k$, and $i$\composable~$u,v,w \in C_k$,
  \[
    (u \comp_{i} v) \comp_{i} w = u \comp_{i} (v \comp_{i} w) ,
  \]
\item\label{cat:id-xch} for~$i,k \in \N_{n-1}$ such that~$i < k$, and~$(u,v) \in C_k
  \times_i C_k$,
  \[
    \unitp{k+1}{}(u \comp_{i} v) = \unitp{k+1}u \comp_{i} \unitp{k+1}v ,
  \]
\item\label{cat:xch}\label{cat:last} for~$i,j,k \in \N_n$ such that~$i < j < k$,
  and~$u,u',v,v' \in C_k$ such that~$u,v$ are $i$\composable, and~${u,u'}$ are
  $j$\composable, and~$v,v'$ are $j$\composable,
  \[
    (u \comp_i v) \comp_j (u' \comp_i v') = (u \comp_j u') \comp_i (v \comp_j v').
  \]
\end{enumerate}
Note that the composition that appear in Axioms~\ref{cat:unital},
\ref{cat:assoc}, \ref{cat:id-xch} and~\ref{cat:xch} are well-defined as a
consequence of Axioms~\ref{cat:id-srctgt} and~\ref{cat:src-tgt} and the
equations satisfied by the source and target operations of a globular set. The
\Axr{cat:xch} is frequently called the \index{exchange law}\emph{exchange law} of strict
categories.
\begin{example}
  Given a $2$\category~$C$ and $0$-, $1$- and $2$\globes as in the following configuration
  \[
    \begin{tikzcd}[sep=huge]
      x
      \ar[r,bend left=70,"f_1"{description},""{auto=false,name=topl}]
      \ar[r,"f_2"{description},""{auto=false,name=midl}]
      \ar[r,bend right=70,"f_3"{description},""{auto=false,name=botl}]
      &
      y
      \ar[r,bend left=70,"g_1"{description},""{auto=false,name=topr}]
      \ar[r,"g_2"{description},""{auto=false,name=midr}]
      \ar[r,bend right=70,"g_3"{description},""{auto=false,name=botr}]
      &
      z
      \ar[from=topl,to=midl,"\Downarrow\! u",phantom]
      \ar[from=midl,to=botl,"\Downarrow\! u'",phantom]
      \ar[from=topr,to=midr,"\Downarrow\! v",phantom]
      \ar[from=midr,to=botr,"\Downarrow\! v'",phantom]
    \end{tikzcd}
  \]
  we have~$(u \comp_0 v) \comp_1 (u' \comp_0 v') = (u \comp_1 u') \comp_0 (v
  \comp_1 v')$ by \Axr{cat:xch}.
\end{example}
\noindent Our definition of strict categories involves sets, but we could have
written a similar definition using classes to define \emph{large} strict
categories. For such alternative definition, we have the following classical
example:
\begin{example}
  There is a large strict $2$\category~$\Cat$ whose $0$\cells are the small
  categories, whose $1$\cells are the functors between the $1$\categories, and
  whose $2$\cells are the natural transformations between functors, and where
  the operations~$\comp_{0,1}$ is the composition of functors, and the
  operations~$\comp_{0,2}$ and~$\comp_{1,2}$ are respectively the horizontal and
  vertical compositions of natural transformations. Note that the exchange law
  \Axr{cat:xch} in this setting corresponds to the usual \emph{exchange law} for
  natural transformations.
\end{example}
\noindent Given two strict $n$\categories~$C$ and~$D$, a
\index{morphism!of strict categories}\emph{morphism}~$F$ between~$C$ and~$D$ is
the data of an $n$\globular morphism~$F \co C \to D$ which moreover satisfies
that
\begin{itemize}
\item $F(\unitp {k+1} u) = \unitp {k+1} {F(u)}$ for every~$k \in \N_{n-1}$ and~$u \in
  C_k$,
\item $F(u \comp_i v) = F(u) \comp_i F(v)$ for every~$i,k \in \N_n$ with~$i < k$
  and $i$\composable~$u,v \in C_k$.
\end{itemize}
We often call such morphisms \index{functor@$n$-functor}\emph{$n$\functors}. We
\glossary(Catn){$\nCat n$}{the category of strict $n$\categories}write~$\nCat n$ for the
category of strict $n$\categories.

\smallpar There is a functor
\[
  \ccfgff_n \co \nCat n \to \nGlob n
\]
which maps a strict $n$\category to its underlying $n$\globular set. The above
definition of strict $n$\categories directly translates into an essentially
algebraic theory (\cf \Cref{text:ess-alg-theories}), so that the
functor~$\ccfgff_n$ is induced by a morphism between the essentially algebraic
theory of $n$\globular sets (\cf \myCref[Remark~1.2.3.4]{rem:glob-ess-alg}) and
the one of strict $n$\categories. Thus, we get:
\begin{prop}
  \label{prop:catn-complete-cocomplete-ra}
  For every~$n \in \Ninf$, the category~$\nCat n$ is locally finitely
  presentable, complete and cocomplete. Moreover, the functor~$\ccfgff_n$ is a right
  adjoint which preserves directed colimits.
\end{prop}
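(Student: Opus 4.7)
The plan is to reduce everything to the general machinery of essentially algebraic theories already cited in the paper. The paragraph preceding the statement already observes that the axiomatization of strict $n$\categories given by operations $\unitp{k+1}{}$ and $\comp_{i,k}$ and the equations~\ref{cat:first}--\ref{cat:last} fits the format of an essentially algebraic theory: all operations are partial with domains of definition cut out by equations between previously defined operations (namely the pullbacks $C_k \times_i C_k$ are defined by the equalities involving $\csrc_i$ and $\ctgt_i$), and all axioms are universally quantified equations. Thus $\nCat n$ is the category of models of this essentially algebraic theory.

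From there, the first sentence of the statement follows by invoking the standard characterization of locally finitely presentable categories as categories of models of essentially algebraic theories, namely \myCref[Theorem~1.1.1.1]{prop:ess-alg-iff-loc-fin-pres}; this gives that $\nCat n$ is locally finitely presentable. Completeness and cocompleteness then follow from \Cref{prop:loc-pres-nice-properties}, which is invoked in exactly the same way in the analogous \Cref{rem:glob-ess-alg} for $\nGlob n$.

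For the second sentence, I would exhibit $\ccfgff_n$ as arising from a morphism of essentially algebraic theories. The essentially algebraic theory of $n$\globular sets (as recalled in \myCref[Remark~1.2.3.4]{rem:glob-ess-alg}) has for signature the sorts $X_k$ with the operations $\csrc_i,\ctgt_i$ and the equations making them globular, while the theory of strict $n$\categories extends this signature by adding the operations $\unitp{k+1}{}$ and $\comp_{i,k}$ together with the axioms~\ref{cat:first}--\ref{cat:last}. The inclusion of theories induces a functor on models which is precisely $\ccfgff_n$. By the general result \myCref[Theorem~1.2.3.4]{prop:functor-from-theo-morphism-ra}, any functor induced by a morphism of essentially algebraic theories is a right adjoint that preserves directed colimits, which gives the final sentence.

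I do not expect any serious obstacle here: the entire content is to observe that the axiomatization fits the essentially algebraic format and to quote the results already set up in the paper. The only point that might require a brief comment is the well-definedness of the partial operation $\comp_{i,k}$, whose domain $C_k \times_i C_k$ is precisely a finite limit in the theory, so that the axiomatization is indeed essentially algebraic in the strict sense; once this is remarked, the rest is a mechanical application of the cited theorems.
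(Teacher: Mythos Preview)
Your proposal is correct and matches the paper's proof essentially line for line: the paper invokes \Cref{prop:ess-alg-iff-loc-fin-pres} for local finite presentability (hence cocompleteness), \Cref{prop:loc-pres-nice-properties} for completeness, and \Cref{prop:functor-from-theo-morphism-ra} for the properties of $\ccfgff_n$, relying on the preceding paragraph's observation that the definition is essentially algebraic and $\ccfgff_n$ is induced by a theory morphism.
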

\begin{proof}
  The category~$\nCat n$ is locally finitely presentable by
  \Cref{prop:ess-alg-iff-loc-fin-pres} and in particular cocomplete. It is
  moreover complete by \Propr{loc-pres-nice-properties}. The required properties
  on~$\ccfgff_n$ are a consequence of \Cref{prop:functor-from-theo-morphism-ra}.
\end{proof}

\subsection{Monadicity}
\label{text:sc-monadicity}

We prove here that the functors~$\ccfgff_n$ are monadic. For this purpose, we
use Beck's monadicity theorem, that we first recall quickly. Given a
category~$C$ and morphisms~$f,g \co X \to Y$ and~$h \co Y \to Z$ in~$\mcal C$,
we say that~$h$ is a \index{split coequalizer}\emph{split coequalizer of~$f$
  and~$g$} when there exist~${s \co Z \to Y}$ and~${t \co Y \to X}$ as in
\[
  \begin{tikzcd}
    X
    \ar[r,shift left,"f"]
    \ar[r,shift right,"g"']
    &
    Y
    \ar[r,"h"]
    \ar[l,bend right=70,"t"']
    &
    Z
    \ar[l,bend right=70,"s"']
  \end{tikzcd}
\]
such that~$h \circ f = h \circ g$,~$h \circ s = \unit Z$,~$f \circ t = \unit Y$,
and~$s \circ h = t \circ g$. From this data, it can be shown that~$h$ is a
coequalizer of~$f$ and~$g$. Beck's monadicity theorem is then:
\begin{theo}
  \label{thm:monadicity}
  Given a functor~$R \co C \to D$, the functor~$R$ is monadic if and only
  if the following conditions are satisfied:
  \begin{enumerateroman}
  \item $R$ is a right adjoint,
    
  \item $R$ reflects isomorphisms,
    
  \item for every pair of morphisms~$f,g \co X \to Y$ in~$ C$, if~$R(f),R(g)$ have a split coequalizer, then~${f,g}$ have a coequalizer
    which is preserved by~$R$.
  \end{enumerateroman}
\end{theo}
\begin{proof}
  See~\cite[Theorem~4.4.4]{borceux1994handbook2} or the original work of Beck~\cite{beck1967triples}.
\end{proof}
\noindent We can then prove the following:
\begin{prop}
  \label{prop:sc-cat-fgf-monadic}
  Given~$n \in \Ninf$, the functor~$\ccfgff_n$ is monadic.
\end{prop}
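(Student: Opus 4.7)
The approach is to verify the three conditions of Beck's monadicity theorem (\Cref{thm:monadicity}) for $\ccfgff_n$. Condition~(i), that $\ccfgff_n$ is a right adjoint, is already provided by \Cref{prop:catn-complete-cocomplete-ra}. Condition~(ii), that $\ccfgff_n$ reflects isomorphisms, follows by a direct argument: if $F \co C \to D$ is an $n$\functor whose underlying globular map is bijective in each dimension, then the dimensionwise inverse $F^{-1}$ is automatically compatible with identities and compositions, since from $F(\unit u) = \unit{F(u)}$ and bijectivity one deduces $F^{-1}(\unit v) = \unit{F^{-1}(v)}$, and similarly $F^{-1}(u \comp_i v) = F^{-1}(u) \comp_i F^{-1}(v)$.

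For condition~(iii), let $f, g \co C \to D$ be $n$\functors such that $\ccfgff_n(f)$ and $\ccfgff_n(g)$ admit a split coequalizer $h \co \ccfgff_n D \to E$ in $\nGlob n$, with section $s \co E \to \ccfgff_n D$ and retract $t$. The plan is to equip $E$ with a strict $n$\category structure by transporting the operations of $D$ along $s$ and $h$: for $k > i$ and $i$\composable $u, v \in E_k$, set
\[
  u \comp_i v := h(s(u) \comp_i s(v))
  \qtand
  \unit u := h(\unit{s(u)}).
\]
The globular compatibility of these operations is immediate from $s$ and $h$ being globular morphisms. Each axiom of strict $n$\categories on $E$ then unfolds into an equality between expressions of the form $h(\cdots)$ that can be rewritten, using the split-coequalizer identities together with the fact that $g$ is an $n$\functor (hence commutes with $\comp_i$ and $\unit{}$), into a corresponding equality in $D$, which holds because $D$ is itself a strict $n$\category. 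By construction $h$ is then an $n$\functor, and its coequalizer property in $\nCat n$ (together with preservation by $\ccfgff_n$) follows from the analogous property at the globular level: any $n$\functor $k \co D \to D'$ with $k f = k g$ factors uniquely as a globular map through $h$, and the defining formulas above force that factorization to respect $\comp_i$ and $\unit{}$.

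The main obstacle is the bookkeeping in condition~(iii): verifying each strict-category axiom on $E$ is routine but tedious, as every axiom has to be transported through the split-coequalizer equations, while carefully exploiting that $g$ is an $n$\functor but $s$ and $t$ are merely globular maps.
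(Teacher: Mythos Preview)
Your proposal is correct and follows essentially the same route as the paper: Beck's monadicity theorem, with conditions~(i) and~(ii) handled exactly as you describe, and condition~(iii) by transporting the operations of $D$ to $E$ via $u \comp_i v := h(s(u)\comp_i s(v))$ and $\unit u := h(\unit{s(u)})$, then verifying the axioms through the split-coequalizer identities. One small sharpening: in the axiom checks you will need that \emph{both} $f$ and $g$ are $n$\functors, not just $g$ --- the typical rewrite uses $sh = gt$ to introduce $g$, functoriality of $g$ to factor it out, $hf = hg$ to swap to $f$, functoriality of $f$ to push it back in, and $ft = \unit{}$ to simplify.
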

\begin{proof}
  By \Propr{catn-complete-cocomplete-ra},~$\ccfgff_n$ is a right adjoint. Moreover,
  given a morphism
  \[
    {F \co C \to D} \in \nCat n\zbox,
  \]
  if~$F_k \co C_k \to D_k$ is a bijection for~$k \in \N_n$, then there is a
  morphism
  \[
    \finv F \co D \to C  \in \nCat n
  \]
  defined by~$(\finv F)_k = \finv {(F_k)}$ for~$k \in \N_n$, so that~$\ccfgff_n$
  reflects isomorphisms. Now, let~$F,G \co X \to Y$ be two morphisms of~$\nCat
  n$ such that there exist~$Z \in \nGlob n$, and morphisms
  \[
    H \co \ccfgff_n Y \to Z
    ,\quad
    S \co Z \to \ccfgff_n Y
    \qtand
    T \co \ccfgff_n Y
    \to \ccfgff_n X
  \]
  of~$\nGlob n$, as in
  \[
    \begin{tikzcd}
      \ccfgff_nX
      \ar[r,shift left,"\ccfgff_n(F)"]
      \ar[r,shift right,"\ccfgff_n(G)"']
      &
      \ccfgff_nY
      \ar[r,"H"]
      \ar[l,bend right=80,"T"']
      &
      Z
      \ar[l,bend right=80,"S"']
    \end{tikzcd}
  \]
  that witness that~$\ccfgff_n(F),\ccfgff_n(G)$ is a split coequalizer. We prove that~$F,G$
  has a coequalizer which is preserved by~$\ccfgff_n$. For this purpose, we shall
  equip~$Z$ with a structure of a strict $n$\category. For~$i,k \in \N_n$ with~$i < k$
  and~$(u,v) \in Z_k \times_i Z_k$, we put
  \[
    u \comp_i v = H(S(u) \comp_i S(v))
  \]
  and, given~$k \in \N_{n-1}$ and~$u \in C_k$, we put
  \[
    \unitp {k+1} u = H(\unitp {k+1} {S(u)})
  \]
  We verify that the axioms of strict $n$\categories are verified. Let~$k \in
  \N_{n-1}$,~$u \in Z_k$ and~$\eps \in \set{-,+}$. We have
  \begin{align*}
    \csrctgt\eps_k(\unitp {k+1} u) &=
    \csrctgt\eps_k(H(\unitp {k+1} {S(u)}))
    \\
    &=H(\csrctgt\eps_k(\unitp {k+1} {S(u)}))
    \\
    &=H(S(u)) = u
  \end{align*}
  so that \Axr{cat:id-srctgt} is satisfied. Now, let~$i,k \in \N_{n}$
  such that~$i < k$,~$(u,v) \in Z_k \times_i Z_k$ and~$\eps \in
  \set{-,+}$. We have
  \begin{align*}
    \csrctgt\eps_{k-1}(u \comp_i v)
    &= H(\csrctgt\eps_{k-1}(S(u)\comp_i S(v)))
    \\
    &=
    \begin{cases}
      H(\csrctgt\eps_{k-1}(S(u)) \comp_{i} \csrctgt\eps_{k-1} (S(v)))&\text{if~$i < k-1$,}\\
      H(\csrc_{k-1}(S(u)))&\text{if~$i = k-1$ and~$\epsilon=-$,}\\
      H(\ctgt_{k-1}(S(v)))&\text{if~$i = k-1$ and~$\epsilon=+$,}
    \end{cases}
  \end{align*}
  so that, by reducing the last expressions, we see that \Axr{cat:src-tgt}
  is satisfied. Now, let~$i,k \in \N_n$ such that~$i < k$, and~$u \in Z_k$.
  We have
  \begingroup
  \allowdisplaybreaks
  \begin{align*}
    \unitp k {}(\csrc_i(u)) \comp_i u
    &=
      H ( S( H(\unitp k {S(\csrc_i(u))}) ) \comp_i S(u) )
    \\
    &=
      H ( S( H(\unitp k {\csrc_i(S(u))}) ) \comp_i SHS(u) )
    \\
    &=
      H ( GT(\unitp k {\csrc_i(S(u))})  \comp_i GTS(u) )
    \\
    &=
      HG ( T(\unitp k {\csrc_i(S(u))})  \comp_i TS(u) )
    \\
    &=
      HF ( T(\unitp k {\csrc_i(S(u))})  \comp_i TS(u) )
    \\
    &=
      H ( FT(\unitp k {\csrc_i(S(u))})  \comp_i FTS(u) )
    \\
    &=
      H ( \unitp k {\csrc_i(S(u))}  \comp_i S(u) )
    \\
    &= H(S(u)) = u
  \end{align*}
  \endgroup and, similarly,~$u \comp_i \unitp k {}(\ctgt_i(u)) = u$, so that
  \Axr{cat:unital} holds. Now, let~$i,k \in \N_n$ such that~$i < k$, and
  $i$\composable~$u,v,w \in C_k$. We have \begingroup \allowdisplaybreaks
  \begin{align*}
    (u \comp_i v) \comp_i w
    &=H(S(H(S(u) \comp_i S(v))) \comp_i S(w))
    \\
    &=H(SH(S(u) \comp_i S(v)) \comp_i SHS(w))
    \\
    &=H(GT(S(u) \comp_i S(v)) \comp_i GTS(w))
    \\
    &=HG(T(S(u) \comp_i S(v)) \comp_i TS(w))
    \\
    &=HF(T(S(u) \comp_i S(v)) \comp_i TS(w))
    \\
    &=H(FT(S(u) \comp_i S(v)) \comp_i FTS(w))
    \\
    &=H((S(u) \comp_i S(v)) \comp_i S(w))
    \\
    &=H(S(u) \comp_i S(v) \comp_i S(w))
  \end{align*}
  \endgroup and, similarly,~$u \comp_i (v \comp_i w) = H(S(u) \comp_i S(v)
  \comp_i S(w))$. So that \Axr{cat:assoc} is satisfied. Axioms~\ref{cat:id-xch}
  and~\ref{cat:xch} are proved similarly, so~$Z$ is equipped with a structure of
  a strict $n$\category.

  \medskip\noindent We now verify that~$H$ is a strict $n$\category morphism. Given~$k
  \in \N_{n-1}$ and~$u \in Y_k$, we have
  \begin{align*}
    \unitp k {H(u)}
    = H(\unitp k {SH(u)})
    = H(\unitp k u)
  \end{align*}
  and, given~$i,k \in \N_n$ with~$i < k$, and~$(u,v) \in Y_k \times_i Y_k$, we
  have
  \begin{align*}
    H(u) \comp_i H(v)
    &= H(SH(u) \comp_i SH(v))
    \\
    &= H(GT(u) \comp_i GT(v))
    \\
    &= HG(T(u) \comp_i T(v))
    \\
    &= HF(T(u) \comp_i T(v))
    \\
    &= H(FT(u) \comp_i FT(v))
    \\
    &= H(u \comp_i v)
  \end{align*}
  so that~$H$ is a strict $n$\category morphism.

  \medskip\noindent We now prove that~$H$ is the coequalizer of~$F$ and~$G$
  in~$\nCat n$. Let~$K \co Y \to W$ be an $n$\functor such that~${KF = KG}$.
  Then, since~$H$ is the coequalizer of~$\ccfgff_n(F)$ and~$\ccfgff_n(G)$, there is a unique
  morphism
  \[
    K' \co \ccfgff_nZ \to \ccfgff_nW
  \]
  of~$\nGlob n$ such that~$K'H = K$. We are only left to prove that~$K'$ is an
  $n$\functor. First, note that we have
  \[
    K' = K'H S = K S
    \qtand
    KSH = KGT = KFT = K\zbox.
  \]
  Now, given~$k \in \N_{n-1}$ and~$u \in C_k$, we have
  \begin{align*}
    KS(\unitp {k+1} u)
    &= KSH(\unitp {k+1}{S(u)})
    \\
    &= K(\unitp {k+1}{S(u)})
    \\
    &= \unitp {k+1}{KS(u)}\zbox.
  \end{align*}
  Moreover, given~$i,k \in \N_n$ with~$i < k$, and~$(u,v) \in C_k \times C_k$,
  we have
  \begin{align*}
    KS(u \comp_i v)
    &= KSH(S(u) \comp_i S(v))
    \\
    &= K(S(u) \comp_i S(v))
    \\
    &= KS(u) \comp_i KS(v)\zbox,
  \end{align*}
  so that~$K'$ is an $n$\functor. Hence,~$H$ is the coequalizer in~$\nCat n$
  of~$F$ and~$G$. We can conclude with~\Thmr{monadicity}.
\end{proof}

\subsection{Truncation and inclusion functors}
\label{text:sc-trunc-inc-functors}

Let~$k,l \in \Ninf$ such that~$k < l$. There is a truncation
\glossary(catatruncf){$\sctruncf[l] k -$, $\restrict k C$}{the truncation functor
for strict categories}functor
\[
  \sctruncf[l] k - \co \nCat l \to \nCat k
\]
which maps a strict $l$\category~$C$ to its evident underlying strict $k$\category,
denoted~$\restrict k C$, and called the \index{truncation!of a strict category}\emph{$k$\truncation of~$C$}.

\medskip\noindent Conversely, there is an inclusion
\glossary(catbinclusion){$\scincf[k] l -$, $\incf l C$}{the inclusion functor for
  strict categories}functor
\[
  \scincf[k] l - \co \nCat k \to \nCat l
\]
which maps a strict $k$\category~$C$ to the strict $l$\category~$\incf l C$,
called the \index{inclusion!of a strict category}\emph{$l$\inclusion of~$C$}, and defined by
\[
  \restrict k {(\incf l C)} = C
  \qtand
  (\incf l C)_m = C_k
\]
for~$m \in \N_l$ with~$k < m$, and such that
\begin{itemize}
\item for~$m \in \N_{l-1}$ with~$k \le m$ and~$u \in (\incf l C)_{m+1}$,~$\csrc_m(u) = \ctgt_m(u) =  u$,
  
\item for~$m \in \N_{l-1}$ with~$k \le m$ and~$u \in (\incf l C)_m$,~$\unitp {m+1} u = u$,
  
\item for~$i,m \in \N_l$ with~$i < k < m$ and~$(u,v) \in (\incf l C)_m \times_i
  (\incf l C)_m$,~$u \comp_{i,m} v = u \comp_{i,k} v$,
\item for~$i,m \in \N_l$ with~$k \le i < m$ and~$(u,v) \in (\incf l C)_m
  \times_i (\incf l C)_m$,~$u
  \comp_{i,m} v = u = v$.
\end{itemize}
There is an adjunction~$\scincf[k] l - \dashv \sctruncf[l] k -$ whose unit is
the identity and whose counit~$\sctrunccu[l] k$ is such that, given a strict
$l$\category~$C$, the $l$\functor~$\sctrunccu[l] k_C \co \incf l {(\restrict k
  C)} \to C$ is defined by~$\restrict k {(\sctrunccu[l] k_C)} = \unit {\restrict
  k C}$ and, for~$m \in \N_l$ with~$m > k$,~$\smash{\sctrunccu[l] k_C}$ maps~$u \in
(\incf l {(\restrict k C)})_m = C_k$ to~$\unitp m u$.

\subsection{Strict categories as globular algebras}
\label{text:sc-as-globular-algebras}

By \Propr{catn-complete-cocomplete-ra}, each functor~$\ccfgff_n$ admits a left
adjoint~$\ccfreef_n$ for~$n \in \Ninf$. In particular, the
adjunction~$\ccfreef_\omega \dashv \ccfgff_\omega$ defines a
monad~$(T,\eta,\mu)$, which is finitary by \Propr{catn-complete-cocomplete-ra},
and it induces categories of algebras~$\Alg_n$ for~${n \in \N \cup
  \set\omega}$ as explained in \Cref{text:globular-algebras}. By
\Propr{sc-cat-fgf-monadic}, the comparison functor~$H_\omega \co \nCat \omega
\to \Alg_\omega$ is an equivalence of categories, that moreover satisfies
that~$\fgfalgf_\omega H_\omega = \ccfgff_\omega$. Using the criterion introduced
in \Cref{text:criterion-for-globular-algebras}, we prove that the other
categories~$\nCat n$ are, up to equivalence, the categories of
algebras~$\Alg_n$:
\begin{theo}
  \label{thm:str-cat-globular-algebras}
  There exists a family of equivalences
  \[
    (H_k \co \nCat k \to \Alg_k)_{k \in \N}
  \]
  making the diagrams
  \[
    \begin{tikzcd}[baseline=(mybase.base)]
      \nCat {k+1}
      \ar[r,"H_{k+1}"]
      \ar[d,"\sctruncf k -"'{name=mybase}]
      &
      \Alg_{k+1}
      \ar[d,"\algtruncf k -"]
      \\
      \nCat k
      \ar[r,"H_k"']
      &
      \Alg_k
    \end{tikzcd}
    \qqtand
    \begin{tikzcd}[baseline=(mybase.base)]
      \nCat \omega
      \ar[r,"H_\omega"]
      \ar[d,"\sctruncf k -"'{name=mybase}]
      &
      \Alg_\omega
      \ar[d,"\algtruncf k -"]
      \\
      \nCat k
      \ar[r,"H_k"']
      &
      \Alg_k
    \end{tikzcd}
  \]
  commute and such that~$\fgfalgf_k H_k = \ccfgff_k$ for every $k \in \N$.
\end{theo}
\begin{proof}
  The unit of the adjunction~$\scincf[n] \omega - \dashv \sctruncf n -$ is the
  identity, so that~$\scincf[n] \omega -$ is fully faithful and
  \Cref{thm:charact-globular-algebras} applies.
\end{proof}
\noindent Finally, we prove the truncability of the monad of strict $\omega$\categories:
\begin{theo}
  \label{thm:sc-monad-wtruncable}
  The monad~$(T,\eta,\mu)$ on~$\nGlob\omega$ derived from~$\ccfreef_\omega \dashv
  \ccfgff_\omega$ is weakly truncable.
\end{theo}
\begin{proof}
  By \Thmr{truncable-charact} and \Thmr{str-cat-globular-algebras}, it is enough
  to show that, for every~$k \in \N$, the functors~$\sctruncf[\omega] k -$ have
  right adjoints such that~$\gincfillu k \fgfalgf_\omega \scincfill[k]\omega-$
  is an isomorphism, where recall that~$\gincfillu k$ is the counit
  of~$\gtruncf[\omega]k- \dashv \gincfill[k]\omega-$. So let~$k \in \N$. Given a
  strict $k$\category~$C$, we define a strict $\omega$\category~$C'$ whose
  underlying globular set is the image the underlying $k$\globular set of~$C$
  by~$\gincfill[k] \omega -$, \ie
  \[
    \restrict k {C'} = C
    \qtand
    C'_l = \set{ (u,v) \in C^2_k \mid \text{$u,v$ are parallel}} \text{ for~$l > k$},
  \]
  and we equip~$C'$ with a structure of a strict $\omega$\category that extends
  the one on~$C$ by putting
  \[
    \unitp {k+1} u = (u,u)
    \quad
    \text{for~$u \in C_k$},
    \qquad
    \unitp {l+1} {(u,v)} = (u,v)
    \quad
    \text{for~$l \in \N$ with~$l > k$ and~$(u,v) \in C'_l$,}
  \]
  and moreover, for~${i,l \in \N}$ with~$\max(i,k) < l$ and $i$\composable~$(u,v), (u',v') \in C'_l$,
  \[
    (u,v) \comp_{i,l} (u',v') =
    \begin{cases}
      (u \comp_{i,k} u',v \comp_{i,k} v') & \text{if~$i < k$,} \\
      (u,v') & \text{if~$i \ge k$.}
    \end{cases}
  \]
  One can show that the axioms of strict $\omega$\categories are verified by~$C'$. Now, let~$D$ be a strict $\omega$\category and~$F \co \restrict k D \to
  C$ be a $k$\functor. By the properties of the adjunction~$\gtruncf[\omega]k-
  \dashv \gincfill[k]\omega-$, there is a unique $\omega$\globular morphism~$F'
  \co D \to C'$ such that~$\restrict k {F'} = F$, which is defined by
  \[
    F'(u) = (F(\csrc_k(u)),F(\ctgt_k(u)))
  \]
  for every~$l \in \N$ with~$k < l$ and~$u \in D_l$. We verify that~$F'$ is an
  $\omega$\functor by checking the compatibility with the~$\unitp {l} {}$ and~$\comp_{i,l}$ operations. Given~$l \in \N$ with~$l \ge k$ and~$u \in D_l$, we have
  \[
    F'(\unitp {l+1} u) = (F(\csrc_k(u)),F(\ctgt_k(u))) = \unitp {k+1} {F'(u)}\zbox.
  \]
  Moreover, given~$i,l \in \N$ with~$\max(i,k) < l$ and $i$\composable~$u,v \in
  D_l$, we have
  \begin{align*}
    F'(u \comp_i v) &=
    \begin{cases}
      (F(\csrc_k(u)) \comp_i F(\csrc_k(v)),F(\ctgt_k(u)) \comp_i F(\ctgt_k(v)))
      & \text{if~$i < k$}
      \\
      (F(\csrc_k(u)),F(\ctgt_k(v))) & \text{if~$i \ge k$}
    \end{cases}
    \\
    &= F'(u) \comp_i F'(v)\zbox.
  \end{align*}
  Thus,~$F'$ is an $\omega$\functor. Hence, the natural bijective correspondence
  \[
    \gtruncf[\omega] k -\co \nGlob \omega (D, C') \to \nGlob k (\restrict k D,C)
  \]
  restricts to a bijective correspondence
  \[
    \sctruncf[\omega] k -\co \nCat  \omega (D, C') \to \nCat k (\restrict k D,C)
  \]
  so that the operation~$C \mapsto C'$ extends to a functor~$\scincfill[k]
  \omega -$ which is right adjoint to~$\sctruncf[\omega] k -$. Moreover, by the
  definition of~$C'$ above, the natural morphism~$\gincfillu k \fgfalgf_\omega
  \scincfill[k]\omega-$ is an isomorphism. Hence, \Thmr{truncable-charact}
  applies and~$(T,\eta,\mu)$ is a weakly truncable monad.
\end{proof}

\begin{remark}
  We highlight that the criterions given by \Thmr{charact-globular-algebras} and
  \Thmr{truncable-charact} enabled us to prove that the categories~$\nCat n$ are
  globular algebras derived from a truncable monad on~$\nGlob \omega$ without
  giving an explicit description of this monad, which could have been a tedious
  exercise~\cite{penon1999approche}.
\end{remark}

\subsection{Free constructions}
\label{text:sc-free-constructions}

Using \Thmr{str-cat-globular-algebras} and
\Thmr{sc-monad-wtruncable}, we can instantiate the definitions and properties
developed in \myCref[Section 1.3]{text:free-higher} to define free constructions
on strict $n$\categories. In particular, for every~$n \in \N$, there is a notion
of $n$\cellular extension, with associated \glossary(Catp){$\nCatp n$}{the
  category of $n$\cellular extension for strict categories}category~$\nCatp n$ defined
like~$\Algp_n$. Moreover, there is a canonical forgetful functor~$\nCat {n+1}
\to \nCatp n$ which has a left adjoint
\[
  \polextp[n] - - \co \nCatp n \to \nCat {n+1}
\]
which can be chosen such that~$\restrict n {\polextp C X} = C$ for~$(C,X) \in
\nCatp n$. As was shown in~\cite{metayer2008cofibrant}, the $(n{+}1)$\cells of a
free extension admit a syntactical description consisting of ``well-typed''
terms considered up to the axioms of strict categories (\cf
\Cref{par:ocat-def}). 

Using the functors~$\polextp[k]--$, we can define, for every~$n \in
\Ninf$, a notion of \index{polygraph!of strict categories}\emph{$n$\polygraph}
with associated category~$\nPol n$, and a functor
\[
  \freecat[n]-\co \nPol n \to \nCat n
\]
which maps an $n$\polygraph~$\P$ to the free strict $n$\category~$\freecat\P$
induced by the generators contained in~$\P$. Note that, when~$n > 0$, as a
consequence of the compatibility of~$\polextp[n-1]--$ with truncation, the
underlying strict $(n{-}1)$\category~$\restrict {n-1} {(\freecat\P)}$
of~$\freecat\P$ is exactly~$\freecat{(\restrict {n-1} \P)}$. As before, the
cells of $\freecat\P$ admit a syntactic description as equivalence classes of
well-typed terms (see \cite{makkai2005word} or
\cite[Proposition~1.4.1.16]{forest:tel-03155192}).

\begin{example}
  Given the $1$\polygraph~$\P$ with~$\P_0 = \set x$ and~$\P_1 = \set{f \co x \to
    x}$, the strict $1$\category~$\freecat\P$ is the monoid of natural
  numbers~$(\N,0,+)$.
\end{example}

\begin{example}
  \label{ex:monoid-pol-sc}
  We define a \emph{$3$\polygraph~$\P$} that aims at encoding the structure of a
  pseudomonoid in a $2$-monoidal category as follows. We put
  \begin{align*}
    \P_0&= \set{x}
    &
    \P_1&= \set{\bar 1\co x \to x}
    &
    \P_2&= \set{\mu\co \bar 2 \To \bar 1, \eta\co \bar 0 \To \bar 1}
  \end{align*}
  where, given~$n\in\N$, we write~$\bar n$ for the composite~$\bar 1 \comp_0 \cdots \comp_0 \bar 1$ of~$n$ copies of~$\bar 1$, and we define~$\P_3$ as the
  set with the following three elements
  \[
    \setlength\arraycolsep{0pt}
    \begin{array}{clccc}
      \monL&\co &(\eta \comp_0 \unitp 2 {\bar 1}) \comp_1 \mu
      &\hspace*{1em}\TO\hspace*{1em}
      & \unitp 2 {\bar 1}  \\
      \monR&\co &(\unitp 2 {\bar 1} \comp_0 \eta) \comp_1 \mu
      &\hspace*{1em}\TO\hspace*{1em}
      & \unitp 2 {\bar 1} \\
      \monA&\co\hspace*{0.5em} 
                &(\mu \comp_0 \unitp 2 {\bar 1}) \comp_1 \mu
      &\hspace*{1em}\TO\hspace*{1em}
      &(\unitp 2 {\bar 1} \comp_0 \mu) \comp_1 \mu \pbox{.}
    \end{array}
  \]
  It is convenient to represent the $2$\cells of~$\freecat\P$ using string
  diagrams. In this representation, the $2$\generators~$\eta$ and~$\mu$ are
  represented by~$\satex{eta}$ and~$\satex{mu}$ respectively, and the $2$\cells
  of the form~$\unitp 2 {\bar n}$ are represented by sequences of~$n$ wires~$\satex{wires}$ for~$n \in \N$. Moreover, given~$u,v \in \freecat\P_2$, when~$u,v$ are $0$\composable (\resp $1$\composable), a representation of the
  $2$\cell~$u \comp_0 v$ (\resp~$u \comp_1 v$) is obtained by concatenating
  horizontally (\resp vertically) representations of~$u$ and~$v$. For example,
  using this representation, the $3$\generators~$\monL$,~$\monR$ and~$\monA$,
  can be pictured by
  \[
    \setlength\arraycolsep{0pt}
    \begin{array}{clccc}
      \monL&\co
      &\satex{mon-unit-l}
                           &\hspace*{1em}\TO\hspace*{1em}
      &  \satex{mon-unit-c}\\[12pt]
      \monR&\co
      &\satex{mon-unit-r} &\hspace*{1em}\TO\hspace*{1em}
      & \satex{mon-unit-c}\\[12pt]
      \monA&\co\hspace*{0.5em}
      &\satex{mon-assoc-l} &\hspace*{1em}\TO\hspace*{1em}
      &  \satex{mon-assoc-r}\pbox.
    \end{array}
  \]
  Note that, by \Axr{cat:xch}, a $2$\cell can admit several representations as
  string diagrams. For example, the $2$\cell
  \[
    \mu \comp_0 \unitp 2 {\bar 3} \comp_0 \mu = (\mu \comp_0 \unitp 2 {\bar 5}) \comp_1
    (\unitp 2 {\bar 4} \comp_0 \mu) = (\unitp 2 {\bar 5} \comp_0 \mu) \comp_1
    (\mu \comp_0 \unitp 2 {\bar 4})
  \]
  can be represented by the three string diagrams
  \[
    \satex{mon-mu-3-mu} \qtand \satex{mon-mu-3-mu-l} \qtand \satex{mon-mu-3-mu-r}\pbox.
  \]
\end{example}

\clearpage
\appendix

\section{Local presentability}
\label{text:loc-fin-pres-cats}

Locally presentable categories are a standard tool for deriving elementary
properties on categories of algebraic structures (monoids, groups, but also
categories, $2$\categories, \etc). They are those categories where every object
is a directed colimit of ``finitely presentable'' objects, which are a
generalization of the notions of finitely presentable monoids or groups. Knowing
that some categories are locally finitely presentable category is helpful since
those categories are complete, cocomplete and satisfy other nice properties. For
a more complete presentation, we refer to the existing
literature~\cite{gabriel2006lokal,adamek1994locally,borceux1994handbook2}.

We first recall the definition of locally finitely presentable categories
(\Cref{text:presentability}) and then introduce \emph{essentially algebraic
  theories}, which are a standard tool to show that some categories are locally
finitely presentable (\Cref{text:ess-alg-theories}).

\subsection{Presentability}
\label{text:presentability}

In this section, we define the notion of locally finitely presentable category,
after recalling directed colimits and presentable objects of categories.

\paragraph{Directed colimits}
\parlabel{text:directed-colims}

A partial order $(D,\le)$ is \index{directed}\emph{directed} when $D \neq \emptyset$ and for all
$x,y \in D$, there exists~$z \in D$ such that~${x \le z}$ and~${y \le z}$. A
small category~$I$ is called \emph{directed} when it is isomorphic to a directed
partial order~$(D,\le)$.

Given a category~$\mcal C \in \CAT$, a \index{diagram}\emph{diagram} in~$\mcal C$ is the data
of a functor $d\co I \to \mcal C$ where $I$ is a small category. We say that it
is a \emph{directed diagram} when $I$ is moreover directed. A \emph{directed
  colimit of~$C$} is a colimit cocone $(p_i\co d(i)\to X)_{i \in I}$ on a
directed diagram~$d\co I \to \mcal C$.
\begin{example}
  A set is a directed colimit of its finite subsets. A monoid is a directed
  colimit of its finitely generated submonoids.
\end{example}
\noindent In $\Set$, we have the following characterization of directed
colimits:
\begin{prop}
  \label{prop:directed-colimits-set}
  Let $d\co I \to \Set$ be a directed diagram in~$\Set$ and $(p_i\co d(i)
  \to C)_{i \in I}$ be a cocone on~$d$. Then, $(p_i\co d(i) \to C)_{i \in I}$ is
  a directed colimit on~$d$ if and only if
  \begin{enumerateroman}
  \item for all $x \in C$, there is $i \in I$ and $x' \in d(i)$ such that
    $p_i(x') = x$,
  \item for all $i_1,i_2 \in I$, $x_1 \in d(i_1)$ and $x_2 \in d(i_2)$, if
    $p_{i_1}(x_1) = p_{i_2}(x_2)$, then there exists $i \in I$ such that

    \centering $i_1 \to i \in I$,\quad $i_2 \to i \in I$\quad and%
    \quad$d(i_1 \to i)(x_1) = d(i_2 \to i)(x_2)$.
  \end{enumerateroman}
\end{prop}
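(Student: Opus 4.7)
\medskip

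The plan is to compare the given cocone with the explicit construction of a colimit in $\Set$, which for a diagram $d\colon I \to \Set$ can be written as the quotient $Q = \bigl(\bigsqcup_{i \in I} d(i)\bigr)/{\sim}$, where $\sim$ is the equivalence relation generated by identifying $(i,x)$ with $(j, d(f)(x))$ for every $f\colon i \to j$ in $I$ and $x \in d(i)$. The structural colimit cocone $(\iota_i\colon d(i) \to Q)_{i \in I}$ sends $x$ to its equivalence class. Since $(p_i)_{i \in I}$ is a colimit cocone if and only if the induced comparison map $Q \to C$ is a bijection, conditions (i) and (ii) should read as the surjectivity and injectivity of this comparison.

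First I will handle the forward direction. The key preliminary observation is that, when $I$ is directed, the generating relations on $\bigsqcup_i d(i)$ already form an equivalence relation: given finitely many zigzags of comparisons between elements indexed by $i_1, \dots, i_n$, directedness provides an upper bound $i \in I$ through which every element can be pushed, collapsing the zigzag to a single pair of arrows into $i$. Thus $(i_1, x_1) \sim (i_2, x_2)$ in $Q$ iff there is some $i \in I$ with morphisms $i_1 \to i$, $i_2 \to i$ and $d(i_1 \to i)(x_1) = d(i_2 \to i)(x_2)$. From this explicit description, surjectivity of $Q \to C$ (when it is an isomorphism) yields (i), while injectivity yields (ii).

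For the backward direction, assume (i) and (ii). Given any cocone $(q_i\colon d(i) \to D)_{i \in I}$, I define $f\colon C \to D$ as follows: for $x \in C$, use (i) to pick some $i \in I$ and $x' \in d(i)$ with $p_i(x') = x$, and set $f(x) = q_i(x')$. Well-definedness is the crux and uses (ii): if $p_{i_1}(x_1) = p_{i_2}(x_2) = x$, then (ii) yields an upper bound $i \in I$ with arrows $u_j\colon i_j \to i$ satisfying $d(u_1)(x_1) = d(u_2)(x_2)$, and then $q_{i_1}(x_1) = q_i(d(u_1)(x_1)) = q_i(d(u_2)(x_2)) = q_{i_2}(x_2)$ by the cocone condition on $(q_i)$. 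The equality $f \circ p_i = q_i$ is then immediate from the definition, and uniqueness of $f$ follows from (i): any other factorization must agree with $q_i$ on the image of each $p_i$, and (i) says these images cover $C$.

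The routine but easily mishandled step is the well-definedness argument above, since it must quote (ii) with both indices $i_1, i_2$ and then invoke the cocone condition for $(q_i)$ twice at the common upper bound; I expect this to be the only part worth writing out in detail, with the rest essentially a repackaging of the standard construction of colimits in $\Set$.
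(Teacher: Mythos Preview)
Your argument is correct and is the standard one: compare the given cocone to the explicit colimit $Q=\bigl(\bigsqcup_i d(i)\bigr)/{\sim}$, use directedness to show that $\sim$ reduces to the one-step relation, and then read (i) and (ii) as surjectivity and injectivity of the comparison map $Q\to C$; conversely build the factorization directly from (i) and (ii). The paper does not give its own proof at all but simply cites Borceux, \emph{Handbook of Categorical Algebra}~1, Proposition~2.13.3, so your write-up is strictly more detailed than what the paper provides and matches the textbook argument it defers to.
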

\begin{proof}
  See for example~\cite[Proposition~2.13.3]{borceux1994handbook1}.
\end{proof}

\paragraph{Finitely presentable objects}

Let~$\mcal C \in \CAT$. An object $P \in \mcal C$ is \index{finitely presentable!object}\emph{finitely presentable}
when its hom-functor
\[
  \mcal C(P,-)\co \mcal C \to \Set
\]
commutes with directed colimits. By \Cref{prop:directed-colimits-set}, it means
that, given a directed colimit
\[
  (p_i \co d(i) \to X)_{i \in I}
\]
on a directed diagram~$d \co I \to \mcal C$, we have
\begin{enumerateroman}
\item for every~$X \in \mcal C$ and~$f \co P \to X$, there is a
  \emph{factorization of~$f$ through~$d$}, \ie there exists $i \in I$ and $g \co P
  \to d(i)$ such that $f = p_i \circ g$;
\item this factorization is \index{essentially unique factorization}\emph{essentially unique}, \ie if there exist
  others~$i' \in I$ and~$g' \co P \to d(i)$ such that $f = p_{i'} \circ g'$,
  then there exist $j \in I$, $h\co i \to j \in I$ and $h'\co i' \to j \in I$
  such that
  \[
    d(h) \circ g = d(h') \circ g'\zbox.
  \]
\end{enumerateroman}

    

\begin{example}
  \label{ex:finitely-presentable-sets}
  Given a set~$S$, $S$ is finitely presentable if and only if it is finite.
  See~\cite[Example~1.2(1)]{adamek1994locally} for details.
\end{example}
\begin{example}
  \label{ex:finitely-presentable-groups}
  A monoid is \index{finitely presentable!monoid}\emph{finitely presentable} when it admits a presentation
  consisting of a finite number of generators and equations. A similar
  description of finitely presentable objects holds for the other categories of
  algebraic structures (groups, rings, etc.).
  See~\cite[Theorem~3.12]{adamek1994locally} for details.
\end{example}

\paragraph{Locally finitely presentable categories}
A locally small category~$\mcal C \in \CAT$ is \index{locally finitely
  presentable category}\emph{locally finitely
  presentable} when
\begin{enumerateroman}
\item it has all small colimits,
\item every object of~$\mcal C$ is a directed colimit of locally finitely
  presentable objects,
\item the full subcategory of~$\mcal C$ whose objects are the finitely
  presentable objects is essentially small.
\end{enumerateroman}
\begin{example}
  The category $\Set$ is locally finitely presentable. Indeed, it is cocomplete
  and every set is a directed colimit of its finite subsets, which are finitely
  presentable objects of~$\Set$.
\end{example}
\begin{example}
  The category $\Mon$ of monoids is locally finitely presentable. More
  generally, the categories of algebraic structures (groups, rings, \etc) are
  locally finitely presentable. This is the consequence of the fact that such
  categories can be described by means of essentially algebraic theories, as we
  will see in the next section.
\end{example}

\ndr{commenté: caractérisation par des strong generators}

\noindent Identifying a category as locally finitely presentable enables to
derive several elementary properties, like completeness:
\begin{prop}
  \label{prop:loc-pres-nice-properties}
  A locally finitely presentable category is complete.
\end{prop}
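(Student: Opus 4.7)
The plan is to prove this by exhibiting $\mcal C$ as a reflective subcategory of a presheaf category, and then invoking the fact that reflective subcategories of complete categories are complete.

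First, by condition~(iii) of the definition, the full subcategory of finitely presentable objects of~$\mcal C$ admits a small skeleton~$\mcal T$. I consider the restricted Yoneda functor
\[
  E \co \mcal C \to [\catop{\mcal T}, \Set], \qquad E(X) = \mcal C(-, X)\big|_{\mcal T}.
\]
Then I would show that $E$ is fully faithful. By condition~(ii), every $X \in \mcal C$ is a directed colimit of finitely presentable objects; a standard cofinality argument upgrades this to the statement that the canonical cocone from the diagram $\mcal T \downarrow X \to \mcal C$ makes $X$ into a directed colimit (the relevant comma category is directed, up to cofinal equivalence, because any two maps $P \to X$ and $P' \to X$ from finitely presentable objects factor through some stage of an ambient directed colimit presentation of~$X$). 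For any $Y \in \mcal C$ this yields
\[
  \mcal C(X,Y) \;\cong\; \lim_{(P \to X) \in \mcal T \downarrow X} \mcal C(P,Y),
\]
and the right-hand side is exactly the set of natural transformations $E(X) \to E(Y)$, via the standard description of presheaf morphisms through the category of elements.

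Next, I would construct a left adjoint $L$ to $E$ as the pointwise left Kan extension of the inclusion $\mcal T \hookrightarrow \mcal C$ along the Yoneda embedding: for a presheaf $F$, set
\[
  L(F) \;=\; \operatorname{colim}\!\left( \textstyle\int\! F \to \mcal T \hookrightarrow \mcal C \right),
\]
which exists by condition~(i). The adjunction $L \dashv E$ then follows from the usual coend/pointwise Kan-extension calculation.

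Finally, because $E$ is fully faithful with a left adjoint, $\mcal C$ identifies with a reflective subcategory of $[\catop{\mcal T}, \Set]$. The presheaf category is complete (limits are computed pointwise in~$\Set$), and a reflective subcategory of a complete category is itself complete, since the inclusion is a right adjoint which, being fully faithful, creates limits. Therefore $\mcal C$ is complete. The main technical obstacle is the fully faithfulness of~$E$, and in particular the cofinality step needed to identify~$X$ as the colimit of the canonical diagram indexed by~$\mcal T \downarrow X$; the remaining pieces are formal manipulations with adjunctions and Kan extensions.
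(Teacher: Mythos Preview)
The paper does not prove this statement; its entire proof is a reference to \cite{adamek1994locally}. Your sketch supplies the standard argument---the representation theorem exhibiting a locally finitely presentable category as a full reflective subcategory of a presheaf category---which is essentially the route taken in that reference. One minor point worth tightening: the comma category~$\mcal T \downarrow X$ is \emph{filtered} rather than directed, and your parenthetical justification only addresses the ``any two objects admit a cocone'' half of filteredness; the ``any parallel pair is eventually equalised'' half is precisely where the \emph{essential uniqueness} clause in the definition of finite presentability (not merely the existence of a factorisation through some stage of the colimit) is used. With that addition the argument goes through.
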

\begin{proof}
  See~\cite[Corollary 1.28,Remark 1.56(1),Theorem 1.58]{adamek1994locally} for details.
\end{proof}
\noindent Moreover, showing that a functor between two locally finitely
presentable categories is a left or right adjoint is easier than in the general
case, since we do not need the existence of solution set like in Freyd's adjoint
theorem (\cite[Theorem~3.3.3]{borceux1994handbook1}):
\begin{prop}
  \label{prop:criterion-loc-pres-cat-adjoints}
  Given a functor $F\co \mcal C \to \mcal D$ between two locally finitely presentable
  categories $\mcal C$ and~$\mcal D$, the following hold:
  \begin{enumerateroman}
  \item $F$ is left adjoint if and only if it preserves colimits,
    
  \item if $F$ preserves limits and directed colimits, then it is right adjoint.
  \end{enumerateroman}
\end{prop}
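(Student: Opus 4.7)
The plan is to reduce both statements to representability arguments that exploit the essentially small dense subcategory of finitely presentable objects available in any locally finitely presentable category. For part~(i), the forward implication is completely general: any left adjoint preserves colimits, with no presentability hypothesis needed. For the converse, assume $F$ preserves colimits and fix $Y \in \mcal D$. Then the presheaf $\mcal D(F(-),Y) \co \catop{\mcal C} \to \Set$ sends colimits in $\mcal C$ to limits in $\Set$. Since $\mcal C$ is locally finitely presentable, it is equivalent to the full reflective subcategory of directed-colimit-preserving presheaves inside $\Set^{\catop{\mcal C_0}}$, where $\mcal C_0$ denotes the essentially small category of finitely presentable objects. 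Under this equivalence, continuous presheaves on $\mcal C$ correspond to representable ones, yielding a representing object $G(Y)$. Extending $G \co \mcal D \to \mcal C$ functorially in $Y$ then delivers the right adjoint of $F$.

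For part~(ii), suppose $F$ preserves limits and directed colimits. I would construct a left adjoint by showing that for each $Y \in \mcal D$ the functor $\Phi_Y = \mcal D(Y,F(-)) \co \mcal C \to \Set$ is representable: a representing object $L(Y)$ for $\Phi_Y$, together with the canonical functorial extension of $L$, will provide the left adjoint. First, $\Phi_Y$ preserves small limits, because $F$ does. Second, I would show $\Phi_Y$ is accessible by writing $Y$ as a directed colimit of finitely presentable objects $(Y_i)_{i \in I}$ in $\mcal D$: this expresses $\Phi_Y$ as a small limit of the functors $\mcal D(Y_i,F(-))$. Because each $Y_i$ is finitely presentable in $\mcal D$ and $F$ preserves directed colimits, each $\mcal D(Y_i,F(-))$ is finitary; hence $\Phi_Y$ is a small limit of accessible functors, and therefore itself accessible. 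The standard representability criterion for accessible continuous functors on a locally presentable category then supplies the representing object $L(Y)$.

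The main conceptual obstacle in both parts is the same: passing from a preservation property of $F$ to a representability property of the induced hom-functors, via the presentability machinery. In~(i) this takes the form of identifying continuous presheaves on $\mcal C$ with representable ones through the reflective embedding into $\Set^{\catop{\mcal C_0}}$; in~(ii) it takes the form of verifying accessibility of $\Phi_Y$ by decomposing $Y$ into finitely presentable pieces. Both are established corollaries of the general theory of locally presentable categories (see e.g.\ \cite[Theorems~1.27 and~1.66]{adamek1994locally}), and the proof essentially amounts to invoking these results cleanly.
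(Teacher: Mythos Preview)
The paper does not actually prove this proposition: it sits in the appendix as recalled background, stated without a proof environment (the neighbouring propositions simply cite Ad\'amek--Rosick\'y). So there is no ``paper's own proof'' to compare against; your sketch is essentially the standard argument that the cited reference gives, and is the right approach.

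One small gap worth tightening in part~(ii): expressing $\Phi_Y = \mcal D(Y,F(-))$ as a cofiltered limit of the finitary functors $\mcal D(Y_i,F(-))$ does not by itself establish accessibility, since limits of accessible functors are not accessible in general. The cleaner route is to observe that in a locally presentable category every object $Y$ is $\lambda$-presentable for some regular cardinal $\lambda$, so $\mcal D(Y,-)$ preserves $\lambda$-filtered colimits; composing with the finitary $F$ then makes $\Phi_Y$ preserve $\lambda$-filtered colimits directly. With that adjustment your argument is exactly the proof of the adjoint functor theorem for locally presentable categories (\cite[Theorem~1.66]{adamek1994locally}), which is what the paper is implicitly invoking.
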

\noindent Finally, there is a simple criterion for a category of algebras on a
monad to be locally finitely presentable. We recall that a functor~$F$ is
\index{finitary!functor}\emph{finitary} when~$F$ preserves directed colimits, and a monad~$(T,\eta,\mu)$
on a category~$\mcal C$ is \index{finitary!monad}\emph{finitary} when~$T$ is finitary. We then have:
\begin{prop}
  \label{prop:loc-fin-pres-alg-cat}
  Given a locally finitely presentable category~$C$ and a finitary
  monad~$(T,\eta,\mu)$ on~$\mcal C$, the category of algebras $\mcal C^T$ is
  locally finitely presentable. Moreover, the canonical forgetful functor~$\mcal
  C^T \to \mcal C$ preserves directed colimits.
\end{prop}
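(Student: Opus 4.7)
The plan is to reduce everything to the creation of directed colimits by the forgetful functor $U^T \co \mcal C^T \to \mcal C$, since that single fact both yields the second sentence of the statement and is the key input for local finite presentability.

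First, I would prove that $U^T$ creates directed colimits. Given a directed diagram $d \co I \to \mcal C^T$ with structure maps $h_i \co T U^T d(i) \to U^T d(i)$, take a colimit cocone $(p_i \co U^T d(i) \to X)_{i \in I}$ in $\mcal C$; such a colimit exists because $\mcal C$ is cocomplete. Since $T$ is finitary, $(T p_i \co T U^T d(i) \to TX)_{i \in I}$ is again a colimit cocone in $\mcal C$. The maps $p_i \circ h_i \co T U^T d(i) \to X$ assemble into a cocone on $T U^T d$, hence factor through a unique $h \co TX \to X$. The algebra axioms for $(X,h)$ (unit and associativity) are equations between two morphisms out of a colimit in $\mcal C$, both of which restrict componentwise to the corresponding axiom for $(U^Td(i),h_i)$; so they hold. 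This makes the $p_i$ into $T$-algebra morphisms, and a standard check shows $(X,h)$ with $(p_i)$ is the colimit in $\mcal C^T$. This also proves the ``moreover'' part of the statement.

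Next, I would establish cocompleteness of $\mcal C^T$. Limits are automatically created by $U^T$, and together with directed colimits this gives $\mcal C^T$ many colimits for free, but to get all small colimits one uses the classical construction of coequalizers in $\mcal C^T$ from reflexive coequalizers in $\mcal C$, combined with coproducts built from the monad (as in Borceux, \cite[\S4.3]{borceux1994handbook2}); since $\mcal C$ is cocomplete, this produces all small colimits in $\mcal C^T$.

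Then I would exhibit a set of finitely presentable generators. Writing $F^T \dashv U^T$, for every finitely presentable $A \in \mcal C$ the free algebra $F^T A$ is finitely presentable in $\mcal C^T$: indeed
\[
  \mcal C^T(F^T A, -) \cong \mcal C(A, U^T-),
\]
and the right-hand side preserves directed colimits because $U^T$ preserves them (by the first step) and $\mcal C(A,-)$ does (by local finite presentability of $\mcal C$). Finally, every $T$-algebra $(X,h)$ is canonically a reflexive coequalizer of $F^TU^TF^TU^T(X,h) \rightrightarrows F^TU^T(X,h)$, and writing the two free objects appearing here as directed colimits of free algebras on finitely presentable objects of $\mcal C$ (using that $\mcal C$ is locally finitely presentable and that $F^T$ preserves colimits) exhibits $(X,h)$ as a colimit of finitely presentable algebras. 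Essential smallness of the finitely presentable objects of $\mcal C^T$ follows from essential smallness in $\mcal C$.

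The main obstacle is the last step: showing that the finitely presentable free algebras actually generate $\mcal C^T$ under directed colimits, rather than merely under arbitrary colimits. This requires the reflexive coequalizer presentation together with the observation that directed colimits suffice in the locally finitely presentable setting; the cleanest route is to invoke the general criterion that a cocomplete category with a strong generator of finitely presentable objects is locally finitely presentable (\cf \cite[Theorem~1.11]{adamek1994locally}), since the $F^TA$ for $A$ finitely presentable in $\mcal C$ form such a strong generator (the unit $\eta$ and the algebra map $h$ together make this routine).
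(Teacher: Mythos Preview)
Your proposal is correct, and indeed it essentially unpacks the standard argument for local finite presentability of $\mcal C^T$: creation of directed colimits, cocompleteness, and a strong generator of finitely presentable free algebras via \cite[Theorem~1.11]{adamek1994locally}. The paper, however, takes a much shorter route: it simply cites \cite[Theorem~2.78 and the following remark]{adamek1994locally} for the local finite presentability, and observes in one line that finitariness of $T$ means directed colimits in $\mcal C^T$ are computed in $\mcal C$, hence preserved by $U^T$.

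So the difference is purely expository. Your approach buys a self-contained argument and makes explicit why finitariness is the key hypothesis at each step (creation of directed colimits, finite presentability of $F^TA$); the paper's approach buys brevity, which is appropriate since this proposition lives in an appendix recalling background material and the result is entirely classical. One minor remark on your write-up: the penultimate paragraph, where you express an arbitrary algebra as a colimit of finitely presentable free algebras, is in fact superfluous once you have decided to invoke the strong-generator criterion in the last paragraph --- that criterion only asks for a strong generator, not that every object be a directed colimit of objects from it, so you could streamline by going straight from ``$\{F^TA : A \text{ fin.\ pres.}\}$ is a strong generator of finitely presentable objects'' to the conclusion.
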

\begin{proof}
  The category~$\mcal C^T$ is finitely locally presentable by~\cite[Theorem 2.78
  and the following remark]{adamek1994locally}. Moreover, since~$T$ is
  finitary, the directed colimits of~$\mcal C^T$ are computed in~$\mcal C$, so
  that the mentioned forgetful functor preserves directed colimits.
\end{proof}

\begin{example}
  The category $\Mon$ is equivalent to the category of algebras~$\Set^T$
  where~$(T,\eta,\mu)$ is the free monoid functor on~$\Set$. It can be shown
  that~$T$ is finitary, so that we obtain another proof that~$\Mon$ is locally
  finitely presentable using \Propr{loc-fin-pres-alg-cat}.
\end{example}

\paragraph[Locally $\omega_1$\presentable categories]{Locally $\bm{\omega_1}$\presentable categories}

The above definitions and properties can be generalized to higher cardinals in
order to define more general notions of presentable categories, in particular
for the first non-countable cardinal $\aleph_1 = \card{\omega_1}$.

A partial order $(D,\le)$ is \emph{$\omega_1$-directed} when every countable
subset $S \subseteq D$ admits an upper bound in $D$. A small category~$I$ is
called \emph{$\omega_1$-directed} when it is isomorphic to an
$\omega_1$-directed partial order~$(D,\le)$.

Given a diagram $d\co I \to \mcal C$ in a category $\cC$, we say that it is an
\emph{$\omega_1$-directed diagram} when $I$ is $\omega_1$-directed. An
\emph{$\omega_1$-directed colimit of~$C$} is a colimit cocone $(p_i\co d(i)\to
X)_{i \in I}$ on a directed diagram~$d\co I \to \mcal C$.

Let~$\mcal C \in \CAT$. An object $P \in \mcal C$ is
\emph{$\omega_1$-presentable} when its hom-functor
\[
  \mcal C(P,-)\co \mcal C \to \Set
\]
commutes with $\omega_1$-directed colimits.

A locally small category~$\mcal C \in \CAT$ is \emph{locally
  $\omega_1$-presentable} when
\begin{enumerateroman}
\item it has all small colimits,
\item every object of~$\mcal C$ is an $\omega_1$-directed colimit of locally
  $\omega_1$-presentable objects,
\item the full subcategory of~$\mcal C$ whose objects are the
  $\omega_1$-presentable objects is essentially small.
\end{enumerateroman}

\begin{prop}
  \label{prop:loc-oone-pres-nice-properties}
  A locally $\omega_1$-presentable category is complete.
\end{prop}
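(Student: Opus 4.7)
The plan is to observe that \Cref{prop:loc-pres-nice-properties} and the results cited for its proof are not specific to the cardinal $\omega$: the statements from Adámek--Rosický (Corollary~1.28, Remark~1.56(1), Theorem~1.58) are formulated uniformly for locally $\lambda$-presentable categories for any regular cardinal $\lambda$. Thus the cleanest route is simply to invoke these results with $\lambda = \omega_1$, since all the definitions preceding the statement (directedness, presentability, local presentability) have been written in a way that transports verbatim by replacing ``finite''/``$\omega$-directed'' by ``$\omega_1$-directed''.

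If one wanted a self-contained argument instead, I would proceed as follows. Let $\mcal C$ be locally $\omega_1$-presentable, write $\mcal A$ for its (essentially small) full subcategory of $\omega_1$-presentable objects, and consider the nerve-like functor
\[
  Y \co \mcal C \to [\mcal A^{\mathrm{op}},\Set]
  \qquad
  X \mapsto \mcal C(-,X)|_{\mcal A}\zbox.
\]
By local $\omega_1$-presentability, every object of $\mcal C$ is an $\omega_1$-directed colimit of objects of $\mcal A$, from which one deduces (as in the finitary case) that $Y$ is fully faithful and admits a left adjoint given by left Kan extension. The next step is to identify the essential image of $Y$ as the full subcategory of presheaves preserving $\omega_1$-small limits, using the defining property of $\omega_1$-presentable objects; this subcategory is closed under \emph{all} small limits in $[\mcal A^{\mathrm{op}},\Set]$ because limits commute with limits. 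Since the presheaf category is complete, this gives completeness of $\mcal C$.

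The main obstacle in this direct route is the identification of the image of $Y$ and the verification that it is closed under limits; this is the step that genuinely uses the hypothesis that presentability is witnessed by $\omega_1$-directed (rather than arbitrary) colimits, and it is where the regularity of $\omega_1$ enters. Everything else (cocompleteness of the presheaf category, limit-reflection along a fully faithful right adjoint) is formal. Given that the ambient paper has already accepted the finitary analogue on the strength of a citation to Adámek--Rosický, I would write the proof simply as: ``The same argument as for \Cref{prop:loc-pres-nice-properties} applies with $\omega_1$ in place of $\omega$, since the results cited there are stated in Adámek--Rosický for arbitrary regular cardinals.''
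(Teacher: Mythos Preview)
Your proposal is correct and matches the paper's approach: the paper's proof is simply a citation to \cite[Corollary~1.28]{adamek1994locally}, which is stated there for locally $\lambda$-presentable categories for any regular cardinal $\lambda$. Your additional self-contained sketch is a bonus, but the paper does not attempt one.
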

\begin{proof}
  See~\cite[Corollary 1.28]{adamek1994locally}.
\end{proof}

\begin{prop}
  \label{prop:criterion-loc-oone-pres-cat-adjoints}
  Given a functor $F\co \mcal C \to \mcal D$ between two locally
  $\omega_1$-presentable categories $\mcal C$ and~$\mcal D$, the following hold:
  \begin{enumerateroman}
  \item $F$ is left adjoint if and only if it preserves colimits,
    
  \item if $F$ preserves limits and $\omega_1$-directed colimits, then it is
    right adjoint.
  \end{enumerateroman}
\end{prop}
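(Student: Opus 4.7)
The plan is to imitate verbatim the proof of the finitary analogue (Proposition~\ref{prop:criterion-loc-pres-cat-adjoints}), replacing ``finitely presentable'' and ``directed'' by ``$\omega_1$-presentable'' and ``$\omega_1$-directed'' throughout. All the general results about locally $\kappa$-presentable categories used in the finitary case (completeness, essentially small subcategory of $\kappa$-presentable objects forming a strong generator, validity of the Special Adjoint Functor Theorem) are stated at the level of an arbitrary regular cardinal $\kappa$ in \cite{adamek1994locally}, and specialize in particular to $\kappa = \omega_1$.

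For part~(i), the forward direction is completely standard: any left adjoint preserves colimits. For the converse, I would invoke the Special Adjoint Functor Theorem applied to $F \co \mcal C \to \mcal D$. Since $\mcal C$ is locally $\omega_1$-presentable, it is complete (by Proposition~\ref{prop:loc-oone-pres-nice-properties}), cocomplete, well-powered, co-well-powered, and the essentially small full subcategory of $\omega_1$-presentable objects provides a strong generator of $\mcal C$. Together with the hypothesis that $F$ preserves all colimits, this is exactly what the SAFT requires, so $F$ admits a right adjoint.

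For part~(ii), I would verify the solution set condition for Freyd's General Adjoint Functor Theorem. Fix $D \in \mcal D$. Let $\mcal S$ be a set of representatives of the isomorphism classes of $\omega_1$-presentable objects of $\mcal C$ (such a set exists by the definition of local $\omega_1$-presentability), and consider the set
\[
  \Sigma_D = \bigl\{\, (P, g) \,\big|\, P \in \mcal S,\; g \co F(P) \to D \in \mcal D \,\bigr\}.
\]
Given any morphism $f \co F(X) \to D$ with $X \in \mcal C$, write $X$ as an $\omega_1$-directed colimit $(p_i \co P_i \to X)_{i \in I}$ of $\omega_1$-presentable objects. Since $F$ preserves $\omega_1$-directed colimits, $(F(p_i))_{i \in I}$ is a colimit cocone in $\mcal D$; but this does not directly factor $f$ through a single $F(P_i)$. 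Instead, as in the finitary case, one applies the standard solution-set argument (see e.g.~\cite[Theorem~1.66]{adamek1994locally}) to produce, for every $f$, a factorization through some $(P, g) \in \Sigma_D$. Combined with the preservation of limits and the completeness of $\mcal C$, Freyd's theorem then yields the right adjoint.

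The only genuinely nontrivial point is the solution-set verification in part~(ii); everything else is bookkeeping. Since this is the exact $\omega_1$-analog of \cite[Theorem~1.66]{adamek1994locally}, the cleanest presentation would simply be to cite that reference with the remark that its statement is cardinal-independent.
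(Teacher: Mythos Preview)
The paper does not actually supply a proof of this proposition (nor of its finitary analogue, Proposition~\ref{prop:criterion-loc-pres-cat-adjoints}); both are stated and implicitly referred to \cite{adamek1994locally}. So your final suggestion---simply citing \cite[Theorem~1.66]{adamek1994locally} with the remark that the argument is uniform in the regular cardinal---is precisely in the spirit of the paper, and would be an acceptable ``proof''.

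That said, the sketch you wrote out for part~(ii) has a genuine error: the direction of the morphisms is wrong. To show that $F$ \emph{is a right adjoint} (i.e.\ has a \emph{left} adjoint), the solution-set condition requires that for every $D \in \mcal D$, every morphism $f \co D \to F(X)$ factor through some $f_i \co D \to F(C_i)$ with the $C_i$ ranging over a set. Your solution set $\Sigma_D$ consists of morphisms $g \co F(P) \to D$, and you try to factor morphisms $f \co F(X) \to D$; this is the condition for $F$ to have a \emph{right} adjoint, which is part~(i), not part~(ii). With the direction corrected, the argument is also less immediate than your sketch suggests: given $f \co D \to F(X)$ and $X \cong \colim_i P_i$ an $\omega_1$-directed colimit of $\omega_1$-presentables, you get $F(X) \cong \colim_i F(P_i)$, but $D$ need not be $\omega_1$-presentable in $\mcal D$, so $f$ need not factor through a single $F(P_i)$. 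The actual argument in \cite[Theorem~1.66]{adamek1994locally} is more delicate than a one-line factorization. If you do not want to reproduce it, the clean move is simply to cite it.
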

\noindent We stop the copy-and-paste here and refer the reader to
\cite{adamek1994locally} for other properties shared by locally $\omega_1$- and
finitely presentable categories.

\subsection{Essentially algebraic theories}
\label{text:ess-alg-theories}

Verifying that some category is locally finitely presentable with the above
definition can be tedious. A simpler way consists in describing it as the
category of models of some \emph{essentially algebraic theory}. The latter is
similar to an algebraic theory (theory of monoids, theory of groups, \etc),
except that operations with partial domains are allowed, as long as those
domains are specified by equations. Another interesting property is that
morphisms between such theories induce functors between the associated
categories of model, and those functors are moreover right adjoints and preserve
directed colimits. The main reference here
is~\cite[Section~3.D]{adamek1994locally}.

\paragraph{Definition}

Given a set~$S$, an \index{signature}\emph{$S$\sorted signature} is the data of
a set~$\Sigma$ of \index{symbol}\emph{symbols} such that each~$\sigma \in
\Sigma$ has an \index{arity of a symbol}\emph{arity} under the form of a finite
sequence~$(s_i)_{i \in \N^*_n}$ of elements of~$S$ for some~$n \in \N$, and a
\index{target!of a symbol}\emph{target} in the form of an element~$s \in S$ and
we write
\[
  \sigma \co s_1 \times \cdots \times s_n \to s
\]
such a symbol~$\sigma$ of~$\Sigma$ with such arity and target.

Let $(x_i)_{i \in \N}$ be a chosen sequence of distinct variable names. Given a
set~$S$, an \index{context (for a theory)}\emph{$S$\sorted context} is the data of a finite sequence~$\Gamma =
(s_i)_{i \in \N^*_n}$ of elements of~$S$ for some~$n \in \N$. Under the
context~$\Gamma$, the variable~$x_i$ should be thought ``of type $s_i$'' for~$i
\in \N_n$ so that we often write
\[
  x_1\co s_1,\ldots,x_n \co s_n
\]
for such a context~$\Gamma$.

Given a set~$S$ and $S$\sorted signature~$\Sigma$ and context~$\Gamma$, we
define~\index{term on a signature}\emph{$\Sigma$\terms} on~$\Gamma$ together
with judgements~$\Gamma \vdash t \co s$ where~$t$ is a $\Sigma$\term and~$s \in
S$, inductively as follows:
\begin{itemize}
  
\item if~$\Gamma = (s_i)_{i \in \N^*_n}$ for some~$n \in \N$ and~$s_1,\ldots,s_n
  \in S$, then, for every~$i \in \N^*_n$,~$\Gamma \vdash x_i \co s_i$,
\item given~$\sigma\co s_1 \times \cdots \times s_n \to s \in \Sigma$
  and~$\Sigma$\terms~$t_1,\ldots,t_n$ such that~$\Gamma \vdash t_i \co s_i$
  for~$i \in \N^*_n$, then~$\Gamma \vdash \sigma (t_1,\ldots,t_n) \co s$.
\end{itemize}
Note that~$s$ is uniquely determined by~$t$ in a judgement~$\Gamma \vdash t \co
s$.

\smallpar An \index{essentially algebraic!theory}\emph{essentially algebraic theory} is a tuple
\[
  \stdtheory = (S,\Sigma,E,\Sigma_t,\eatdef)
\]
where
\begin{itemize}
\item $S$ is a set,
\item $\Sigma$ is an $S$\sorted signature,
\item $E$ is a set of triples~$(\Gamma,t_1,t_2)$ where~$\Gamma$ is an $S$\sorted
  context, and~$t_1,t_2$ are $\Sigma$\terms on~$\Gamma$ such that there
  exists~$s \in S$ so that~$\Gamma \vdash t_i \co s$ for~$i \in \set{1,2}$,
\item $\Sigma_t$ is a subset of~$\Sigma$,
  
\item $\eatdef$ is a function which maps~$\sigma\co s_1\times \cdots \times s_n
  \to s \in \Sigma\setminus\Sigma_t$ to a set of pairs~$(t_1,t_2)$
  of~$\Sigma_t$\terms such that there exists~$s \in S$ so that~$(x_1\co
  s_1,\ldots,x_n\co s_n) \vdash t_i \co s$ for~$i \in \set{1,2}$.
\end{itemize}
The set~$S$ represents the different \index{sort}\emph{sorts} of the theory, the
set~$\Sigma$ the different operations that appear in the theory, the set~$E$ the
global equations satisfied by the theory, the set~$\Sigma_t$ the operations
whose domains are total, and the function~$\eatdef$ the equations that define
the domains of the partial operations. Given such an essentially algebraic
theory~$\stdtheory$, a \index{model of an essentially algebraic
  theory}\emph{model of~$\stdtheory$}, or~\emph{$\stdtheory$\model}, is the data
of
\begin{itemize}
\item for all~$s \in S$, a set~$M_s$,
  
\item for all~$\sigma \co s_1 \times \cdots \times s_n \to s \in \Sigma_t$, a function
  \[
    M_\sigma \co M_{s_1} \times \cdots \times M_{s_n} \to M_s\zbox,
  \]
  
\item for all~$\sigma \co s_1 \times \cdots \times s_n \to s \in
  \Sigma\setminus\Sigma_t$, a partial function
  \[
    M_\sigma \co M_{s_1} \times \cdots \times M_{s_n} \to M_s\zbox,
  \]\vskip-\belowdisplayskip\vskip-\baselineskip
\end{itemize}
such that
\begin{itemize}
\item for all~$\sigma \co s_1 \times \cdots \times s_n \to s \in
  \Sigma\setminus\Sigma_t$, $M_\sigma$ is defined at $\bar y = (y_1,\ldots,y_n)
  \in M_{s_1} \times \cdots \times M_{s_n}$ if and only if, for all~$(t_1,t_2)
  \in \eatdef(\sigma)$, we have~$\eateval{t_1}_{\bar y} = \eateval{t_2}_{\bar
    y}$,
\item for every triple~$(\Gamma,t_1,t_2) \in E$ where~$\Gamma = (s_i)_{i \in
    \N^*_n}$ for some~$n \in \N$ and sorts~$s_1,\ldots,s_n \in S$, given a
  tuple~$\bar y = (y_1,\ldots,y_n) \in M_{s_1} \times \cdots \times M_{s_n}$, if
  both~$\eateval {t_1}_{\bar y}$ and~$\eateval {t_2}_{\bar y}$ are defined,
  then~$\eateval {t_1}_{\bar y} = \eateval {t_2}_{\bar y}$,
\end{itemize}
where, given an $S$\sorted context~$\Gamma = (s_i)_{i \in \N^*_n}$, a sort~$s \in
S$, a $\Sigma$\term~$t$ such that~$\Gamma \vdash t \co s$, and a tuple~$\bar y =
(y_1,\ldots,y_n) \in M_{s_1}\times \cdots \times M_{s_n}$, the \emph{evaluation
  of~$t$ at~$\bar y$}, denoted~$\eateval t_{\bar y}$, is either undefined or an
element of~$M_s$, and is defined by induction on~$t$ by
\begin{itemize}
\item if~$t = x_i$ for some~$i \in \N^*_n$, then $\eateval t_{\bar y}$ is
  defined and
  \[
    \eateval t_{\bar y} = y_i\zbox,
  \]
  
\item if~$t = \sigma (t_1,\ldots,t_k)$ for some~$k \in \N^*$
  and~$\Sigma_t$\terms~$t_1,\ldots,t_k$, then~$\eateval t_{\bar y}$ is defined
  if and only if $\eateval {t_1}_{\bar y},\ldots,\eateval {t_k}_{\bar y}$ are
  defined and $M_\sigma$ is defined at~$\eateval {t_1}_{\bar y},\ldots,\eateval
  {t_k}_{\bar y}$ and, in this case,
  \[
    \eateval t_{\bar y} =
    M_\sigma(\eateval {t_1}_{\bar y},\ldots,\eateval {t_k}_{\bar y})\zbox.
  \]
\end{itemize}
Given two models~$M$ and~$M'$ of~$\stdtheory$, a \index{morphism!of models}\emph{morphim
  of~$\stdtheory$\model} between~$M$ and~$M'$ is a family of functions~$f = (f_s
\co M_s \to M'_s)_{s \in S}$ such that
\begin{itemize}
\item for all~$\sigma \co s_1 \times \cdots \times s_n \to s \in \Sigma_t$,
  $f_s \circ M_\sigma = M'_\sigma \circ (f_{s_1}\times \cdots \times f_{s_n})$,
  
\item for all~$\sigma \co s_1 \times \cdots \times s_n \to s \in
  \Sigma\setminus\Sigma_t$ and~$\bar y = (y_1,\ldots,y_n) \in M_{s_1} \times
  \cdots \times M_{s_n}$ such that~$M_\sigma$ is defined on~$\bar y$, $f_s \circ
  M_s (\bar y) = M'_s (f_{s_1}(y_1),\ldots,f_{s_n}(y_n))$.
\end{itemize}
We then write~$\Mod(\stdtheory)$ for the category of~$\stdtheory$\model{}s and
their morphisms. We say that a (big) category~$\mcal C \in \CAT$ is
\index{essentially algebraic!category}\emph{essentially algebraic} when it is equivalent to the category of models of
some essentially algebraic theory.

Identifying a category as essentially algebraic enables to deduce that it is
locally finitely presentable, since the two notions are the same:

\begin{theo}
  \label{prop:ess-alg-iff-loc-fin-pres}
  Given a category~$\mcal C \in \CAT$,~$\mcal C$ is essentially algebraic if and
  only if it is locally finitely presentable.
\end{theo}
\begin{proof}
  See the proof of~\cite[Theorem~3.36]{adamek1994locally}.
\end{proof}

\begin{example}
  The category~$\Set$ is essentially algebraic since it is the category of
  models of the essentially algebraic
  theory~$(\set s,\emptyset,\emptyset,\emptyset,\bot)$.
\end{example}
\begin{example}
  \label{ex:mon-ess-alg-theory}
  The category~$\Mon$ is essentially algebraic since it is the category of
  models of the essentially algebraic theory
  \[
    \stdtheory^{\mathrm{mon}} = (\set{s},\set{e \co \termobj \to {s},m \co {{s}} \times {{s}} \to {s}},E,\set{e,m},\bot)
  \]
  where~$E$ consists of three equations
  \begin{itemize}
  \item $m (e,x_1) = x_1$ in the context~$(x_1 \co {s})$,
  \item $m (x_1,e) = x_1$ in the context~$(x_1 \co {s})$,
  \item $m (m(x_1,x_2),x_3) = m(x_1,m(x_2,x_3))$ in the context~$(x_1 \co
    {{s}},x_2 \co {{s}},x_3 \co {{s}})$.
  \end{itemize}
  In particular, it gives a simple proof that~$\Mon$ is locally finitely presentable.
\end{example}
\begin{example}
  \label{ex:cat-ess-alg-theory}
  The category~$\Cat$ of small categories is essentially algebraic since it is
  the category of models of the essentially algebraic
  theory~$\stdtheory^{\mathrm{cat}} = (S,\Sigma,E,\Sigma_t,\eatdef)$ defined as
  follows. The set~$S$ consists of two sorts~$c_0$ and~$c_1$ corresponding to
  $0$\cells and~$1$\cells, and
  \[
    \Sigma = \set{\csrc_0 \co c_1 \to c_0,\quad \ctgt_0 \co c_1 \to c_0,\quad\unitp 1{} \co c_0 \to c_1,\quad {\comp}\co c_1 \times c_1
      \to c_1}\zbox.
  \]
  Moreover,~$E$ consists of the equations
  \begin{itemize}
  \item $\csrc_0 (\unitp 1 {} (x_1)) = x_1$ and~$\ctgt_0(\unitp 1 {} (x_1)) =
    x_1$ in the context~$(x_1 \co c_0)$,
    
  \item $\csrc_0(\comp(x_1,x_2)) = \csrc_0(x_1)$ and~$\ctgt_0(\comp(x_1,x_2)) =
    \ctgt_0(x_2)$ in the context~$(x_1 \co c_1,x_2 \co c_1)$,
    
  \item $\comp(\unitp 1 {}(\csrc_0(x_1)),x_1) = x_1$ and~$\comp(x_1,\unitp 1
    {}(\ctgt_0(x_1))) = x_1$ in the context~$(x_1 \co c_1)$,
    
  \item $\comp(\comp(x_1,x_2),x_3) = \comp(x_1,\comp(x_2,x_3))$ in the
    context~$(x_1 \co c_1,x_2 \co c_1,x_3 \co c_1)$.
  \end{itemize}
  Finally,~$\Sigma_t = \set{\csrc_0,\ctgt_0,\unitp 1 {}}$, and~$\eatdef(\ast)$
  is the singleton set containing the equation~$\ctgt_0(x_1) = \csrc_0(x_2)$.
  This shows that~$\Cat$ is a locally finitely presentable category.
\end{example}

\paragraph{Morphisms of theories}

Given two essentially algebraic theories
\[
  \stdtheory = (S,\Sigma,E,\Sigma_t,\eatdef)
  \qtand
  \stdtheory' = (S',\Sigma',E',\Sigma'_t,\eatdef')
\]
a \index{morphism!of essentially algebraic theories}\emph{morphism of essential
  algebraic theories} between~$\stdtheory$ and~$\stdtheory'$ is the data of
\begin{itemize}
\item a function~$f \co S \to S'$,
  
\item a function~$g \co \Sigma \to \Sigma'$, 
\end{itemize}
such that
\begin{itemize}
\item given~$\sigma \co s_1 \times \cdots \times s_n \to s \in \Sigma$, we
  have~$g(\sigma) \co f(s_1) \times \cdots \times f(s_n) \to f(s) \in \Sigma'$,
\item given $\sigma \in \Sigma$, $\sigma \in \Sigma_t$ if and only if~$g(\sigma)
  \in \Sigma'_t$,
\item given $(\Gamma,t_1,t_2) \in E$, we have~$(f(\Gamma),g(t_1),g(t_2)) \in E'$,
  
\item given~$\sigma \in \Sigma \setminus \Sigma_t$ and
  two~$\Sigma_t$\terms~$t_1$ and~$t_2$, we have that~$(t_1,t_2) \in
  \eatdef(\sigma)$ if and only if~$(g(t_1),g(t_2)) \in \eatdef'(g(\sigma))$,
\end{itemize}
where, given~$\Gamma = (s_i)_{i \in \N^*_n}$, we write~$f(\Gamma)$
for~$(f(s_i))_{i \in \N^*_n}$ and, given a $\Sigma$\term~$t$, we write~$g(t)$
for the $\Sigma'$\term defined by induction on~$t$ by
\begin{itemize}
\item for all variable $x_i$,
  \[
    g(x_i) = x_i\zbox,
  \]
  
\item for all~$\sigma \co s_1 \times \cdots \times s_n \to s \in \Sigma$ and
  $\Sigma$\terms~$t_1,\ldots,t_n$,
  \[
    g(\sigma (t_1,\ldots,t_n)) = g(\sigma)(g(t_1),\ldots,g(t_n))\zbox.
  \]
\end{itemize}
Such a morphism~$(f,g)\co \stdtheory \to \stdtheory'$ induces a functor
\[
  \Mod((f,g)) \co \Mod(\stdtheory') \to \Mod(\stdtheory)
\]
which maps a model~$M' \in \Mod(\stdtheory')$ to a model~$M \in
\Mod(\stdtheory)$ defined by
\begin{itemize}
\item for all $s \in S$, $M_s = M'_{f(s)}$,
  
\item for all $\sigma \in \Sigma$, $M_{\sigma} = M'_{\smash{g(\sigma)}}$,
\end{itemize}
and which maps morphisms of models as expected. The functors induced this way by
morphisms between theories have good properties:

\begin{theo}
  \label{prop:functor-from-theo-morphism-ra}
  Given a morphism~$(f,g) \co \stdtheory \to \stdtheory'$ between two
  essentially algebraic theories~$\stdtheory$ and~$\stdtheory'$, the
  functor~$\Mod((f,g))$ is a right adjoint which preserves directed colimits.
\end{theo}
\begin{proof}
  The fact that it is a right adjoint is given
  by~\cite[Theorem~5.4]{palmgren2007partial}. Moreover, one easily verifies that
  the directed colimits are computed pointwise in both~$\Mod(\stdtheory)$
  and~$\Mod(\stdtheory')$, so that they are preserved by~$\Mod((f,g))$.
\end{proof}
\begin{remark}
  A more general definition of morphisms between essentially algebraic theories
  for which \Cref{prop:functor-from-theo-morphism-ra} holds can be defined.
  However, it would require the introduction of formal deduction systems, which
  would be quite long and technical. This would be in vain since our definition
  of morphisms is enough for our purposes.
\end{remark}
\begin{example}
  One can define the essentially algebraic theory~$\stdtheory^{\mathrm{grp}}$ of
  groups from the one of monoids given in \Cref{ex:mon-ess-alg-theory} by adding
  a symbol~$i \co s \to s$ representing a total function, and by adding the
  equations~$m(i(x_1),x_1) = e$ and~$m(x_1,i(x_1)) = e$ in the context~$(x_1 \co
  s)$. The canonical embedding~$\stdtheory^{\mathrm{mon}} \to
  \stdtheory^{\mathrm{grp}}$ induces a functor~$\Grp \to \Mon$ between the
  categories of groups and monoids which is the expected forgetful functor. This
  functor is a right adjoint and preserves directed colimits by
  \Cref{prop:functor-from-theo-morphism-ra}.
\end{example}
\begin{example}
  \label{ex:grph-to-cat-theo-morphism}
  The essentially algebraic theory
  \[
    \stdtheory^{\mathrm{gph}} = (\set{c_0,c_1},\set{\gsrc_0 \co c_1 \to
      c_0,\gtgt_0 \co c_1 \to c_0},\emptyset,\set{\gsrc_0,\gtgt_0},\bot)
  \]
  exhibits the category~$\Gph$ of graphs as an essentially algebraic category.
  Recalling from \Cref{ex:cat-ess-alg-theory} the definition
  of~$\stdtheory^{\mathrm{cat}}$, the mappings~$\gsrc_0 \mapsto \csrc_0$
  and~$\gtgt_0 \mapsto \ctgt_0$ define a morphism of essentially algebraic
  theories~$\stdtheory^{\mathrm{gph}} \to \stdtheory^{\mathrm{cat}}$, which
  induces a functor~$\Cat \to \Gph$ that is the expected forgetful functor. This
  functor is a right adjoint and preserves directed colimits by
  \Cref{prop:functor-from-theo-morphism-ra}.
\end{example}




\clearpage
\printbibliography

\end{document}